\tikzset{double line with arrow/.style args={#1,#2}{decorate,decoration={markings, mark=at position 0 with {\coordinate (ta-base-1) at (0,1pt);
\coordinate (ta-base-2) at (0,-1pt);},
mark=at position 1 with {\draw[#1] (ta-base-1) -- (0,1pt);
\draw[#2] (ta-base-2) -- (0,-1pt);
}}}}
\tikzstyle{level 1}=[level distance=2.4cm, sibling distance=6.5cm]
\tikzstyle{level 2}=[level distance=2.4cm, sibling distance=2.5cm]
\tikzstyle{level 3}=[level distance=3cm, sibling distance=0.8cm]
\newcommand{\listref}[1]{{\color{ForestGreen}(}\ref{#1}{\color{ForestGreen})}}
\newcommand{\Rr}{\mathbb{R}}
\newcommand{\Zz}{\mathbb{Z}}
\newcommand{\Ff}{\mathbb{F}}
\newcommand{\Nn}{\mathbb{N}}
\newcommand{\Qq}{\mathbb{Q}}
\newcommand{\Hh}{\mathbb{H}}
\newcommand{\Pp}{\mathbb{P}}
\newcommand{\Aa}{\mathbb{A}}
\newcommand{\Gg}{\mathbb{G}}
\newcommand{\Bb}{\mathbb{B}}
\newcommand{\Tt}{\mathbb{T}}
\newcommand{\Ll}{\mathbb{L}}
\newcommand{\Mm}{\mathbb{M}}
\newcommand{\Dd}{\mathbb{D}}
\newcommand{\solid}{\otimes^{\scalebox{0.5}{$\blacksquare$}}}
\newcommand{\dsolid}{\otimes^{\LL \scalebox{0.5}{$\blacksquare$}}}
\newcommand{\solidif}{\scalebox{0.5}{$\blacksquare$}}
\newcommand{\dsolidif}{\LL \scalebox{0.5}{$\blacksquare$}}
\newcommand{\petc}{\text{\myuline{$\pet$}}}
\newcommand{\dRc}{\underline{\dR}}
\newcommand{\cl}[1]{\mathcal{#1}}
\newcommand{\myuline}[1]{%
  \uline{\phantom{#1}}%
  \llap{\contour{white}{#1}}%
 }
\DeclareMathOperator*{\colim}{colim}
\DeclareMathOperator{\ett}{\acute{e}t}
\DeclareMathOperator{\pet}{pro\acute{e}t}
\DeclareMathOperator{\qpet}{qpro\acute{e}t}
\DeclareMathOperator{\an}{an}
\DeclareMathOperator{\rk}{rk}
\DeclareMathOperator{\GL}{GL}
\DeclareMathOperator{\Gal}{Gal}
\DeclareMathOperator{\Hom}{Hom}
\DeclareMathOperator{\coker}{coker}
\DeclareMathOperator{\Spa}{Spa}
\DeclareMathOperator{\dR}{dR}
\DeclareMathOperator{\Fil}{Fil}
\DeclareMathOperator{\gr}{gr}
\DeclareMathOperator{\Sp}{Sp}
\DeclareMathOperator{\RigSm}{RigSm}
\DeclareMathOperator{\Mod}{Mod}
\DeclareMathOperator{\cont}{cont}
\DeclareMathOperator{\intt}{int}
\DeclareMathOperator{\nonint}{n-int}
\DeclareMathOperator{\Kos}{Kos}
\DeclareMathOperator{\Vect}{Mod}
\DeclareMathOperator{\im}{im}
\DeclareMathOperator{\LC}{LC}
\DeclareMathOperator{\Ext}{Ext}
\DeclareMathOperator{\cond}{cond}
\DeclareMathOperator{\Set}{Set}
\DeclareMathOperator{\aff}{aff}
\DeclareMathOperator{\Shv}{Shv}
\DeclareMathOperator{\Ab}{Ab}
\DeclareMathOperator{\Solid}{Solid}
\DeclareMathOperator{\ssolid}{solid}
\DeclareMathOperator{\LL}{L}
\DeclareMathOperator{\Berk}{Berk}
\DeclareMathOperator{\DR}{dR}
\DeclareMathOperator{\PN}{PN}
\DeclareMathOperator{\FF}{FF}
\DeclareMathOperator{\nuc}{nuc}
\DeclareMathOperator{\Tot}{Tot}
\DeclareMathOperator{\Perf}{Perf}
\DeclareMathOperator{\opp}{op}
\DeclareMathOperator{\CondAb}{CondAb}
\DeclareMathOperator{\CondSet}{CondSet}
\DeclareMathOperator{\ExtrDisc}{ExtrDisc}
\DeclareMathOperator{\ttop}{top}
\DeclareMathOperator{\topp}{top}
\DeclareMathOperator{\LCA}{LCA}
\DeclareMathOperator{\trace}{tr}
\DeclareMathOperator{\Ban}{Ban}
\DeclareMathOperator{\disc}{disc}
\DeclareMathOperator{\Bun}{Bun}
\DeclareMathOperator{\cris}{cris}
\DeclareMathOperator{\Isoc}{Isoc}
\numberwithin{equation}{section}
\newtheorem{theorem}{Theorem}
\numberwithin{theorem}{section}
\newtheorem{lemma}[theorem]{Lemma}
\newtheorem{cor}[theorem]{Corollary}
\newtheorem{prop}[theorem]{Proposition}
\theoremstyle{definition}
\newtheorem{df}[theorem]{Definition}
\newtheorem{convention}[theorem]{Convention}
\newtheorem{convnot}[theorem]{Notation and conventions}
\newtheorem{notation}[theorem]{Notation}
\newtheorem*{Acknowledgments}{Acknowledgments}
\theoremstyle{remark}
\newtheorem{rem}[theorem]{Remark}
\newtheorem{example}[theorem]{Example}
\newtheorem{examples}[theorem]{Examples}
 \def\MR#1{}
\date{\today}
\renewcommand{\address}[1]{\gdef\@address{#1}}
\renewcommand{\email}[1]{\gdef\@email{\url{#1}}}
\newcommand{\@endstuff}{\par\vspace{\baselineskip}\noindent\small
\begin{tabular}{@{}l}\scshape\@address\\\textit{E-mail address:} \@email\end{tabular}}
\title[On the $p$-adic pro-\'etale cohomology of Drinfeld symmetric spaces]{On the $p$-adic pro-\'etale cohomology \\ of Drinfeld symmetric spaces}
\author{Guido Bosco}
\address{IMJ-PRG, Sorbonne Université, 4 place Jussieu, 75005 Paris, France}
\email{gbosco@imj-prg.fr}
\begin{document}
 
\begin{abstract}
 Via the relative fundamental exact sequence of $p$-adic Hodge theory, we determine the geometric $p$-adic pro-étale cohomology of the Drinfeld symmetric spaces defined over a $p$-adic field, thus giving an alternative proof of a theorem of Colmez--Dospinescu--Nizioł. Along the way, we describe, in terms of differential forms, the geometric pro-étale cohomology of the positive de Rham period sheaf on any connected, paracompact, smooth rigid-analytic variety over a $p$-adic field, and we do it with coefficients. A key new ingredient is the condensed mathematics recently developed by Clausen--Scholze.
\end{abstract}

\maketitle

\setcounter{tocdepth}{2}

\tableofcontents

\section{\textbf{Introduction}}
 \sectionmark{}

 Let $p$ be a fixed prime number. Let $K$ be a finite extension of $\Qq_p$, and let $\cl O_K$ denote its ring of integers.
 Let $\overline K$ be a fixed algebraic closure of $K$. Let us denote by $C:=\widehat{\overline K}$ the completion of $\overline K$, $\cl O_C$ its ring of integers, and $\mathscr{G}_K:=\Gal(\overline K/K)$ the absolute Galois group of $K$.
 
 \subsection{History and motivation}
 In this article, using perfectoid methods, we study the geometric $p$-adic pro-étale cohomology of the Drinfeld upper half-spaces defined over $K$ (a non-archimedean variant of the Poincaré upper half-spaces),\footnote{See Definition \ref{defdrinfeld}.} whose interest lies in the understanding of the $p$-adic local Langlands program.
 
 The $p$-adic local Langlands correspondence for $\GL_2(\Qq_p)$, envisioned by Breuil, and established in full generality by Colmez, \cite{Colmezlanglands}, is given by a covariant exact functor $\Pi\mapsto \mathbf{V}(\Pi)$ from the category of admissible unitary $\Qq_p$-Banach representations of $\GL_2(\Qq_p)$,  which are residually of finite length and admitting a central character, towards the category of finite-dimensional continuous $\Qq_p$-representations of $\mathscr{G}_{\Qq_p}$. As shown by Colmez--Dospinescu--Paškūnas, \cite[Theorem 1.1]{CDP}, the functor $\Pi\mapsto \mathbf{V}(\Pi)$ induces a bijection between the isomorphism classes of absolutely irreducible non-ordinary $\Qq_p$-Banach representations of $\GL_2(\Qq_p)$ in the source, and the isomorphism classes of $2$-dimensional absolutely irreducible continuous $\Qq_p$-representations of $\mathscr{G}_{\Qq_p}$. Moreover, by \cite[Theorem 1.3]{CDP}, such functor encodes the classical $\ell$-adic local Langlands correspondence for $\GL_2(\Qq_p)$, for a prime number $\ell\neq p$. The latter correspondence is known more generally for $\GL_n(K)$, for any integer $n\ge 1$, thanks to the work of Harris--Taylor \cite{HT}, Henniart \cite{Henniart}, and, more recently, Scholze \cite{Scholzelanglands}.  The proof of Harris--Taylor, combined with \cite{Faltingsmoduli}, \cite{Fargues}, in particular shows that, for the supercuspidal representations, such correspondence can be realized in the geometric $\ell$-adic pro-étale cohomology of the Drinfeld tower, which is a tower of rigid-analytic étale Galois coverings of the Drinfeld upper half-space.
  
 On the other hand, very little is known about a $p$-adic local Langlands correspondence for $\GL_n(K)$, beyond the case of $\GL_2(\Qq_p)$.\footnote{Although there are by now at least two candidates for a $p$-adic local Langlands correspondence for $\GL_n(K)$: Caraiani--Emerton--Gee--Geraghty--Paskunas--Shin constructed a functor from representations of $\mathscr{G}_K$ to representations of $\GL_n(K)$, \cite{CEGGPS}, and Scholze produced a functor going in the opposite direction, \cite{Scholzelubintate}.} 
 The story of the $\ell$-adic counterpart teaches us that one way to understand better, and possibly generalize, the correspondence for $\GL_2(\Qq_p)$ might be to find a geometric incarnation of it, but one difficulty is that the $p$-adic (pro-)étale cohomology of $p$-adic rigid-analytic varieties is much more intricate than the $\ell$-adic one. 
 
 Despite these difficulties, recently Colmez--Dospinescu--Nizioł, \cite{CDN0}, were able to show that the geometric $p$-adic étale cohomology of the Drinfeld tower over $\Qq_p$ in dimension 1 realizes the $p$-adic local Langlands correspondence for $\GL_2(\Qq_p)$, for the 2-dimensional de Rham representations of $\mathscr{G}_{\Qq_p}$ of Hodge-Tate weight 0 and 1, whose associated Weil--Deligne representation is irreducible. Moreover, their computation suggests that the geometric $p$-adic étale cohomology of the Drinfeld tower over $K$ in dimension 1 should encode a still hypothetical $p$-adic Langlands correspondence for $\GL_2(K)$, for a general $K$. 
 
 For the Drinfeld tower in higher dimension, all that we can say so far is for the level 0 of the tower, i.e. for the Drinfeld upper half-space. In fact, Colmez--Dospinescu--Nizioł, \cite{CDN1}, proved the following surprising result, of which we propose an alternative proof that is amenable to several generalizations.
 
 \subsection{Main result} 
 
  Given an integer $d\ge 1$, let $\Hh^d_K$ denote the Drinfeld upper half-space of dimension $d$ defined over $K$, and let $\Hh_C^d$ be its base change to $C$. Let $G:=\GL_{d+1}(K)$. 

 \begin{theorem}[{\cite[Theorem 4.12]{CDN1}, Theorem \ref{azz}}]\label{first}For all $i\ge 0$, there is a strictly exact sequence of $G\times \mathscr{G}_K$-Fr\'echet spaces over $\Qq_p$
  $$0\to \Omega^{i-1}(\Hh_C^d)/\ker d\to H^i_{\pet}(\Hh_C^d, \Qq_p(i))\to \Sp_i(\Qq_p)^*\to 0$$
  where $\Sp_i(\Qq_p)^*$ denotes the weak topological dual of the locally constant special representations of $G$ with coefficients in $\Qq_p$.\footnote{See Definition \ref{speciali}.}
 \end{theorem}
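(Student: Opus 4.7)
The plan is to apply pro-\'etale cohomology on $\Hh^d_C$ to the (relative) fundamental exact sequence of $p$-adic Hodge theory on the pro-\'etale site,
$$0 \to \Qq_p(i) \to \Bb_{\cris}^{\varphi = p^i} \to \Bb_{\dR}/\Fil^i \Bb_{\dR} \to 0,$$
and to extract the stated short exact sequence from the resulting long exact cohomology sequence, exploiting two geometric features of Drinfeld spaces: $\Hh^d_C$ is Stein (so its higher coherent cohomology vanishes), and its de Rham cohomology is well understood thanks to Schneider-Stuhler. Throughout the argument I would work in the condensed/solid framework of Clausen-Scholze, so that all period sheaves, cohomologies, and exact sequences live in a quasi-abelian category of solid $\Qq_p$-modules; the strict exactness of the final sequence of Fr\'echet spaces is then built in rather than grafted on at the end.

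For the "de Rham" contribution, I would invoke the paper's general computation of $R\Gamma_{\pet}(X, \Bb_{\dR}^+)$ on a connected paracompact smooth rigid-analytic variety $X$ in terms of differential forms. Applied to $\Hh^d_C$ and combined with the Hodge-type filtration on $\Bb_{\dR}$, this computes $R\Gamma_{\pet}(\Hh^d_C, \Bb_{\dR}/\Fil^i)$ as a filtered complex whose associated graded reduces, by the Stein hypothesis, to the global differential forms $\Omega^j(\Hh^d_C)$ for $0 \le j < i$, with connecting maps given by the de Rham differential. The subobject $\Omega^{i-1}(\Hh^d_C)/\ker d$ then appears as the image of the connecting map $H^{i-1}_{\pet}(\Hh^d_C, \Bb_{\dR}/\Fil^i) \to H^i_{\pet}(\Hh^d_C, \Qq_p(i))$ produced by the fundamental sequence.

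For the "crystalline" quotient $\Sp_i(\Qq_p)^*$, I would combine the Schneider-Stuhler identification $H^i_{\dR}(\Hh^d_K) \cong \Sp_i(K)^*$ with a crystalline incarnation coming from the natural semistable formal model of $\Hh^d_K$ over $\cl O_K$. This equips the de Rham cohomology with a Frobenius structure, and via the crystalline comparison $R\Gamma_{\pet}(\Hh^d_C, \Bb_{\cris}^{\varphi = p^i})$ should identify with the $\varphi = p^i$-eigenspace of this crystalline cohomology. Taking Frobenius invariants at the right slope removes the $K$-dependence of the Schneider-Stuhler identification and produces the smooth $\Qq_p$-representation $\Sp_i(\Qq_p)^*$ as the desired $G \times \mathscr{G}_K$-equivariant quotient of $H^i_{\pet}(\Hh^d_C, \Qq_p(i))$.

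The main obstacle is verifying that the connecting maps in the long exact sequence align cleanly, so that the geometric and arithmetic pieces decouple into a genuine short exact sequence, and that this sequence is strict. The condensed formalism handles strictness automatically once one has an exact triangle in the stable $\infty$-category of solid $\Qq_p$-modules, but the algebraic verification---that the connecting map image is exactly $\Omega^{i-1}(\Hh^d_C)/\ker d$ and that the resulting quotient is precisely $\Sp_i(\Qq_p)^*$---requires careful bookkeeping. In particular, the descent from $K$-coefficients (natural in Schneider-Stuhler) to $\Qq_p$-coefficients via Frobenius invariance, and the explicit matching of the two pieces across the fundamental exact sequence, form the technical heart of the argument.
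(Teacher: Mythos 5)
Your overall skeleton---apply $R\Gamma_{\pet}$ to the fundamental exact sequence, extract $\Omega^{i-1}(\Hh^d_C)/\ker d$ from the description of $\Bb_{\dR}^+$-cohomology in terms of differential forms on a Stein space, and identify the quotient with $\Sp_i(\Qq_p)^*$ via the Frobenius-eigenspace term---is exactly the paper's, and your treatment of the de Rham half matches Corollary \ref{B/B+} and Remark \ref{affstein}. Where you genuinely diverge is the Frobenius half. You propose to compute $H^i_{\pet}(\Hh^d_C, \Bb_{\cris}^{\varphi=p^i})$ by passing through a semistable formal model and a crystalline comparison; for a non-proper Stein space such a comparison is not off the shelf---it is essentially the main comparison theorem of \cite{CDN1}, proved there by syntomic/Hyodo--Kato methods---so your route reconstructs the original proof rather than giving an independent one, and you would have to import that theorem wholesale. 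The paper instead works with $\Bb_e=\Bb[1/t]^{\varphi=1}$ and never touches a formal model: it shows (Proposition \ref{BeSS}) that $H^\bullet_{\petc}(-_C,\Bb_e)$ satisfies the Schneider--Stuhler axioms, the crucial one being homotopy invariance for the open unit disk, proved by a Bhatt--Morrow--Scholze-style computation with the d\'ecalage functor $L\eta_t$ and Koszul complexes on the torus and the closed disk (Proposition \ref{pota}, Corollary \ref{smadonno}); the Schneider--Stuhler machine then gives $H^i_{\petc}(\Hh^d_C,\Bb_e)\cong \underline{\Hom}(\Sp_i(\Zz),\underline{B_e})(-i)$ directly, and $\Sp_i(\Qq_p)^*$ falls out of the fundamental exact sequence over the point. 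Your approach buys an explicit Frobenius structure at the cost of the heavy semistable comparison; the paper's buys independence from formal models and syntomic cohomology at the cost of the $L\eta_t$ computations. Finally, the ``decoupling'' you flag as the main obstacle is resolved in the paper not by inspecting connecting maps in isolation but by mapping the fundamental sequence to the row $0\to\Bb_{\dR}^+\to\Bb_{\dR}\to\Bb_{\dR}/\Bb_{\dR}^+\to 0$ and applying the snake lemma twice; and strictness is not purely automatic from the exact triangle---one must still exhibit $H^i_{\petc}(\Hh_C^d,\Qq_p(i))$ as a pullback of Fr\'echet spaces to land back in the classical topological category.
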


  \begin{rem}
   In \cite{CDN1} this result  is deduced from a general comparison theorem, between the geometric $p$-adic pro-étale cohomology and the Hyodo--Kato and de Rham cohomologies, for rigid-analytic Stein spaces over $K$ having a semistable weak formal model over $\cl O_K$ (e.g. $\Hh_K^d$); in turn, such comparison is obtained via the geometric syntomic cohomology of Fontaine--Messing. We recall that Colmez--Dospinescu--Nizioł also computed the rational and integral $p$-adic étale cohomology of $\Hh_C^d$ in \cite{CDN1} and \cite{CDNint}, respectively.
   
   Moreover, recently, Orlik gave an alternative proof of Theorem \ref{first}, \cite{Orlik},\footnote{However, in \textit{loc. cit.} the cohomology groups $H^i_{\pet}(\Hh_C^d, \Qq_p(i))$ are only determined as $G\times \mathscr{G}_K$-modules, and not as topological $G\times \mathscr{G}_K$-vector spaces over $\Qq_p$.} which is based on his strategy for describing global sections of certain $G$-equivariant vector bundles on $\Hh_K^d$, \cite{Orlikequi}.
  \end{rem}

  \begin{rem}
   The relation between Theorem \ref{first} and a hypothetical $p$-adic local Langlands correspondence is still not so transparent, and it will not be explored in this paper. 
  \end{rem}

  Before summarizing the strategy we use to prove Theorem \ref{first}, we first recall its $\ell$-adic version due to Schneider--Stuhler, for a prime number $\ell\neq p$, and explain why the proof of the latter fails in the $p$-adic setting.
  
  \begin{theorem}[Schneider--Stuhler, \cite{SS}] Let $\ell\neq p$ be a prime number.
  For all $i\ge 0$, there is an isomorphism of $G\times \mathscr{G}_K$-modules
   $$H_{\pet}^i(\Hh_C^d, \Qq_\ell(i))\cong \Sp_i(\Qq_\ell)^*.$$
  \end{theorem}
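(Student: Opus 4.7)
The plan is to follow Schneider-Stuhler and prove the statement via an admissible $G$-equivariant cover of $\Hh_K^d$ whose nerve reconstructs the Bruhat-Tits building $\mathscr{B}$ of $\mathrm{PGL}_{d+1}(K)$. Recall that $\Hh_K^d$ is the complement in $\Pp_K^d$ of the union of all $K$-rational hyperplanes; the combinatorics of this arrangement is controlled by $\mathscr{B}$, and Drinfeld's semistable formal model of $\Hh_K^d$ has dual intersection complex equal to $\mathscr{B}$. From this one extracts an admissible affinoid cover $\mathfrak{U}=\{U_\sigma\}$ indexed by (pointed) simplices $\sigma$ of $\mathscr{B}$, compatible with the $G$-action and with the simplicial structure in the sense that the intersections $U_{\sigma_0}\cap\cdots\cap U_{\sigma_p}$ are controlled by the simplex spanned by the $\sigma_i$.

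First I would set up the Čech-to-cohomology spectral sequence
$$E_1^{p,q}=\bigoplus_{\sigma\in\mathscr{B}_p} H^q_{\ett}(U_{\sigma,C},\Qq_\ell(i))\Longrightarrow H^{p+q}_{\ett}(\Hh_C^d,\Qq_\ell(i))$$
associated to the pullback of $\mathfrak{U}$ along the base change to $C$. The critical input is then the cohomological acyclicity of each $U_{\sigma,C}$: for $\ell\ne p$, one uses proper/smooth base change between the rigid generic fiber of Drinfeld's formal model and its special fiber (a strictly semistable scheme over the residue field of $\cl O_C$), together with the fact that the irreducible components of the special fiber and their mutual intersections are smooth rational varieties whose $\ell$-adic cohomology is that of a point (up to the Tate twist). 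One then obtains $H^q_{\ett}(U_{\sigma,C},\Qq_\ell(i))\cong\Qq_\ell(i)$ for $q=0$ and $0$ otherwise, so the spectral sequence collapses to the row $q=0$.

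Second, I would identify the surviving combinatorial complex with $\Sp_i(\Qq_\ell)^*$. The $q=0$ row is the augmented $\Qq_\ell$-valued simplicial cochain complex of $\mathscr{B}$ (twisted by $\Qq_\ell(i)$), endowed with its natural $G\times\mathscr{G}_K$-action. Via the Solomon-Tits theorem, or equivalently via the standard resolution of the trivial representation by parabolic inductions from standard maximal parabolic subgroups of $G$, the $i$-th cohomology of this complex is canonically the dual of the $i$-th generalized special representation $\Sp_i(\Qq_\ell)$; the Galois action sits entirely in the Tate twist, yielding the asserted isomorphism of $G\times\mathscr{G}_K$-modules.

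The main obstacle is the acyclicity step: the vanishing of $H^q_{\ett}(U_{\sigma,C},\Qq_\ell)$ in positive degrees is what reduces the $\ell$-adic computation to a purely combinatorial one on the building, and it is precisely this input that fails in the $p$-adic setting, where smooth affinoids carry rich pro-étale cohomology coming from differential forms. This failure is the source of the more delicate $p$-adic statement of Theorem \ref{first} and motivates the relative fundamental exact sequence approach pursued in the rest of the paper.
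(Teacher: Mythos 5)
Your proposed route --- \v{C}ech descent along a $G$-equivariant cover adapted to Drinfeld's semistable formal model, with nerve the Bruhat--Tits building $\mathscr{B}$ --- is not the argument of Schneider--Stuhler (nor of \S 3 of this paper, which redoes their computation axiomatically), and as written it has a fatal gap at the acyclicity step. The irreducible components of the special fiber of Drinfeld's model are the successive blow-ups of $\Pp^d_{\Ff_q}$ along all $\Ff_q$-rational linear subspaces, and the open strata are isomorphic to the finite Drinfeld space $\Pp^d_{\Ff_q}\setminus\bigcup_H H$ and its analogues; neither these nor the closed strata have the $\ell$-adic cohomology of a point (already for $d=1$ the components are projective lines and the open strata are $\Pp^1$ minus its $q+1$ rational points, whose $H^1_{\ett}$ has dimension $q$). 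So $H^q_{\ett}(U_{\sigma,C},\Qq_\ell)$ does not vanish for $q>0$ and your spectral sequence does not collapse to its bottom row. Worse, if it did collapse you would get the wrong answer: the nerve of such a cover is the affine building $\mathscr{B}$, which is contractible, so the bottom row would compute $\Qq_\ell$ concentrated in degree $0$, whereas the theorem asserts nonzero cohomology in every degree $1\le i\le d$. The Solomon--Tits theorem concerns the spherical building at infinity, not $\mathscr{B}$; in a correct semistable-model computation (the Rapoport--Zink weight spectral sequence, as carried out by Ito and Dat) it is precisely the nontrivial cohomology of the strata that produces the special representations, so your two errors are not independent --- they are two faces of the same missing input.

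By contrast, Schneider--Stuhler (and Theorem \ref{mainSS} here) work directly with the hyperplane arrangement: one writes $\Hh^d_K=\bigcup_n U_n$ with $U_n=\bigcap_{H\in\cl H_n}U_n(H)$ a finite intersection of complements of tubular neighbourhoods of hyperplanes, computes the relative cohomology $H^*(\Pp^d_K, U_n(H_0)\cup\cdots\cup U_n(H_j))$ using homotopy invariance with respect to the open unit disk (these unions fibre over smaller projective spaces with open polydisk fibres) together with the projective-space axiom, assembles the result in a Mayer--Vietoris spectral sequence for the pair $(\Pp^d_K,U_n)$, and passes to the limit over $n$ via a Mittag--Leffler argument. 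The combinatorial object that emerges is the compact \emph{topological} Tits building $\cl T^{(i)}_\bullet$ --- which, as the footnote to its definition warns, is not the Bruhat--Tits building --- and its reduced cohomology is identified with $\Sp_i(\Zz)$ in Proposition \ref{titsp}. If you want to salvage your approach you must replace the false acyclicity claim by the actual weight spectral sequence of the semistable model and identify its $E_2$-terms; that is a genuinely different and substantially longer argument.
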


  Schneider--Stuhler computed the cohomology groups of $\Hh_K^d$ for a general abelian sheaf cohomology theory, defined on the category of smooth rigid-analytic varieties over $K$, satisfying a number of axioms, the most restricting one being the homotopy invariance with respect to the $1$-dimensional open unit disk $\accentset{\circ}{\Dd}_K$. These axioms are satisfied by the geometric pro-étale cohomology with coefficients in $\Qq_\ell$, but the homotopy invariance with respect to $\accentset{\circ}{\Dd}_K$ fails with coefficients in $\Qq_p$: in fact, $H^1_{\pet}(\accentset{\circ}{\Dd}_C, \Qq_p)$ is an infinite-dimensional $C$-vector space (see \cite[Theorem 3]{CNaff} and \cite[\S 3]{LeBras1}). Therefore, a different strategy is needed.
  
 \subsection{Overview of the strategy}
  To prove Theorem \ref{first} we consider the sheaf-theoretic version of the fundamental exact sequence of $p$-adic Hodge theory on the pro-étale site $\Hh^d_{C, \pet}$ 
 \begin{equation}\label{fund}
   0\to\mathbb{Q}_p\to \Bb_e\to \Bb_{\dR}/\Bb_{\dR}^+\to 0.
 \end{equation}
 Here, $\Bb_{\dR}^+$ is the positive de Rham period sheaf, $\Bb_{\dR}$ denotes the de Rham period sheaf, and $\Bb_e$ is defined as the Frobenius invariants $\Bb[1/t]^{\varphi=1}$ where $\Bb$ is the pro-étale sheaf-theoretic version of the ring $B$ of the analytic functions on the ``punctured open unit disk'' $\Spa(A_{\inf}, A_{\inf})\setminus V(p[p^\flat])$, introduced by Fargues--Fontaine in their work on \textit{the curve}, and $t$ is the  Fontaine’s $2\pi i$ (see \S\ref{petsheaves}).
 
 Therefore, in order to determine the $p$-adic pro-étale cohomology of $\Hh_C^d$, we reduce to study the pro-étale cohomology of $\Hh_C^d$ with coefficients in $\Bb_{\dR}^+$, $\Bb_{\dR}$ and $\Bb_e$. 
 
 Using Scholze's Poincar\'{e} lemma for $\Bb_{\dR}^+$ (Proposition \ref{poincare}), we determine, in terms of differential forms, the geometric pro-étale cohomology of the period sheaf $\Bb_{\dR}^+$ on any connected, paracompact, smooth rigid-analytic variety over $K$ (such as $\Hh_K^d$), as explained in more details in \S \ref{relev}.
 
 On the other hand, for the geometric pro-étale cohomology of the period sheaves $\Bb_{\dR}$ and $\Bb_e$, we show that they satisfy a slight variant of the above-mentioned axioms of Schneider--Stuhler, including the homotopy invariance with respect to the $1$-dimensional open unit disk $\accentset{\circ}{\Dd}_K$ (see Proposition \ref{BdRSS}, and Proposition \ref{BeSS}, respectively). The proof of the latter axiom for the geometric pro-étale cohomology of $\Bb_e$ is essentially due to Le Bras (see Remark \ref{lbrem}), and it is largely inspired by the strategy used by Bhatt--Morrow--Scholze to relate their $A_{\inf}$-cohomology theory with the $q$-de Rham
 cohomology, \cite{BMS1}.
 
 This will be enough to show Theorem \ref{first}. We refer the reader to \cite{Bosco2} for a more in-depth study of the geometric pro-étale cohomology of $\Bb_e$. 
 
 \begin{rem}\label{lbrem}
  The fundamental exact sequence (\ref{fund}) has already been used by Le Bras to compute the geometric $p$-adic pro-étale cohomology of the rigid-analytic affine space, and the open polydisk of any dimension, \cite[\S 3]{LeBras1}. 
 \end{rem}

 \subsection{Relevance of the condensed and solid formalisms}\label{relev}
 In the search for a geometric incarnation of the hypothetical $p$-adic Langlands correspondence for $G=\GL_{d+1}(K)$, in Theorem \ref{first} it is crucial to describe the cohomology groups $H^i_{\pet}(\Hh_C^d, \Qq_p(i))$ as \textit{topological} $G\times \mathscr{G}_K$-vector spaces over $\Qq_p$, and not merely as $G\times \mathscr{G}_K$-modules. 
 We note also that, from a purely geometric point of view, for many cohomology theories appearing in $p$-adic Hodge theory (such as the $p$-adic (pro-)étale, de Rham, Hyodo--Kato, etc.), the cohomology groups of non-proper rigid-analytic varieties (e.g. $\Hh_K^d$) are usually huge; therefore, it is important to exploit the topological structure that they may carry in order to study them. But, in doing so, one quickly runs into topological issues, mainly due to the fact that the category of topological abelian groups is \textit{not} abelian.
 
 In our case, one encounters an example of such a topological issue already at the start of the strategy we have outlined. In fact, we consider the long exact cohomology sequence of $\Qq_p$-vector spaces associated to the fundamental exact sequence (\ref{first}) on $\Hh^d_{C, \pet}$
 \begin{equation}\label{less}
  \cdots\to H^i_{\pet}(\Hh_C^d, \Qq_p)\to H^i_{\pet}(\Hh_C^d, \Bb_e)\to H^i_{\pet}(\Hh_C^d, \Bb_{\dR}/\Bb_{\dR}^+)\to \cdots
 \end{equation}
 and we want to endow these cohomology groups with a natural topology in such a way that all the maps in (\ref{less}) are continuous. One may try to work in the category of locally convex $\Qq_p$-vector spaces, and put a topology à la \v{C}ech on such cohomology groups, using the fact that the global sections of a pro-étale period sheaf on an affinoid perfectoid space over $\Spa(C, \cl O_C)$ carry a natural topology (cf. \S \ref{petsheaves}), but \textit{a priori} it is not clear whether, in this way, the boundary maps of the long exact sequence (\ref{less}) are continuous.\footnote{A classical solution to this particular issue is to work instead in an \textit{abelian envelope} of the category of locally convex $\Qq_p$-vector spaces. Cf. \cite[\S 2.1.1]{CDN1}.}
 
 As we explain below, since the pro-étale cohomology groups appearing in (\ref{less}) have a natural structure of condensed $\Qq_p$-vector spaces, by their very definition, we found it convenient and fruitful to work in the condensed mathematics framework, recently introduced by Clausen--Scholze, \cite{Scholzecond}, which is precisely designed to overcome the kind of topological issues we have described.
 
 We denote by $\CondAb$ the category of condensed abelian groups,\footnote{See \S \ref{conve} for the set-theoretic conventions we adopt.} which we identify with the category of pro-étale sheaves of abelian groups on the geometric point $\Spa(C, \cl O_C)=*$ (see Remark \ref{geopoint}). Recall that $\CondAb$ is a nice abelian category, containing most topological abelian groups of interest, \cite[Proposition 1.7, Theorem 2.2]{Scholzecond}. Then, we give the following definition.
 
 \begin{df}\label{Hcintro}(Definition \ref{Hc})
  Let $f: X\to \Spa(C, \cl O_C)$ be an analytic adic space, and let $\cl F$ be a sheaf of abelian groups on $X_{\pet}$.  We define the complex of $D(\CondAb)$
   $$R\Gamma_{\petc}(X, \cl F):=Rf_{\pet *} \cl F$$
   with $i$-th cohomology, for $i\ge 0$, the \textit{condensed pro-étale cohomology group}
   $H^i_{\petc}(X, \cl F)=R^if_{\pet *}\cl F.$
  \end{df}
  
 \begin{rem}
   Note that the underlying abelian group $H^i_{\petc}(X, \cl F)(*)$ is the usual pro-étale cohomology group $H^i_{\pet}(X, \cl F)$.
 \end{rem}
 
 Coming back to (\ref{less}), if we denote by $f:\Hh_C^d\to \Spa(C, \cl O_C)$ the structure morphism, now we can simply apply the derived functor $Rf_{\pet *}$ to (\ref{fund}), and consider the associated long exact sequence in cohomology, which will be an exact sequence of condensed $\Qq_p$-vector spaces. \medskip
 
 Let us now explain in more detail how we compare the geometric pro-étale cohomology with coefficients in $\Bb_{\dR}$ and $\Bb_{\dR}^+$, with the de Rham cohomology, and the role that the solid formalism plays in this. In the following, we denote by $T\mapsto \underline{T}$ the functor from topological groups/rings/$K$-vector spaces/etc. to condensed groups/rings/$K$-vector spaces/etc.\footnote{Again, see \S \ref{conve} for the set-theoretic conventions.} Moreover, we denote by $\Vect_K^{\cond}$ the category of condensed $K$-vector spaces, and by $\Vect_K^{\ssolid}$ the symmetric monoidal subcategory of solid $K$-vector spaces, endowed with the solid tensor product $\solid_K$ (see Appendix \ref{condfun}). \medskip
 
 We will prove the following theorem, which extends results of Scholze \cite[Theorem 7.11]{Scholze}, and Le Bras \cite[Proposition 3.17]{LeBras1}. We refer the reader to Theorem \ref{BDR} for a version with coefficients of the statement below.

 \begin{theorem}[cf. Theorem \ref{BDR}]\label{equi}
    Let $X$ be a smooth rigid-analytic variety\footnote{All rigid-analytic varieties will be assumed to be quasi-separated.} defined over $K$.
    
    We define\footnote{See Definition \ref{FsolidA} for the relevant notation.} the \textit{de Rham cohomology of the base change of $X$ to $B_{\dR}$} as the complex of $D(\Vect_K^{\cond})$
   $$R\Gamma_{\dRc}(X_{B_{\dR}}):=R\Gamma(X, \underline{\Omega_X^\bullet}\solid_K B_{\dR}).$$
   \begin{enumerate}[(i)]
   \item\label{equi:1}
   We have a $\underline{\mathscr{G}_K}$-equivariant, compatible with filtrations, natural isomorphism in $D(\Vect_K^{\ssolid})$
   $$ R\Gamma_{\petc}(X_C, \Bb_{\dR})\simeq R\Gamma_{\dRc}(X_{B_{\dR}}).$$
   \item\label{equi:2}  Assume that $X$ is connected and paracompact.
   Then, for each $r\in \Zz$, we have a $\underline{\mathscr{G}_K}$-equivariant isomorphism in $D(\Vect_K^{\ssolid})$
   $$R\Gamma_{\petc}(X_C, \Fil^r\Bb_{\dR})\simeq\Fil^r(R\Gamma_{\dRc}(X)\dsolid_K B_{\dR}).$$ 
   Here, $R\Gamma_{\dRc}(X)$ denotes the de Rham cohomology complex in $D(\Vect_K^{\cond})$ (Definition \ref{condDR}). 
   \end{enumerate}
  \end{theorem}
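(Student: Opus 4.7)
The strategy is to deduce both parts from Scholze's Poincar\'e lemma for $\Bb_{\dR}^+$ (Proposition \ref{poincare}): on $X_{C,\pet}$, this provides a filtered quasi-isomorphism between $\Bb_{\dR}^+$ and a de Rham-type complex with coefficients in the structural positive de Rham period sheaf $\cl{OB}_{\dR}^+$. Inverting $t$ yields the analogous (unfiltered) resolution of $\Bb_{\dR}$.

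For part (\ref{equi:1}), I would apply $Rf_{\pet *}$ termwise to the resolution $\Bb_{\dR} \simeq \cl{OB}_{\dR} \otimes_{\cl O_X} \Omega_X^\bullet$ and identify the outcome. The key local input is that, for a sufficiently small affinoid $U \subseteq X$ admitting an \'etale map to a torus, the sheaf $\cl{OB}_{\dR}$ is a completed polynomial ring over $\Bb_{\dR}$ in formal coordinates coming from the logarithms of the toric variables, and its higher pro-\'etale cohomology on $U$ vanishes. Taking sections of the induced resolution then produces a bicomplex whose rows are Koszul complexes in the formal coordinates, and whose total complex reduces, as a condensed $K$-vector space, to $\underline{\Omega_U^\bullet(U)} \solid_K \underline{B_{\dR}}$. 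Sheafifying over $X$ and assembling yields $R\Gamma(X, \underline{\Omega_X^\bullet} \solid_K \underline{B_{\dR}}) = R\Gamma_{\dRc}(X_{B_{\dR}})$, with $\underline{\mathscr{G}_K}$-equivariance and filtration compatibility inherited from the construction.

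For part (\ref{equi:2}), I would run the same argument with the filtered Poincar\'e lemma, whose convolution filtration is $\Fil^r(\cl{OB}_{\dR} \otimes \Omega_X^i) = \Fil^{r-i}\cl{OB}_{\dR} \otimes \Omega_X^i$. Termwise, the calculation now picks up $\Fil^{r-i}\underline{B_{\dR}}$ in the $i$-th column of the bicomplex. Here, the paracompactness of $X$ is used to reduce to an affinoid exhaustion and to guarantee that the resulting \v{C}ech computation represents $R\Gamma_{\dRc}(X)$ by a well-behaved (bounded-below) complex of nuclear solid Fr\'echet $K$-vector spaces, while connectedness pins down the expected $H^0$. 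The good behavior of $\dsolid_K$ on nuclear solid $K$-vector spaces then allows one to compute the derived solid tensor product termwise, and unwinding the convolution filtration yields $\Fil^r(R\Gamma_{\dRc}(X) \dsolid_K \underline{B_{\dR}})$.

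The main obstacle is in part (\ref{equi:2}): namely, justifying that the derived solid tensor product of $R\Gamma_{\dRc}(X)$ with $\underline{B_{\dR}}$ coincides with the termwise construction coming from the Poincar\'e resolution, \emph{and} that this identification is compatible with the convolution filtration on both sides. This requires appropriate solid flatness of the filtration pieces of $\underline{B_{\dR}}$ over $\underline{K}$ together with nuclearity of the spaces of differential forms on paracompact rigid-analytic varieties — properties that are packaged cleanly by the Clausen-Scholze solid formalism, and which would be unwieldy to formulate, let alone prove, in purely topological language.
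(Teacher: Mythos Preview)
Your proposal is correct and follows essentially the same route as the paper: part~(\ref{equi:1}) via Scholze's Poincar\'e lemma together with a local identification of the pushforward of $\cl{O}\Bb_{\dR}$ (the paper carries this out on graded pieces via Lemma~\ref{grcond} and Corollary~\ref{frompoincare} rather than through your direct ``Koszul'' picture, but the content is the same), and part~(\ref{equi:2}) by reducing to the solid base change Theorem~\ref{Wbasechange}, whose proof packages precisely the nuclearity and flatness inputs you name. One small correction: connectedness is not used to pin down $H^0$, but rather, combined with paracompactness, to guarantee that $X$ admits a \emph{countable} increasing quasi-compact cover (Remark~\ref{paracompactvscountable}\listref{paracompactvscountable:1}), which is what makes the $R\varprojlim$ step in the solid base change argument go through.
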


 \begin{rem}
  The \textit{paracompact} assumption (Definition \ref{parac}) in Theorem \ref{equi} is not very restrictive, in fact most of the rigid-analytic varieties over $K$ that arise in nature are paracompact. Examples include any separated rigid-analytic variety over $K$ of dimension 1, the rigid analytification of a separated scheme of finite type over $K$, a Stein space over $K$ (e.g. $\Hh_K^d$), an admissible open of a quasi-compact and quasi-separated rigid-analytic variety over $K$ (see Examples \ref{list}).
 \end{rem}
 
 Let us briefly describe how we prove Theorem \ref{equi}: we show part (\ref{equi:1}) using Scholze's Poincaré lemma (Proposition \ref{poincare}); then, part (\ref{equi:2}) follows from part (\ref{equi:1}) and the following base change result, recalling that $\Fil^jB_{\dR}=t^jB_{\dR}^+$ for $j\in \Zz$ and $B_{\dR}^+$ is a $K$-Fréchet algebra.

 \begin{theorem}[Theorem \ref{Wbasechange}]\label{basechangeintro}
  Let $X$ be a connected, paracompact, rigid-analytic variety defined over $K$. Let $\cl F^\bullet$ be a bounded below complex of sheaves of topological $K$-vector spaces whose terms are coherent $\cl O_X$-modules. Let $A$ be a $K$-Fréchet algebra, regarded as a condensed $K$-algebra. Then, we have a natural isomorphism in $D(\Vect_K^{\ssolid})$
  \begin{equation}\label{basec}
    R\Gamma(X, \underline{\cl F}^\bullet)\dsolid_K A\overset{\sim}{\to} R\Gamma(X, \underline{\cl F}^\bullet \solid_K A).
  \end{equation}
 \end{theorem}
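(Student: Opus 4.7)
The plan is to reduce (\ref{basec}) to a Čech computation on a good countable affinoid covering of $X$, and then to check that the derived solid tensor product with $\underline{A}$ commutes with the resulting countable products of $K$-Fréchet spaces.

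By the stupid truncation spectral sequence, I first reduce to the case where $\cl F^\bullet=\cl F$ is a single coherent $\cl O_X$-module concentrated in degree $0$. Exploiting paracompactness of $X$ (Definition \ref{parac}), I fix a countable admissible covering $\fr U=\{U_n\}_{n\in\Nn}$ of $X$ by affinoids with all finite intersections $U_I:=\bigcap_{i\in I}U_i$ again affinoid. By Kiehl's theorem, and because forgetting from condensed abelian groups to abelian sheaves is exact and conservative, $R\Gamma(X,\underline{\cl F})$ is computed by the Čech complex
$$\check C^\bullet(\fr U,\underline{\cl F}):\quad \prod_{|I|=1}\underline{\cl F(U_I)}\longrightarrow \prod_{|I|=2}\underline{\cl F(U_I)}\longrightarrow \cdots,$$
whose terms are countable products of condensed $K$-Fréchet spaces.

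To identify the right-hand side of (\ref{basec}), I would use two facts. First, for any $K$-Fréchet space $V$ one has a canonical identification $\underline V\solid_K \underline A\cong \underline{V\,\widehat\otimes_K A}$, where $\widehat\otimes_K$ denotes the completed projective tensor product, so that on each affinoid $U_I$ the sections of $\underline{\cl F}\solid_K\underline A$ again form a condensed Fréchet space. Second, combining Kiehl's theorem with the exactness of $-\,\widehat\otimes_K A$ on strict exact sequences of $K$-Fréchet spaces yields the vanishing of the higher cohomology of the sheaf $\underline{\cl F}\solid_K\underline A$ on affinoids, so that the right-hand side of (\ref{basec}) is equivalently computed by $\check C^\bullet(\fr U,\underline{\cl F})\solid_K\underline A$.

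It remains, and this is the main obstacle, to show that the natural comparison map
$$\check C^\bullet(\fr U,\underline{\cl F})\dsolid_K \underline A\;\longrightarrow\;\check C^\bullet(\fr U,\underline{\cl F})\solid_K \underline A$$
is a quasi-isomorphism; equivalently, that the derived solid tensor product with $\underline A$ commutes with the countable products $\prod_I\underline{\cl F(U_I)}$, and that on such countable products of condensed Fréchet spaces $-\solid_K\underline A$ already agrees with $-\dsolid_K\underline A$. For this I would appeal to the structure theory of solid $K$-vector spaces developed in Appendix \ref{condfun}: a condensed $K$-Banach space admits a short resolution by objects of the form $\prod_S K$ with $S$ extremally disconnected, and these are $\dsolid$-acyclic. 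Fubini identities $\prod_I \prod_S K\cong \prod_{I\times S} K$ then propagate this acyclicity to the Čech terms, while countable inverse limits of Banach spaces (which is what a Fréchet space is) are handled by Mittag--Leffler–type arguments, exploiting again that $A$ itself is Fréchet. Assembling these ingredients produces the required quasi-isomorphism in $D(\Vect_K^{\ssolid})$.
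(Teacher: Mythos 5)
Your overall architecture matches the paper's: reduce to the affinoid case, where flatness of $\underline A$ (Proposition \ref{corbello} plus Corollary \ref{solidqs}) and the acyclicity of $\underline{\cl F}\solid_K A$ on affinoids (Lemma \ref{condensedtate}) give the statement, and then pass to $X$ by commuting $-\dsolid_K A$ with a countable limit. The paper does the gluing slightly differently: instead of a single global \v{C}ech complex over a locally finite covering, it exhausts $X$ by quasi-compact opens $U_n\subseteq U_{n+1}$ and writes $R\Gamma(X,-)=R\varprojlim_n R\Gamma(U_n,-)$. This sidesteps a small problem with your setup: for a merely quasi-separated $X$, finite intersections of affinoids are only quasi-compact, not affinoid, so a covering with ``all finite intersections affinoid'' need not exist; one should either assume separatedness or replace the \v{C}ech terms by $R\Gamma(U_I,-)$ of quasi-compact opens, which is what the exhaustion argument effectively does.

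The genuine gap is in your treatment of the ``main obstacle,'' the commutation of $-\dsolid_K\underline A$ with the countable products $\prod_I\underline{\cl F(U_I)}$. The ingredients you invoke do not suffice. First, a condensed $K$-Banach space is not of the form $\prod_S K$, nor resolved by such objects; by Lemma \ref{crucial} it is $\underline{\Hom}(\Zz[S],K)\cong(\prod_I\cl O_K)[1/\varpi]$, which is compact projective and flat (Proposition \ref{proj-flat}) — so no resolution is needed for flatness, but flatness is not the issue. Second, the Fubini identity $(\prod_I\cl O_K)[1/\varpi]\dsolid_K(\prod_J\cl O_K)[1/\varpi]=(\prod_{I\times J}\cl O_K)[1/\varpi]$ does not propagate to countable products of Banach spaces, because $\prod_n(\prod_{I_n}\cl O_K)[1/\varpi]\neq(\prod_{\coprod_n I_n}\cl O_K)[1/\varpi]$ (denominators need not be uniformly bounded across $n$); a countable product of Banach spaces is only Fréchet. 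Third, Mittag--Leffler arguments control $R^1\varprojlim$, not the commutation of a tensor product (which preserves colimits, not limits) with a product. The statement you actually need is Proposition \ref{nuccomm}: for a countable family $\{V_n\}$ of \emph{nuclear} $K$-vector spaces and $W$ Fréchet, $(\prod_n V_n)\solid_K W\to\prod_n(V_n\solid_K W)$ is an isomorphism; together with Theorem \ref{nuclearbanach} (Banach spaces are nuclear, and nuclears are stable under countable products and finite limits) and its derived form Corollary \ref{commlim}, this is exactly how the paper justifies the passage to the limit. Nuclearity is a substantive input here — it is what lets one move the product through $\underline{\Hom}(\Zz[S],K)\solid_K(-)$ — and your sketch does not supply a substitute for it.
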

 
 The proof of Theorem \ref{basechangeintro} is based on results of Clausen--Scholze on the category of \textit{nuclear $K$-vector spaces} (see \S \ref{nucnuc}), which allow us to reduce the statement to the case when $X$ is affinoid.
 
 \begin{rem}
  Under the hypotheses of Theorem \ref{basechangeintro},  by the flatness of $K$-Fréchet spaces with respect to the solid tensor product $\solid_K$ (a result due to Clausen--Scholze), taking cohomology in (\ref{basec}) we have the following isomorphism in $\Vect_K^{\ssolid}$ (Corollary \ref{fundcor})
  $$H^i(X, \underline{\cl F}^\bullet)\solid_K A\cong H^i(X, \underline{\cl F}^\bullet \solid_K A)$$
   for all $i\in \Zz$. We note that such a statement is characteristic of the condensed mathematics realm: in fact, its naive analogue in the category of locally convex $K$-vector spaces, with the completed projective tensor product replacing the solid tensor product, is trivially false, due to the fact that the cohomology group $H^i(X, {\cl F}^\bullet)$ can be a non-Hausdorff locally convex $K$-vector space (see Remark \ref{vs}). This is just an instance of the fact that the solid formalism works very well also for ``non-Hausdorff objects'' (more precisely, for non-quasi-separated condensed sets).
 \end{rem}

 As a consequence of Theorem \ref{equi}, in some special cases we can give a particularly nice description of the geometric pro-étale cohomology with coefficients in $\Bb_{\dR}/\Bb_{\dR}^+$, appearing in the fundamental exact sequence (\ref{fund}), including in cases where the de Rham cohomology groups are otherwise pathological as topological vector spaces. The following result explains in particular how, for a smooth rigid-analytic variety $X$ over $K$, the (non-)degeneration of the Hodge-de Rham spectral sequence is reflected in its geometric $p$-adic pro-étale cohomology.
 
  \begin{cor}[Corollary \ref{B/B+}]\label{nice}
 Let $X$ be a smooth rigid-analytic variety over $K$. Let $i\ge 0$.
  \begin{enumerate}[(i)]
  \item \label{nice:1} If $X$ is proper,  we have a $\underline{\mathscr{G}_K}$-equivariant isomorphism in $\Vect_K^{\ssolid}$ 
   $$H^i_{\petc}(X_C, \Bb_{\dR}/\Bb_{\dR}^+)\cong(H^i_{\dRc}(X)\otimes_K B_{\dR})/\Fil^0.$$
  \item \label{nice:2} If $X$ is an affinoid space,  we have the following $\underline{\mathscr{G}_K}$-equivariant exact sequence in $\Vect_K^{\ssolid}$
    $$0\to H^i_{\dRc}(X)\solid_K B_{\dR}/t^{-i}B_{\dR}^+ \to H^i_{\petc}(X_C, \Bb_{\dR}/\Bb_{\dR}^+)\to \underline{\Omega^i(X)/\ker d}\solid_K C(-i-1)\to 0.$$
  \end{enumerate} 
 \end{cor}
 
 \begin{rem} Let us shortly comment on the statement and the proof of Corollary \ref{nice}.
\begin{enumerate}[(i)]
 \item If $X$ is proper, the proof uses crucially the degeneration of the Hodge-de Rham spectral sequence, \cite[Corollary 1.8]{Scholze}. In this case, the de Rham cohomology groups are finite-dimensional $K$-vector spaces, therefore it not a surprise that $H^i_{\dRc}(X)=\underline{H_{\dR}^i(X)}$ (Lemma \ref{propclass}),  and Corollary \ref{nice}\listref{nice:1} can be stated in classical topological terms.
 \item If $X$ is an affinoid space, the proof relies on  Tate's acyclicity theorem (Lemma \ref{condensedtate}\listref{condensedtate:1}), using which one can show that there is a $\underline{\mathscr{G}_K}$-equivariant exact sequence in $\Vect_K^{\ssolid}$
  \begin{equation}\label{introB_dR^+}
    0\to H^i_{\dRc}(X)\solid_K t^{-i+1}B_{\dR}^+ \to H^i_{\petc}(X_C, \Bb_{\dR}^+)\to \underline{\Omega^i(X)}^{d=0}\solid_K C(-i)\to 0.
  \end{equation}
 Note that $H^i_{\dRc}(X)=\underline{\Omega^i(X)}^{d=0}/\underline{d\Omega^{i-1}(X)}$, but, in general, $H^i_{\dRc}(X)\neq \underline{\Omega^i(X)^{d=0}/d\Omega^{i-1}(X)}$ (since the latter displayed quotient can be non-Hausdorff, see Remark \ref{vsCN}). 
 \item If $X$ is a Stein space, using Kiehl's acyclicity theorem (Lemma \ref{A&B}), one can prove that the exact sequence (\ref{introB_dR^+}) also holds for such $X$. In this case, we have  $H^i_{\dRc}(X)=\underline{\Omega^i(X)^{d=0}/d\Omega^{i-1}(X)}$ (Lemma \ref{drstein}), and, \textit{a posteriori}, the exact sequence (\ref{introB_dR^+}) can be restated in the category of topological $K$-vector spaces with the completed projective tensor product replacing the solid tensor product (see Remark \ref{affstein}). 
 On the contrary, in the affinoid case, it seems hard to get such a clean statement as the one of Corollary \ref{nice}\listref{nice:2} in the usual topological setting. This kind of issue is classically avoided replacing affinoid spaces with dagger affinoid spaces (cf. \cite{CDN1}, \cite{CN}); however, the point we want to make here is that, in our setting, there is no need for this.
\end{enumerate}
 \end{rem}

 Let us also mention that Heuer has recently observed in \cite[Remark 5.9]{Heuer} that, for $X$ a smooth Stein space over $K$, the exact sequence (\ref{introB_dR^+}) might be used to describe the ``exotic line bundles''  appearing when passing from the analytic site to the pro-étale site of $X_C$. \medskip
 
 As a final note, we want to add that, since at the time this article was written the work of Clausen--Scholze was fairly recent, we had to put a particular accent on comparing topological spaces to condensed sets. However, it should be noted that the main results and proofs of this paper could be entirely formulated in the condensed language.

 \subsection{Leitfaden of the paper}
 
 We have organized the paper as follows. In \S \ref{condcohgroups}, we discuss several ways of endowing an abelian sheaf cohomology group with a structure of condensed abelian group, and we explain the relations between them. In \S \ref{SSmodif}, we redefine in the condensed setting, and slightly modify (in order to include the geometric pro-étale cohomology of $\Bb_{\dR}$ and $\Bb_e$), the cohomology theories for which Schneider--Stuhler computed the cohomology groups of the Drinfeld upper half-spaces in \cite{SS}; then, we outline the computation of \textit{loc. cit.}  highlighting the main points where, due to our modifications, an additional argument is needed.
 In \S \ref{petsheaves}, after having recalled the definitions and the basic results we use on the pro-\'etale period sheaves, we study, using the solid formalism, a Cartan--Leray spectral sequence for the cohomology of such pro-étale sheaves, and we give a proof of the relative fundamental exact sequences of $p$-adic Hodge theory, via the relative Fargues--Fontaine curves. In \S \ref{coherent}, we prove Theorem \ref{basechangeintro}; the latter will be then used in \S \ref{sectionBdr} to show Theorem \ref{equi}. In \S \ref{sectionBdr}, we also prove that the geometric pro-étale cohomology of $\Bb_{\dR}$ satisfies the axioms of Schneider--Stuhler (\ref{axioms}). In \S \ref{bee}, we show that the geometric pro-étale cohomology of $\Bb_e$ satisfies the axioms of Schneider--Stuhler (\ref{axioms}).
 Finally, in \S \ref{appli}, we gather the results of the previous sections and prove Theorem \ref{first}.
 
 We end with two appendices. Appendix \ref{condfun} is devoted to the condensed functional analysis over non-archimedean local fields, developed by Clausen--Scholze. In appendix \ref{ccg}, we recall the definition of condensed group cohomology, and, using the solid formalism,  we relate it to Koszul complexes in some cases of particular interest to us.

 \subsection{Notation and conventions}\label{conve}\footnote{This notation, and these conventions, are adopted throughout the paper, except for Appendix \ref{condfun} and Appendix \ref{ccg}. See \S \ref{ape}.}
  Fix a prime number $p$. Unless otherwise specified, we will denote by $(K, |\cdot|)$ a complete discretely valued non-archimedean extension of $\Qq_p$, with perfect residue field $k$, ring of integers $\cl O_K$, and fixed uniformizer $\varpi\in \cl O_K$.
  
  We fix an algebraic closure $\overline K$ of $K$. We will denote $C:=\widehat{\overline K}$ the completion of $\overline K$, $\cl O_C$ its ring of integers, and $\mathscr{G}_K:=\Gal(\overline K/K)$ the absolute Galois group of $K$. \medskip
  
  Throughout the paper, all Huber rings and pairs will be assumed to be complete. \medskip
  
  To avoid set-theoretic issues, we fix an uncountable cardinal $\kappa$ as in \cite[Lemma 4.1]{Scholze3}: $\kappa$ is a strong limit cardinal; the cofinality of $\kappa$ is uncountable;  for all cardinals $\lambda<\kappa$, there is a strong limit cardinal $\kappa_{\lambda}<\kappa$ such that the cofinality of $\kappa_{\lambda}$ is greater than $\lambda$.\footnote{There exists an arbitrary large such cardinal $\kappa$.}
  \medskip

  We say that an analytic adic space $X$ is \textit{$\kappa$-small} if the cardinality of the underlying topological space $|X|$ is less than $\kappa$, and for all open affinoid subspaces $\Spa(R, R^+)\subset X$, the ring $R$ has cardinality less than $\kappa$.  In this paper, all the analytic adic spaces will be assumed to be $\kappa$-small.
  \medskip

  We say that a profinite set is \textit{$\kappa$-small} if it has cardinality less than $\kappa$. We denote by $*_{\kappa{\text-}\pet}$ the site of $\kappa$-small profinite sets, with coverings given by finite families of jointly surjective maps. Recall that the category of $\kappa$-condensed sets/groups/rings/etc. is defined as the category of sheaves on $*_{\kappa{\text -}\pet}$ with values in  sets/groups/rings/etc., \cite[Definition 2.1]{Scholzecond}, and it is equivalent to the category of contravariant functors from $\kappa$-small extremally disconnected sets to sets/groups/rings/etc. taking finite disjoint unions to finite products, \cite[Proposition 2.7]{Scholzecond}.  \medskip
  
  Unless explicitly stated otherwise, all condensed sets will be $\kappa$-condensed sets (and often the prefix ``$\kappa$'' is tacit). We will denote by $\CondSet$ the category of $\kappa$-condensed sets, and by $\CondAb$ the category of $\kappa$-condensed abelian groups. We denote by $\Solid\subset \CondAb$ the full subcategory of $\kappa$-solid abelian groups (see Theorem \ref{mainsolid}, Notation \ref{solidnot}, and Remark \ref{solidcut}). \medskip
  
   All condensed rings will be $\kappa$-condensed commutative unital rings. Given a ($\kappa$-)condensed ring $A$, we denote by $\Mod_A^{\cond}$ the category of $A$-modules in $\CondAb$. We write $\underline{\Hom}_A(-, -)$ for the internal Hom in the category $\Mod_A^{\cond}$ (and in the case $A=\Zz$, we often omit the subscript $\Zz$). \medskip
   
   Given  a condensed group $G$, and a $G$-module $M$ in $\CondAb$, the \textit{condensed group cohomology of $G$ with coefficients in $M$} will be denoted by  
   $$R\Gamma_{\underline{\cond}}(G, M):=R\underline{\Hom}_{\Zz[G]}(\Zz, M)\in D(\CondAb)$$
  where $\Zz$ is endowed with the trivial $G$-action (see Appendix \ref{ccg}). \medskip
  
  We denote by $T\mapsto \underline{T}$ the functor from the category of topological spaces/groups/rings/etc. to the category of $\kappa$-condensed sets/groups/rings/etc., where $\underline{T}$ is defined via sending a $\kappa$-small profinite set $S$ to the set/group/ring/etc. of the continuous functions $\mathscr{C}^0(S, T)$.

  \medskip

  \begin{Acknowledgments}
    I am indebted to Arthur-César Le Bras for explaining to me the relevance of the relative fundamental exact sequence of $p$-adic Hodge theory in the study of the  $p$-adic pro-étale cohomology of rigid-analytic varieties, and for numerous suggestions and comments. I heartily thank Wiesława Nizioł and Pierre Colmez for encouraging me to write this paper. I also thank Wiesława Nizioł for suggesting to generalize some previously obtained results, and for her helpful comments. I am very grateful to Dustin Clausen for answering with enthusiasm my questions on condensed mathematics, and for explaining to me much of the content of Appendix \ref{condfun}. I thank Johannes Anschütz, Sebastian Bartling, and Akhil Mathew for very helpful discussions. I also thank Johannes Anschütz, K\k{e}stutis Česnavičius, Haoyang Guo, and David Hansen for their precise comments on an earlier version of this manuscript.
   
   Parts of this manuscript were written while visiting the Mathematical Research Institute of Oberwolfach, which I thank for the hospitality.
   
  \end{Acknowledgments}
 
  \clearpage
  
  \section{\textbf{Condensed cohomology groups}}\label{condcohgroups}
  \sectionmark{}
  
  In this section, we endow the cohomology groups of a pro-étale abelian sheaf on an analytic adic space defined over $\Zz_p$ with a structure of condensed abelian group. Then, we discuss several other situations in which a cohomology group has a natural condensed structure. \medskip
  
  \subsection{The pro-étale topology}
  
  \begin{notation}
  Let $\kappa$ be a cut-off cardinal as in \S \ref{conve}, and let $\Perf_{\kappa}$ denote the category of $\kappa$-small perfectoid spaces of characteristic $p$. In the following, all diamonds are assumed to live as pro-étale sheaves on $\Perf_{\kappa}$, \cite[Definition 11.1]{Scholze3}. 

  Given a diamond $Y$, we denote by $Y_{\ett}$ its  étale site, and by $Y_{\qpet}$ its (bounded by $\kappa$) quasi-pro-étale site, \cite[Definition 14.1]{Scholze3}. 
  \end{notation}

  We begin by recollecting some generalities on the pro-étale topology that will be used throughout the paper. The following class of perfectoid spaces will serve as a convenient basis for the quasi-pro-étale site.

  \begin{df}[{\cite[Definition 1.1]{MW}}]\label{wcon}
  A \textit{w-contractible space} $Y$ is a w-strictly local perfectoid space (\cite[Definition 7.17]{Scholze3}) such that the set of its connected components $\pi_0(Y)$ is an extremally disconnected profinite set.
  \end{df}

  We recall that a $w$-contractible space is in particular a strictly totally disconnected perfectoid space, \cite[Definition 7.15, Proposition 7.16]{Scholze3}. The key result about w-contractible spaces is the following.
  
  \begin{lemma}[{\cite[Lemma 1.2]{MW}}]\label{wfund}\label{wbasis} 
   Let $Y$ be a diamond. 
   \begin{enumerate}[(i)]
    \item\label{wfund:1} The w-contractible spaces in $Y_{\qpet}$ form a basis of the site $Y_{\qpet}$. 
    \item\label{wfund:2} Every w-contractible space $U\in Y_{\qpet}$ is quasi-pro-étale weakly contractible, i.e. any pro-étale cover of $Y$ admits a section. In particular, for any such $U$ and any sheaf of abelian groups $\cl F$ on $Y_{\pet}$ we have $H_{\qpet}^i(U, \cl F)=0$ for $i>0$.
   \end{enumerate}
  \end{lemma}
   
   \begin{cor}
     Let $Y$ be a diamond. The topos $Y_{\qpet}^\sim$ is locally weakly contractible (\cite[Definition 3.2.1]{BS}), and hence replete.
   \end{cor}
   \begin{proof}
    The first assertion follows from Lemma \ref{wfund}, and then the second one follows from \cite[Proposition 3.2.3, (1)]{BS}.
   \end{proof}

  An important feature of the category underlying the site $Y_{\qpet}$ is that it is  ``tensored over $\kappa$-small profinite sets'', as we now recall.
  
  \begin{rem}\label{tensprof}
   Let $Y$ be a diamond. Given a $\kappa$-small profinite set $S$, by abuse of notation we also denote by $S$ the functor on $\Perf_{\kappa}$ sending a perfectoid space $Z$ to the continuous functions $\mathscr{C}^0(|Z|, S)$; this defines a pro-étale sheaf on $\Perf_{\kappa}$. Then, we observe that $Y\times S\in Y_{\qpet}$. In fact, writing $S=\varprojlim_{i\in I} S_i$ as a cofiltered limit of finite sets $S_i$ along a $\kappa$-small index category $I$,\footnote{Such a presentation of $S$ exists by the proof of \cite[Tag 08ZY]{Thestack}.} and recalling \cite[Definition 10.1]{Scholze3}, we have
   $$Y\times S=\varprojlim_{i\in I}(Y\times S_i)=\varprojlim_{i\in I}\coprod_{s_i\in S_i} Y\in Y_{\qpet}.$$
   In general, denoting $S_Y:=Y\times S$, for any $Y'\in Y_{\qpet}$, we have $Y'\times_{Y}S_Y\in Y_{\qpet}$.
  \end{rem}

  Next, we restrict our attention to the category of analytic adic spaces defined over $\Zz_p$. \medskip
  
  Recall that there is a natural functor $X\mapsto X^\diamondsuit$ from the category of analytic adic spaces defined over $\Spa(\Zz_p, \Zz_p)$ to the category of locally spatial diamonds, satisfying $|X|=|X^\diamondsuit|$ and $X_{\ett}\cong X_{\ett}^\diamondsuit$ (see \cite[Definition 15.5, Lemma 15.6]{Scholze3}). We can then give the following definitions.
  
  \begin{df}\label{qpet}
   Let $X$ be an analytic adic space defined over $\Spa(\Zz_p, \Zz_p)$. We denote by $$X_{\pet}:=X^\diamondsuit_{\qpet}$$ its \textit{pro-\'etale site}.\footnote{Note that the definition of pro-étale site that we adopt is different from the one given in \cite{Scholze} (cf. \cite[Remark 4.1.11]{BS}). However, the results on $X_{\pet}$ of \textit{loc. cit.} that we will need still hold with this definition.} 
  \end{df}
  
  Recalling \S \ref{conve}, in what follows, we denote by $*_{\kappa{\text -}\pet}$ the site of $\kappa$-small profinite sets, with coverings given by finite families of jointly surjective maps. Moreover, we denote by $\CondSet$ the category of $\kappa$-condensed sets, and by $\CondAb$ the category of $\kappa$-condensed abelian groups.
  
  \begin{df}\label{Hc}
   Let $X$ be an analytic adic space defined over $\Spa(\Zz_p, \Zz_p)$. We denote by 
   $$f_{\cond}: X_{\pet}\to *_{\kappa{\text -}\pet}$$
   the natural morphism of sites defined by sending $S\in *_{\kappa{\text -}\pet}$ to $X^\diamondsuit\times S\in X_{\pet}$.
   
   Given $\cl F$ a sheaf of abelian groups on $X_{\pet}$, we define the complex of $D(\CondAb)$
   $$R\Gamma_{\petc}(X, \cl F):=Rf_{\cond *} \cl F$$
   with $i$-th cohomology, for $i\ge 0$, the \textit{condensed pro-étale cohomology group}
   $H^i_{\petc}(X, \cl F)=R^if_{\cond *}\cl F$,
   an object of $\CondAb$.
  \end{df}

  \begin{rem}
  Note that the condensed pro-étale cohomology groups are an enrichment of the usual cohomology groups, in fact, for $i\ge 0$, we have
  $H^i_{\petc}(X, \cl F)(*)=H^i_{\pet}(X, \cl F)$ as abelian groups.
  \end{rem}

  \begin{rem}\label{geopoint}
  In the case $X$ is an analytic adic space over $\Spa(C, \cl O_C)$, then Definition \ref{Hc} agrees with Definition \ref{Hcintro}. In fact, the category of sheaves of sets on the site $\Spa(C, \cl O_C)_{\pet}$ is equivalent to the category of $\kappa$-condensed sets. For this, we first note that the topos associated to the site $\Spa(C, \cl O_C)_{\pet}$ is equivalent to the topos associated to the subsite $\Spa(C, \cl O_C)_{\pet}^{\aff}$, consisting of the affinoid pro-étale maps in $\Spa(C, \cl O_C)_{\pet}$, \cite[Definition 7.8, (i)]{Scholze3}. We recall that a map $U\to \Spa(C^\flat, \cl O_{C^\flat})$, from a perfectoid space $U$ to the tilt of $\Spa(C, \cl O_C)$, is an affinoid pro-étale map if and only if $U=\Spa(C^\flat, \cl O_{C^\flat})\times S$ for some $\kappa$-small profinite set $S$.   
  Then, the functor sending $U\in \Spa(C, \cl O_C)_{\pet}^{\aff}$ to the underlying topological space $|U|$ induces an equivalence of categories between the category underlying $\Spa(C, \cl O_C)_{\pet}^{\aff}$ and the category of $\kappa$-small profinite sets.
  Now, the claim easily follows unraveling the definition of the covering families of $\Spa(C, \cl O_C)_{\pet}^{\aff}$.
  \end{rem}
 
  \subsection{Condensed structure à la \v{C}ech}\label{summary}
  In this paper, we will have to deal with sheaves of topological abelian groups/rings/etc. on analytic adic spaces (e.g. the structure sheaf). In some cases, it is reasonable to equip the cohomology groups of such sheaves with a condensed structure à la \v{C}ech, using the fact that their local sections have a natural topology, and then to ask how this compares with Definition \ref{Hc}. This is explained in the following remarks, that we will use to define a structure of condensed abelian group on the geometric cohomology of the pro-étale period sheaves, in \S\ref{petsheaves} (checking its agreement with Definition \ref{Hc}), and on the cohomology of coherent sheaves on rigid-analytic varieties, in \S \ref{coherent}. \medskip
  
  We start with some reminders on the category $\CondAb$ that we will use throughout the paper.
  
  \begin{theorem}[{\cite[Theorem 2.2]{Scholzecond}}]\label{condbasic}
   The category of $\kappa$-condensed abelian groups $\CondAb$ is an abelian category which satisfies the Grothendieck’s axioms (AB3), (AB3*), (AB4), (AB4*), (AB5) and (AB6), and it is generated by a set of compact projective objects.\footnote{Recall also that, instead, the category of all condensed abelian groups (\cite[Definition 2.11]{Scholzecond}) has a class, and not a set, of compact projective generators.} 
  \end{theorem}

  \begin{rem}\label{summary:1}
    In particular, we deduce from Theorem \ref{condbasic} that, for any small site $\cl C$, the  category of sheaves on $\cl C$ with values in $\CondAb$ has enough injectives.\footnote{More precisely, it has functorial injective embeddings, \cite[Tag 0139, Tag 079H]{Thestack}.} \smallskip
    
    Then, given $\cl M$ a complex in the bounded below derived category $D^{+}\left(\Shv_{\CondAb}(\cl C)\right)$ of sheaves on $\cl C$ with values in $\CondAb$, for any object $V\in \cl C$ we can define the sheaf cohomology complex $$R\Gamma(V, \cl M)\in D(\CondAb)$$ by taking an injective resolution of $\cl M$.\footnote{An alternative to ``taking injective resolutions'' would be to work with the $\infty$-category of sheaves on $\cl C$ with values in the derived $\infty$-category of condensed abelian groups $\cl D(\CondAb)$. However, the theory of $\infty$-categories will not play a crucial role in this paper.} For $i\in \Zz$, we denote by $H^i(V, \cl M)$ its $i$-th cohomology, which is an object of $\CondAb$.
  \end{rem}

  \begin{rem}\label{summary:2}
   Let $\cl C$ be a small site with a basis $\cl B$. Let $\cl F$ be a sheaf of topological abelian groups on $\cl B$.
  Since the functor $T\mapsto \underline{T}$ from topological abelian groups to $\CondAb$ preserves limits, the presheaf given by
  $$\cl B^{\opp}\to \CondAb: U\mapsto \underline{\cl F(U)}$$
  is a sheaf. We denote by $\underline{\cl F}$ the associated sheaf on $\cl C$ with values in $\CondAb$, and, for $V\in \cl C$, we consider the sheaf cohomology complex $R\Gamma(V, \underline{\cl F})\in D(\CondAb)$, defined following Remark \ref{summary:1}.
  By Verdier's hypercovering theorem, \cite[Theorem 8.6]{DHI}, we have that $H^i(V, \underline{\cl F})$ has the usual sheaf cohomology group as underlying abelian group, i.e.
  $$H^i(V, \underline{\cl F})(*)=H^i(V, \cl F)$$
  as abelian groups (here, $\cl F$ is regarded as a sheaf of abelian groups).
  Note that Verdier's hypercovering theorem, combined with Remark \ref{setcut}, also shows that, if the basis $\cl B$ has cardinality less than $\kappa$, and, for all $U\in \cl B$, the topological space $\cl F(U)$ is T1 with cardinality less than $\kappa$, then $H^i(V, \underline{\cl F})$ does not depend on our choice of the cardinal $\kappa$.
  The previous discussion also holds replacing $\cl F$ with a bounded below complex of such sheaves on $\cl C$.
  \end{rem}

 \subsection{Comparison to the étale topology}\label{cet}

  In the rest of this section,  we denote by $X$ an analytic adic space over $\Zz_p$, and we identify it with the associated diamond $X^\diamondsuit$. \medskip
  
  We remark that Definition \ref{Hc} takes advantage of Remark \ref{tensprof}, which obviously fails for the étale site $X_{\ett}$. In particular, by applying the pushforward functor along the natural morphism of sites $X_{\pet}\to X_{\ett}$ one may lose ``topological information''. However, for the comparison theorems proven in this paper, we will need to pass from the pro-étale site to the étale site, still retaining the information captured by profinite sets. Thus, to remedy to this issue, we give the following definition, which will play a crucial role later on (see Corollary \ref{frompoincare}).
  
  \begin{df}\label{crulo} Let $\tau\in\{\pet, \ett \}$.
   The site $X_{\tau, \cond}$ has underlying category the pairs $(U, S)$ with $U\in X_{\tau}$ and $S\in *_{\kappa{\text -}\pet}$, and coverings given by the families of morphisms $$\left\{(f_i, s_j): (U_i, S_j)\to (U, S)\right\}_{(i, j)\in I\times J}$$ with $\{f_i: U_i\to U\}_{i\in I}$ a covering of $X_{\tau}$ and $\{s_j: S_j\to S\}_{j\in J}$ a covering of $*_{\kappa{\text -}\pet}$.
  \end{df}

  \begin{rem}
    Note that $X_{\tau, \cond}$ is indeed a site. There is a natural projection of sites $$\pi:X_{\tau, \cond}\to X_{\tau}$$ 
  given by sending $U\in X_{\tau}$ to the pair $(U, *)\in X_{\tau, \cond}$. There is also a natural morphism of sites 
  $$\mu: X_{\pet}\to X_{\tau, \cond}$$
  defined by sending a pair $(U, S)\in X_{\tau, \cond}$ to $u(U)\times S\in X_{\pet}$, where $u: X_{\tau}\to X_{\pet}$ denotes the natural continuous functor.
  \end{rem}
  
  \begin{rem}\label{cohcond}
   We have an equivalence between the category of sheaves of sets on $X_{\tau, \cond}$ and the category of sheaves of $\kappa$-condensed sets on $X_{\tau}$
   \begin{equation}\label{triangled}
    \Shv_{\Set}(X_{\tau, \cond})\overset{\sim}{\longrightarrow} \Shv_{\CondSet}(X_{\tau}): \cl F\mapsto \cl F^{\blacktriangledown}
   \end{equation}
   where the sheaf $\cl F^{\blacktriangledown}$ is defined by assigning to $U\in X_{\tau}$ the $\kappa$-condensed set $\cl F^{\blacktriangledown}(U):S\mapsto \cl F(U, S)$.
   A quasi-inverse of the functor (\ref{triangled}) is given by sending a sheaf of $\kappa$-condensed sets $\cl G$ on $X_{\tau}$ to the sheaf of sets on $X_{\tau, \cond}$ defined by assigning to $(U, S)\in X_{\tau, \cond}$ the set $\cl G(U)(S)$.
   Under this equivalence, an abelian group object/ring object/etc. in $\Shv_{\Set}(X_{\tau, \cond})$, i.e. a sheaf on $X_{\tau, \cond}$ with values in abelian groups/rings/etc., corresponds to a sheaf on $X_{\tau}$ with values in $\kappa$-condensed abelian groups/rings/etc. Moreover, the functor (\ref{triangled}) induces an equivalence of derived categories
   $$D\left(\Shv_{\Ab}(X_{\tau, \cond})\right)\overset{\sim}{\longrightarrow} D\left(\Shv_{\CondAb}(X_{\tau})\right): \cl F\mapsto \cl F^{\blacktriangledown}.$$
  \end{rem}

  Then, recalling Remark \ref{summary:1}, we can give the following definition.
  
  \begin{df}
  Given $\cl F\in D^{+}\left(\Shv_{\Ab}(X_{\tau, \cond})\right)$ we define the complex of $D(\CondAb)$
   $$R\Gamma_{\tau, \cond}(X, \cl F):=R\Gamma_{\tau}(X, \cl F^{\blacktriangledown})$$
   with $i$-th cohomology, for $i\in \Zz$, denoted by $H^i_{\tau, \cond}(X, \cl F)$,
   an object of $\CondAb$.
  \end{df}

  In the case $\tau=\pet$, the latter definition gives us only a slightly alternative point of view on Definition \ref{Hc}.
  \begin{lemma}\label{pet=petcond}
   Let $\cl F$ be a sheaf of abelian groups on $X_{\pet}$. Let $\mu: X_{\pet}\to X_{\pet, \cond}$ be the natural morphism of sites.  Then, for all $i\ge 0$, we have
   $$H^i_{\petc}(X, \cl F)=H_{\pet, \cond}^i(X, R\mu_* \cl F)$$
   as condensed abelian groups.\footnote{Of course, we have $H_{\pet, \cond}^i(X, R\mu_* \cl F)(*)=H_{\pet}^i(X, \cl F)$ as abelian groups.}
  \end{lemma}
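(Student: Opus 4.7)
The plan is to factor the morphism of sites $f_{\cond}: X_{\pet} \to *_{\kappa-\pet}$ through $X_{\pet, \cond}$ and then deduce the lemma from the composition rule for derived pushforwards combined with the equivalence of Remark \ref{cohcond}.

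First, I would introduce the morphism of sites $g: X_{\pet, \cond} \to *_{\kappa-\pet}$ whose underlying continuous functor $u_g$ sends $S \in *_{\kappa-\pet}$ to $(X, S) \in X_{\pet, \cond}$. Continuity is immediate: any jointly surjective family $\{S_j \to S\}_{j \in J}$ is sent to $\{(X, S_j) \to (X, S)\}_{j \in J}$, which is a covering of $(X, S)$ in $X_{\pet, \cond}$ (one takes the $X_{\pet}$-factor in the definition of coverings to be $\{\Id_X\}$). Since morphisms of sites compose contravariantly at the level of underlying functors, one checks that $u_{\mu} \circ u_{g}$ sends $S$ to $u_{\mu}(X, S) = X \times S$, which coincides with the underlying functor $u_{f_{\cond}}$. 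Hence $f_{\cond} = g \circ \mu$ as morphisms of sites.

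Next, since $\mu_*$ preserves injective abelian sheaves (its left adjoint $\mu^{-1}$ being exact), the standard composition rule for derived pushforwards gives $R f_{\cond *} \cl F \simeq R g_*(R \mu_* \cl F)$ in $D(\CondAb)$. It then remains to identify $R g_*$ with $R\Gamma_{\pet, \cond}(X, -)$. Under the equivalence of Remark \ref{cohcond}, the pushforward $g_*$ corresponds to the global sections functor $\Gamma(X, -): \Shv_{\CondAb}(X_{\pet}) \to \CondAb$, since $g_* \cl G(S) = \cl G(X, S) = \cl G^{\blacktriangledown}(X)(S)$. As this equivalence is exact and lifts to an equivalence of derived categories, passing to derived functors yields $R g_* \cl G \simeq R\Gamma_{\pet}(X, \cl G^{\blacktriangledown}) = R\Gamma_{\pet, \cond}(X, \cl G)$ by definition.

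Combining these gives $R\Gamma_{\petc}(X, \cl F) \simeq R\Gamma_{\pet, \cond}(X, R\mu_* \cl F)$ in $D(\CondAb)$, and taking $i$th cohomology gives the claim. The only delicate point I expect is careful bookkeeping of the direction of morphisms of sites versus that of their underlying continuous functors, since confusing these would reverse the proposed factorization; once this is straight, the rest is a standard Leray-type composition argument.
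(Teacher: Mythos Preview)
Your proof is correct and follows essentially the same approach as the paper: both factor $f_{\cond *}$ as $\Gamma(X, (-)^{\blacktriangledown}) \circ \mu_*$ (in your notation, $g_* \circ \mu_*$) and then pass to derived functors via the composition rule / Grothendieck spectral sequence. The paper's version is slightly more terse, writing out the identity $(f_{\cond *}\cl F)(S)=\cl F(X\times S)=(\mu_*\cl F)(X, S)=\Gamma(X, (\mu_*\cl F)^{\blacktriangledown})(S)$ directly rather than introducing the intermediate morphism $g$, but the content is the same.
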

  \begin{proof}
   Let $f_{\cond}: X_{\pet}\to *_{\kappa{\text -}\pet}$ denote the morphism of sites of Definition \ref{Hc}. It suffices to observe that the pushforward functor $f_{\cond *}$ is given by the composition
   $$\Shv_{\Ab}(X_{\pet})\overset{\mu_*}{\longrightarrow}\Shv_{\Ab}(X_{\pet, \cond})\overset{\blacktriangledown}{\longrightarrow}\Shv_{\CondAb}(X_{\pet})\overset{\Gamma(X, -)}{\longrightarrow}\CondAb$$
   and then use the associated Grothendieck spectral sequence. In fact, for any $\kappa$-small extremally disconnected set $S$, we have
   $$(f_{\cond *}\cl F)(S)=\cl F(X\times S)=(\mu_*\cl F)(X, S)=\Gamma\left(X, (\mu_*\cl F)^{\blacktriangledown}\right)(S)$$
   and one readily checks that the restriction morphisms agree as well.
  \end{proof}

  \section{\textbf{Schneider--Stuhler's theorem}}\label{SSmodif}
  \sectionmark{}

 In  \cite{SS} Schneider--Stuhler computed the cohomology groups of the Drinfeld upper half-spaces over $K$, for a general abelian sheaf cohomology theory that satisfies some natural axioms, the most important one requiring the homotopy invariance with respect to the $1$-dimensional open unit disk. Examples of such cohomology theories include the de Rham cohomology and the étale cohomology with coefficients in a finite ring whose order is prime to $p$. In this section, we slightly change the axioms of Schneider--Stuhler, \cite[\S 2]{SS}, in order to include the (condensed) geometric pro-étale cohomology of the period sheaves $\Bb_{\dR}$ and $\Bb_e$.\footnote{See \S\ref{petsheaves} for the definition of such pro-étale period sheaves.} Then, we highlight the main points of the proof of \textit{loc. cit.} where, due to such changes, an additional argument is needed.

 \subsection{Drinfeld upper half-space}

 We start by recalling the following definition.
 
 \begin{df}\label{defdrinfeld}
  Let $K$ be a finite extension of $\Qq_p$. Let $d\ge 1$ be an integer. The \textit{Drinfeld upper half-space} over $K$ of dimension $d$ is defined by
  $$\Hh_K^d:=\Pp^d_K\setminus \bigcup_{H\in \cl H} H$$
  where $\cl H:=\Pp((K^{d+1})^*)$ denotes the set of the $K$-rational hyperplanes in the rigid-analytic projective space $\Pp^d_K$.
 \end{df}

 \begin{rem}\label{coverdrinfeld}
  The Drinfeld upper half-space $\Hh_K^d$ is an admissible open of the rigid-analytic projective space $\Pp_K^d$, \cite[\S 1, Proposition 1]{SS}, in particular we can (and we do) regard it as a rigid-analytic variety over $K$. Moreover, $\Hh^d_K$ has an increasing admissible covering $\{U_n\}_{n\in \Nn}$ defined as follows: for a fixed uniformizer $\varpi$ of $\cl O_K$ (see \S \ref{conve}), and $n\in \Nn$, we set $\cl H_n:=\Pp((\cl O_K^{d+1})^*/\varpi^n)$ and
  \begin{equation}\label{Un}
     U_n:=\Pp_K^d\setminus \bigcup_{H\in \cl H_n} N_n(H)
  \end{equation}
  where, for a hyperplane $H\in \cl H$, we denote by
  $N_n(H)$ the $|\varpi|^n$-neighbourhood of $H$.\footnote{Namely, chosen a unimodular representative $\ell_H\in (\cl O_K^{d+1})^*$ among the linear forms defining $H$, the subset $N_n(H)$ of $\Pp^d_K$ is defined by identifying the conjugate points over $K$ of the set  $\{z\in \Pp^d(C):|\ell_H(z)|\le |\varpi|^n\}$, where we use unimodular coordinates to represent a point of $\Pp^d(C)$. See also \cite[\S 1, (C)]{SS}.}
 \end{rem}

  We note that $\Hh_K^d$ admits a natural action of the group $\GL_{d+1}(K)$ via homographies.

 \subsection{Schneider--Stuhler's axioms} Next, we define a variant of the cohomology theories for which Schneider--Stuhler were able to compute the cohomology groups of the Drinfeld upper half-space. Using that the category of $\kappa$-condensed abelian groups $\CondAb$ is an abelian category satisfying the same Grothendieck’s axioms of the category of abelian groups, and it is generated by a set of compact projective objects, we will also be able to deal with cohomology theories defined by a complex of sheaves with values in $\CondAb$ (see Remark \ref{summary:1}). \medskip
 
 \begin{convnot}
   Let $\RigSm_K$ denote the category of quasi-separated smooth rigid-analytic varieties over $K$, endowed with a fixed Grothendieck topology $\tau$ that is finer than the analytic topology.\footnote{For example, $\tau$ can be the analytic, étale, or pro-étale topology.} We denote by $D^{\ge 0}(\RigSm_{K})$  the derived category of complexes of sheaves on $\RigSm_K$, concentrated in non-negative degrees, with values in $\CondAb$. 
   
   We fix a complex $\cl F\in D^{\ge 0}(\RigSm_{K})$, and we denote by
  $$H^\bullet:=H^\bullet(-, \cl F): \RigSm_{K}\to \CondAb$$
  the sheaf cohomology with coefficients in $\cl F$ defined on $\RigSm_{K}$. Moreover, given $X\in \RigSm_{K}$, and $U\subseteq X$ an open subvariety, we denote by $H^\bullet(X, U):=H^\bullet(X, U; \cl F)$ the relative sheaf cohomology of the pair $(X, U)$ with coefficients in $\cl F$.\footnote{We recall that the \textit{relative sheaf cohomology} of the pair $(X, U)$ with coefficients in $\cl F$ is the derived functor of the ``sections of $\cl F$ on $X$ vanishing on $U$'' functor. It can be thought as the ``cohomology with support in $X\setminus U$''.} 
 \end{convnot}

  \begin{df}\label{axioms}
  Let $A$ be a condensed ring. The sheaf cohomology theory $H^\bullet=H^\bullet(-, \cl F)$ satisfies the \textit{axioms of Schneider--Stuhler (relative to $A$)} if takes values in the category of $A$-modules in $\CondAb$ $$H^\bullet: \RigSm_K\to \Mod_A^{\cond}$$ and the following conditions are satisfied.
   \begin{enumerate}
    \item\label{axiom:1} The \textit{homotopy invariance} with respect to the $1$-dimensional open unit disk $\accentset{\circ}{\Dd}_K$ over $K$, i.e. for any affinoid space $X\in \RigSm_K$, the natural projection $X\times \accentset{\circ}{\Dd}_K\to X$ induces an isomorphism in cohomology
    $$H^\bullet(X)\overset{\sim}{\to} H^\bullet(X\times \accentset{\circ}{\Dd}_K).$$
    \item\label{axiom:2} There is a  morphism in $D^{+}(\RigSm_{K})$
    $$\cup: \cl F\otimes^{\LL}_{\Zz}\cl F\to \cl F$$
    called \textit{cup product}, that is associative and commutative with unit $e:\Zz\to \cl F$.
    \item\label{axiom:3} There exists a non-archimedean local field $F$ such that $A$ is a condensed $F$-algebra.\footnote{The non-archimedean local field $F$ is regarded as a condensed ring.} Moreover, we have\footnote{Here, $*_K$ denotes the rigid-analytic point $\Spa(K)$.} 
    $$H^i(*_K)=
    \begin{cases} 
    A & \mbox{if }i=0 \\
    0 & \mbox{if } i>0.
    \end{cases}
    $$
    \item\label{axiom:4} We have that $H^i(\Pp_K^d)=0$ for $i$ odd, or $i>2d$. Moreover, there exists a morphism in $D^{\ge 0}(\RigSm_{K})$, called \textit{cycle class map},\footnote{Here, $\Gg_m$ denotes the multiplicative group sheaf, regarded as a sheaf of discrete condensed abelian groups.}
    $$c:\Gg_m[-1]\to \cl F$$
    satisfying the following condition: if $\eta\in H^2(\Pp_K^d)$ denotes the image of the canonical line bundle $\cl O(1)\in H^1(\Pp_K^d, \Gg_m)$ under the map $H^1(\Pp_K^d, \Gg_m)\to H^2(\Pp_K^d)$ induced by $c$, then, for all $0\le i\le d$, the map
    \begin{equation}\label{cycleiso}
     f^*(-)\cup \eta^i: A=H^0(*_K)\to H^{2i}(\Pp_K^d)
    \end{equation}
    is an isomorphism, where $f:\Pp_K^d\to *_K$ is the structure morphism, and $\eta^i:=\eta \cup \cdots\cup \eta$ is the $i$ times repeated cup product of $\eta$.
    \end{enumerate}
  \end{df}
  
   In Schneider--Stuhler's paper, \cite[\S 2]{SS}, axiom \listref{axiom:3} requires $A$ to be a (discrete) Artinian ring: this is the main condition we have changed. In \textit{loc. cit.} this axiom guarantees the vanishing of $R^1\varprojlim$ in some cases of particular interest. In our case, such vanishing will follow from Lemma \ref{ML} below. \medskip
   
   In the following, we let $F$ be a non-archimedean local field. We say that $V\in \Vect_{F}^{\cond}$ is \textit{finite-dimensional} if there exists an integer $m\ge 0$, and an isomorphism $V\cong F^{\oplus m}$ of condensed $F$-vector spaces.

  \begin{lemma}[cf. Lemma \ref{condML}]\label{ML}
   Let $\{V_n\}$ be a countable inverse system of finite-dimensional condensed $F$-vector spaces. Let $A$ be a condensed $F$-algebra. Then, the inverse system $\{V_n\otimes_{F} A\}$ is Mittag-Leffler.\footnote{Here, and in the following, we adopt Grothendieck's definition of the \textit{Mittag-Leffler condition}, \cite[\S 13.1]{Groth}, which we now recall. Let $\mathcal C$ be an abelian category satisfying (AB3*). We say that an inverse system $(A_i, f_{ij})$ of objects in $\mathcal C$, indexed by a directed set $I$, is \textit{Mittag-Leffler} if, for each $i\in I$, there exists $j\ge i$ such that for all $k\ge j$ we have $f_{ij}(A_j)=f_{ik}(A_k)$.}  In particular,  $R^j\varprojlim_n (V_n\otimes_{F}A)=0$, for all $j>0$.
  \end{lemma}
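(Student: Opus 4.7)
The plan is to first verify the Mittag-Leffler property for $\{V_n\}$ itself via the classical dimension argument, and then transfer it to $\{V_n\otimes_{F}A\}$ using the exactness of $(-)\otimes_{F}A$ on sequences of finite-dimensional $F$-vector spaces.

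First, I would unpack the structure of finite-dimensional condensed $F$-vector spaces. Since every such $V$ is, by definition, isomorphic to $F^{\oplus m}$ for some $m\ge 0$, it is a finite free object in $\Vect_{F}^{\cond}$; in particular it is compact projective, and quotients and sub-objects (as condensed $F$-vector spaces) are again finite-dimensional, being themselves quotients of finite free $F$-modules. Consequently, every short exact sequence of finite-dimensional condensed $F$-vector spaces splits, and $\mathrm{dim}_F$ is a well-defined additive invariant on such objects.

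Next, I would prove the Mittag-Leffler property for $\{V_n\}$. Fix $m\ge 1$; for $n\ge m$, let $W_{n,m}:=\mathrm{im}(V_n\to V_m)\subseteq V_m$. These form a descending chain of finite-dimensional sub-objects of $V_m$, whose $F$-dimensions form a descending sequence of nonnegative integers, hence stabilize at some $N_m\ge m$. To transfer this to $\{V_n\otimes_{F}A\}$, I would factor $V_n\to V_m$ as a split surjection $V_n\twoheadrightarrow W_{n,m}$ followed by a split inclusion $W_{n,m}\hookrightarrow V_m$; both splittings exist by the projectivity statement of the previous paragraph. Applying the additive functor $(-)\otimes_{F}A$ preserves both splittings (on finite direct sums of copies of $F$ it is just the corresponding direct sum of copies of $A$), giving
\[\mathrm{im}(V_n\otimes_{F}A\to V_m\otimes_{F}A)=W_{n,m}\otimes_{F}A,\]
which therefore stabilizes for $n\ge N_m$. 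This proves $\{V_n\otimes_{F}A\}$ is Mittag-Leffler.

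Finally, the vanishing of $R^j\varprojlim_n(V_n\otimes_{F}A)$ for $j>0$ is formal: $\CondAb$ is an (AB4$^*$) abelian category with enough injectives (Remark \ref{summary:1}), so the standard Mittag-Leffler argument yields $R^1\varprojlim=0$, while Roos/Mitchell-type bounds on the cohomological dimension of countable inverse limits force $R^j\varprojlim=0$ for $j\ge 2$. This is precisely packaged in the referenced Lemma \ref{condML} of Appendix \ref{ccg}. The only subtle point in the whole argument is the image identity inside $\Mod_A^{\cond}$ in the middle step, but since every sequence being tensored with $A$ is already split in $\Vect_{F}^{\cond}$, this reduces to a trivial identity between finite direct sums.
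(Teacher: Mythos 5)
Your treatment of the Mittag-Leffler property itself is correct and is in substance the paper's own argument: the paper invokes Grothendieck's classical dimension count for $\{V_n\}$ together with the exactness of $-\otimes_F A$ on finite-dimensional condensed $F$-vector spaces (Remark \ref{remo}), which is exactly what your split factorization $V_n\twoheadrightarrow W_{n,m}\hookrightarrow V_m$ makes explicit. One caveat on the first paragraph: the blanket claim that arbitrary sub-objects and quotients of finite-dimensional condensed $F$-vector spaces are again finite-dimensional is false (for instance the discrete condensed vector space on the abstract field $F$ is a sub-object of $\underline{F}$ but is not isomorphic to any $F^{\oplus k}$). What is true, and all your argument needs, is that a morphism $F^{\oplus m}\to F^{\oplus m'}$ is given by a matrix (since $\Hom(F,M)=M(*)$), so its kernel, image and cokernel are the condensed vector spaces attached to the classical ones; this is what the paper extracts from Lemma \ref{acyclic}.

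The genuine gap is in the final step. The assertion that in an (AB4*) abelian category with enough injectives the Mittag-Leffler condition yields $R^1\varprojlim=0$ ``by the standard argument'' is precisely the 1961 claim of Roos that Neeman disproved: there exist (AB4), (AB4*) abelian categories with a projective generator and countable inverse systems with \emph{surjective} transition maps and nonvanishing $\varprojlim^1$. So some input specific to $\CondAb$ is indispensable, and this step is exactly why the paper singles the lemma out. Moreover, Lemma \ref{condML} does not package what you need: it concerns inverse systems of $K$-Banach spaces under a dense-image hypothesis, and $V_n\otimes_F A$ is not a Banach space for a general condensed $F$-algebra $A$. The paper's route is to evaluate on extremally disconnected sets $S$: since $\Zz[S]$ is compact projective, evaluation at $S$ is exact and commutes with limits, so images and $R\varprojlim$ in $\CondAb$ are computed pointwise on such $S$; hence $\{(V_n\otimes_F A)(S)\}$ is a countable Mittag-Leffler system of honest abelian groups, Grothendieck's Proposition 13.2.2 gives $R^1\varprojlim=0$ there (vanishing in degrees $\ge 2$ being automatic for $\Nn$-indexed systems via the two-term telescope), and one concludes with \cite[Lemma 3.18]{Scholze}. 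Your splitting argument feeds directly into this: it shows the sections at each $S$ form a Mittag-Leffler system, so the repair is short, but it must be made.
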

  \begin{proof}
   By \cite[\S 13.1.2]{Groth} any countable inverse system of finite-dimensional $F$-vector spaces is Mittag-Leffler; we deduce from Lemma \ref{acyclic} that $\{V_n\}$ is Mittag-Leffler, and then that $\{V_n\otimes_{F} A\}$ is Mittag-Leffler too. Equivalently, for every extremally disconnected set $S$, the inverse system $\{(V_n\otimes_{F} A)(S)\}$ is Mittag-Leffler, hence, by \cite[Proposition 13.2.2]{Groth} combined with \cite[Lemma 3.18]{Scholze}, we have that $R^j\varprojlim_n (V_n\otimes_{F}A)=0$, for all $j>0$.
  \end{proof}

  \subsection{Tits buildings}
  
  Before stating Schneider--Stuhler's result, \cite[\S 3, Theorem 1]{SS}, let us recall the following definition.
  
  \begin{df}
   Let $d\ge 1$ be an integer. For $1\le i\le d$, we denote by $\cl T_\bullet^{(i)}$ the \textit{topological Tits $i$-building} of $\GL_{d+1}(K)$, i.e. the simplicial profinite set defined by
  $$\cl T_j^{(i)}:=\{\text{flags } W_0\subseteq \cdots \subseteq W_j \text{ in } (K^{d+1})^*: 1\le \dim_K W_r\le i \;\text{  for all } 0\le r\le j\}$$
  endowed with its natural profinite topology, with face/degeneracy maps given by omitting/doubling one vector subspace in a flag.\footnote{We warn the reader that a topological Tits building differs from a \textit{Bruhat-Tits building}. See also the discussion before \cite[\S 3, Lemma 3]{SS}.}
  \end{df}

  \begin{theorem}\label{mainSS}
   Let $K$ be a finite extension of $\Qq_p$. Let $A$ be a condensed ring, and let $H^\bullet:\RigSm_K\to \Mod_A^{\cond}$ be a cohomology theory that satisfies the axioms of Schneider--Stuhler (\ref{axioms}). Let $d\ge 1$ be an integer, and $G=\GL_{d+1}(K)$. Then, for all $i\ge 0$, there exists a natural isomorphism of $\underline{G}$-modules in $\Mod_A^{\cond}$
   $$H^i(\Hh_K^d)\cong
   \begin{cases} 
   A & \mbox{if }i=0 \\
   \underline{\Hom}(\widetilde H^{i-1}(|\cl T_{\bullet}^{(i)}|, \Zz), A) & \mbox{if }1\le i\le d \\
   0 & \mbox{if } i>d
   \end{cases}
   $$
   where $\widetilde H$ denotes the reduced cohomology, $|\cl T_{\bullet}^{(i)}|$ denotes the geometric realization of the topological Tits $i$-building of $G$, and $\widetilde H^{i-1}(|\cl T_{\bullet}^{(i)}|, \Zz)$ is regarded as a discrete condensed abelian group.
  \end{theorem}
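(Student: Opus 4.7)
The plan is to follow the strategy of Schneider--Stuhler \cite{SS}, reworked in the condensed language. The starting point is the admissible covering $\Hh_K^d=\bigcup_n U_n$ from Remark \ref{coverdrinfeld}. For each $n$ one has the complement $Z_n:=\Pp_K^d\setminus U_n=\bigcup_{H\in\cl H_n} N_n(H)$, a finite union of tubular neighbourhoods of rational hyperplanes. First I would compute $H^\bullet(U_n)$ via the relative cohomology long exact sequence
$$\cdots\to H^i(\Pp_K^d, U_n)\to H^i(\Pp_K^d)\to H^i(U_n)\to H^{i+1}(\Pp_K^d, U_n)\to\cdots$$
where $H^i(\Pp_K^d)$ is determined by axiom \listref{axiom:4}, and the relative term is computed by a Mayer--Vietoris spectral sequence associated to the covering of $Z_n$ by the $N_n(H)$'s. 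The key geometric input is that any nonempty intersection $\bigcap_{r} N_n(H_r)$ retracts onto a linear subvariety of $\Pp_K^d$, with fibres products of $1$-dimensional open unit disks; axiom \listref{axiom:1} then reduces its cohomology to that of a projective space, which is known by axiom \listref{axiom:4}. The nerve of the covering is indexed by flags in $(\cl O_K^{d+1}/\varpi^n)^\ast$, i.e. by the simplices of a finite truncation of the Tits building. After performing the usual combinatorial bookkeeping (computing the cycle class contributions via the cup product of axiom \listref{axiom:2}, and identifying the resulting $E_2$-page with simplicial cohomology), one obtains a natural $\underline{G}$-equivariant description of $H^i(U_n)$ as $\underline{\Hom}(\widetilde H^{i-1}(|\cl T_\bullet^{(i)}(n)|,\Zz),A)$, where $\cl T_\bullet^{(i)}(n)$ is the finite simplicial set of flags in $(\cl O_K^{d+1}/\varpi^n)^\ast$ with all constituents of dimension $\le i$.

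Second, I would pass to the limit $n\to\infty$. Since $\{U_n\}$ is an admissible covering, there is a convergent spectral sequence
$$E_2^{p,q}=R^p\varprojlim_n H^q(U_n)\Rightarrow H^{p+q}(\Hh_K^d)$$
together with the evident identification $\varprojlim_n \underline{\Hom}(\widetilde H^{i-1}(|\cl T_\bullet^{(i)}(n)|,\Zz),A)=\underline{\Hom}(\widetilde H^{i-1}(|\cl T_\bullet^{(i)}|,\Zz),A)$, using that $|\cl T_\bullet^{(i)}|$ is the inverse limit of the $|\cl T_\bullet^{(i)}(n)|$ and that reduced cohomology of the simplicial finite sets $\cl T_\bullet^{(i)}(n)$ is a finitely generated abelian group so that the relevant $\colim$ of Homs is an inverse limit of Homs.

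The main obstacle, and the point where axiom \listref{axiom:3} in our modified form (with $A$ a condensed $F$-algebra, not a discrete Artinian ring) replaces Schneider--Stuhler's assumption, is the vanishing of $R^1\varprojlim_n H^q(U_n)$. In the original argument this vanishing was automatic because each $H^q(U_n)$ was a finite $A$-module. Here, using the combinatorial description obtained above, each $H^q(U_n)$ is of the form $V_n\otimes_F A$ with $V_n$ a finite-dimensional $F$-vector space (namely $\Hom_F(\widetilde H^{q-1}(|\cl T_\bullet^{(q)}(n)|,\Zz)\otimes_{\Zz} F, F)$), and the transition maps are $F$-linear. Lemma \ref{ML} then applies verbatim: the system $\{V_n\otimes_F A\}$ is Mittag-Leffler and has vanishing $R^1\varprojlim$. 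This collapses the limit spectral sequence to the desired isomorphism.

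Finally, the $\underline{G}$-equivariance follows from the $G$-equivariance of the covering $\{U_n\}$ (which is $G$-stable as an unordered family) and of the combinatorial identifications, while the naturality of the internal Hom $\underline{\Hom}(-,A)$ produces the correct condensed action of $\underline{G}$, carrying the profinite topology of the Tits building. The boundary cases ($i=0$ and $i>d$) are forced by the connectedness of $\Hh_K^d$ together with axiom \listref{axiom:3} and by the dimension bound $\dim|\cl T_\bullet^{(i)}|<i$, respectively.
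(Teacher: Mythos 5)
Your proposal follows essentially the same route as the paper: the relative cohomology sequence for $(\Pp_K^d,\Hh_K^d)$ (equivalently, your Milnor-type sequence for the exhaustion $\{U_n\}$), a spectral sequence attached to the configuration of tubes $N_n(H)$ --- your Mayer--Vietoris for $Z_n$ is the support-theoretic reformulation of the spectral sequence (\ref{definitive}) for $U_n=\bigcap_H U_n(H)$, and since $H^\bullet$ is only defined on opens it must in any case be phrased via the unions $U_n(H_0)\cup\cdots\cup U_n(H_j)$ and their fibrations over $\Pp_K^m$ --- then identification of the $E_2$-page with simplicial (co)homology, passage to the limit over $n$, and the replacement of Schneider--Stuhler's Artinian hypothesis by Lemma \ref{ML} applied to terms of the form $V_n\otimes_F A$. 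That last point is the only genuinely new input needed, and you identify it correctly, including why the transition maps stay within the class of $F$-linear maps between finite-dimensional spaces tensored with $A$.

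The one step that does not go through as stated is the finite-level formula $H^i(U_n)\cong\underline{\Hom}(\widetilde H^{i-1}(|\cl T_\bullet^{(i)}(n)|,\Zz),A)$. Degeneration of the spectral sequence is not available at finite level $n$: the $E_2$-terms are the homology groups $H_j(Y_\bullet^{(n,k)},A)$ of the finite simplicial set of tuples of hyperplanes in $\cl H_n$ of bounded rank (not directly of flags), and there is no reason for these to be concentrated in degrees $j\in\{0,k-1\}$; the Solomon--Tits-type acyclicity (Lemma \ref{YT}\listref{YT:2}) holds only for the profinite building $|\cl T_\bullet^{(k)}|$, i.e.\ after the limit over $n$, and likewise the comparison between tuples and flags (Lemma \ref{YT}\listref{YT:1}) is established for the limit objects. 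The correct order of operations is therefore: form the inverse system of spectral sequences over $n$, use Lemma \ref{ML} on the $E_2$-terms (which are of the form $V_n\otimes_F A$ by Remark \ref{remo}) to pass to the limit, identify the limiting $E_2$-page with $\underline{\Hom}(H^j(|\cl T_\bullet^{(i/2)}|,\Zz),A)$ via Lemma \ref{YT}, and only then invoke degeneration. With that reordering your argument matches the paper's proof.
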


  \begin{proof}
  By axiom \listref{axiom:4}, considering the relative cohomology exact sequence
  \begin{equation}\label{relcoh}
   \cdots\to H^i(\Pp_K^d)\to H^i(\Hh_K^d)\to H^{i+1}(\Pp_K^d, \Hh_K^d)\to H^{i+1}(\Pp_K^d)\to \cdots
  \end{equation}
  we can reduce to compute $H^{i}(\Pp_K^d, \Hh_K^d)$.
  Given $n\in \Nn$, we begin by studying $H^i(\Pp_K^d, U_n)$, with $U_n$ as in (\ref{Un}). Note that we can write
  $$U_n=\bigcap_{H\in \cl H_n} U_n(H)$$
  where $U_n(H):=\Pp_K^d\setminus N_n(H)$ is an open polydisk in the affine space $\Pp_K^d\setminus H$. Then, we can reduce to study the strongly convergent spectral sequence\footnote{By the proof of \cite[\S 2, Proposition 6, Lemma 7]{SS} and the discussion afterwards, for a 
  sheaf cohomology theory $H^\bullet: \RigSm_K\to \Mod_A^{\cond}$, given $X\in \RigSm_K$, and $U_1, \ldots, U_m$ a finite number of open subvarieties of $X$, if we set $U:=U_1\cap\cdots \cap U_m$, then, there exists a strongly convergent spectral sequence
 $$E_1^{r, s}=\bigoplus_{1\le i_0, \ldots, i_{-r}\le m} H^s(X, U_{i_0}\cup \cdots \cup U_{i_{-r}})\implies H^{r+s}(X, U).$$}
  \begin{equation}\label{definitive}
   E_{1, (n)}^{-j, i}:=\bigoplus_{(H_0, \ldots, H_j)\in \cl H_n^{j+1}}H^{i}(\Pp_K^d, U_n(H_0)\cup \cdots \cup U_n(H_j))\implies H^{i-j}(\Pp_K^d, U_n).
  \end{equation}
  We first compute $H^{i}(\Pp_K^d, U_n(H_0)\cup \cdots \cup U_n(H_j))$. Given hyperplanes $H_0, \ldots, H_j\in \cl H_n$, we denote by $\rk(H_0, \ldots, H_j)$ the rank of the following $\cl O_K$-module  (i.e. its minimal number of generators over $\cl O_K$): $\sum_{r=0}^j (\cl O_K/\varpi^n)\ell_{H_r}\subseteq (\cl O_{K}^{d+1})^*/\varpi^n$,  where $\ell_{H_r}\in (\cl O_K^{d+1})^*$ is a unimodular representative among the linear forms defining $H_r$.
  
  Then, one checks that, given $H_0, \ldots, H_j\in \cl H_n$, for  $m=\rk(H_0, \ldots, H_j)-1$, there exists a locally trivial fibration
  $U_n(H_0)\cup \cdots \cup U_n(H_j)\to \Pp_K^m$, whose fibers are open polydisks of dimension $d-m$ (see \cite[\S 1, Lemma 5, Proposition 6]{SS}). Using this observation, and considering the relative cohomology sequence associated to the pair $(\Pp_K^d, U_n(H_0)\cup\cdots\cup U_n(H_j))$, by axioms \listref{axiom:1} and \listref{axiom:4}, we have that
  $$H^i(\Pp_K^d, U_n(H_0)\cup\cdots\cup U_n(H_j))=
  \begin{cases}
  A &\text{if $i$ even and } 2\rk(H_0, \ldots, H_j)\le i\le 2d \\
  0 &\text{otherwise}
  \end{cases}
  $$
 (see \cite[\S 3, Lemma 1]{SS}). Then, the first page of spectral sequence (\ref{definitive}) can be rewritten as
 \begin{equation}\label{1818}
  E_{1, (n)}^{-j, i}=\bigoplus_{\substack{(H_0, \ldots, H_j)\in \cl H_n^{j+1}\\
 \rk(H_0, \ldots, H_j)\le i/2}} A
 \end{equation}
 if $i\in[2, 2d]$ is even, and $E_{1, (n)}^{-j, i}=0$ otherwise. 
 Next, to have a cleaner picture of the differentials of (\ref{1818}), we introduce, for $1\le k\le d$, the simplicial set $Y_\bullet^{(n, k)}$ defined by
  $$Y_j^{(n, k)}:=\{(H_0, \ldots, H_j)\in \cl H_n^{j+1}: \rk (H_0, \ldots, H_j)\le k\}$$
  with face/degeneracy maps given by omitting/doubling one hyperplane in a tuple. In addition, we denote by $\cl K(Y_\bullet^{(n, k)}, A)$ the chain complex on $Y_\bullet^{(n, k)}$ with coefficients in $A$. Then, the terms  and the differentials of the first page of the spectral sequence (\ref{definitive}) identify with
 $$E_{1, (n)}^{-j, i}=\cl K(Y_{j}^{(n, i/2)}, A)
 $$
 if $i\in[2, 2d]$ is even, and $E_{1, (n)}^{-j, i}=0$ otherwise.  
 Passing to the second page of the spectral sequence,
 $$E_{2, (n)}^{-j, i}=H_j(Y_{\bullet}^{(n, i/2)}, A)
 $$
 if $i\in[2, 2d]$ is even, and $E_{2, (n)}^{-j, i}=0$ otherwise. 
 At this point, to compute $H^{i}(\Pp_K^d, \Hh_K^d)$, we want to take the inverse limit of (\ref{definitive}) over $n\in \Nn$. For this, we will need the following observations.
  \begin{rem}\label{remo} Let $F$ be a non-archimedean local field as in axiom \listref{axiom:3}, i.e. such that $A$ is a condensed $F$-algebra. Clearly, the functor $-\otimes_{F} A$ is exact on finite-dimensional condensed $F$-vector spaces. One deduces the following points.
  \begin{enumerate}[(i)]
   \item \label{remo:1} Note that the complex $\cl K(Y_\bullet^{(n, k)}, A)$ can be written as $\cl K(Y_\bullet^{(n, k)}, A)=\cl K(Y_\bullet^{(n, k)}, F)\otimes_{F} A$, where the terms of $\cl K(Y_\bullet^{(n, k)}, F)$ are finite-dimensional condensed $F$-vector spaces. Then, we have that
  $H_j(Y_{\bullet}^{(n, k)}, A)=H_j(Y_{\bullet}^{(n, k)}, F)\otimes_{F} A$,
  where $H_j(Y_{\bullet}^{(n, k)}, F)$ is finite-dimensional.
  \item \label{remo:2} From the spectral sequence (\ref{definitive}) it follows that $H^{i}(\Pp_K^d, U_n)=W_n\otimes_{F}A$, with $W_n$ a finite-dimensional condensed $F$-vector space.
 \end{enumerate}
 \end{rem}
 Now, we note that we have a morphism of spectral sequences
 $$
 \xymatrix@C-=0.6cm@R-=0.6cm{
 E_{2, (n+1)}^{-j, i} \ar[d] \ar@{}[r]^(.22){}="a"^(.5){}="b" \ar@{=>} "a";"b" &  H^{i-j}(\Pp_K^d, U_{n+1}) \ar[d] \\
 E_{2, (n)}^{-j, i} \ar@{}[r]^(.22){}="a"^(.5){}="b" \ar@{=>} "a";"b"          & H^{i-j}(\Pp_K^d, U_{n}) 
 }
 $$
 where the left arrow is induced by the map of simplicial sets $Y_\bullet^{(n+1, k)}\to Y_{\bullet}^{(n, k)}$, and the right arrow is induced by the inclusion $U_n\subset U_{n+1}$. 
 Therefore, for varying $n\in \Nn$, we obtain an inverse system of spectral sequences;  by Remark \ref{remo}\listref{remo:1}, and Lemma \ref{ML}, passing to the limit, we get the following spectral sequence
  \begin{equation}\label{2059}
   E_2^{-j, i}\implies \varprojlim_n H^{i-j}(\Pp_K^d, U_n)
  \end{equation}
 where $E_2^{-j, i}=\varprojlim_n H_j(Y_{\bullet}^{(n, i/2)}, A)$ if $i\in[2, 2d]$ is even, and $E_{2}^{-j, i}=0$ otherwise. Moreover, by (the proof of) \cite[\S 2, Proposition 4]{SS}, for all $i\ge 0$, we have the following natural exact sequence
  $$0\to R^1\varprojlim_n H^{i-1}(\Pp_K^d, U_n)\to H^i(\Pp_K^d, \Hh_K^d)\to \varprojlim_n H^i(\Pp_K^d, U_n)\to 0.$$
  Hence, by Remark \ref{remo}\listref{remo:2}, and Lemma \ref{ML}, we have that
  $$\varprojlim_n H^i(\Pp_K^d, U_n)=H^i(\Pp_K^d, \Hh_K^d).$$ 
  In conclusion, the spectral sequence (\ref{2059}) can be rewritten as
  \begin{equation}\label{spectral12}
   E_2^{-j, i}=\left\{\begin{array}{lr}
                 \varprojlim_n H_j(Y_{\bullet}^{(n, i/2)}, A) &\text{if } i\in[2, 2d]\cap 2\Zz \\
                 0 & \text{otherwise}
                \end{array}\right\}
  \implies H^{i-j}(\Pp_K^d, \Hh_K^d).
  \end{equation}
  Next, we study the second page $E_2^{-j, i}$ of the obtained spectral sequence (\ref{spectral12}). 
  For this, we introduce, for $1\le k\le d$, the simplicial profinite set $Y_\bullet^{(k)}$ defined by
  $$Y_j^{(k)}:=\left\{(H_0, \ldots, H_j)\in \cl H^{j+1}: \dim_K \left(\sum\nolimits_{r=0}^j K\ell_{H_r}\right)\le k\right\}$$
  endowed with the topology induced from the natural profinite one on $\cl H^{j+1}$, and with face/degeneracy maps given by omitting/doubling one hyperplane in a tuple.\footnote{
  Note that $Y_\bullet^{(k)}=\varprojlim_n Y_\bullet^{(n, k)}$, in fact, for $(H_0, \ldots, H_j)\in \cl H^{j+1}$, we have $\dim_K (\sum\nolimits_{r=0}^j K\ell_{H_r})\le k$ if and only if  $\rk(\sum_{r=0}^j (\cl O_K/\varpi^n)\ell_{H_r})\le k$ for all $n\in \Nn$. } 
  
  The definition above is motivated by the following result.
    
  \begin{lemma}\label{YT}\
  \begin{enumerate}[(i)]
   \item \label{YT:1} For each $i\ge 0$, and $1\le k\le d$, there is a natural isomorphism of abelian groups\footnote{Here, $|-|$ denotes the geometric realization.}
   $$H^i(|Y_\bullet ^{(k)}|, \Zz)\cong H^i(|\cl T_\bullet^{(k)}|, \Zz).$$
   \item \label{YT:2} We have $H^i(|\cl T_\bullet^{(k)}|, \Zz)=0$ for $i\neq 0, k-1$, and \footnote{Here, $\LC(-, \Zz)$ denotes the locally constant functions with values in $\Zz$.}
   $$H^0(|\cl T_\bullet^{(k)}|, \Zz)=
   \begin{cases}
    \LC(\Pp((K^{d+1})^*), \Zz) & \text{if } k=1 \\
    \Zz & \text{if } k>1.
   \end{cases}
   $$
   \item \label{YT:3} For each $j\ge 0$, we have a natural isomorphism in $\Mod_{A}^{\cond}$
   $$\varprojlim_n H_j(Y_\bullet^{(n, k)}, A)\cong\underline{\Hom}(H^j(|\cl T_\bullet^{(k)}|, \Zz), A)$$
   where $H^j(|\cl T_\bullet^{(k)}|, \Zz)$ is regarded as discrete condensed abelian group.
  \end{enumerate}
  \end{lemma}
  \begin{proof}
   Part \listref{YT:1} is \cite[\S 3, Proposition 5]{SS}, and part \listref{YT:2} is \cite[\S 3, Proposition 6, (iii)]{SS}.
   For part \listref{YT:3}, we adapt the argument of \cite[\S 3, Proposition 6, (i)]{SS}. Given a field $F$ as in axiom \listref{axiom:3}, by the universal coefficient theorem we have, for each $n\in \Nn$, a natural exact sequence in $\Vect_{F}^{\cond}$
   \begin{equation}\label{above}
    0 \to \underline{\Ext}^1(H^{j+1}(Y_\bullet^{(n, k)}, \Zz), F)\to H_j(Y_\bullet^{(n, k)}, F)\to \underline{\Hom}(H^j(Y_\bullet^{(n, k)}, \Zz), F)\to 0.
   \end{equation}
    We recall that $H^j(Y_\bullet^{(n, k)}, \Zz)$ is a finite module in $\Mod_{\Zz}^{\cond}$,  in particular $H^j(Y_\bullet^{(n, k)}, \Zz)\otimes_{\Zz}F$ is a finite-dimensional condensed $F$-vector space, for all $j\ge 0$. We deduce that the left term of (\ref{above}) vanishes, and thus we have an isomorphism 
    \begin{equation}\label{vani}
     H_j(Y_\bullet^{(n, k)}, F)\cong \underline{\Hom}(H^j(Y_\bullet^{(n, k)}, \Zz), F).
    \end{equation}
   Then, recalling Remark \ref{remo}, applying the functor $-\otimes_{F} A$ to  (\ref{vani}), and passing to the inverse limit over $n\in \Nn$, we have
   \begin{equation}\label{lllast}
    \varprojlim_n H_j(Y_\bullet^{(n, k)}, A)\cong \underline{\Hom}(\varinjlim\nolimits_n H^j(Y_\bullet^{(n, k)}, \Zz), A)
   \end{equation}
   where we used that $\underline{\Hom}(H^j(Y_\bullet^{(n, k)}, \Zz), F)\otimes_{F} A=\underline{\Hom}(H^j(Y_\bullet^{(n, k)}, \Zz), A)$,  which can be checked taking a finite presentation of the module $H^j(Y_\bullet^{(n, k)}, \Zz)$ in $\Mod_{\Zz}^{\cond}$. Now, for all $j\ge 0$, we have $\varinjlim_n H^j(Y_\bullet^{(n, k)}, \Zz)=H^j(|Y_\bullet^{(k)}|, \Zz)$ (see the proof of \cite[\S 3, Lemma 2]{SS}), therefore, the statement of part \listref{YT:3} follows from (\ref{lllast}) and part \listref{YT:1}.
  \end{proof}

  Now, we come back to the spectral sequence (\ref{spectral12}). In view of Lemma \ref{YT}, it can we rewritten as
  $$E_2^{-j, i}=\left\{\begin{array}{lr}
                \underline{\Hom}(H^j(|\cl T_\bullet^{(i/2)}|, \Zz), A) &\text{if } i\in[2, 2d]\cap 2\Zz,\; j\in \{0, \frac{i}{2}-1\} \\
                 0 & \text{otherwise}
                \end{array}\right\}
  \implies H^{i-j}(\Pp_K^d, \Hh_K^d).$$
 
 We note that on the second page of the spectral sequence above there are no differentials between the non-zero terms $E_2^{-j, i}$ with $j=i/2-1$. In conclusion, this spectral sequence degenerates and computes $H^i(\Pp_K^d, \Hh_K^d)$. Then, Theorem \ref{mainSS} follows from the exact sequence (\ref{relcoh})  (see \cite[\S 3, Lemma 7]{SS} for the details); the $\underline{G}$-equivariance of the isomorphism in the statement is a consequence of the remarks after \cite[\S 3, Theorem 1]{SS}. 
 \end{proof}

 \subsection{Generalized Steinberg representations}\label{gsr}
  
  As we will recall, it is possible to give a clearer representation-theoretic interpretation to the result of Theorem \ref{mainSS}.  In fact, the cohomology of the geometric realization of the topological Tits buildings of $\GL_{d+1}(K)$ is related with a generalization of the Steinberg representations. See also \cite[\S 5.2]{CDN1}.
  
 \begin{notation}\label{maintain}
  Let $d\ge 1$ be an integer, and $G=\GL_{d+1}(K)$. Let $e_1^*, \ldots, e_{d+1}^*$ be the canonical basis of $(K^{d+1})^*$. Let $\Delta:=\{1, \ldots, d\}$ and $I\subseteq \Delta$ a subset. We denote by $P_I$ the corresponding parabolic subgroup of $G$, i.e. the stabilizer in $G$ of the flag
  $\tau_I$ defined by $W_{i_r}\subsetneq \cdots \subsetneq W_{i_1}\subsetneq W_{i_0}$,
  where $\Delta\setminus I=\{i_0<\cdots< i_r\}$ and $W_{i_k}:=\sum_{i=i_k+1}^{d+1} Ke_i^*$. Moreover, we denote $X_I:=G/P_I$.
 \end{notation}
 
 \begin{df}\label{speciali}
  Let $M$ be an abelian group, and let $I\subseteq \Delta$. The \textit{locally constant special representations of} $G$ (associated to $I$) with coefficients in $M$ are defined by
  $$\Sp_I(M):=\frac{\LC(X_I, M)}{\sum_{j\in \Delta\setminus I}\LC(X_{{I}\cup \{j\}}, M)}$$
  where $\LC(-, M)$ denotes the locally constant functions with values in $M$.
  
  For $0\le i\le d$, we denote $$\Sp_i(M):=\Sp_{\{1, \ldots, d-i\}}(M)$$
  and for $i>d$, we set $\Sp_i(M)=0$.
 \end{df}
 
 \begin{rem}
  We have that $\Sp_I(M)$ is a smooth $G$-module. Moreover, we observe that there is a natural isomorphism $\Sp_I(M)\cong \Sp_I(\Zz)\otimes_{\Zz} M$. 
 \end{rem}
 
 \begin{rem}
  For $I=\emptyset$, we have that $\Sp_I(M)$ is the locally constant Steinberg representation of $G$ with coefficients in $M$.
 \end{rem}
 
 \begin{prop}\label{titsp}
  For all $1\le i\le d$, we have a natural isomorphism
  $$\widetilde H^{i-1}(|\cl T_\bullet^{(i)}|, \Zz)\cong \Sp_i(\Zz)$$
  where $\widetilde H$ denotes the reduced cohomology.
 \end{prop}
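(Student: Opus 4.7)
The plan is to identify the top-degree cells of $\cl T_\bullet^{(i)}$ with the varieties $X_I$ used to define $\Sp_i(\Zz)$, and then read off the reduced top cohomology as the cokernel of the last coboundary map in the simplicial cochain complex.

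First, I would inspect the non-degenerate simplices. A non-degenerate $j$-simplex of $\cl T_\bullet^{(i)}$ is a strict chain $W_0\subsetneq\cdots\subsetneq W_j$ with $1\le\dim W_r\le i$. For $j=i-1$ this forces $\dim W_r=r+1$ for all $r$, so the non-degenerate $(i-1)$-simplices form the profinite set of complete flags of dimensions $1,2,\ldots,i$ in $(K^{d+1})^*$. Unraveling Notation \ref{maintain}, for $I=\{1,\ldots,d-i\}$ one has $\Delta\setminus I=\{d-i+1,\ldots,d\}$ and the flag $\tau_I$ has successive dimensions $1,2,\ldots,i$; hence the non-degenerate $(i-1)$-simplices are canonically identified with $X_I=G/P_I$. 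Similarly, the non-degenerate $(i-2)$-simplices are chains of length $i-1$ whose dimensions form an $(i-1)$-element subset of $\{1,\ldots,i\}$; grouping them by the omitted dimension $k\in\{1,\ldots,i\}$ yields $\coprod_{k=1}^{i}X_{I\cup\{d+1-k\}}$, and the two face maps from $X_I$ match the natural projections $X_I\to X_{I\cup\{j\}}$ for $j\in\Delta\setminus I$ (forgetting one subspace of the flag).

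Second, I would use the normalized cochain complex (equivalently, work with non-degenerate simplices) with $\Zz$-coefficients: since $\cl T_\bullet^{(i)}$ has no non-degenerate simplices of degree greater than $i-1$, the cohomology $H^{i-1}(|\cl T_\bullet^{(i)}|,\Zz)$ equals the cokernel of
\[
d\colon \bigoplus_{j\in \Delta\setminus I} \LC(X_{I\cup\{j\}},\Zz)\longrightarrow \LC(X_I,\Zz),
\]
the alternating sum of the pullbacks along the projections $X_I\to X_{I\cup\{j\}}$. Since signs are irrelevant to the image, $\im(d)=\sum_{j\in\Delta\setminus I}\LC(X_{I\cup\{j\}},\Zz)$. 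Combined with Lemma \ref{YT}\listref{YT:2}, for $i\ge 2$ we have $\widetilde H^{i-1}=H^{i-1}$ and therefore
\[
\widetilde H^{i-1}(|\cl T_\bullet^{(i)}|,\Zz)=\LC(X_I,\Zz)\Big/\sum_{j\in\Delta\setminus I}\LC(X_{I\cup\{j\}},\Zz)=\Sp_i(\Zz).
\]

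Third, for the base case $i=1$, the simplicial profinite set $\cl T_\bullet^{(1)}$ has only non-degenerate $0$-simplices (the set $\Pp((K^{d+1})^*)=X_I$ with $I=\{1,\ldots,d-1\}$), so $|\cl T_\bullet^{(1)}|$ is discrete and $\widetilde H^0(|\cl T_\bullet^{(1)}|,\Zz)=\LC(X_I,\Zz)/\Zz$; noting that $X_{I\cup\{d\}}$ is a single point, so that $\LC(X_{I\cup\{d\}},\Zz)=\Zz$, this recovers $\Sp_1(\Zz)$. Finally, all identifications above are manifestly equivariant for the $G$-action on flags, and the isomorphism of the proposition follows. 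The only subtle bookkeeping, which I would treat as the main (mild) obstacle, lies in matching the face maps of $\cl T_\bullet^{(i)}$ with the parabolic projections $X_I\to X_{I\cup\{j\}}$ so that the image of the alternating-sum differential really is the sum of subspaces in the definition of $\Sp_i(\Zz)$; everything else is routine simplicial combinatorics together with an appeal to Lemma \ref{YT}\listref{YT:2}.
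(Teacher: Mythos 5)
Your argument is correct, but it is genuinely different in character from what the paper does: the paper disposes of this proposition in one line by citing \cite[\S 4, Lemma 1]{SS} and \cite[Proposition 5.6]{CDN1}, whereas you reprove the content of those references directly. Your combinatorics checks out: for $I=\{1,\dots,d-i\}$ the flag $\tau_I$ has dimension type $(1,2,\dots,i)$, so the non-degenerate $(i-1)$-simplices of $\cl T_\bullet^{(i)}$ are exactly $X_I$, the non-degenerate $(i-2)$-simplices decompose by omitted dimension $k$ as $\coprod_k X_{I\cup\{d+1-k\}}$, every simplex in degree $\ge i$ is degenerate by pigeonhole, and the face maps are the parabolic projections, so the top cohomology of the normalized cochain complex is precisely $\LC(X_I,\Zz)/\sum_{j\in\Delta\setminus I}\LC(X_{I\cup\{j\}},\Zz)=\Sp_i(\Zz)$, with the $i=1$ case and the reduced/unreduced discrepancy in degree $0$ handled as you say (your appeal to Lemma \ref{YT}\listref{YT:2} for $\widetilde H^{i-1}=H^{i-1}$ when $i\ge 2$ is superfluous --- that is just the general relation between reduced and unreduced cohomology of a nonempty space --- and ``the two face maps'' should read ``the $i$ face maps''). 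The one step you should make explicit is the identification of $H^{*}(|\cl T_\bullet^{(i)}|,\Zz)$ with the cohomology of the locally constant normalized cochain complex: this is not automatic for a simplicial profinite set, and is obtained by writing $\cl T_\bullet^{(i)}$ as an inverse limit of simplicial finite sets and using $H^{*}(|\cl T_\bullet^{(i)}|,\Zz)=\varinjlim_n H^{*}$ of the finite levels, exactly as the paper does (via the proof of \cite[\S 3, Lemma 2]{SS}) in establishing Lemma \ref{YT}\listref{YT:3}; one should also note that the non-degenerate locus in each degree is clopen (it is a $G$-orbit $G/P$, hence compact, inside the profinite set of all flags), so that normalized locally constant cochains really are $\LC$ of the non-degenerate simplices. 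With that sentence added, your proof is a clean self-contained substitute for the two citations, at the cost of redoing bookkeeping that the references already carry out.
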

 \begin{proof}
  See \cite[\S 4, Lemma 1]{SS} and \cite[Proposition 5.6]{CDN1}.
 \end{proof}

   \begin{theorem}\label{mainSSS}
   Let $K$ be a finite extension of $\Qq_p$. Let $A$ be a condensed ring, and let $H^\bullet:\RigSm_K\to \Mod_A^{\cond}$ be a cohomology theory that satisfies the axioms of Schneider--Stuhler (\ref{axioms}). Then, for all $i\ge 0$, there exists a natural isomorphism of $\underline{G}$-modules in $\Mod_A^{\cond}$
   $$H^i(\Hh_K^d)\cong \underline{\Hom}(\Sp_i(\Zz), A)$$
   where $\Sp_i(\Zz)$ is regarded as a discrete condensed abelian group.
  \end{theorem}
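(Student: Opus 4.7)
The plan is to deduce this theorem directly by combining the computation of Theorem \ref{mainSS} with the representation-theoretic identification of Proposition \ref{titsp}. Concretely, for $1\le i\le d$, Theorem \ref{mainSS} provides a natural $\underline{G}$-equivariant isomorphism
$$H^i(\Hh_K^d)\cong \underline{\Hom}(\widetilde{H}^{i-1}(|\cl T_{\bullet}^{(i)}|, \Zz), A)$$
in $\Mod_A^{\cond}$, while Proposition \ref{titsp} identifies $\widetilde{H}^{i-1}(|\cl T_{\bullet}^{(i)}|, \Zz)\cong \Sp_i(\Zz)$; applying $\underline{\Hom}(-, A)$ to this isomorphism (regarded on both sides as discrete condensed abelian groups) yields the desired identification in the range $1\le i\le d$. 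The main step is to check that the isomorphism of Proposition \ref{titsp} is itself $G$-equivariant, so that the composite is equivariant in $\Mod_A^{\cond}$; this is verified directly from the explicit description of the Tits building and of the locally constant special representation, as done in \cite[\S 4, Lemma 1]{SS} and \cite[Proposition 5.6]{CDN1}.

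It remains to handle the edge cases. For $i=0$, unwinding Definition \ref{speciali} with $I=\{1,\dots,d\}$ gives $X_I=G/G=\{*\}$, so $\Sp_0(\Zz)=\LC(\{*\},\Zz)=\Zz$, and hence $\underline{\Hom}(\Sp_0(\Zz), A)=A=H^0(\Hh_K^d)$, matching the $i=0$ case of Theorem \ref{mainSS}. For $i>d$, Definition \ref{speciali} sets $\Sp_i(\Zz)=0$, and Theorem \ref{mainSS} gives $H^i(\Hh_K^d)=0$, so both sides vanish trivially. The only genuine content is in the middle range, and no new argument is required there beyond concatenating the two statements; the potential obstacle is purely bookkeeping, namely to ensure that the $\underline{G}$-action produced in the proof of Theorem \ref{mainSS} (through the pro-étale/condensed structure on the Tits building) matches the standard smooth $G$-action on $\Sp_i(\Zz)$ under the identification of Proposition \ref{titsp}, which follows from the functoriality of all constructions involved.
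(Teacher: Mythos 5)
Your proof is correct and is exactly the paper's argument: Theorem \ref{azz}'s precursor, Theorem \ref{mainSSS}, is proved in the paper by the one-line combination of Theorem \ref{mainSS} with Proposition \ref{titsp}. Your additional verification of the edge cases $i=0$ and $i>d$ and of the $G$-equivariance is sound bookkeeping that the paper leaves implicit.
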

  \begin{proof}
   This is a consequence of Theorem \ref{mainSS} combined with Proposition \ref{titsp}.
  \end{proof}
  
  \begin{rem}\label{spdual}  In the case $A=F$ is a non-archimedean local field, we can describe in classical topological terms the internal dual $\underline{\Hom}(\Sp_I(\Zz), F)$ that appears in Theorem \ref{mainSSS} (here, we maintain Notation \ref{maintain}).
  
  First, we observe that we can endow $\Sp_I(F)$ with a natural structure of topological $F$-vector space (see also \cite[\S 5.2.2]{CDN1}). In fact, the quotient space $X_I=G/P_I$ is a profinite set; more precisely, we can write $X_I=\varprojlim_n X_{n, I}$ as a countable inverse limit of finite sets $X_{n, I}$, along surjective transition maps. Since each $\LC(X_{n, I}, F)$ is a finite-dimensional $F$-vector space, it has a natural topology coming from the topology on $F$. We equip $\LC(X_I, F)=\varinjlim_n \LC(X_{n, I}, F)$ with the direct limit topology,\footnote{Recall that, given $\{W_n\}$ a countable direct system of locally compact Hausdorff topological $K$-vector spaces, whose transitions maps are immersions, then, the $F$-vector space $\varinjlim_n W_n$, endowed with the direct limit topology, is a topological $F$-vector space (see e.g. \cite[Theorem 4.1]{Hirai}).} and $\Sp_I(F)$ with the induced quotient topology.  Note that we can write $\Sp_I(F)=\varinjlim_n V_n$ as a countable direct limit of finite-dimensional topological $F$-vector spaces $V_n$, along (closed) immersions; then, by Example \ref{LFspace}, we have $\underline{\Sp_I(F)}=\varinjlim_n \underline{V_n}$. We deduce that the internal dual $\underline{\Hom}(\Sp_I(\Zz), F)$ identifies with
  $$\underline{\Hom}_{F}(\underline{\Sp_I(F)}, F)=\varprojlim_n \underline{\Hom}_F(\underline{V_n}, F)=\varprojlim_n \underline{\Hom_{\cont, F}(V_n, F)}=\underline{\Hom_{\cont, F}(\Sp_I(F), F)}$$
  where the dual space $\Sp_I(F)^*:=\Hom_{\cont, F}(\Sp_I(F), F)$ is endowed with the weak topology. We observe that $\Sp_I(F)^*$ is an $F$-Fréchet space, being a countable inverse limit of finite-dimensional topological $F$-vector spaces.
 \end{rem}

  \section{\textbf{Pro-\'etale period sheaves}}\label{petsheaves}
  \sectionmark{}

  In this section, first we recall definitions and basic results on the pro-\'etale period sheaves, and we check that the condensed pro-étale cohomology groups (Definition \ref{Hc}) of the period sheaves are compatible with the ``standard topology'' on the relative period rings. Then, in \S \ref{cls}, we study, in the condensed setting, a Cartan--Leray type spectral sequence for the cohomology of such pro-étale sheaves. Lastly, in \S \ref{rfun}, via the relative Fargues--Fontaine curves, we provide a geometric proof of the relative fundamental exact sequences of $p$-adic Hodge theory. \medskip
  
 \subsection{Preliminaries}
 
  We start with the following definition.
  
  \begin{df}\label{gyu}
    Let $X$ be an analytic adic space over $\Spa(\Zz_p, \Zz_p)$.\footnote{We remind the reader that all the analytic adic spaces are assumed to be $\kappa$-small (see \S \ref{conve}).} We define the \textit{integral $\pet$-structure sheaf} $\widehat{\cl O}_X^+$ and the \textit{$\pet$-structure sheaf} $\widehat{\cl O}_X$ as the sheaves on $X_{\pet}$ satisfying respectively $$\widehat{\cl O}_X^+(Y):=\cl O^+_{Y^{\sharp}}(Y^\sharp),\;\;\;\;\;\;\;\; \widehat{\cl O}_X(Y):=\cl O_{Y^{\sharp}}(Y^\sharp)$$
   for all perfectoid spaces $Y\in X_{\pet}$.
  \end{df}

  We note that, thanks to \cite[Theorem 8.7]{Scholze3}, $\widehat{\cl O}_X^+$ and $\widehat{\cl O}_X$ are indeed sheaves.

 \begin{rem}[Comparison with \cite{Scholze}] Let $X$ be an analytic adic space over $\Spa(\Qq_p, \Zz_p)$ that is either locally noetherian or perfectoid. Let $\nu: X_{\pet}\to X_{\ett}$ be the natural morphism of sites. As in \cite{Scholze}, one can define the following sheaves on $X_{\pet}$: the sheaves $\cl O_X^+=\nu^* \cl O_{X_{\ett}}^+$ and $\cl O_{X}=\nu^* \cl O_{X_{\ett}}$, the \textit{completed integral structure sheaf} $\widehat{\cl O}_X^+=\varprojlim_n \cl O_X^+/p^n$, and the \textit{completed structure sheaf} $\widehat{\cl O}_X=\widehat{\cl O}_X^+[1/p]$. Under the current assumptions on $X$, thanks to \cite[Lemma 2.7]{MW}, the latter two sheaves can be respectively identified with the ones of Definition \ref{gyu}.
 \end{rem}

  Following Scholze, \cite[\S 6]{Scholze}, we are ready to define the de Rham pro-étale period sheaves, that are the sheaf-theoretic version of the classical de Rham period rings of Fontaine.
  
 \begin{df}\label{periodsheaves} Let $X$ be an analytic adic space over $\Spa(\Qq_p, \Zz_p)$.  The following are defined to be sheaves on $X_{\pet}$.
 \begin{enumerate}[(i)] 
   \item The \textit{tilted integral $\pet$-structure sheaf} $\widehat{\cl O}_X^{\flat +}=\varprojlim_{\varphi} \widehat{\cl O}_{X}^+/p$, where the inverse limit is taken along the Frobenius map $\varphi$.
  \item The sheaves $\Aa_{\inf}=W(\widehat{\cl{O}}_X^{\flat +})$ and $\Bb_{\inf}=\Aa_{\inf}[1/p]$. We have a morphism of pro-\'etale sheaves $\theta: \Aa_{\inf}\to\widehat{\cl{O}}^+_{X}$ that extends to $\theta: \Bb_{\inf}\to \widehat{\cl{O}}_{X}$.
  \item We define the \textit{positive de Rham sheaf} $\Bb_{\dR}^+=\varprojlim_{n\in \Nn} \Bb_{\inf}/(\ker\theta)^n$, with filtration given by $\Fil^r \Bb_{\dR}^+=(\ker \theta)^r \Bb_{\dR}^+$.
  \item\label{periodsheaves:last} Let $t$ be a generator of $\Fil^1 \Bb_{\dR}^+$.\footnote{Such a generator exists locally on $X_{\pet}$, it is a non-zero-divisor and unique up to unit, by \cite[Lemma 6.3]{Scholze}.} We define the \textit{de Rham sheaf} $\Bb_{\dR}=\Bb_{\dR}^+[1/t]$, with filtration $\Fil^r \Bb_{\dR}=\sum_{j\in \Zz} t^{-j} \Fil^{r+j}\Bb_{\dR}^+$.
 \end{enumerate}
\end{df}

 Now, we recall the definition of the pro-\'etale sheaf-theoretic version of the ring $B$ introduced by Fargues and Fontaine in their work on the \textit{fundamental curve of $p$-adic Hodge theory}, \cite[\S 1.6]{FF}. See also \cite[\S 8]{LeBras1}. \medskip
 
 In the following, we denote by $v(-)$ the valuation on $\cl O_{C^\flat}$ defined as follows: for $x\in \cl O_{C^\flat}$, we define $v(x)$ as the $p$-adic valuation of $x^{\sharp}\in \cl O_{C}$

 \begin{df}\label{I}
   Let $X$ an analytic adic space over $\Spa(C, \cl O_C)$.  Let $I=[s, r]$ be an interval of $(0, \infty)$ with rational endpoints, and let $\alpha, \beta\in \cl O_{C^{\flat}}$ with valuation $v(\alpha)=1/r$ and $v(\beta)=1/s$. We define the following sheaves on $X_{\pet}$
  $$\Aa_{\inf, I}=\Aa_{\inf}\left[\frac{p}{[\alpha]}, \frac{[\beta]}{p}\right],\;\;\;\; \Aa_I=\varprojlim_n\Aa_{\inf, I}/p^n,\;\;\;\;\Bb_I=\Aa_I[1/p].$$
  Moreover, we define the sheaf on $X_{\pet}$
  $$\Bb=\varprojlim _{I\subset (0, \infty)} \Bb_I$$
  where $I$ runs over all the compact intervals of $(0, \infty)$ with rational endpoints.
 \end{df}
 
 \begin{rem}\label{frr}
  Let $I=[s, r]$ be an interval of $(0, \infty)$ with rational endpoints. The Frobenius $\varphi$ on $\Aa_{\inf}$ induces an isomorphism
  $$\varphi: \Bb_I\overset{\sim}{\to} \Bb_{I/p}$$
  where $I/p:=[s/p, r/p]$ (cf. \cite[\S 1.6]{FF}). In particular, the Frobenius $\varphi$ extends to an automorphism of $\Bb$.
 \end{rem}

  Next, we recall that, on the affinoid perfectoid spaces over $\Spa(C, \cl O_C)$, the period sheaves defined above are given by the expected relative period rings. \medskip
 
 Let $(R, R^+)$ be an affinoid perfectoid algebra over $(C, \cl O_C)$, and let $(R^\flat, R^{\flat +})$ be its tilt.\footnote{Here, $R^+$ is endowed with the $p$-adic topology, and $R^{\flat +}=\varprojlim_{x\mapsto x^p}R^+$ with the induced inverse limit topology.} We define $\Aa_{\inf}(R, R^+)=W(R^{\flat +})$ endowed with the inverse limit topology, $\Bb_{\inf}(R, R^+)=\Aa_{\inf}(R, R^+)[1/p]$ with the direct limit topology, and $\Bb_{\dR}^+(R, R^+)=\varprojlim_{n\in \Nn} \Bb_{\inf}(R, R^+)/(\ker\theta)^n$ with the inverse limit topology.
 Given $\xi \in A_{\inf}=\Aa_{\inf}(C, \cl O_C)$ a generator of the kernel of $\theta: A_{\inf}\to \cl O_C$,\footnote{We recall that such an element $\xi$ generates also the kernel of $\theta:\Aa_{\inf}(R, R^+)\to R^+$ and it is a non-zero-divisor in $\Aa_{\inf}(R, R^+)$ (see \cite[Lemma 6.3]{Scholze}).} we set $\Bb_{\dR}(R, R^+)=\Bb_{\dR}^+(R, R^+)[1/\xi]$ and endow it with the direct limit topology.
 Moreover, for $I\subset (0, \infty)$ a compact interval with rational endpoints, in a similar fashion we define $\Aa_{\inf, I}(R, R^+)$, $\Aa_{I}(R, R^+)$, $\Bb_I(R, R^+)$, $\Bb(R, R^+)$ and endow them with the topologies induced by the one on $\Aa_{\inf}(R, R^+)$. 
 
 \begin{rem}
  Recall that the topology defined above gives to $\Bb_{\dR}^+(R, R^+)$ a structure of $\Qq_p$-Fréchet algebra. Moreover, for $I\subset (0, \infty)$ a compact interval with rational endpoints, we have that $\Bb_I(R, R^+)$ is a $\Qq_p$-Banach algebra, and $\Bb(R, R^+)$ is a $\Qq_p$-Fréchet algebra (see \cite[\S 1.6]{FF}).
 \end{rem}

 \begin{prop}\label{affinoidsections}
  Let $Z=\Spa(R, R^+)$ be an affinoid perfectoid space over $\Spa(C, \cl O_C)$.
  \begin{enumerate}[(i)]
   \item \label{affinoidsections:1} We have $\widehat{\cl{O}}_Z^{+}(Z)=R^+$ and $\widehat{\cl{O}}_Z^{\flat +}(Z)=R^{\flat +}$. Moreover, $H_{\pet}^i(Z, \widehat{\cl{O}}_Z^{\flat +})$ (resp. $H_{\pet}^i(Z, \widehat{\cl{O}}_Z^{+})$) is almost zero, for all $i>0$, with respect to the almost setting defined by the ring $\cl O_C$ (resp. $\cl O_{C^\flat}$) and its ideal of topologically nilpotent elements.
   \item \label{affinoidsections:2} For $\mathbf{A}\in \{\Aa_{\inf}, \Aa_{\inf, I},  \Aa_I\}$, we have $\mathbf{A}(Z)=\mathbf{A}(R, R^+)$, and $H^i_{\pet}(Z, \mathbf{A})$ is almost zero, for all $i>0$, with respect to the almost setting defined by the ring $A_{\inf}$ and its ideal $([p^{\flat}]^{1/p^n})_{n\ge 1}$.
   \item \label{affinoidsections:3} For $\mathbf{B}\in \{\Bb_{\dR}^+, \Bb_{\dR}, \Bb_I, \Bb, \Bb_I[1/t], \Bb[1/t]\}$ we have
   $\mathbf{B}(Z)=\mathbf{B}(R, R^+)$, and $H^i_{\pet}(Z, \mathbf{B})$ vanishes for all $i>0$.
  \end{enumerate}
 \end{prop}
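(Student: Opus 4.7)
The plan is to bootstrap from the almost acyclicity of the completed integral structure sheaves to all of the sheaves built from them, using devissage, $p$-adic derived completion, localization, and inversion of $p$ and $t$.

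Part \listref{affinoidsections:1} is Scholze's foundational almost acyclicity result (see \cite[Lemma 4.10]{Scholze}): the identification of $\widehat{\cl O}_Z^+(Z)$ with $R^+$ comes from the sheaf property on the pro-\'etale affinoid perfectoid basis of $Z_{\pet}$, while the almost vanishing of higher cohomology follows from the computation on this basis via Faltings' almost purity theorem. The tilted statement is formally identical, applied to the affinoid perfectoid algebra $(R^\flat, R^{\flat+})$ of characteristic $p$.

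For part \listref{affinoidsections:2}, I would first establish the finite-level statement modulo $p^n$. Since $p$ is a non-zero-divisor on $\Aa_{\inf} = W(\widehat{\cl O}_Z^{\flat +})$, the quotient $\Aa_{\inf}/p^n$ is an iterated extension of $\widehat{\cl O}_Z^{\flat +}$, so the tilted case of \listref{affinoidsections:1} produces almost vanishing of $H^i_{\pet}(Z, \Aa_{\inf}/p^n)$ for $i>0$ together with the identification of $H^0$ with $A_{\inf}(R, R^+)/p^n$. Passing to the inverse limit over $n$, the Mittag-Leffler property on $H^0$ together with the uniformly almost-zero higher cohomology groups gives almost vanishing of $R^1\varprojlim_n$, and hence the statement for $\Aa_{\inf}$. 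For $\Aa_{\inf, I}$ one notes that the localization at $[a]/p$ and $p/[b]$ is flat, so the finite-level argument goes through as above using the same tilted perfectoid input; the case of $\Aa_I$ is then obtained by repeating the $p$-adic completion argument.

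Part \listref{affinoidsections:3} follows by inverting $p$, which turns ``almost zero'' into honest zero. For $\Bb_{\dR}^+$ I would devissage along the filtration: each graded piece $\gr^r \Bb_{\dR}^+$ is isomorphic to a Tate twist of $\widehat{\cl O}_Z$, whose higher pro-\'etale cohomology vanishes after inverting $p$ by \listref{affinoidsections:1}; by induction this gives vanishing of $H^i_{\pet}(Z, \Bb_{\inf}/(\ker\theta)^n)$ for $i>0$ together with the expected $H^0$, and the Mittag-Leffler property on sections then yields the claim for $\Bb_{\dR}^+$. The sheaves $\Bb_I$ and $\Bb = \varprojlim_I \Bb_I$ are handled by inverting $p$ in \listref{affinoidsections:2} and passing to the inverse limit over intervals $I$, which is unproblematic since the higher cohomology groups all vanish. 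Finally, inverting $t$ is a filtered colimit, hence exact and compatible with cohomology on the quasi-compact space $Z$, settling $\Bb_{\dR}$, $\Bb_I[1/t]$ and $\Bb[1/t]$.

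The principal obstacle is the control of the derived inverse limits: one must verify the relevant Mittag-Leffler properties (at worst up to the almost setting), which I expect to reduce to the explicit description of the successive graded pieces and to the uniform bound on the cohomological amplitude of the almost-zero modules appearing at each finite step.
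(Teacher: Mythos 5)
Your overall route coincides with the paper's: part \listref{affinoidsections:1} is quoted from Scholze, and the remaining cases are obtained by exactly the devissages you sketch (the paper simply outsources them to \cite[Proposition 6.5]{Scholze} for $\Aa_{\inf}$ and $\Bb_{\dR}^{+}$, $\Bb_{\dR}$, and to \cite[Proposition 8.3]{LeBras1} for $\Aa_{\inf, I}$, $\Aa_I$, $\Bb_I$). There is, however, one step where your justification is wrong as stated. The case $\Bb=\varprojlim_I\Bb_I$ is \emph{not} ``unproblematic since the higher cohomology groups all vanish'': acyclicity of each $\Bb_I$ only reduces the claim to the vanishing of $R^1\varprojlim_I \Bb_I(Z)$, which is a statement about the sections and does not follow from any sheaf-cohomological vanishing. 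You flag Mittag-Leffler as ``the principal obstacle'' at the end but propose to handle it via ``the explicit description of the successive graded pieces,'' which is not the right mechanism here: the images of the transition maps are not eventually stable, so the algebraic Mittag-Leffler criterion fails. What saves the day (this is the paper's Lemma \ref{R1BI}, due to Le Bras) is the \emph{topological} Mittag-Leffler lemma: the $\Bb_I(Z)$ are $\Qq_p$-Banach spaces and the transition maps $\Bb_{I'}(Z)\to\Bb_I(Z)$ for $I\subset I'$ have dense image, which forces $R^1\varprojlim_I=0$.

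A secondary caveat: ``inverting $p$ turns almost zero into honest zero'' is immediate for the almost setting of part \listref{affinoidsections:1}, since $p$ lies in the ideal of topologically nilpotent elements, but for the setting $(A_{\inf}, ([p^\flat]^{1/p^n})_n)$ of part \listref{affinoidsections:2} one must also note that $[p^\flat]$ becomes a unit after inverting $p$ in $B_I$ (and that $[p^\flat]/p$ is a unit in $B_{\dR}^+$). Your devissage of $\Bb_{\dR}^+$ through the graded pieces $\widehat{\cl O}_Z(r)$ sidesteps this, but deducing the $\Bb_I$ case from part \listref{affinoidsections:2} does use it. With these two points repaired, the argument is correct and is essentially the one the cited references carry out.
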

 \begin{proof}
 Part \listref{affinoidsections:1} is \cite[Lemma 4.10, Lemma 5.10]{Scholze} (see also \cite[Proposition 8.8]{Scholze3} for a more general statement). The other statements are a corollary. The statement for $\mathbf{A}=\Aa_{\inf}$ and $\mathbf{B}\in \{\Bb_{\dR}^+, \Bb_{\dR}\}$ is proven in \cite[Theorem 6.5]{Scholze}. The statement for $\mathbf{A}\in \{\Aa_{\inf, I},  \Aa_I\}$ and $\mathbf{B}=\Bb_I$ follows from (the proof of) \cite[Proposition 8.3]{LeBras1}.
  In order to deduce the statement for $\mathbf{B}=\Bb$, we reduce to the previous case, using \cite[Lemma 3.18]{Scholze}, together with Lemma \ref{R1BI} below. The statement for $\mathbf{B}\in \{\Bb_I[1/t], \Bb[1/t]\}$ follows from the latter two cases using that $|Z|$ is quasi-compact and quasi-separated.
 \end{proof}
 
 We used the following lemma.
 
  \begin{lemma}[{\cite[Lemma 3.5]{LeBras2}}]\label{R1BI}
   Let $Z=\Spa(R, R^+)$ be an affinoid perfectoid space over $\Spa(C, \cl O_C)$. Then, for all $j>0$, we have
  $$R^j\varprojlim_I \Bb_I(Z)=0$$
  where  $I$ runs over all the compact intervals of $(0, \infty)$ with rational endpoints.
 \end{lemma}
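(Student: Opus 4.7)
The plan is to reduce to a countable cofinal subsystem and then apply a topological Mittag-Leffler argument.

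First, I would pass to a countable cofinal chain in the directed set of compact subintervals of $\,]0,1[\,$ with endpoints in $p^{\Qq}$, for example $I_n := [p^{-n}, p^{-1/n}]$, giving $R^j \varprojlim_I \Bb_I(Z) \cong R^j \varprojlim_n \Bb_{I_n}(Z)$. Since the index set is now countable, $R^j \varprojlim_n$ vanishes automatically for $j \ge 2$, and only the case $j = 1$ remains.

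Second, I would observe that each $\Bb_{I_n}(Z) = \Bb_{I_n}(R, R^+)$ is a $\Qq_p$-Banach algebra for the Gauss norm attached to the annulus $I_n$, and that the transition maps $\Bb_{I_{n+1}}(Z) \to \Bb_{I_n}(Z)$ are continuous $\Qq_p$-linear morphisms. By Grothendieck's topological Mittag-Leffler theorem (cf.\ \cite[\S 13.2]{Groth}), the vanishing of $R^1\varprojlim_n \Bb_{I_n}(Z)$ in the category of abelian groups will follow as soon as every such transition map has dense image.

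Third, I would establish this density. Geometrically, the affinoids attached to the $\Bb_{I_n}(R, R^+)$ form the standard affinoid exhaustion of the ``punctured open unit disk'' obtained from $\Spa(\Aa_{\inf}(R, R^+), \Aa_{\inf}(R, R^+))$ by removing $V(p[p^\flat])$, which is a quasi-Stein adic space; restriction maps between consecutive pieces of such an exhaustion have dense image by a standard non-archimedean Runge-type argument. Equivalently, at the algebraic level, picking $a_n, b_n \in \cl O_{C^\flat}$ with $|a_n| = p^{-n}$ and $|b_n| = p^{-1/n}$ and writing $a_n = a_{n+1} e_n$, $b_{n+1} = b_n d_n$ in $\cl O_{C^\flat}$, the transition map sends $[a_{n+1}]/p \mapsto [e_n] \cdot [a_n]/p$ and $p/[b_{n+1}] \mapsto [d_n] \cdot p/[b_n]$, and polynomials in these Teichmüller multiples with $\Aa_{\inf}[1/p]$-coefficients form a $p$-adically dense $\Aa_{\inf}[1/p]$-subalgebra of $\Bb_{I_n}(R, R^+)$.

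The main obstacle is the algebraic verification of this density: the Teichmüller lifts $[e_n], [d_n]$ are non-invertible in $\Bb_{I_n}(R, R^+)$, so approximating a generic polynomial in $[a_n]/p, p/[b_n]$ by elements in the image requires a careful argument exploiting the completeness of the Banach norm and successive approximations to absorb the factors $[e_n], [d_n]$. Once this density is in hand, the topological Mittag-Leffler theorem yields the conclusion immediately.
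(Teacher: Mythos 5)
The paper gives no argument of its own here: it defers entirely to \cite[Lemma 3.5]{LeBras2}, and your sketch is essentially that proof (countable cofinal chain, hence only $R^1\varprojlim$ matters, killed by Grothendieck's topological Mittag--Leffler criterion for Banach spaces once the transition maps have dense image). The only comment worth making is that the density step you flag as the ``main obstacle'' is cleaner than you fear: the subring $\Aa_{\inf}(R,R^+)[1/p,1/[p^\flat]]$ is dense in every $\Bb_I(Z)$ by construction (it equals $\Aa_{\inf,I}(R,R^+)[1/p]$, because $[b]$ --- and hence $[p^\flat]$, of which $b$ is a unit multiple of a rational power --- is already invertible there, as $[b]\cdot(p/[b])=p$), and it is \emph{independent of} $I$, with the transition maps restricting to the identity on it. So every transition map has dense image with no successive-approximation argument needed, and your worry about the non-invertibility of $[e_n],[d_n]$ evaporates: these Teichm\"uller lifts are units in $\Bb_{I_n}(Z)$.
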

 \begin{proof}
  For any compact intervals $I\subset J\subset (0, \infty)$ with rational endpoints, the natural map $\Bb_J(R^+, R)\to \Bb_I(R^+, R)$ is a morphism of $\Qq_p$-Banach spaces with dense image. Then, the statement follows from the topological Mittag-Leffler lemma \cite[Remarques 13.2.4]{Groth} applied to the countable inverse system $\{\Bb_I(R^+, R)\}_I$.
 \end{proof}

 Therefore, given $X$ an analytic adic space over $\Spa(C, \cl O_C)$ and $\cl F$ any of the sheaves on $X_{\pet}$ of Proposition \ref{affinoidsections}, denoting by $\cl B$ the basis of $X_{\pet}$ consisting of the affinoid perfectoid spaces $U\in X_{\pet}$, one has that the values of $\cl F$ on $\cl B$ have a natural structure of topological abelian groups. Hence, following Remark \ref{summary:2}, we associate to $\cl F$ a sheaf $\underline{\cl F}$ on $X_{\pet}$ with values in $\CondAb$ such that, for $U\in \cl B$, we have $\underline{\cl F}(U)=\underline{\cl F(U)}$.\footnote{More precisely, Remark \ref{summary:2} applies to the \textit{big} pro-étale site, \cite[Definition 8.1, (i)]{Scholze3}.} The following result shows that the condensed cohomology groups $H_{\pet}^i(X, \underline{\cl F})$ agree with the ones of Definition \ref{Hc}.

 \begin{cor}[{\cite[Corollary 6.6]{Scholze}}]\label{profinite}
  Let $X$ be an analytic adic space over $\Spa(C, \cl O_C)$. Let $\cl F$ be any of the pro-\'etale sheaves on $X$ of Proposition \ref{affinoidsections}. For any $U\in X_{\pet}$ affinoid perfectoid, and $S$ profinite set, we have $$\cl F(U\times S)=\mathscr{C}^0(S, \cl F(U)).$$
  In particular, for all $i\ge0$, we have
  \begin{equation}\label{compatible}
   H^i_{\petc}(X, \cl F)=H_{\pet}^i(X, \underline{\cl F})
  \end{equation}
  as condensed abelian groups.\footnote{In the case $\cl F=\mathbf{B}$ of Proposition \ref{affinoidsections}\listref{affinoidsections:3}, we have that \ref{compatible} also holds as condensed $\Qq_p$-vector spaces.}
 \end{cor}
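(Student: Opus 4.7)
The plan is to reduce both statements to direct local computations on affinoid perfectoid opens, combined with the vanishing results of Proposition \ref{affinoidsections}; the cohomological identity will then follow from the first identity via the factorization from Lemma \ref{pet=petcond}.

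First, I would establish the local identity. If $U = \Spa(R, R^+)$ is affinoid perfectoid and $S = \varprojlim_i S_i$ is a profinite set written as a cofiltered limit of finite sets, then $U \times S = \varprojlim_i \coprod_{s \in S_i} U$ is again affinoid perfectoid in $X_{\pet}$, with underlying pair identified with $(\mathscr{C}^0(S, R), \mathscr{C}^0(S, R^+))$. The case $\cl F = \widehat{\cl O}_X^+$ is then immediate from Proposition \ref{affinoidsections}\listref{affinoidsections:1}, and the case of $\widehat{\cl O}_X^{\flat+}$ follows since $\varprojlim_\varphi$ and $\mathscr{C}^0(S,-)$ commute when $S$ is profinite.

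Next, I would bootstrap to the remaining sheaves by unwinding the constructions in Definitions \ref{periodsheaves} and \ref{I}: Witt vectors (a polynomial functor on countably many components), $p$-adic completion, inverting $p$, the localizations at $[a]/p$ and $p/[b]$ defining $\Aa_{\inf, I}$, further inverse limits over $I$, and the inversions $[1/t]$ producing $\Bb_{\dR}$ and $\Bb[1/t]$. Each of these operations commutes with $\mathscr{C}^0(S, -)$ when $S$ is profinite: continuous functions into a topological ring commute with its limit and filtered colimit presentations, and countable-product functors given by polynomial maps extend term-by-term. This yields $\cl F(U \times S) = \mathscr{C}^0(S, \cl F(U))$ for every sheaf in the list.

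For the cohomological statement, I would invoke Lemma \ref{pet=petcond} to factor $f_{\cond}$ through the morphism $\mu: X_{\pet} \to X_{\pet, \cond}$. The local identity from the previous step means that the sheaf $\mu_*\cl F \in \Shv_{\Ab}(X_{\pet, \cond})$, corresponding under the equivalence $\blacktriangledown$ of Remark \ref{cohcond} to the assignment $U \mapsto \underline{\cl F(U)}$, is precisely the sheaf $\underline{\cl F}$ on $X_{\pet}$ with values in $\CondAb$ built via Remark \ref{summary:2}. Moreover, by Proposition \ref{affinoidsections} the higher derived functors $R^i\mu_*\cl F$ vanish on affinoid perfectoids for $i>0$ (strictly for the $\Bb$-type sheaves of part \listref{affinoidsections:3}; for the $\Aa$-type sheaves of part \listref{affinoidsections:2}, the almost vanishing is enough since the identification of sheaves still holds after sheafification, as in \cite{Scholze}). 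Hence $R\mu_*\cl F$ is concentrated in degree $0$ and identifies with $\underline{\cl F}$, so applying $R\Gamma_{\pet, \cond}(X, -)$ yields $H^i_{\petc}(X, \cl F) = H^i_{\pet}(X, \underline{\cl F})$ as condensed abelian groups.

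The main obstacle is the uniform bookkeeping across all the period sheaves in the list, especially ensuring that the commutation of $\mathscr{C}^0(S, -)$ with $p$-adic completion (for $\Aa_I$ and its derivatives) and with the countable inverse limit over $I$ defining $\Bb$ is compatible with the natural Banach/Fréchet structures on the sections; the point where this enters crucially is in the reduction of $\Bb$ to the $\Bb_I$'s via Lemma \ref{R1BI}.
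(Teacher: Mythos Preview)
The paper does not supply a proof: the first assertion is attributed to \cite[Corollary 6.6]{Scholze}, and the cohomological identity is stated as an ``in particular''. Your proposal fills this in correctly and uses exactly the machinery the paper sets up for the purpose---the factorization through $\mu$ via Lemma \ref{pet=petcond}, the identification $(\mu_*\cl F)^{\blacktriangledown}=\underline{\cl F}$ on the basis of affinoid perfectoids (using Remark \ref{cohcond}), and the acyclicity input from Proposition \ref{affinoidsections}. This is the same route the paper itself takes later, in the opening lines of the proof of Theorem \ref{BDR}.

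One caveat: your parenthetical about the $\mathbf{A}$-type sheaves is a bit quick. Almost-vanishing of $H^j_{\pet}(U\times S,\mathbf{A})$ for $U$ affinoid perfectoid does not obviously force $R^j\mu_*\mathbf{A}=0$ after sheafification on $X_{\pet,\cond}$, so the step ``$R\mu_*\cl F$ is concentrated in degree $0$'' is not immediate in that case. This does not affect the paper's applications, which only invoke (\ref{compatible}) for the $\mathbf{B}$-type sheaves of Proposition \ref{affinoidsections}\listref{affinoidsections:3} where the vanishing is strict (cf.\ the footnote to the corollary). But since the corollary is stated uniformly for all sheaves in the list, a complete argument would either sharpen the almost-vanishing step (e.g.\ by working integrally and controlling the defect) or restrict the second assertion to the $\mathbf{B}$-type sheaves.
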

 \begin{proof}
  We note that for any profinite set $S=\varprojlim_i S_i$ written as a cofiltered limit of finite sets $S_i$, and any discrete topological space $T$, we have
  $$\mathscr{C}^0(S, T)=\varinjlim_i \mathscr{C}^0(S_i, T)$$
  (see e.g. \cite[Lemma 3.16]{MW}). We deduce that, for any $n\ge 1$, we have $$\widehat{\cl{O}}^{+}(U\times S)/p^n=\mathscr{C}^0(S, \widehat{\cl{O}}^{+}(U)/p^n).$$
  Then, the statement follows from Proposition \ref{affinoidsections} with the help of Lemma \ref{bspro}.
 \end{proof}

 \begin{example}\label{profiniteex}
  By Corollary \ref{profinite}, the pro-étale period sheaf $\Bb$ on $\Spa(C, \cl O_C)_{\pet}$ agrees with the condensed $\Qq_p$-algebra $\underline{B}$, where $B=\Bb(C, \cl O_C)$ is Fargues--Fontaine's period ring regarded as a $\Qq_p$-Fréchet algebra.
 \end{example}


 \subsection{Cartan--Leray spectral sequence}\label{cls} In this subsection, using the solid formalism and the results of Appendix \ref{ccg} on condensed group cohomology, we show that for the pro-étale cohomology groups of the period sheaves we have a Cartan--Leray spectral sequence. \medskip

 We start with the following definition.
 
 \begin{df}
   Let $G$ be a profinite group. We say that a map $f:\widetilde X\to X$ of analytic adic spaces over $\Zz_p$ is a \textit{pro-étale $G$-torsor} if the associated map of diamonds $f^\diamondsuit$ is endowed with an action of $G$ on $\widetilde X^{\diamondsuit}$ over $X^{\diamondsuit}$ such that quasi-pro-étale locally on $X^{\diamondsuit}$ there is a $G$-equivariant isomorphism of diamonds $\widetilde X^{\diamondsuit}\cong X^{\diamondsuit}\times G$.\footnote{Here, $G$ is regarded as a pro-étale sheaf on $\Perf_{\kappa}$ via the functor  sending a perfectoid space $Z$ to the continuous functions $\mathscr{C}^0(|Z|, G)$.} In this case, we call $G$ the \textit{Galois group} of $f$.
 \end{df}

 \begin{prop}\label{cart-ler}
  Let $G$ be a profinite group. Let $\widetilde X\to X$ be a pro-étale $G$-torsor of analytic adic spaces over $\Spa(C, \cl O_C)$. Let $\cl F$ be any of the period sheaves on $X_{\pet}$ of Proposition \ref{affinoidsections}. Then, we have a Cartan--Leray spectral sequence
  $$E_2^{i, j}=H^i_{\underline{\cond}}(G, H^j_{\petc}(\widetilde X, \cl F))\implies H^{i+j}_{\petc}(X, \cl F).$$
  In particular, if $\widetilde X$ is affinoid perfectoid and $\cl F$ is any of the period sheaves of Proposition \ref{affinoidsections}\listref{affinoidsections:3}, we have an isomorphism in $D(\CondAb)$
  $$R\Gamma_{\underline{\cond}}(G, \cl F(\widetilde X))\overset{\sim}{\to} R\Gamma_{\petc}(X, \cl F).$$
 \end{prop}
  \begin{proof} Consider the \v{C}ech-to-cohomology spectral sequence
  \begin{equation}\label{aok}
   E_1^{i, j}=H_{\petc}^j(\widetilde X_i, \cl F)\implies H_{\petc}^{i+j}(X, \cl F)
  \end{equation}
  where, for $i\ge 1$, $\widetilde X_i$ denotes the $i$-fold fibre product of $\widetilde X$ over $X$. We note that $\widetilde X_i\cong \widetilde X\times G^{i-1}$ regarded as diamonds.
  
  We claim that for any analytic adic space $Y$ over $\Spa(C, \cl O_C)$ and any profinite set $S$ (like $G^{i-1}$), we have a natural isomorphism in $D(\CondAb)$
  \begin{equation}\label{starr}
   R\Gamma_{\petc}(Y\times S, \cl F)\simeq R\underline{\Hom}(\Zz[S],  R\Gamma_{\petc}(Y, \cl F)).
  \end{equation}
  We note that $R\Gamma_{\petc}(Y, \cl F)\in D(\Solid)$: in fact, using (\ref{compatible}), it suffices to observe that the sheaf $\underline{\cl F}$ takes values in $\Solid$, and to recall that the subcategory $\Solid\subset \CondAb$ is stable under all limits and colimits. Then, by \cite[Corollary 6.1, (iv)]{Scholzecond}, we have a natural isomorphism in  $D(\CondAb)$
  \begin{equation}\label{0101}
   R\underline{\Hom}(\Zz[S]^{\solidif},  R\Gamma_{\petc}(Y, \cl F))\overset{\sim}{\to} R\underline{\Hom}(\Zz[S],  R\Gamma_{\petc}(Y, \cl F)).
  \end{equation}
   Now, assuming first $Y$ qcqs, by \cite[Lemma 7.18]{Scholze3} we can pick a simplicial $\pet$-hypercover $U_\bullet\to Y$ such that all $U_n$ are strictly totally disconnected perfectoid spaces. By pro-étale descent we have
   $$R\Gamma_{\petc}(Y, \cl F)=R\lim_{[n]\in \Delta}R\Gamma_{\petc}(U_n, \cl F)$$
   where $\Delta$ denotes the simplicial category. Hence, using that $R\underline{\Hom}(\Zz[S],  -)$ commutes with derived limits, we can reduce to showing (\ref{starr}) with $Y$ a strictly totally disconnected perfectoid space over $\Spa(C, \cl O_C)$. In the latter case, we have the following natural identifications 
   \begin{equation}\label{amino}
    R\Gamma_{\petc}(Y\times S, \cl F)=\underline{\cl F(Y\times S)}=\underline{\mathscr{C}^0(S, \cl F(Y))}=\underline{\Hom}(\Zz[S],  \underline{\cl F(Y)}).
   \end{equation}
   Here,  for the first identification in (\ref{amino}) we used Corollary \ref{profinite} and the fact that, for any profinite set $S'$, we have $H_{\pet}^i(Y\times S', \cl F)=0$ for all $i>0$: to show this, using the same reduction steps in the proof of Proposition \ref{affinoidsections} and the references therein, together with \cite[Proposition 2.13]{MW}, we can reduce to checking that $H_{\ett}^i(Y\times S', \cl O^+/p)=0$ for all $i>0$, which holds true observing that $Y\times S'$ is a strictly totally disconnected perfectoid space (see e.g. \cite[Lemma 7.19]{Scholze3}) in particular any of its étale covers splits; for the latter identification in (\ref{amino}) we used that, for any profinite set $S'$, $\mathscr{C}^0(S', \mathscr{C}^0(S,\cl F(Y)))\cong \mathscr{C}^0(S'\times S, \cl F(Y))$. Then, in the case $Y$ is a strictly totally disconnected perfectoid space over $\Spa(C, \cl O_C)$, (\ref{starr}) holds true observing that, by the isomorphism (\ref{0101}), using that $\Zz[S]^{\solidif}$ is an internally projective object in $\Solid$ (Proposition \ref{intproj}), the right-hand side of  (\ref{starr}) identifies with  $\underline{\Hom}(\Zz[S]^{\solidif},  \underline{\cl F(Y)})\cong\underline{\Hom}(\Zz[S],  \underline{\cl F(Y)})$.
   
   Combining (\ref{starr}) and (\ref{0101}), and using again that, for any profinite set $S$, $\Zz[S]^{\solidif}$ is an internally projective object in $\Solid$, we have that $$H_{\petc}^j(\widetilde X_i, \cl F)\cong \underline{\Hom}(\Zz[G^{i-1}], H_{\petc}^j(\widetilde X, \cl F)).$$ Then, the statement follows from the spectral sequence (\ref{aok}) and Proposition \ref{cond=cont}\listref{cond=cont:1}.
  \end{proof}

 \subsection{Relative fundamental exact sequences via the Fargues--Fontaine curve}\label{rfun}

 In this subsection, we state the relative fundamental exact sequences of rational $p$-adic Hodge theory, and we provide a geometric proof via the relative Fargues--Fontaine curves. \medskip
 
 \begin{notation}\label{notff}
 Let $S=\Spa(R, R^+)$ be an affinoid perfectoid space over $\Ff_p$. We denote
  $$Y_{\FF, S}:=\Spa(W(R^+), W(R^+))\setminus V(p[p^{\flat}]).$$
  We recall that $Y_{\FF, S}$ defines an analytic adic space over $\Qq_p$, \cite[Proposition II.1.1]{FS}. The $p$-th power Frobenius on $R^+$ induces an automorphism $\varphi$ of $Y_{\FF, S}$ whose action is free and totally discontinuous, \cite[Proposition II.1.16]{FS}. The \textit{Fargues--Fontaine curve relative to $S$} (and $\Qq_p$) will be denoted by
  \begin{equation}\label{FFdef}
   \FF_S:=Y_{\FF, S}/\varphi^{\Zz}.
  \end{equation}
  For $S$ an affinoid perfectoid space over $\Ff_p$, and $I=[s, r]\subset (0, \infty)$ an interval with rational endpoints, we define the open subset
 \begin{equation}
  Y_{\FF, S, I}:=\{|p|^r\le |[p^\flat]|\le |p|^s\}\subset Y_{\FF, S}.
 \end{equation}
 We note that $Y_{\FF, S, I}$ is an affinoid space, as it is a rational open subset of $\Spa(W(R^+), W(R^+))$. \medskip
  
 We denote by $\Bun(\FF_S)$ the category of vector bundles on $\FF_S$.
 Let $\breve{\Qq}_p:=W(\overline{\Ff}_p)[1/p]$, and let $\sigma$ be the automorphism of $\breve{\Qq}_p$ induced by the $p$-th power Frobenius on $\overline{\Ff}_p$. We denote by $\Isoc_{\overline{\Ff}_p}$ the category of isocrystals over $\overline{\Ff}_p$, i.e. the category of pairs $(V, \varphi)$ with $V$ a finite-dimensional $\breve{\Qq}_p$-vector space and $\varphi$ a $\sigma$-semilinear automorphism of $V$. Recall that we have a natural exact $\otimes$-functor
  $$\Isoc_{\overline{\Ff}_p}\to \Bun(\FF_S), \;\;\;\; (V, \varphi)\mapsto \cl E(V, \varphi).$$ 
 For $n\in \Zz$, we denote $\cl O_{\FF_S}(n):=\cl E(\breve{\Qq}_p, p^{-n}\sigma)$. \medskip
 
 We fix a compatible system $(1, \varepsilon_p, \varepsilon_{p^2}, \ldots)$ of $p$-th power roots of unity in $\cl O_C$, which defines an element $\varepsilon\in \cl O_C^{\flat}$. We denote by $[\varepsilon]\in A_{\inf}=\Aa_{\inf}(C, \cl O_C)$ its Teichm\"uller lift,  and $t=\log[\varepsilon]\in B=\Bb(C, \cl O_C)$.
 \end{notation}

 The following lemma will be repeatedly used in the sequel.
 
 \begin{lemma}\label{repp}
  Let $S^{\sharp}$ be an affinoid perfectoid space over $\Spa(C, \cl O_C)$, and let $S=(S^{\sharp})^\diamondsuit$.
  Let $I=[s, r]\subset (0, \infty)$ be an interval with rational endpoints. Then,  we have
  $$\Bb_I(S^{\sharp})=\cl O(Y_{\FF, S, I}),\;\;\;\;\;\;\;\;\Bb(S^{\sharp})=\cl O(Y_{\FF, S}).$$
 \end{lemma}
 \begin{proof}
  The statement follows from Proposition \ref{affinoidsections}\listref{affinoidsections:3}.
 \end{proof}

 \begin{prop}\label{strfs}
  Let $S$ be an affinoid perfectoid space over $\Ff_p$ with an untilt $S^{\sharp}$ over $C$.  We denote $\iota_{S^{\sharp}}:S^{\sharp}\hookrightarrow \FF_S$ the natural closed immersion. For $i\ge 1$, we have a short exact sequence of sheaves on $\FF_S$
 \begin{equation}\label{llog}
  0\to \cl O_{\FF_S}\overset{\cdot t^i}{\to}\cl O_{\FF_S}(i)\to \iota_{S^{\sharp}, *}(\Bb_{\dR}^+(S^{\sharp})/\Fil^i)\to 0
 \end{equation}
 where $\Fil^i\Bb_{\dR}^+(S^{\sharp})=t^i \Bb_{\dR}^+(S^{\sharp})$.
 \end{prop}
 \begin{proof}
  For $i=1$, the exact sequence (\ref{llog}) follows from \cite[Proposition II.2.3]{FS}. The general case follows by induction on $i\ge 1$, considering the following commutative diagram (cf. the proof of \cite[Proposition 8.22]{Colmez}):
  \begin{center}
  \begin{tikzcd}
   & & 0 \arrow[d]& 0 \arrow[d]&  \\
   0 \arrow[r]& \cl O_{\FF_S} \arrow[-,double line with arrow={-,-}]{d}\arrow[r, "\cdot t^i"]& \cl O_{\FF_S}(i) \arrow[d, "\cdot t"]\arrow[r]& \iota_{S^{\sharp}, *}(\Bb_{\dR}^+(S^{\sharp})/\Fil^i) \arrow[d, "\cdot t"]\arrow[r]& 0 \\
   0 \arrow[r]& \cl O_{\FF_S} \arrow[r, "\cdot t^{i+1}"]& \cl O_{\FF_S}(i+1) \arrow[d]\arrow[r]& \iota_{S^{\sharp}, *}(\Bb_{\dR}^+(S^{\sharp})/\Fil^{i+1}) \arrow[d]\arrow[r]& 0 \\
    & & \iota_{S^{\sharp}, *}(\widehat{\cl O}(S^{\sharp})) \arrow[d]\arrow[-,double line with arrow={-,-}]{r}& \iota_{S^{\sharp}, *}(\Bb_{\dR}^+(S^{\sharp})/\Fil^{1}) \arrow[d]&  \\
    & & 0 & 0 &  \\
  \end{tikzcd}
  \end{center}
 \end{proof}

 The following fundamental exact sequences summarize the relevant relations between the various rational period sheaves.

 \begin{prop}\label{suites} Let $X$ an analytic adic space over $\Spa(C, \cl O_C)$. Let $i\ge 0$ be an integer.
 We have the following exact sequences of sheaves on $X_{\pet}$
 \begin{equation}\label{suite2}
  0\to \Bb^{\varphi=p^i}\to \Bb \xrightarrow{\varphi p^{-i}-1} \Bb\to 0 
 \end{equation}
 
 \begin{equation}\label{suite3}
 0\to \Qq_p(i)\to \Bb^{\varphi=p^i}\to \Bb_{\dR}^+/\Fil^i\Bb_{\dR}^+\to 0.
 \end{equation}
 \end{prop}
 \begin{proof}
 By Lemma \ref{wfund}\listref{wfund:1} it suffices to prove the stated exact sequences over $S$ for any $S\in X_{\pet}$ w-contractible affinoid perfectoid space. In the following, we fix such an $S$.
 
 For the exact sequence (\ref{suite2}), we consider the vector bundle $\cl O_{\FF_S}(i)$ over the Fargues--Fontaine curve $\FF_S$. We have
 \begin{equation}\label{ffff}
  R\Gamma(\FF_S, \cl O_{\FF_S}(i))=[H^0(Y_{\FF, S}, \cl O_{Y_{\FF, S}})\xrightarrow{\varphi p^{-i}-1}H^0(Y_{\FF, S}, \cl O_{Y_{\FF, S}})].
 \end{equation}
 Then, we want to show that $H^1(\FF_S, \cl O_{\FF_S}(i))=0$. For $i>0$, this is \cite[Proposition II.2.5(i)]{FS}.
 For $i=0$, by \cite[Proposition II.2.5(ii)]{FS}, we have an isomorphism
 \begin{equation}\label{localsis}
  R\Gamma_{\pet}(S, \Qq_p)\overset{\sim}{\to} R\Gamma(\FF_S, \cl O_{\FF_S})
 \end{equation}
 and, using Lemma \ref{wfund}\listref{wfund:2}, we have
 \begin{equation}\label{vanishlb}
  H^1_{\pet}(S, \Qq_p)=0
 \end{equation}
 as desired. 
 
 Next, we show the exact sequence (\ref{suite3}). The case $i=0$ follows recalling that by (\ref{ffff}) we have $H^0(\FF_S, \cl O_{\FF_S})=H^0(Y_{\FF, S}, \cl O_{Y_{\FF, S}})^{\varphi=1}$. In the case $i\ge 1$, we use Proposition \ref{strfs}: taking cohomology of the short exact sequence (\ref{llog}) of sheaves on $\FF_S$, and using again (\ref{localsis}) combined with (\ref{vanishlb}), we obtain the desired statement.
 \end{proof}
  
 \begin{rem}
 By \cite[Proposition 7.8]{LeBras1} the vanishing (\ref{vanishlb}) also holds true for $S=\Spa(R, R^\circ)^\diamondsuit$ for any sympathetic $C$-algebra $R$.
 \end{rem}

 \begin{cor}\label{profundexact}
  Let $X$ be an analytic adic space over $\Spa(C, \cl O_C)$. We have the following exact sequences of sheaves on $X_{\pet}$
  \begin{equation}\label{fundexacttard}
     0\to \Bb_e\to \Bb[1/t]\overset{\varphi-1}{\to} \Bb[1/t]\to 0
  \end{equation}
  
  \begin{equation}\label{fundexact}
   0\to \Qq_p\to \Bb_e\to \Bb_{\dR}/\Bb_{\dR}^+\to 0
  \end{equation}
  where $\Bb_e:=\Bb[1/t]^{\varphi=1}$.
 \end{cor}
 \begin{proof}
 For $i\ge 0$, multiplying (locally) by $t^{-i}$ the exact sequence (\ref{suite3}), we obtain the exact sequence
 \begin{equation}\label{fili}
  0\to \Qq_p \to (t^{-i}\Bb)^{\varphi=1}\to \Fil^{-i}\Bb_{\dR}/\Bb_{\dR}^+\to 0
 \end{equation}
 where we used that $\varphi(t)=pt$. Then, the exact sequence (\ref{fundexact}) follows taking the filtered colimit over $i\ge 0$ of the exact sequences (\ref{fili}). Similarly, for $i\ge 0$, from the exact sequence (\ref{suite2}), we obtain the exact sequence
 \begin{equation}\label{fili2}
  0\to (t^{-i}\Bb)^{\varphi=1}\to t^{-i}\Bb\overset{\varphi-1}{\to}t^{-i}\Bb\to 0
 \end{equation}
 and (\ref{fundexacttard}) follows taking the filtered colimit over $i\ge 0$ of (\ref{fili2}).
 \end{proof}

 \begin{rem}\label{remonI}
  Let us keep the hypotheses of Proposition \ref{suites}. Then, proceeding as in the proof of (\ref{suite2}), using the presentation of the Fargues--Fontaine curves $\FF_S$ as the quotient of $Y_{\FF, S, [1, p]}$ via the identification $\varphi: Y_{\FF, S, [1, 1]}\cong Y_{\FF, S, [p, p]}$, we also have the following exact sequence of sheaves on $X_{\pet}$
  \begin{equation}\label{suite2bis}
  0\to \Bb_{[1, p]}^{\varphi=p^i}\to \Bb_{[1, p]} \xrightarrow{\varphi p^{-i}-1} \Bb_{[1, 1]}\to 0. 
 \end{equation}
 Using such presentation of the Fargues--Fontaine curves, we can similarly deduce exact sequences of sheaves analogous to (\ref{suite3}), (\ref{fundexacttard}), and (\ref{fundexact}).
 \end{rem}

\section{\textbf{Solid base change for coherent cohomology}}\label{coherent}
\sectionmark{}

 In this section, for $X$ a rigid-analytic variety over $K$, we endow the cohomology groups of a coherent $\cl O_{X}$-module over $X$ with a structure of condensed $K$-vector space. Then, in the category of condensed $K$-vector spaces, we prove a base change result for the coherent cohomology of connected, paracompact, rigid-analytic varieties defined over $K$. \medskip
 
  Following \S \ref{conve}, all of our rigid-analytic varieties will be assumed to be $\kappa$-small. Moreover, we introduce the following convention and notation.

  \begin{convention}\label{convqs}
  All rigid-analytic varieties will be assumed to be quasi-separated.
 \end{convention}

 \begin{notation}\label{notasolid}
   We denote by $\Vect_K^{\cond}$ the category of $\kappa$-condensed $K$-vector spaces, and by $\Vect_K^{\ssolid}\subset \Vect_K^{\cond}$ the symmetric monoidal subcategory of $\kappa$-solid $K$-vector spaces, endowed with the tensor product $\solid_K$ (and often the prefix ``$\kappa$'' is tacit).  See \S \ref{ssk} and Proposition \ref{solidA}.\footnote{See also Remark \ref{setcut} and Remark \ref{solidcut} on set-theoretic bounds.}
 \end{notation}

 \subsection{Coherent cohomology in the condensed world}\label{cohcondw} 
 
 In the following, for a rigid-analytic variety $X$ over $K$, we denote by $X_{\an}$ its analytic site, and by $X_{\ett}$ its étale site. \medskip
 
 As we will need to compute coherent cohomology both on the analytic site and the étale site, we start by  recalling the following definition.
 
 \begin{df}\label{situa}
  Let $X$ be a rigid-analytic variety defined over $K$, and let $\cl F$ be a coherent $\cl O_X$-module over $X$. We denote by $\cl F_{\ett}$ the associated $\cl O_{X_{\ett}}$-module over $X_{\ett}$, given by sending an affinoid étale $f:U\to X$ to the sections $\Gamma(U, f^*\cl F\otimes_{f^*\cl O_X}\cl O_U)$. 
 \end{df}
 
 By \cite[Proposition 3.2.2]{DeJong}, $\cl F_{\ett}$ is indeed a sheaf on $X_{\ett}$.

  \begin{rem}\label{nnew}
  In the situation of Definition \ref{situa}, both $\cl F$ and $\cl F_{\ett}$ are naturally sheaves of topological $K$-vector spaces. More precisely, the values of $\cl F$ (resp. $\cl F_{\ett}$) on an admissible open affinoid $U\subseteq X$ (resp. on an affinoid étale $U\to X$) have a natural structure of $K$-Banach space: in fact, $\cl F(U)$ is a finite module over the $K$-Banach algebra $\cl O_{U}(U)$.  
  \end{rem}
  
 Then, following \S \ref{summary}, we now study the coherent cohomology of rigid-analytic varieties from the point of view of condensed mathematics.
 
 \begin{df}\label{FsolidA}
  Let $X$ be a rigid-analytic variety defined over $K$. Let $\cl F_{\an}$ be a coherent $\cl O_{X_{\an}}$-module over $X_{\an}$, and let $\cl F_{\ett}$ denote the associated $\cl O_{X_{\ett}}$-module over $X_{\ett}$. Let $\tau\in \{\an, \ett\}$, and let $\cl B_{\tau}^{\aff}$ denote the basis for the site $X_{\tau}$ consisting of all $U\in X_{\tau}$ which are affinoid. 
  \begin{enumerate}[(i)]
   \item We denote by $\underline{\cl F_{\tau}}$ the presheaf on $X_{\tau}$, with values in $\Vect_K^{\cond}$, extending the presheaf
   $$(\cl B_{\tau}^{\aff})^{\opp}\to \Vect_K^{\cond}: U\mapsto \underline{\cl F_{\tau}(U)}.$$
   \item  Given $A\in \Vect_K^{\ssolid}$ flat,\footnote{See Definition \ref{platit}.} we denote by $\underline{\cl F_{\tau}}\solid_K A$ the presheaf on $X_{\tau}$, with values in $\Vect_K^{\cond}$, extending the following presheaf\footnote{We warn the reader that it might \textit{not} be true, in general, that for all $V\in X_{\tau}$ we have $(\underline{\cl F_{\tau}}\solid_K A)(V)=\underline{\cl F_{\tau}}(V)\solid_K A$, since the tensor product $\solid_K$ does not commute with all limits.}
   $$(\cl B_{\tau}^{\aff})^{\opp}\to \Vect_K^{\cond}: U\mapsto \underline{\cl F_{\tau}(U)}\solid_K A.$$   
  \end{enumerate}
 \end{df}
 
 The next result shows that the presheaves defined above are indeed sheaves, and translates in our setting some classical results on coherent cohomology.

 \begin{lemma}\label{condensedtate}\label{afterTate}\label{straight}
  Let $X$ be a rigid-analytic variety defined over $K$. Let $\cl F_{\an}$ be a coherent $\cl O_{X_{\an}}$-module over $X_{\an}$, and let $\cl F_{\ett}$ denote the associated $\cl O_{X_{\ett}}$-module over $X_{\ett}$.
  \begin{enumerate}[(i)]
   \item \label{condensedtate:1} The presheaf $\underline{\cl F_{\tau}}$ is a sheaf on $X_{\tau}$ with values in $\Vect_K^{\ssolid}$. Moreover, if $X$ is an affinoid space, then we have that $H^i_{\tau}(X, \underline{\cl F_{\tau}})=0$ for all $i>0$.
   \item \label{condensedtate:2} For $A\in \Vect_K^{\ssolid}$ flat, the presheaf $\underline{\cl F_{\tau}}\solid_K A$ is a sheaf on $X_{\tau}$ with values in $\Vect_K^{\ssolid}$. Moreover, if $X$ is an affinoid space, then we have that $H^i_{\tau}(X, \underline{\cl F_{\tau}}\solid_K A)=0$ for all $i>0$.
  \end{enumerate}  
 \end{lemma}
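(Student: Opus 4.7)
The plan is to reduce both assertions to the case where $X$ is affinoid and then invoke Tate's acyclicity theorem (in the analytic case) or its étale analogue (in the étale case), combined with the good behaviour of $K$-Banach spaces inside $\Vect_K^{\ssolid}$ recorded in Appendix \ref{condfun}. First observe that for every $U \in \cl B_\tau^{\aff}$, the module $\cl F_\tau(U)$ is finite over the $K$-Banach algebra $\cl O_X(U)$, so $\cl F_\tau(U)$ is naturally a $K$-Banach space and $\underline{\cl F_\tau(U)}$ lies in $\Vect_K^{\ssolid}$; likewise $\underline{\cl F_\tau(U)} \solid_K A \in \Vect_K^{\ssolid}$ by construction. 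Since $\Vect_K^{\ssolid}$ is closed under limits in $\Vect_K^{\cond}$, as soon as the sheaf axiom is verified on the basis of affinoids, the extension to a sheaf on all of $X_\tau$ will automatically take values in $\Vect_K^{\ssolid}$.

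For part \listref{condensedtate:1}, the sheaf axiom and the vanishing of higher cohomology on affinoids reduce, via Čech theory on the basis $\cl B_\tau^{\aff}$, to showing that for every finite affinoid cover $\{U_i\}$ of $U \in \cl B_\tau^{\aff}$ the augmented Čech complex
\begin{equation*}
  0 \to \underline{\cl F_\tau(U)} \to \prod\nolimits_i \underline{\cl F_\tau(U_i)} \to \prod\nolimits_{i,j} \underline{\cl F_\tau(U_i \times_U U_j)} \to \cdots
\end{equation*}
is exact in $\Vect_K^{\ssolid}$. Tate's acyclicity theorem for $\tau=\an$, together with its étale upgrade for $\tau=\ett$ (which is contained in \cite[Proposition 3.2.2]{DeJong} and follows from étale descent of coherent modules), states that the corresponding Čech complex of $K$-Banach spaces is \emph{strictly} exact, i.e. each differential has closed image endowed with the subspace topology. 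Then I would invoke the foundational fact, to be extracted from Appendix \ref{condfun}, that the functor $T \mapsto \underline{T}$ from $K$-Banach spaces to $\Vect_K^{\ssolid}$ sends strictly exact complexes to exact complexes; this yields simultaneously the sheaf property on the basis and the vanishing in positive degrees when $X$ is affinoid.

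For part \listref{condensedtate:2}, the argument is the same once we know that applying $-\solid_K A$ to the strictly exact Čech complex from part \listref{condensedtate:1} preserves exactness. But this complex already lives in $\Vect_K^{\ssolid}$, and by the assumption that $A$ is flat in the sense of Definition \ref{flatt}, the functor $-\solid_K A \colon \Vect_K^{\ssolid} \to \Vect_K^{\ssolid}$ is exact. Therefore the resulting complex is exact, giving the sheaf axiom and the acyclicity on affinoids for $\underline{\cl F_\tau} \solid_K A$. The only nonroutine input is the passage from strict exactness in the Banach setting to exactness in $\Vect_K^{\ssolid}$, for which one relies on the Clausen-Scholze theory; everything else is a direct translation of the classical Tate and descent results into the condensed framework.
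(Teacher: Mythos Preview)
Your proposal is correct and follows essentially the same route as the paper: reduce to the \v{C}ech complex on a finite affinoid cover, invoke Tate's acyclicity (and its \'etale version, which is \cite[Proposition 3.2.5]{DeJong} rather than 3.2.2), and then pass to condensed $K$-vector spaces via the fact that the functor $V\mapsto\underline{V}$ preserves exactness of complexes of $K$-Banach (or Fr\'echet) spaces---this is exactly Lemma~\ref{acyclic} in the appendix, whose proof uses the open mapping theorem, so strict exactness is automatic. Part~\listref{condensedtate:2} then follows by flatness of $A$ together with the observation that $-\solid_K A$ commutes with finite products, just as you indicate.
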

 \begin{proof}
  First, we recall that, by Proposition \ref{corbello} (combined with Remark \ref{precision}), for any $K$-Banach space $V$, we have $\underline{V}\in \Vect_K^{\ssolid}$. Then, from Remark \ref{nnew} we deduce that $\underline{\cl F_\tau}$ and  $\underline{\cl F_{\tau}}\solid_K A$ take values in $\Vect_K^{\ssolid}$, since the abelian subcategory $\Vect_K^{\ssolid}\subset \Vect_K^{\cond}$ is stable under all limits and colimits, Proposition \ref{fevre}\listref{fevre:1}.
  
  For part \listref{condensedtate:1}, it suffices to show  the exactness of the \v{C}ech complex
  \begin{equation}\label{cco}
  0\to \underline{\cl F_\tau(U)}\to \prod_{i\in I}\underline{\cl F_\tau(U_i)}\to\prod_{i, j\in I}\underline{\cl F_\tau(U_i\times_U U_j)}\to \cdots
  \end{equation}
 for any $U\in \cl B_{\tau}^{\aff}$ and $\cl U=\{U_i\to U\}_{i\in I}$ a finite covering of $U$ in $\cl B_{\tau}^{\aff}$. By Tate's acyclicity theorem, and \cite[Proposition 3.2.5]{DeJong}, the \v{C}ech cohomology $\check{H}^i(\cl U, \cl F_\tau)$ vanishes for all $i>0$. Therefore, recalling Remark \ref{nnew}, the statement follows from Lemma \ref{acyclic}.
 
 For part \listref{condensedtate:2}, we observe that, since $A$ is a flat solid $K$-vector space, the complex (\ref{cco}) remains acyclic after applying the functor $-\solid_K A$ (and then we use that $-\solid_K A$ commutes with finite products).
 \end{proof}
  
  \begin{rem}\label{condvsolid}
  We keep the notation of Definition \ref{FsolidA}.
  \begin{enumerate}[(i)]
   \item\label{condvsolid:1} By Remark \ref{summary:2}, for all $i\ge 0$, we have $H^i_{\tau}(X, \underline{\cl F_\tau})(*)=H^i_{\tau}(X, \cl F_\tau)$ as $K$-vector spaces, and the condensed cohomology group $H^i_{\tau}(X, \underline{\cl F_\tau})$ does not depend on our choice of the cardinal $\kappa$ (here, we use Remark \ref{precision}, and the fact that $X$ is assumed to be $\kappa$-small).
   \item\label{condvsolid:2} We note that the sheaf cohomology complex $R\Gamma_{\tau}(X, \underline{\cl F_\tau})\in D(\Vect_K^{\cond})$ lies in $D(\Vect_K^{\ssolid})$. Equivalently, by Proposition \ref{fevre}\listref{fevre:1}, $H^i_{\tau}(X, \underline{\cl F_\tau})$ lies in $\Vect_K^{\ssolid}$ for all $i$. This is true by Verdier's hypercovering theorem, recalling that the subcategory $\Vect_K^{\ssolid}\subset \Vect_K^{\cond}$ is stable under all limits and colimits.
   \item\label{condvsolid:3} Let $A$ be a solid $K$-algebra. By Proposition \ref{solidA} (and Remark \ref{solidcut}), we can endow the condensed ring $A$ with an analytic ring structure $(A, \Zz)_{\solidif}$ such that $\Mod_A^{\ssolid}:=\Mod_{(A, \Zz)_{\solidif}}^{\cond}$ is the category of $A$-modules in $\Vect_K^{\ssolid}$. Then, assuming that $A$ is flat as a solid $K$-vector space, the argument of point \listref{condvsolid:2} shows that, since the sheaf $\underline{\cl F_\tau}\solid_K A$ (recall Lemma \ref{condensedtate}\listref{condensedtate:2}) has values in $\Mod_A^{\ssolid}$, the complex $R\Gamma_{\tau}(X, \underline{\cl F_\tau}\solid_K A)\in D(\Vect_K^{\cond})$ lies in $D(\Mod_A^{\ssolid})$.
  \end{enumerate}  
  The present remark also holds replacing $\cl F_\tau$ with a bounded below complex of such sheaves on $X_{\tau}$.
  \end{rem}

   In the condensed setting, we also have a version of Kiehl's acyclicity theorem for Stein spaces. First, let us recall the following definition.
  
  \begin{df}\label{stein}
   A rigid-analytic variety $X$ over $K$ is called a \textit{Stein space} if it has an increasing admissible affinoid covering $\{U_j\}_{j\in \Nn}$ such that $U_j\Subset U_{j+1}$, i.e. the inclusion $U_j\subset U_{j+1}$ factors over the adic compactification of $U_j$, for every $j\in \Nn$. We call $\{U_j\}_{j\in \Nn}$ a \textit{Stein covering} of $X$.
  \end{df}
 
  \begin{lemma}\label{A&B} 
  Let $X$ be a Stein space over $K$, and let $\cl F$ be a coherent $\cl O_{X}$-module over $X$. Then, $H^i(X, \underline{\cl F})=0$ for all $i>0$.
  \end{lemma}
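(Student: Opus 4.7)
The plan is to mirror the classical proof of Kiehl's theorem for Stein spaces, and transfer it to the condensed setting using sectionwise evaluation on extremally disconnected sets and a Mittag-Leffler argument.

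First, I would fix a Stein covering $\{U_j\}_{j\in\Nn}$ of $X$ with $U_j\Subset U_{j+1}$ as in Definition \ref{stein}, and invoke the convergent spectral sequence
$$E_2^{p,q}=R^p\!\lim_j H^q(U_j,\underline{\cl F})\Longrightarrow H^{p+q}(X,\underline{\cl F})$$
in $\CondAb$. This spectral sequence comes from applying $\lim_j$ to a functorial injective resolution of $\underline{\cl F}$ on $X_{\an}$, together with the tautological identity $\Gamma(X,-)=\lim_j\Gamma(U_j,-)$ valid for any sheaf (because $\{U_j\}$ is an admissible cover by quasi-compact opens). By Lemma \ref{condensedtate}\listref{condensedtate:1} applied to the affinoid spaces $U_j$, one has $H^q(U_j,\underline{\cl F})=0$ for $q>0$ and $H^0(U_j,\underline{\cl F})=\underline{\cl F(U_j)}$, so the spectral sequence degenerates and yields
$$H^i(X,\underline{\cl F})\cong R^i\!\lim_j\underline{\cl F(U_j)}.$$

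Next, since the inverse system is countable, the derived limit in $\CondAb$ is computed by the two-term telescope exact sequence
$$0\to\lim_j\underline{\cl F(U_j)}\to\prod_j\underline{\cl F(U_j)}\xrightarrow{\,1-\mathrm{shift}\,}\prod_j\underline{\cl F(U_j)}\to R^1\!\lim_j\underline{\cl F(U_j)}\to 0,$$
with $R^i\!\lim=0$ for $i\geq 2$. Products in $\CondAb$ and the shift map are evaluated sectionwise, so the vanishing of $R^1\!\lim$ can be checked on each $\kappa$-small extremally disconnected set $S$, where it reduces to showing that $R^1\!\lim_j\mathscr{C}^0(S,\cl F(U_j))=0$ as abelian groups.

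Finally, I would invoke the classical input from rigid analysis: because $U_j\Subset U_{j+1}$ and $\cl F$ is coherent, the restriction $\cl F(U_{j+1})\to\cl F(U_j)$ has dense image (a standard Runge-type statement for coherent sheaves on Stein spaces). Since $S$ is profinite, locally constant functions are dense in $\mathscr{C}^0(S,V)$ for any $K$-Banach space $V$, and any locally constant function with values in $\cl F(U_j)$ lifts, up to arbitrarily small error, through a linear map with dense image. Thus the induced transition maps $\mathscr{C}^0(S,\cl F(U_{j+1}))\to\mathscr{C}^0(S,\cl F(U_j))$ have dense image, so classical Mittag-Leffler for countable inverse systems of $K$-Banach spaces gives $R^1\!\lim=0$, completing the proof.

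The main technical delicacy lies in the first step, namely the clean identification $R\Gamma(X,\underline{\cl F})\simeq R\lim_j R\Gamma(U_j,\underline{\cl F})$ in the condensed category; once this is in place, the remaining steps faithfully transfer the classical Mittag-Leffler argument through the sectionwise description of $\CondAb$ on extremally disconnected sets.
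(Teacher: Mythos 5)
Your argument is correct, but it takes a genuinely different route from the paper's. The paper's proof is two lines: by Lemma \ref{condensedtate}\listref{condensedtate:1} the cohomology $H^i(X,\underline{\cl F})$ is computed by the \v{C}ech complex of the Stein covering, this complex is the image under $V\mapsto\underline V$ of the classical \v{C}ech complex of $K$-Fr\'echet spaces, the latter is acyclic in positive degrees by Kiehl's Satz 2.4, and Lemma \ref{acyclic} transfers acyclicity to the condensed side. You instead run the $R\varprojlim$ Milnor sequence $H^i(X,\underline{\cl F})\cong R^i\varprojlim_j\underline{\cl F(U_j)}$ (using affinoid acyclicity to degenerate the spectral sequence) and then kill $R^1\varprojlim$ by a sectionwise Mittag--Leffler argument; the latter is, almost verbatim, the paper's Lemma \ref{condML} from the appendix, applied to the system $\{\cl F(U_j)\}$ whose transition maps have dense image. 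Note that this density is not a free input: it is the Runge-type approximation that constitutes the analytic core of Kiehl's proof (immediate for $\cl O_X$ from $U_j\Subset U_{j+1}$, but requiring a Theorem-A-type argument for general coherent $\cl F$), so you are effectively re-proving the relevant half of Kiehl rather than quoting it. What your route buys is that it stays entirely inside the condensed formalism and isolates exactly where the topology enters (the $R^1\varprojlim$ term); what the paper's route buys is brevity, by outsourcing all the analysis to the classical statement and to the strictness transfer of Lemma \ref{acyclic}. The one step you rightly flag as delicate, the identification $R\Gamma(X,\underline{\cl F})\simeq R\varprojlim_jR\Gamma(U_j,\underline{\cl F})$, does hold: injective objects of $\Shv_{\CondAb}(X_{\an})$ restrict surjectively along admissible open immersions, so the system of sections of an injective resolution is $\varprojlim$-acyclic and the Grothendieck spectral sequence for $\Gamma(X,-)=\varprojlim_j\circ(\Gamma(U_j,-))_j$ applies.
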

  \begin{proof}
   By Lemma \ref{condensedtate}\listref{condensedtate:1}, choosing a countable admissible affinoid covering $\cl U$ of $X$, we have that $H^i(X, \underline{\cl F})=\check{H}^i(\cl U, \underline{\cl F})$. Then, the statement follows from Kiehl's acyclicity theorem, \cite[Satz 2.4]{Kiehl},\footnote{Note that \textit{loc. cit.} is stated for any quasi-Stein space (see \cite[Definition 2.3]{Kiehl}), and in particular it holds for Stein spaces (Definition \ref{stein}).} and Lemma \ref{acyclic} (recalling that a countable product of $K$-Fréchet spaces is a $K$-Fréchet space).
  \end{proof}

  Next, we give some examples by defining a structure of condensed $K$-vector space on the de Rham cohomology groups.
 
  Let $X$ be a smooth rigid-analytic variety over $K$. Denote by $\Omega_X^\bullet$ the de Rham complex of $X$, given by
  $$\Omega_X^\bullet:=[\cl O_X\overset{d}{\to}\Omega^1_X\overset{d}{\to}\cdots \overset{d}{\to}\Omega^m_X\overset{d}{\to}\cdots].$$
  \begin{df}\label{condDR}
   We define the complex of $D(\Vect_K^{\cond})$
   $$R\Gamma_{\dRc}(X):=R\Gamma(X, \underline{\Omega_X^\bullet})$$
   whose $i$-th cohomology, for $i\ge 0$, is the \textit{condensed de Rham cohomology group} $H^i_{\dRc}(X)$.
  \end{df}
  
   Now, we want to compare the definition above with some other natural ways of putting a structure of topological $K$-vector space on the de Rham cohomology groups. Let us start observing that if $H_{\dR}^i(X)$ is a finite-dimensional $K$-vector space, then it has a natural topology coming from the topology on $K$.
  
  \begin{lemma}\label{propclass}
   Let $X$ be a smooth proper rigid-analytic variety. Then, for all $i\ge 0$, we have
   $$H^i_{\dRc}(X)=\underline{H_{\dR}^i(X)}.$$
  \end{lemma}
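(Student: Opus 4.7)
The plan is to reduce the statement to the classical Kiehl finiteness theorem plus degeneration of the Hodge--de Rham spectral sequence, by showing that for proper $X$ the underline functor commutes with all the cohomological computations involved.

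First, since $X$ is proper, it is quasi-compact and admits a finite admissible affinoid covering $\mathcal{U}=\{U_j\}_{j=1}^N$. By Lemma \ref{condensedtate}\listref{condensedtate:1}, each sheaf $\underline{\Omega^p_X}$ is acyclic on affinoids, so the complex $R\Gamma(X,\underline{\Omega^p_X})$ is computed by the \v{C}ech complex $\check{C}^\bullet(\mathcal U,\underline{\Omega^p_X})$, whose terms are finite products of underlined $K$-Banach spaces, hence are themselves underlined $K$-Banach spaces lying in $\Vect_K^{\ssolid}$. Similarly, the classical \v{C}ech complex $\check{C}^\bullet(\mathcal U,\Omega^p_X)$ computes $H^q(X,\Omega^p_X)$, which by Kiehl's finiteness theorem for proper rigid-analytic varieties is a finite-dimensional $K$-vector space.

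Second, I would show that $H^q(X,\underline{\Omega^p_X})=\underline{H^q(X,\Omega^p_X)}$ for every $p,q$. By the open mapping theorem applied to the classical \v{C}ech complex, the differentials have closed image, so the complex is strict. The upshot is that each cohomology group carries its canonical finite-dimensional topology as a subquotient of Banach spaces, and the cycles/boundaries are themselves Banach. Using that short strict exact sequences of $K$-Banach spaces give short exact sequences in $\Vect_K^{\ssolid}$ after applying $V\mapsto \underline V$ (a consequence of the results of Appendix \ref{condfun} on solid $K$-vector spaces, cf.\ Proposition \ref{corbello}), we obtain $H^q(\underline{\check{C}^\bullet(\mathcal U,\Omega^p_X)})=\underline{H^q(\check{C}^\bullet(\mathcal U,\Omega^p_X))}$, which gives the desired identification.

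Third, I would invoke the Hodge--de Rham spectral sequence in the condensed setting,
\begin{equation*}
E_1^{p,q}=H^q(X,\underline{\Omega^p_X})\Longrightarrow H^{p+q}_{\dRc}(X),
\end{equation*}
and compare it term-by-term with the classical one. By the previous step the condensed $E_1$ page is the underline of the classical $E_1$ page, with differentials obtained by applying $V\mapsto\underline V$ to the classical ones. By Scholze's degeneration theorem \cite[Corollary 1.8]{Scholze}, the classical spectral sequence degenerates at $E_1$; since $V\mapsto\underline V$ is functorial and all $E_r^{p,q}$ (for $r\ge 1$) are underlines of finite-dimensional $K$-vector spaces, the higher differentials in the condensed spectral sequence vanish as well, so the condensed spectral sequence also degenerates at $E_1$.

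Finally, the abutment $H^i_{\dRc}(X)$ is an iterated extension of the finite-dimensional discrete condensed $K$-vector spaces $\underline{H^q(X,\Omega^p_X)}$ for $p+q=i$. Any extension of finite-dimensional discrete condensed $K$-vector spaces is again finite-dimensional and discrete (since $\Ext^1$ between such objects in $\Vect_K^{\cond}$ is classical), and its underlying $K$-vector space is the classical $H^i_{\dR}(X)$ by Scholze's result. This forces $H^i_{\dRc}(X)=\underline{H^i_{\dR}(X)}$. The main technical point to verify carefully is the compatibility of $V\mapsto \underline V$ with taking cohomology of strict Banach complexes in step two; everything else reduces to classical facts about proper rigid-analytic varieties.
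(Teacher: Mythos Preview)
Your argument is correct but takes a substantially longer route than the paper does, and in particular invokes a deep theorem that is not needed.

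The paper's proof works directly with the \emph{total} \v{C}ech complex of $\Omega_X^\bullet$ with respect to a finite affinoid covering $\mathcal U$. This is a bounded complex of $K$-Banach spaces whose cohomology groups $H^i_{\dR}(X)$ are finite-dimensional by Kiehl. Strictness of the differentials then follows from the open mapping theorem exactly as in your step two, and a single application of Lemma \ref{acyclic} gives $H^i_{\dRc}(X)=\check{H}^i(\mathcal U,\underline{\Omega_X^\bullet})=\underline{\check{H}^i(\mathcal U,\Omega_X^\bullet)}=\underline{H^i_{\dR}(X)}$. That is the whole proof.

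By contrast, you first establish $H^q(X,\underline{\Omega^p_X})=\underline{H^q(X,\Omega^p_X)}$ for each $(p,q)$, then run the Hodge--de Rham spectral sequence, invoke Scholze's degeneration theorem \cite[Corollary 1.8]{Scholze}, and finally argue about extensions of finite-dimensional condensed $K$-vector spaces. The degeneration theorem is a genuinely hard input and is entirely unnecessary here: the paper's argument never separates the Hodge filtration pieces, so it never needs to reassemble them. Your final step also deserves more care: the objects $\underline{K^n}$ are not ``discrete'', and the vanishing of the relevant $\Ext^1$ should be justified by noting that $K^n$ is projective in $\Vect_K^{\ssolid}$ (it is a finite sum of copies of $\mathcal M_K[*]=K$), which is where the spectral sequence and its abutment live by Remark \ref{condvsolid}. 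With that fix your argument goes through, but the paper's direct route is both shorter and avoids the appeal to Hodge--de Rham degeneration.
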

  \begin{proof}
   Since $H_{\dR}^i(X)$ is a finite-dimensional $K$-vector space, \cite{Kiehl0}, and any $K$-linear map between finite-dimensional $K$-vector spaces is continuous, we have that it is isomorphic, as a topological $K$-vector space, to the \v{C}ech hypercohomology $\check{H}^i(\cl U, \Omega_X^\bullet)$ of a finite admissible affinoid covering $\cl U$ of $X$, endowed with the natural quotient topology. By Lemma \ref{condensedtate}\listref{condensedtate:1}, we have $H^i_{\dRc}(X)=\check{H}^i(\cl U, \underline{\Omega_X^\bullet})=\underline{\check{H}^i(\cl U, \Omega_X^\bullet)}$, where in the last step we used Lemma \ref{acyclic}.
  \end{proof}

  \begin{rem}\label{steinfrechet}
  If $X$ is a smooth Stein space, then the global sections $\Omega^i(X)$, $i\ge 0$, have a natural structure of $K$-Fr\'echet spaces: in fact, given a Stein covering $\{U_j\}_{j\in \Nn}$ of $X$, each space $\Omega^i(U_j)$ is $K$-Banach, therefore the spaces $\Omega^i(X)=\varprojlim_j \Omega^i(U_j)$ endowed with the inverse limit topology are $K$-Fr\'echet, and such structure does not depend on the choice of $\{U_j\}_{j\in \Nn}$.  By \cite[Corollary 3.2]{GK}, all the differentials $d: \Omega^{i-1}(X)\to \Omega^{i}(X)$ have closed image, hence, endowing $\Omega^i(X)^{d=0}\subseteq \Omega^i(X)$ with the subspace topology, and $H_{\dR}^i(X)=\Omega^i(X)^{d=0}/d\Omega^{i-1}(X)$ with the induced quotient topology, we obtain a $K$-Fr\'echet space structure on the latter.
 \end{rem}

  \begin{lemma}\label{drstein}
  Let $X$ be a smooth Stein space over $K$. 
  Then, for all $i\ge 0$, we have
  $$H^i_{\dRc}(X)=\underline{\Omega^i(X)^{d=0}/d\Omega^{i-1}(X)}.$$
  \end{lemma}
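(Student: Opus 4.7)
The plan is to first use the Stein acyclicity of coherent sheaves (Lemma \ref{A&B}) to collapse $R\Gamma_{\dRc}(X)$ to the complex of condensed global sections, and then to compute the cohomology of that complex using that $\underline{\cdot}$ converts strict short exact sequences of $K$-Fr\'echet spaces into short exact sequences of condensed $K$-vector spaces.

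First, since each term $\underline{\Omega_X^j}$ is acyclic on $X$ by Lemma \ref{A&B}, the hypercohomology spectral sequence degenerates and yields a quasi-isomorphism in $D(\Vect_K^{\cond})$
$$R\Gamma_{\dRc}(X)\simeq \bigl(\,\underline{\Omega_X^0}(X)\to \underline{\Omega_X^1}(X)\to \cdots\,\bigr).$$
For a Stein covering $\{U_n\}_{n\in \Nn}$, the sheaf property of $\underline{\Omega_X^j}$ together with $U_n\subseteq U_{n+1}$ gives $\underline{\Omega_X^j}(X)=\varprojlim_n \underline{\Omega_X^j}(U_n)=\varprojlim_n \underline{\Omega^j(U_n)}$, and because $\underline{\cdot}$ commutes with limits this equals $\underline{\varprojlim_n \Omega^j(U_n)}=\underline{\Omega^j(X)}$, where $\Omega^j(X)$ is by definition equipped with the inverse-limit Fr\'echet topology (Remark \ref{steinfrechet}).

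To compute the cohomology of $(\underline{\Omega^\bullet(X)},\underline{d})$ in $\Vect_K^{\cond}$, note that since $\underline{\cdot}$ preserves kernels we have $\ker\underline{d}=\underline{\Omega^i(X)^{d=0}}$. Remark \ref{steinfrechet} (invoking the closedness of differentials due to \cite{GK}) furnishes two strict short exact sequences of $K$-Fr\'echet spaces
$$0\to \Omega^{i-1}(X)^{d=0}\to \Omega^{i-1}(X)\xrightarrow{d} d\Omega^{i-1}(X)\to 0,$$
$$0\to d\Omega^{i-1}(X)\to \Omega^i(X)^{d=0}\to \Omega^i(X)^{d=0}/d\Omega^{i-1}(X)\to 0.$$
Granting that $\underline{\cdot}$ transforms each of these into a short exact sequence in $\Vect_K^{\ssolid}$, the first gives $\im\,\underline{d}=\underline{d\Omega^{i-1}(X)}$, and combining with the second gives
$$H^i_{\dRc}(X)=\underline{\Omega^i(X)^{d=0}}\big/\underline{d\Omega^{i-1}(X)}=\underline{\Omega^i(X)^{d=0}/d\Omega^{i-1}(X)},$$
as claimed.

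The main obstacle is thus to justify the exactness of $\underline{\cdot}$ applied to a strict short exact sequence of $K$-Fr\'echet spaces. The delicate point is surjectivity at the condensed level: given an extremally disconnected profinite set $S$ and a continuous map $f:S\to V_3$ with $V_2\twoheadrightarrow V_3$ a continuous surjection of Fr\'echet spaces, one must lift $f$ to a continuous map $S\to V_2$. This is provided by the Bartle--Graves selection theorem, which gives a continuous (not necessarily linear) section $\sigma:V_3\to V_2$; the composition $\sigma\circ f$ is the required lift. This input is part of the standard embedding of $K$-Fr\'echet spaces into $\Vect_K^{\ssolid}$ in the Clausen--Scholze formalism, in the same spirit as Lemma \ref{acyclic} already used throughout the paper.
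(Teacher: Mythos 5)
Your proposal is correct and follows essentially the same route as the paper: collapse $R\Gamma_{\dRc}(X)$ to the complex of global sections via Lemma \ref{A&B}, identify $\underline{\Omega^i_X}(X)=\underline{\Omega^i(X)}$ with its Fr\'echet topology, and then use closedness of $d\Omega^{i-1}(X)$ (Remark \ref{steinfrechet}) together with the exactness of $V\mapsto\underline{V}$ on strict exact sequences of $K$-Fr\'echet spaces. The only difference is your appeal to a Bartle--Graves selection for the surjectivity step, which is unnecessary: Lemma \ref{acyclic} (proved via compact lifting and projectivity of extremally disconnected sets, plus the open mapping theorem) already supplies exactly this exactness, and it is what the paper cites.
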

  \begin{proof}
  First note that, by definition, for all $i\ge0$, we have $\underline{\Omega^i_X}(X)=\lim_{U\in \cl B_{\an}^{\aff}}\underline{\Omega^i(U)}=\underline{\Omega^i(X)}$, with the structure of $K$-Fréchet space on $\Omega^i(X)$ as in Remark \ref{steinfrechet}. 
  
  Then, by Lemma \ref{A&B},  we have that $H_{\dRc}^i(X)=\underline{\Omega^i(X)}^{d=0}/d\underline{\Omega^{i-1}(X)}$ and the statement follows from Lemma \ref{acyclic}.\footnote{Note that $d \underline{\Omega^{i-1}(X)}=\underline{\Omega^{i-1}(X)/\Omega^{i-1}(X)^{d=0}}=\underline{d\Omega^{i-1}(X)}$, where in the first step we used Lemma \ref{acyclic}, and in the second one we used the open mapping theorem for $K$-Fréchet spaces.} 
  \end{proof}

  \begin{rem}\label{vsCN}
   Let $X$ be a smooth affinoid over $K$. Then, by Lemma \ref{condensedtate}\listref{condensedtate:1}, for all $i\ge 0$, we have that $H^i_{\dRc}(X)=\underline{\Omega^i(X)}^{d=0}/\underline{d\Omega^{i-1}(X)}$. But, in general, $H^i_{\dRc}(X)\neq \underline{\Omega^i(X)^{d=0}/d\Omega^{i-1}(X)}$: in fact, the subspace $d\Omega^{i-1}(X)\subseteq \Omega^i(X)^{d=0}$ can be non-closed,\footnote{For example, let $\Dd$ be the $1$-dimensional closed unit disk over $\Qq_p$, then, one can check that the subspace $d\cl O(\Dd)$ of $\Omega^1(\Dd)$ is not closed.} and then we can refer to Example \ref{card}. Indeed, the quotient topology on the de Rham cohomology groups is not the ``correct'' one in this case (cf. Remark \ref{vs}).
  \end{rem}

 \subsection{Paracompact rigid-analytic varieties}\label{paracsec}

 Before stating and proving the main result of this section, namely Theorem \ref{Wbasechange}, we first need to recall the notion of paracompact rigid-analytic variety, and give some examples.
 
 \begin{df}\label{parac}
  A (quasi-separated)\footnote{Recall Convention \ref{convqs}.} rigid-analytic variety $X$  over $K$ is \textit{paracompact} if it admits an admissible locally finite affinoid covering, i.e. there exists an admissible covering $\{U_i\}_{i\in I}$ of $X$ by affinoid subspaces such that for each index $i\in I$ the intersection $U_i\cap U_j$ is non-empty for at most finitely many indices $j\in I$.
 \end{df}
 
 \begin{rem}\label{paracompactvscountable}\
 \begin{enumerate}[(i)]
  \item\label{paracompactvscountable:1}  A paracompact rigid-analytic variety over $K$ is taut, \cite[Definition 5.1.2, Lemma 5.1.3]{Huberbook}, and it is the admissible disjoint union of connected paracompact rigid-analytic varieties of countable type, i.e. having a countable admissible affinoid covering, \cite[Lemma 2.5.7]{DeJong}. 
  \item Conversely, given $X$ a taut rigid-analytic variety over $K$ that is of countable type, then $X$ is paracompact. In order to see this, recall that there is an equivalence between the category of Hausdorff strictly $K$-analytic Berkovich spaces and the category of taut rigid-analytic varieties over $K$, \cite[Proposition 8.3.1]{Huberbook}, and, under such equivalence, Hausdorff, paracompact\footnote{Here, we mean \textit{paracompact} as a topological space.} strictly $K$-analytic Berkovich spaces correspond to paracompact rigid-analytic varieties over $K$, \cite[Theorem 1.6.1]{Berk}. Now, if $X^{\Berk}$ corresponds to $X$ under such equivalence, then $X^{\Berk}$ is a locally compact Hausdorff topological space that is a countable union of compact subspaces; we deduce that $X^{\Berk}$ is paracompact, and so $X$ is a paracompact rigid-analytic variety.
  \item  Let us also recall that there are examples of (non-taut) separated rigid-analytic varieties over $K$ of countable type that are not paracompact, \cite[Remarks 4.4]{LP}.
 \end{enumerate}
 \end{rem}

 \begin{rem}\label{openinpara}
  Assume $K/\Qq_p$ finite. Any admissible open of a paracompact rigid-analytic variety $X$ over $K$ is paracompact. In fact, the Hausdorff strictly $K$-analytic Berkovich space $X^{\Berk}$ associated to $X$ is metrizable, hence any of its subspaces is metrizable, in particular paracompact. In order to see that $X^{\Berk}$ is metrizable, we recall that, since $K$ has a countable dense subfield (namely, the algebraic closure of the field of rational numbers $\Qq$ in $K$), $X^{\Berk}$ is locally metrizable (see \cite[\S 2]{CL}), and a Hausdorff, paracompact, topological space that is locally metrizable is metrizable.\footnote{ We want to stress the importance of the assumptions on the base field $K$ in this remark: it would not hold for a base field with uncountable residue field, e.g. $\Qq_p(\!(t)\!)$, (see \cite[Proposition 4.3]{LP} for a counterexample).}
 \end{rem}

 \begin{examples}\label{list}
  A lot of interesting rigid-analytic varieties over $K$ are paracompact. Here is a list of examples: any separated rigid-analytic variety over $K$ of dimension 1, \cite{LP}; the rigid analytification of a separated scheme of finite type over $K$, \cite[\S 8.4, Proposition 7]{Bosch}; a Stein space over $K$; an admissible open of a quasi-compact (and quasi-separated) rigid-analytic variety over $K$, if $K/\Qq_p$ is finite (by Remark \ref{openinpara}).
 \end{examples}

 \begin{rem}\label{last?}
 Let $X$ be a paracompact rigid-analytic variety over $K$, and let $\cl F$ a coherent $\cl O_X$-module $\cl F$. If $X$ has finite dimension $n$, by \cite[Corollary 2.5.10]{DeJong} and Lemma \ref{condensedtate}\listref{condensedtate:1}, we have that $H^i(X, \underline{\cl F})=0$ for all $i>n$.
  \end{rem}

 \subsection{Solid base change} We are ready to state and prove the main result of this section.

 \begin{theorem}\label{Wbasechange}
  Let $X$ be a connected, paracompact, rigid-analytic variety defined over $K$. Let $\cl F^\bullet$ be a bounded below complex of sheaves of topological $K$-vector spaces whose terms are coherent $\cl O_X$-modules. Let $A$ be a $K$-Fréchet algebra, regarded as a condensed $K$-algebra. Then, we have a natural isomorphism in $D(\Vect_K^{\ssolid})$
  $$R\Gamma(X, \underline{\cl F}^\bullet)\dsolid_K A\overset{\sim}{\to} R\Gamma(X, \underline{\cl F}^\bullet \solid_K A).$$
 \end{theorem}

 \begin{proof}
  First of all, we note that, by Corollary \ref{frescoflat}, $A$ is flat as a solid $K$-vector space. Moreover, the natural map $R\Gamma(X, \underline{\cl F}^\bullet)\to R\Gamma(X, \underline{\cl F}^\bullet\solid_K A)$  induces a morphism
  \begin{equation}\label{derivedbasechange}
   R\Gamma(X, \underline{\cl F}^\bullet)\dsolid_K A\to R\Gamma(X, \underline{\cl F}^\bullet\solid_K A).
  \end{equation}
  in $D(\Vect_K^{\ssolid})$. In fact, by Remark \ref{condvsolid}\listref{condvsolid:3}, the complex $R\Gamma(X, \underline{\cl F}^\bullet\solid_K A)$ lies in $D(\Mod_A^{\ssolid})$.
  
  Now, we show that (\ref{derivedbasechange}) is a quasi-isomorphism. We suppose first that $X$ is affinoid, and consider the hypercohomology spectral sequence
  $$E_1^{i, j}=H^j(X, \underline{\cl F}^i\solid_K A)\implies H^{i+j}(X, \underline{\cl F}^\bullet\solid_K A).$$
  By Lemma \ref{condensedtate}\listref{condensedtate:2}, we have that $E_1^{i, j}=0$ for $j>0$. Hence, the spectral sequence degenerates and it gives the quasi-isomorphism 
  \begin{equation}\label{firsthalf}
   \underline{\cl F^\bullet(X)}\solid_K A\simeq R\Gamma(X, \underline{\cl F}^\bullet\solid_K A).
  \end{equation}
  Since $R\Gamma(X, \underline{\cl F}^\bullet)=\underline{\cl F^\bullet(X)}$, recalling that $A$ is a flat solid $K$-vector space, we have that
  \begin{equation}\label{secondhalf}
   R\Gamma(X, \underline{\cl F}^\bullet)\dsolid_K A=\underline{\cl F^\bullet(X)}\dsolid_K A=\underline{\cl F^\bullet(X)}\solid_K A.
  \end{equation}
  Thus, putting together (\ref{firsthalf}) and (\ref{secondhalf}), we have shown that (\ref{derivedbasechange}) is a quasi-isomorphism for $X$ affinoid.  
  For $X$ quasi-compact (and quasi-separated), we can reduce to the affinoid case by covering $X$ by a finite number of admissible affinoid subspaces. 
  In general, for $X$ as in the statement, by Remark \ref{paracompactvscountable}\listref{paracompactvscountable:1}, we can choose a quasi-compact admissible covering $\{U_n\}_{n\in \Nn}$ of $X$ such that $U_n\subseteq U_{n+1}$. By Corollary \ref{ids} and Theorem \ref{nuclearbanach}\listref{nuclearbanach:1},\footnote{We apply Theorem \ref{nuclearbanach}\listref{nuclearbanach:1} for $F=\Qq_p$ and $A=K$, using Corollary \ref{ova}.} each complex $R\Gamma(U_n, \underline{\cl F}^\bullet)$ is representable by a complex of nuclear $K$-vector spaces:\footnote{For $U$ an admissible open of $X$ we denote $R\Gamma(U, \underline{\cl F}^\bullet):=R\Gamma(U, \underline{\cl F}^\bullet|_U)$.} in fact, for $V\in \cl B_{\an}^{\aff}$, $R\Gamma(V, \underline{\cl F}^\bullet)$ is representable by a complex of $K$-Banach spaces; for $U$ a quasi-compact admissible open of $X$, which is the colimit of a finite, full and complete subcategory $\{V_i\}_{i\in \cl I}$ of $\cl B_{\an}^{\aff}$, we have that $R\Gamma(U, \underline{\cl F}^\bullet)=R\lim_{\cl I}R\Gamma(V_i, \underline{\cl F}^\bullet)$, and the claim follows from the Bousfield--Kan formula for the derived limits.\footnote{See \cite[Chapter XI]{BK}, or \cite[Appendix to Lecture VIII]{Scholzeanalytic} for a more recent reference.} Then, by Corollary \ref{commlim}, we have
  $$R\Gamma(X, \underline{\cl F}^\bullet)\dsolid_K A=R\varprojlim_n \left(R\Gamma(U_n, \underline{\cl F}^\bullet)\dsolid_K A\right)\simeq R\varprojlim_nR\Gamma(U_n, \underline{\cl F}^\bullet\solid_K A)=R\Gamma(X, \underline{\cl F}^\bullet\solid_K A)$$
  which is what we wanted. 
 \end{proof}

 \begin{cor}\label{fundcor}
  Under the hypotheses of Theorem \ref{Wbasechange},  we have the isomorphism in $\Vect_K^{\ssolid}$
  $$H^i(X, \underline{\cl F}^\bullet)\solid_K A\cong H^i(X, \underline{\cl F}^\bullet \solid_K A)$$
  for all $i\in \Zz$.
 \end{cor}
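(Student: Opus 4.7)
The plan is to deduce the corollary directly from Theorem \ref{Wbasechange} by taking $i$th cohomology on both sides of the displayed quasi-isomorphism
$$R\Gamma(X, \underline{\cl F}^\bullet)\dsolid_K A \overset{\sim}{\to} R\Gamma(X, \underline{\cl F}^\bullet \solid_K A).$$
Taking $H^i$ on the right-hand side trivially yields $H^i(X, \underline{\cl F}^\bullet \solid_K A)$, so the only thing to verify is that
$$H^i\bigl(R\Gamma(X, \underline{\cl F}^\bullet)\dsolid_K A\bigr) \cong H^i(X, \underline{\cl F}^\bullet) \solid_K A$$
as solid $K$-vector spaces.

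For this, I will invoke the flatness of $A$. Since $A$ is a $K$-Fréchet algebra, Proposition \ref{corbello} shows that $\underline{A}$ is a quasi-separated solid $K$-vector space, and Corollary \ref{solidqs} then gives that $A$ is flat for the solid tensor product $\solid_K$ (this is exactly the input already used inside the proof of Theorem \ref{Wbasechange}). Flatness of $A$ means the functor $-\solid_K A$ is exact on $\Vect_K^{\ssolid}$, so the derived tensor product agrees with the underived one, namely
$$R\Gamma(X, \underline{\cl F}^\bullet)\dsolid_K A \simeq R\Gamma(X, \underline{\cl F}^\bullet) \solid_K A,$$
and, moreover, $-\solid_K A$ commutes with taking $i$th cohomology of a complex in $D(\Vect_K^{\ssolid})$.

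Combining these two observations,
$$H^i\bigl(R\Gamma(X, \underline{\cl F}^\bullet)\dsolid_K A\bigr) \cong H^i(R\Gamma(X, \underline{\cl F}^\bullet)) \solid_K A = H^i(X, \underline{\cl F}^\bullet) \solid_K A,$$
which together with the quasi-isomorphism of Theorem \ref{Wbasechange} gives the desired identification. There is no genuine obstacle here: the work has already been done in Theorem \ref{Wbasechange} and in the Clausen--Scholze flatness result recalled in Appendix \ref{condfun}; the corollary is simply the level-wise shadow of the derived statement, made possible by flatness.
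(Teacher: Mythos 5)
Your argument is correct and is essentially the paper's: both deduce the statement from Theorem \ref{Wbasechange} together with the flatness of $A$ as a solid $K$-vector space (Proposition \ref{corbello} and Corollary \ref{solidqs}). The only difference is cosmetic — the paper justifies the step ``$-\dsolid_K A$ commutes with taking cohomology'' via the spectral sequence $E_2^{j,i}=H^j(H^i(X,\underline{\cl F}^\bullet)\dsolid_K A)\Rightarrow H^{i+j}(R\Gamma(X,\underline{\cl F}^\bullet)\dsolid_K A)$, which degenerates precisely because flatness makes each $H^i(X,\underline{\cl F}^\bullet)\dsolid_K A$ concentrated in degree $0$.
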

 \begin{proof}
  We want to show that, for all $i\in \Zz$, we have $H^i(R\Gamma(X, \underline{\cl F}^\bullet)\dsolid_K A)=H^i(X, \underline{\cl F}^\bullet)\solid_K A$. Considering the spectral sequence
  $$E_2^{j, i}=H^j(H^i(X, \underline{\cl F}^\bullet)\dsolid_K A)\implies H^{i+j}(R\Gamma(X, \underline{\cl F}^\bullet)\dsolid_K A).$$
  it suffices to note that $H^i(X, \underline{\cl F}^\bullet)\dsolid_K A$ is concentrated in degree 0, recalling that, by Corollary \ref{frescoflat}, $A$ is a flat solid $K$-vector space.
 \end{proof}

 \begin{rem}\label{vs}
  Let $X$ be a smooth affinoid over $K$. We note that, taking for example $\cl F^\bullet=\Omega_X^\bullet$ and $A=K$, Corollary \ref{fundcor} is trivially false, in general, if we work instead in the category of locally convex $K$-vector spaces, with the completed projective tensor product $\widehat \otimes_K$,\footnote{See Footnote \ref{projj} for a reminder about the definition of $\widehat \otimes_K$.} and put on $H^i_{\dR}(X)$ its natural locally convex quotient topology. In fact, $H^i_{\dR}(X)\widehat \otimes_K K$ is the Hausdorff completion of $H^i_{\dR}(X)$, by definition of $\widehat \otimes_K$, but $H^i_{\dR}(X)$ can be non-Hausdorff (cf. Remark \ref{vsCN}).
 \end{rem}

 \section{\textbf{Pro-\'{e}tale cohomology of $\Bb_{\dR}$ and $\Bb_{\dR}^+$}}\label{sectionBdr}
  \sectionmark{}

 In this section, we study the geometric pro-\'etale cohomology of $\Fil^r\Bb_{\dR}$ on any connected, paracompact, smooth rigid-analytic variety defined over $K$. As a corollary, we also show that the geometric pro-étale cohomology of $\Bb_{\dR}$ satisfies the axioms of Schneider--Stuhler (\ref{axioms}). \medskip

 Throughout this section, we maintain the notations and conventions of \S \ref{petsheaves} and \S \ref{coherent}. In particular, all rigid-analytic varieties will be assumed to be quasi-separated (Convention \ref{convqs}). \medskip

  \subsection{The comparison with the de Rham cohomology} 
  
  Let us begin by fixing the notation.
  
  \begin{notation}
   We denote $B_{\dR}^+=\Bb_{\dR}^+(C, \cl O_C)$, and we write $B_{\dR}=\Bb_{\dR}(C, \cl O_C)$ for Fontaine’s field of $p$-adic periods, which we equip with the filtration $\Fil^r B_{\dR}=t^rB_{\dR}^+$ for $r\in \Zz$, induced from the filtration of Definition \ref{periodsheaves}\listref{periodsheaves:last}. 
   If the context is clear we omit the underscores from the notation of the associated condensed rings.
  \end{notation}

  \begin{rem}\label{dRisind}
  Note that, by Lemma \ref{bspro} (and Example \ref{LFspace}), we have $\underline{B_{\dR}}=\varinjlim_{j\in \Nn}t^{-j}\underline{B_{\dR}^+}$ as condensed $K$-vector spaces, and each $t^{-j}B_{\dR}^+$ is a $K$-Fréchet space. 
  In particular, we deduce that $\underline{B_{\dR}}$ is a quasi-separated condensed $K$-vector space, being a filtered colimit along injections of quasi-separated condensed $K$-vector spaces (Remark \ref{ovoo}), and a flat solid $K$-vector space, being a filtered colimit of $K$-Fréchet spaces (Corollary \ref{frescoflat}).
 \end{rem}
 
 \begin{df} Let $X$ be a smooth rigid-analytic variety over $K$. Given a filtered $\cl O_X$-module with integrable connection $(\cl E, \nabla, \Fil^\bullet)$, \cite[Definition 7.4]{Scholze}, we denote by 
  $$\DR_X^{\cl E}:=[\cl E\overset{\nabla}{\to}\cl E\otimes_{\cl O_X}\Omega_X^1\overset{\nabla}{\to}\cdots \overset{\nabla}{\to}\cl E\otimes_{\cl O_X}\Omega_X^m\overset{\nabla}{\to}\cdots]$$
  its de Rham complex, which we equip with the filtration given by 
  $$\Fil^r\DR_X^{\cl E}:= [\Fil^r\cl E\overset{\nabla}{\to}\Fil^{r-1}\cl E\otimes_{\cl O_X}\Omega_X^1\overset{\nabla}{\to}\cdots \overset{\nabla}{\to}\Fil^{r-m}\cl E\otimes_{\cl O_X}\Omega_X^m\overset{\nabla}{\to}\cdots]$$
  for $r\in \Zz$.
  \begin{enumerate}[(i)]
   \item  We define the complex of $D(\Vect_K^{\cond})$
    $$R\Gamma_{\dRc}(X, \cl E):=R\Gamma(X, \underline{\DR_X^{\cl E}})$$
    whose $i$-th cohomology, for $i\ge 0$, is the \textit{condensed de Rham cohomology group $H^i_{\dRc}(X, \cl E)$ with coefficients in $\cl E$}.
    \item We define the complex of $D(\Vect_K^{\cond})$
   $$R\Gamma_{\dRc}(X_{B_{\dR}}, \cl E):=R\Gamma(X, \underline{\DR_X^{\cl E}}\solid_K B_{\dR})$$
   and we endow it with the filtration induced from the tensor product filtration.
  \end{enumerate}  
 \end{df}

 \begin{rem}
  By Remark \ref{condvsolid}, the complexes $R\Gamma_{\dRc}(X, \cl E)$ and $R\Gamma_{\dRc}(X_{B_{\dR}}, \cl E)$ lie in $D(\Vect_K^{\ssolid})$.
 \end{rem}
 
  We will prove the following theorem, which generalizes results of Scholze \cite[Theorem 7.11]{Scholze}, and Le Bras \cite[Proposition 3.17]{LeBras1}.
 
 \begin{theorem}\label{BDR}\label{BDR+}
   Let $X$ be a smooth rigid-analytic variety defined over $K$. Let $\Ll$ be a de Rham $\Qq_p$-local system on $X_{\pet}$, with associated  filtered $\cl O_X$-module with integrable connection $(\cl E, \nabla, \Fil^\bullet)$. Denote $\Mm_{\dR}:=\Ll\otimes_{\Qq_p}\Bb_{\dR}$. 
   \begin{enumerate}[(i)]
   \item\label{BDR:1}
   We have a $\underline{\mathscr{G}_K}$-equivariant, compatible with filtrations, natural isomorphism in $D(\Vect_K^{\ssolid})$
   $$ R\Gamma_{\petc}(X_C, \Mm_{\dR})\simeq R\Gamma_{\dRc}(X_{B_{\dR}}, \cl E).$$
   \item\label{BDR:2}  Assume that $X$ is connected and paracompact.
   Then, for each $r\in \Zz$, we have a $\underline{\mathscr{G}_K}$-equivariant isomorphism in $D(\Vect_K^{\ssolid})$
   $$R\Gamma_{\petc}(X_C, \Fil^r\Mm_{\dR})\simeq\Fil^r(R\Gamma_{\dRc}(X, \cl E)\dsolid_K B_{\dR}).$$ 
   \end{enumerate}
  \end{theorem}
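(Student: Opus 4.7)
\textbf{Proof plan for Theorem \ref{BDR}.}

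For part (\ref{BDR:1}), the starting point is Scholze's filtered Poincaré lemma for $\Bb_{\dR}^+$ (Proposition \ref{poincare}), which, after twisting by $(\cl E, \nabla, \Fil^\bullet)$ and inverting $t$, provides a filtered resolution on $X_{C, \pet}$ of $\Mm_{\dR}$ by the de Rham complex of the structural period sheaf $\cl O\Bb_{\dR}$ tensored with $\cl E$. To transport this to the coherent world while retaining the condensed/solid structure, I would work on the site $X_{C, \ett, \cond}$ introduced in \S \ref{condcohgroups} and push down along the natural morphism $\mu: X_{C, \pet} \to X_{C, \ett, \cond}$, using Scholze's computation of $R\nu_* \Fil^r \cl O\Bb_{\dR}$ upgraded to identify it, via Corollary \ref{profinite}, with $\Fil^r(\cl O_{X_{\ett}} \otimes_K \underline{B_{\dR}})$ in $\Shv_{\CondAb}(X_{C, \ett})$. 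Combining with Lemma \ref{pet=petcond} then gives the filtered quasi-isomorphism
$$R\Gamma_{\petc}(X_C, \Mm_{\dR}) \simeq R\Gamma(X_C, \underline{\DR_X^{\cl E}} \solid_K \underline{B_{\dR}}) = R\Gamma_{\dRc}(X_{B_{\dR}}, \cl E)$$
in $D(\Vect_K^{\ssolid})$. The $\underline{\mathscr{G}_K}$-equivariance is automatic: the action is carried by $\underline{B_{\dR}}$ and all identifications are functorial in the coefficient. Values in $\Vect_K^{\ssolid}$ come from Remark \ref{condvsolid}(\ref{condvsolid:3}), since $\underline{B_{\dR}}$ is a flat solid $K$-algebra by Remark \ref{dRisind}.

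For part (\ref{BDR:2}), I would combine part (\ref{BDR:1}) with the solid base change Theorem \ref{Wbasechange}. By part (\ref{BDR:1}) restricted to filtration level $r$,
$$R\Gamma_{\petc}(X_C, \Fil^r \Mm_{\dR}) \simeq R\Gamma(X, \Fil^r(\underline{\DR_X^{\cl E}} \solid_K \underline{B_{\dR}})).$$
The filtration piece $\Fil^r(\underline{\DR_X^{\cl E}} \solid_K \underline{B_{\dR}})^i$ is built out of the coherent $\cl O_X$-modules $\Fil^a \cl E \otimes_{\cl O_X} \Omega_X^i$ tensored with the Fréchet pieces $t^b \underline{B_{\dR}^+}$ for $a + b = r - i$. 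Writing $\underline{B_{\dR}} = \varinjlim_{j} t^{-j} \underline{B_{\dR}^+}$ (Remark \ref{dRisind}), so that each $\Fil^{-j} \underline{B_{\dR}}/\Fil^r \underline{B_{\dR}}$ is a finite extension of line bundles $\underline{C(k)}$ and hence a $K$-Fréchet space, I would apply Theorem \ref{Wbasechange} to the bounded-below complex $\underline{\Fil^{\geq r-i-j} \DR_X^{\cl E}}$ with Fréchet algebra $t^{-j} B_{\dR}^+$, and then pass to the filtered colimit $j \to \infty$. Because paracompactness gives finite cohomological dimension (Remark \ref{last?}) and filtered colimits along injections commute with $\Fil^r$ in $\Vect_K^{\ssolid}$, the two sides match after the colimit, producing
$$R\Gamma(X, \Fil^r(\underline{\DR_X^{\cl E}} \solid_K \underline{B_{\dR}})) \simeq \Fil^r\bigl(R\Gamma_{\dRc}(X, \cl E) \dsolid_K \underline{B_{\dR}}\bigr).$$

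The main obstacle I expect is the bookkeeping of the tensor filtration interacting with base change: since $\underline{B_{\dR}}$ is only an LF space, not itself a Fréchet algebra, Theorem \ref{Wbasechange} does not apply directly to $\underline{B_{\dR}}$, and one must carefully truncate the filtration to reduce to the Fréchet pieces $t^b \underline{B_{\dR}^+}$. The key technical input that makes this work is the flatness and quasi-separatedness of $\underline{B_{\dR}}$ as a solid $K$-vector space (Remark \ref{dRisind}, via Corollary \ref{solidqs}), which guarantees that the derived tensor product $\dsolid_K \underline{B_{\dR}}$ is underived and compatible with filtered colimits along injections, and Corollary \ref{commlim} for interchanging $R\varprojlim$ with $\dsolid_K$ in the paracompact reduction. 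Once these compatibilities are set up, the $\underline{\mathscr{G}_K}$-equivariance in part (\ref{BDR:2}) follows, as in part (\ref{BDR:1}), from functoriality of each identification in the coefficient ring $B_{\dR}$.
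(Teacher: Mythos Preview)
Your proposal is correct and follows essentially the same route as the paper. Part (\ref{BDR:1}) is exactly the paper's argument: Poincar\'e lemma, push down to $X_{C,\ett,\cond}$, and identify the pushforward with $\underline{\DR_X^{\cl E}}\solid_K\underline{B_{\dR}}$ (the paper packages this last identification as Corollary \ref{frompoincare}, whose proof is the ``upgrade via Corollary \ref{profinite}'' you sketch, checked on graded pieces via Lemma \ref{grcond}). One step you elide is the descent from $R\Gamma_{\ett}(X_C,\,\cdot\,)$ to $R\Gamma_{\an}(X,\,\cdot\,)$, which is not automatic from Lemma \ref{pet=petcond} alone; the paper handles this with Lemma \ref{bc} (acyclicity of $\underline{\cl F}\solid_K\underline{A}$ on $X_{C,\ett}$ for $X$ affinoid) together with Lemma \ref{condensedtate}\listref{condensedtate:2}.

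For part (\ref{BDR:2}) the paper's argument is shorter than yours: once you know part (\ref{BDR:1}) is compatible with filtrations, you only need that
\[
\Fil^r\bigl(R\Gamma_{\dRc}(X,\cl E)\dsolid_K\underline{B_{\dR}}\bigr)\longrightarrow R\Gamma\bigl(X,\Fil^r(\underline{\DR_X^{\cl E}}\solid_K\underline{B_{\dR}})\bigr)
\]
is a quasi-isomorphism, and since $\DR_X^{\cl E}$ is bounded and the filtration on $\cl E$ is (locally) finite, each term of $\Fil^r(\underline{\DR_X^{\cl E}}\solid_K\underline{B_{\dR}})$ is already a \emph{finite} iterated extension of $(\text{coherent})\solid_K t^bB_{\dR}^+$. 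So Theorem \ref{Wbasechange} applies directly to each Fr\'echet piece $t^bB_{\dR}^+$, and no filtered colimit over $j$ is needed. Your truncation-and-colimit argument would also work, but the bookkeeping you anticipate is avoided by this observation.
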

  \medskip
  
  In order to recall the notion of being \textit{de Rham} for a $\Qq_p$-local system on $X_{\pet}$, we first need to recall the definition of the pro-étale period sheaf $\cl O \Bb_{\dR}$, following \cite{Scholze2} and \cite[Definition 2.2.10]{DLLZ}. \medskip
  
  In the following, we let $X$ be a smooth analytic adic space over $\Spa(K, \cl O_K)$.

 \begin{df}\label{precc} Let $\nu:X_{\pet}\to X_{\ett}$ denote the natural morphism of sites. We define the following sheaves on $X_{\pet}$.
 \begin{enumerate}[(i)]
  \item For $m\ge 1$, the sheaf of differentials $\Omega^m_X=\nu^*\Omega_{X_{\ett}}^m$.
  \item We define the sheaf $\cl O\Bb_{\dR}^+$ as the sheafification of the presheaf that to $Z^\diamondsuit \in X_{\pet}$, with $Z=\Spa(R_\infty, R_\infty^+)\to X$ a map, from an affinoid perfectoid space $Z$, that can be written as a cofiltered limit of étale maps $\Spa(R_i, R_i^+)\to X$, $i\in I$, along a $\kappa$-small index category $I$,\footnote{Recall that the set of all such $Z^\diamondsuit \in X_{\pet}$ forms a basis of $X_{\pet}$ (see e.g. \cite[Lemma 2.6]{MW}).} associates the following direct limit
  $$\varinjlim_i \varprojlim_n \left(( R_i^+\widehat\otimes_{W(k)}\Aa_{\inf}(R_\infty, R_\infty^+) )[1/p] \right)/(\ker \theta)^n.$$
  Here, $\widehat\otimes$ denotes the $p$-adic completion of the tensor product, and 
  $$\theta:(R_i^+\widehat\otimes_{W(k)}\Aa_{\inf}(R_\infty, R_\infty^+))[1/p]\to R_\infty$$
  is the tensor product of the maps $R_i^+\to R_\infty^+$ and $\theta: \Aa_{\inf}(R_\infty, R_\infty^+)\to R_\infty^+$. We define a filtration on $\cl O\Bb_{\dR}^+$ by setting $\Fil^r\cl O\Bb_{\dR}^+=(\ker\theta)^r\cl O\Bb_{\dR}^+$.
  \item Let $t$ be a generator of $\Fil^1 \Bb_{\dR}^+$. We define the sheaf $\cl O\Bb_{\dR}$ as the completion of the sheaf $\cl O\Bb_{\dR}^+[1/t]$ with respect to the filtration defined by $\Fil^r \cl (\cl O\Bb_{\dR}^+[1/t])=\sum_{j\in \Zz} t^{-j} \Fil^{r+j}\cl O\Bb_{\dR}^+$, and we equip $\cl O\Bb_{\dR}$ with the induced filtration.
 \end{enumerate}
 \end{df}
 
 \begin{rem}
  As observed in \cite[Remark 2.2.11]{DLLZ} the definition of $\cl O\Bb_{\dR}$ given above corrects the one of \cite{Scholze}, \cite{Scholze2}, as the latter is not complete with respect to the filtration.\footnote{We thank David Hansen for having brought this point to our attention.} The fact that $\cl O\Bb_{\dR}$ is complete with respect to its filtration will be crucial in the proof of Corollary \ref{frompoincare}.
 \end{rem}

 \begin{prop}[{\cite[Theorem 7.6]{Scholze}}]\label{7.6}
  The functor from filtered $\cl O_X$-modules with integrable connection to $\Bb_{\dR}^+$-local systems on $X_{\pet}$
  \begin{equation}\label{ffio}
  (\cl E, \nabla, \Fil^\bullet)\mapsto \Fil^0(\cl E\otimes_{\cl O_X}\cl O\Bb_{\dR})^{\nabla=0}
  \end{equation}
  is fully faithful.
 \end{prop}

 \begin{df}
  We say that a $\Qq_p$-local system $\Ll$ on $X_{\pet}$ is \textit{de Rham} if the $\Bb_{\dR}^+$-local system $\Ll\otimes_{\Qq_p}\Bb_{\dR}^+$ is associated to a filtered module with integrable connection $(\cl E, \nabla, \Fil^\bullet)$, i.e. it lies in the essential image of the functor (\ref{ffio}).
 \end{df}

 One of the main ingredients that we will use to prove Theorem \ref{BDR} is the following version of the Poincar\'{e} lemma, due to Scholze.
 
  \begin{prop}[{\cite[Corollary 6.13]{Scholze}}, {\cite[Corollary 2.4.2]{DLLZ}}]\label{poincare}
  Let $X$ be a smooth analytic adic space over $\Spa(K, \cl O_K)$ of dimension $n$. Then, we have an exact sequence of sheaves on $X_{\pet}$
  \begin{equation*}\label{poincexact}
   0\to \Bb_{\dR}^+\to \cl O\Bb_{\dR}^+\overset{\nabla}{\to}\cl O\Bb_{\dR}^+\otimes_{\cl O_X}\Omega_X^1\overset{\nabla}{\to}\cdots \overset{\nabla}{\to}\cl O\Bb_{\dR}^+\otimes_{\cl O_X}\Omega_X^n\to 0
  \end{equation*}
  where $\nabla: \cl O\Bb_{\dR}^+\to\cl O\Bb_{\dR}^+\otimes_{\cl O_X}\Omega_X^1$ is the unique $\Bb_{\dR}^+$-linear connection extending the differential $d: \cl O_X\to \Omega_X^1$.
  Moreover, we have an exact sequence as above replacing $\Bb_{\dR}^+$ with $\Bb_{\dR}$ and $\cl O\Bb_{\dR}^+$ with $\cl O\Bb_{\dR}$, and for $r\in \Zz$ we have compatible exact sequences of sheaves on $X_{\pet}$
  \begin{equation*}\label{poincexactgen}
   0\to \Fil^r\Bb_{\dR}\to \Fil^{r}\cl O\Bb_{\dR}\overset{\nabla}{\to}\Fil^{r-1}\cl O\Bb_{\dR}\otimes_{\cl O_X}\Omega_X^1\overset{\nabla}{\to}\cdots \overset{\nabla}{\to}\Fil^{r-n}\cl O\Bb_{\dR}\otimes_{\cl O_X}\Omega_X^n\to 0.
  \end{equation*}
 \end{prop}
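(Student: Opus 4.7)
The proof is essentially local on $X_{\pet}$, so my plan is to first reduce to a convenient basis and then invoke an algebraic Poincar\'e lemma for a formal power series ring. Concretely, I would choose a basis of $X_{\pet}$ consisting of affinoid perfectoid $\widetilde U^\diamondsuit \to U$, where $U = \Spa(R, R^+) \subset X$ is an affinoid étale over a polydisk via coordinates $T_1, \ldots, T_n$ and $\widetilde U = U_\infty$ is obtained by adjoining all $p$-power roots of the $T_i$ in the usual toric tower. On such $\widetilde U^\diamondsuit$, I would exploit the tilts $T_i^\flat \in R_\infty^{\flat+}$ (whose sharp is $T_i$) and set $u_i := T_i - [T_i^\flat] \in \ker\theta$. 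Unwinding Definition \ref{precc} and using continuity in the $\xi$-adic topology, I expect to identify
\[
\cl O\Bb_{\dR}^+(\widetilde U^\diamondsuit) \;\cong\; \Bb_{\dR}^+(\widetilde U^\diamondsuit)[[u_1, \ldots, u_n]],
\]
with the connection $\nabla$ being $\Bb_{\dR}^+$-linear and satisfying $\nabla(u_i) = -dT_i$ (hence $\nabla$ is, up to sign, the standard derivation in the $u_i$'s, after using $dT_i$ as a basis of $\Omega^1_X|_{\widetilde U}$). Establishing this local description cleanly is where I expect the main technical work to sit; one has to justify the identification on the explicit presheaf of Definition \ref{precc} before sheafifying, which requires comparing the $p$-adic and $\xi$-adic completions and using that $\Omega^1_X$ is étale-locally free with basis $dT_1, \ldots, dT_n$.

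Granted this local description, the first exact sequence follows at once from the classical (algebraic) Poincar\'e lemma: the Koszul complex of $\partial/\partial u_1, \ldots, \partial/\partial u_n$ acting on $\Bb_{\dR}^+(\widetilde U^\diamondsuit)[[u_1, \ldots, u_n]]$ is exact with kernel the constants $\Bb_{\dR}^+(\widetilde U^\diamondsuit)$, since one explicitly integrates a closed form term by term in the power series expansion (this is formal because we are in characteristic $0$, so the division by positive integers is allowed). Since the statement is local on $X_{\pet}$ and the chosen affinoid perfectoid $\widetilde U^\diamondsuit$ form a basis, this proves exactness of the first displayed sequence as sheaves.

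For the filtered version, I would track the filtration through the local identification. Writing $\xi$ for a generator of $\ker\theta$ on $\Bb_{\dR}^+$, the kernel of $\theta: \cl O\Bb_{\dR}^+(\widetilde U^\diamondsuit) \to \widehat{\cl O}_X(\widetilde U^\diamondsuit)$ corresponds, under the identification above, to the ideal $(\xi, u_1, \ldots, u_n)$, so $\Fil^r\cl O\Bb_{\dR}^+$ becomes the $r$-th power of this ideal. After inverting $t$, the filtration on $\cl O\Bb_{\dR}$ is then the shifted power-series filtration: a section lies in $\Fil^s \cl O\Bb_{\dR}$ iff each coefficient of $u^I$ (with $|I| = k$) lies in $\Fil^{s-k}\Bb_{\dR}$. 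Since $\nabla$ decreases the $u$-degree by one and preserves $\Bb_{\dR}$-coefficients, it sends $\Fil^{r-k}\cl O\Bb_{\dR} \otimes \Omega^k$ into $\Fil^{r-k-1}\cl O\Bb_{\dR} \otimes \Omega^{k+1}$, matching the claimed filtration.

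Finally, with this bookkeeping in place, the term-by-term integration used in the unfiltered case automatically produces antiderivatives inside the correct filtered piece, because integrating $u^I$ increases the $u$-degree by one without changing $\Bb_{\dR}$-coefficients. Hence the filtered de Rham complex is exact with kernel $\Fil^r\Bb_{\dR}$, as desired. The only delicate points will be the explicit local computation of $\cl O\Bb_{\dR}^+$ as a power-series ring and the compatibility of the $(\ker\theta)$-adic filtration with the $(\xi, u_1, \ldots, u_n)$-adic one; once these are in hand, the rest is routine Koszul calculus.
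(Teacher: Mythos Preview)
The paper does not give its own proof of this proposition; it is simply quoted from \cite[Corollary 6.13]{Scholze} and used as a black box. Your proposal is essentially a correct sketch of Scholze's original argument: the local identification $\cl O\Bb_{\dR}^+ \cong \Bb_{\dR}^+[[u_1,\ldots,u_n]]$ via $u_i = T_i - [T_i^\flat]$ is exactly \cite[Proposition 6.10]{Scholze}, and the formal Poincar\'e lemma plus filtration bookkeeping then give \cite[Corollary 6.13]{Scholze} as you describe.
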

 
  We will also need the following lemma.
 
 \begin{lemma}\label{grcond}
  Let $X=\Spa(R, R^+)$ be an affinoid adic space of finite type over $\Spa(K, \cl O_K)$ with an \'etale map $$X\to \Tt_K^n:=\Spa(K\langle T_1^{\pm 1}, \ldots, T_n^{\pm 1}\rangle, \cl O_K\langle T_1^{\pm 1}, \ldots, T_n^{\pm 1}\rangle)$$ that can be written as a composite of of finite \'etale maps and rational embeddings. Let $S\in *_{\kappa{\text -}\pet}$. Then, for any $j\in \Zz$, we have
  $$H_{\pet}^i(X_C\times S, \gr^j\cl O\Bb_{\dR})=
   \begin{cases} \mathscr{C}^0\left(S, R\widehat\otimes_K C(j)\right)& \mbox{if }i=0 \\ 0 & \mbox{if }i>0 \end{cases}$$
   where $(j)$ denotes a Tate twist.\footnote{See Footnote \ref{projj} for the definition of completed projective tensor product $\widehat \otimes_K$.}
 \end{lemma}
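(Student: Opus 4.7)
My plan is to first reduce to the case $S = *$ by profinite compatibility, and then invoke the local cohomology computation due to Scholze (compare \cite[Proposition 6.16]{Scholze}).

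\emph{First step: reduction to $S = *$.} The key input will be Scholze's explicit presentation \cite[Proposition 6.10]{Scholze}: on the toric perfectoid Galois cover $\widetilde X \to X_C$ obtained by adjoining $p$-power roots of the coordinates $T_1, \ldots, T_n$, one has $\cl O\Bb_{\dR}^+|_{\widetilde X} \cong \Bb_{\dR}^+|_{\widetilde X}\llbracket Y_1, \ldots, Y_n \rrbracket$, so that $\gr^j \cl O\Bb_{\dR}|_{\widetilde X}$ decomposes as a direct sum indexed by monomials in the $Y_i$, each summand isomorphic to a Tate twist of $\widehat{\cl O}_{X_C}$. The profinite compatibility $\widehat{\cl O}_{X_C}(U \times S) = \mathscr{C}^0(S, \widehat{\cl O}_{X_C}(U))$ provided by Corollary \ref{profinite} then passes to $\gr^j \cl O\Bb_{\dR}$. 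A \v{C}ech argument on the cover $\widetilde X \to X_C$, using that $\mathscr{C}^0(S, -)$ is exact on countable limits of Banach spaces when $S$ is profinite, will extend this compatibility to pro-étale cohomology, yielding $H^i_{\pet}(X_C \times S, \gr^j \cl O\Bb_{\dR}) = \mathscr{C}^0(S, H^i_{\pet}(X_C, \gr^j \cl O\Bb_{\dR}))$.

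\emph{Second step: the case $S = *$.} Here I will use the Cartan--Leray spectral sequence for the Galois cover $\widetilde X \to X_C$, whose group $\Gamma \cong \Zz_p(1)^n$ acts on the $Y_i$ by translations $\gamma \cdot Y_i = Y_i + \log \chi_i(\gamma)$ (with $\chi_i$ the coordinate characters) and on $R_\infty$ via its natural action on $R_\infty^+$. By Proposition \ref{affinoidsections}, the higher pro-étale cohomology of the relevant period sheaves vanishes on $\widetilde X$, so the spectral sequence collapses to an isomorphism $H^i_{\pet}(X_C, \gr^j \cl O\Bb_{\dR}) = H^i_{\cont}(\Gamma, \gr^j \cl O\Bb_{\dR}(\widetilde X))$. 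The right-hand side will be computed via the Koszul complex, as recalled in Appendix \ref{ccg}; the interplay between the translation action on the $Y_i$ and the Tate-twisted action on the coefficients should produce a telescoping cancellation that kills all higher cohomology and leaves exactly $R \widehat{\otimes}_K C(j)$ in degree zero.

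\emph{Main obstacle.} The hard part will be the Koszul computation in the second step: one must carefully track the combined action of $\Gamma$ on $\gr^j \cl O\Bb_{\dR}(\widetilde X) = \bigoplus_{\mathbf k} R_\infty(j - |\mathbf k|) \cdot \mathbf Y^{\mathbf k}$, verifying that the translation action on the monomials together with the natural action on the Tate-twisted coefficients gives the desired vanishing in positive degrees and the correct invariants in degree zero. The reduction to $S = *$ is essentially formal given Corollary \ref{profinite} and the good behavior of $\mathscr{C}^0(S, -)$ for profinite $S$.
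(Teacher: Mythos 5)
Your overall strategy uses the same two ingredients as the paper — the Cartan--Leray spectral sequence for the toric affinoid perfectoid cover $\widetilde X_C\to X_C$ and the content of \cite[Proposition 6.16]{Scholze} — but you factor the argument differently: you first try to commute $\mathscr{C}^0(S,-)$ past pro-étale cohomology, and only then compute for $S=*$. The paper does the opposite and, crucially, never needs the commutation as a standalone formal statement: it reduces to $j=0$ by twisting, observes that $\widetilde X_C\times S=\Spa(\mathscr{C}^0(S,R_\infty),\mathscr{C}^0(S,R_\infty^+))$ is again affinoid perfectoid, applies Cartan--Leray to the cover $\widetilde X_C\times S\to X_C\times S$ to get $H^i_{\pet}(X_C\times S,\gr^0\cl O\Bb_{\dR})\cong H^i_{\cont}(\Zz_p^n,\gr^0\cl O\Bb_{\dR}(\widetilde X_C\times S))$, and then runs Scholze's proof verbatim with $R_\infty$ replaced by $\mathscr{C}^0(S,R_\infty)$; the answer $\mathscr{C}^0(S,R)\widehat\otimes_K C$ is identified with $\mathscr{C}^0(S,R\widehat\otimes_K C)$ at the very end via \cite[Corollary 10.5.4]{Garcia}. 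Your Step 2 is thus essentially a re-proof of \cite[Proposition 6.16]{Scholze}, which you could simply cite.

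The genuine gap is in Step 1. The claim $H^i_{\pet}(X_C\times S,\gr^j\cl O\Bb_{\dR})=\mathscr{C}^0(S,H^i_{\pet}(X_C,\gr^j\cl O\Bb_{\dR}))$ is not a formal consequence of exactness properties of $\mathscr{C}^0(S,-)$: the Cartan--Leray complex has terms $\mathscr{C}^0(\Gamma^{\,j-1},\gr^0\cl O\Bb_{\dR}(\widetilde X_C))$ whose coefficient module is a completed direct sum of Banach spaces (an LB-type space, not a countable limit of Banach spaces), its differentials are not a priori strict, and $S$ is merely profinite, not extremally disconnected, so the projectivity argument behind Lemma \ref{acyclic} is unavailable. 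What actually makes the commutation true is the explicit structure uncovered in your Step 2 — the weight decomposition of $\gr^0\cl O\Bb_{\dR}(\widetilde X_C)$ and the continuous contracting homotopies (invertibility of some $\gamma_i-1$ with bounded inverse) on the nontrivial weight spaces — and these survive $\mathscr{C}^0(S,-)$ because they are given by continuous maps. So as written your argument is circular: Step 1 needs Step 2. The fix is exactly the paper's: carry $S$ through the whole computation by working with the cover $\widetilde X_C\times S\to X_C\times S$ from the start, rather than attempting a prior formal base change.
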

 \begin{proof}
  This is \cite[Proposition 6.16]{Scholze} in the case $S=*$. For the general case, it suffices to slightly expand the argument of \textit{loc. cit.} as follows. By twisting, we can suppose $j=0$. Denote by $$\widetilde \Tt_C^n:=\Spa(C\langle T_1^{\pm 1/p^\infty}, \ldots, T_n^{\pm 1/p^\infty}\rangle, \cl O_C\langle T_1^{\pm 1/p^\infty}, \ldots, T_n^{\pm 1/p^\infty}\rangle)$$
  the affinoid perfectoid $n$-torus over $C$, and let $\widetilde X_C:=X_C\times_{\Tt_C^n}\widetilde\Tt_C^n=\Spa(R_{\infty}, R_{\infty}^+)$. We recall that $\widetilde \Tt_C^n\to\Tt_C^n$ is a pro-(finite étale) $\Zz_p(1)^n$-cover. 
  Then, since a version of Proposition \ref{affinoidsections} and Corollary \ref{profinite} holds true for the pro-étale sheaf $\gr^0 \cl O\Bb_{\dR}$, considering the Cartan--Leray spectral sequence associated to the affinoid perfectoid $\Zz_p(1)^n$-cover $\widetilde X_C\times S\to X_C\times S$, we have, for $i\ge 0$,
  $$H^i_{\pet}(X_C\times S, \gr^0 \cl O\Bb_{\dR})\cong H^i_{\cont}(\Zz_p^n, \gr^0\cl O\Bb_{\dR}(\widetilde X_C\times S)).$$
  We note that $\widetilde X_C\times S=\Spa(\mathscr{C}^0(S, R_{\infty}), \mathscr{C}^0(S, R_{\infty}^+))$. Now, the proof of \textit{loc. cit.} shows that $H_{\pet}^i(X_C\times S, \gr^0\cl O\Bb_{\dR})=0$ if $i>0$, and it is isomorphic to $\mathscr{C}^0(S, R)\widehat\otimes_K C$ if $i=0$.\footnote{Here, $\mathscr{C}^0(S, R)$ is endowed with the sup-norm.} Then, we obtain the statement observing that $\mathscr{C}^0(S, R)\widehat\otimes_K C=\mathscr{C}^0(S, R\widehat\otimes_K C)$ (which follows from \cite[Corollary 10.5.4]{Garcia}).
 \end{proof}

 In the following statement, we will keep using the notations introduced in \S \ref{condcohgroups} (in particular, see Remark \ref{cohcond}).
 Moreover, given $X$ a rigid-analytic variety defined over $K$, $\cl F$ an $\cl O_{X_{\ett}}$-module over $X_{\ett}$ that is locally free of finite rank, and $A$ a $\overline{K}$-algebra such that $\underline{A}$ is a flat solid $K$-vector space,\footnote{Our main cases of interest will be $A\in\{t^mB_{\dR}^+/t^nB_{\dR}^+:-\infty\le m<n\le \infty\}$.} in addition to Definition \ref{FsolidA}, by abuse of notation we will also denote by $\underline{\cl F}\solid_K \underline{A}$ the sheaf with values in $\Vect_K^{\ssolid}\subset \Vect_K^{\cond}$ regarded on $X_{C, \ett}$ via the equivalence of topoi\footnote{As observed in \cite[\S 3.2]{LeBras1}, this equivalence follows from Elkik's approximation theorem, \cite{Elkik}. Compare with \cite[Lemma 2.5]{LZ}.}
 \begin{equation}\label{eqtop}
  X_{C, \ett}^\sim\cong \varprojlim_{K'/K \text{ finite}} X_{K', \ett}^\sim.
 \end{equation}
 
 \begin{cor}\label{frompoincare}
  Let $X$ be a smooth rigid-analytic variety over $K$. Let $(\cl E, \nabla, \Fil^\bullet)$ be a filtered $\cl O_X$-module with integrable connection, with associated $\Bb_{\dR}^+$-local system 
  $$\Mm_{\dR}^+:=\Fil^0(\cl E\otimes_{\cl O_X}\cl O \Bb_{\dR})^{\nabla=0}$$ 
  and let $\Mm_{\dR}:=\Mm_{\dR}^+[1/t]$. Let us denote by $\lambda: X_{\pet}/X_C\cong X_{C, \pet} \to X_{C, \ett, \cond}$ the natural morphism of sites. Then, we have a natural quasi-isomorphism of complexes of sheaves on $X_{C, \ett}$ with values in $\Vect_K^{\cond}$ which is compatible with filtrations
  \begin{equation}\label{afterpoincare0}  
   (R\lambda_*\Mm_{\dR})^{\blacktriangledown}\simeq\underline{\DR_X^{\cl E}} \solid_K \underline{B_{\dR}}
  \end{equation}
  where the right-hand side is endowed with the tensor product filtration.
 \end{cor}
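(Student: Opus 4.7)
The plan is to deduce this corollary from Scholze's filtered Poincaré lemma (Proposition~\ref{poincare}) tensored with $\cl E$, combined with the computation of $R\lambda_* \cl O\Bb_{\dR}$ that follows from Lemma~\ref{grcond}.

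First I would tensor the filtered exact sequences of Proposition~\ref{poincare} with $\cl E$ (using the diagonal connection $\nabla_{\cl E} \otimes 1 + 1 \otimes \nabla$) to obtain, for each $r \in \Zz$, a filtered resolution of $\Fil^r \Mm_{\dR}$ on $X_{C, \pet}$ by the complex
\[
\Fil^r\bigl(\cl E \otimes_{\cl O_X} \cl O\Bb_{\dR} \overset{\nabla}{\to} \cl E \otimes_{\cl O_X} \Omega^1_X \otimes_{\cl O_X} \cl O\Bb_{\dR} \overset{\nabla}{\to} \cdots\bigr),
\]
where the filtration on the right is the tensor product filtration of $\Fil^\bullet\cl E$, the trivial filtration on $\Omega^\bullet_X$, and $\Fil^\bullet\cl O\Bb_{\dR}$, and where I use the identification $\Mm_{\dR}^+ \otimes_{\Bb_{\dR}^+} \cl O\Bb_{\dR}^+ \cong \cl E \otimes_{\cl O_X}\cl O\Bb_{\dR}^+$ coming from the de Rham assumption on $\Ll$. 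Applying $R\lambda_*$ termwise and using the projection formula (justified because each $\cl E \otimes_{\cl O_X} \Omega^m_X$ is a coherent $\cl O_X$-module pulled back from $X_{C,\ett}$), the problem reduces to computing $(R\lambda_* \Fil^s \cl O\Bb_{\dR})^{\blacktriangledown}$ as a sheaf of condensed $K$-vector spaces on $X_{C, \ett}$.

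Next I would compute these sheaves through the graded pieces. By Lemma~\ref{grcond}, for any affinoid $U = \Spa(R, R^+) \in X_{C, \ett}$ admitting an étale map to the torus that factors as a composite of rational embeddings and finite étale maps, and any $\kappa$-small profinite $S$, one has $H^i_{\pet}(U \times S, \gr^j\cl O\Bb_{\dR}) = 0$ for $i > 0$ and equals $\mathscr{C}^0(S, R\widehat\otimes_K C(j))$ for $i = 0$. By Proposition~\ref{corbello} one has $R \widehat\otimes_K C(j) \cong \underline{R} \solid_K \underline{C(j)}$, so this identifies with $\gr^j(\underline{R} \solid_K \underline{B_{\dR}})$ evaluated on $S$. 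By successive extensions on the $(\ker\theta)$-adic filtration on $\cl O\Bb_{\dR}^+$ and passage to the inverse limit (exact thanks to the vanishing of higher $R^i\lambda_*$ on graded pieces together with a Mittag-Leffler argument as in Lemma~\ref{ML}), followed by inverting $t$ to pass from $\cl O\Bb_{\dR}^+$ to $\cl O\Bb_{\dR}$, I would deduce the filtered identification $(R\lambda_*\Fil^s\cl O\Bb_{\dR})^{\blacktriangledown}|_U \simeq \Fil^s(\underline{R} \solid_K \underline{B_{\dR}})$ with vanishing of higher derived functors.

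Putting the two steps together produces, for each $r$, a filtered quasi-isomorphism $(R\lambda_*\Fil^r\Mm_{\dR})^{\blacktriangledown} \simeq \Fil^r(\underline{\DR_X^{\cl E}} \solid_K \underline{B_{\dR}})$, and taking the union over $r$ yields the displayed quasi-isomorphism compatible with filtrations. The main technical obstacle will be showing that the passage from graded pieces to the filtered object $\Fil^s \cl O\Bb_{\dR}$ is correctly captured by the solid tensor product on the right-hand side: this requires the derived inverse limit along $\Fil^s/\Fil^{s+N}$ to be exact in $\Vect_K^{\ssolid}$ and to match the completed tensor product $\underline{R}\,\widehat{\solid}_K\,\underline{B_{\dR}^+}$, which is precisely the kind of behaviour that motivates working in the solid formalism rather than with classical locally convex $K$-vector spaces.
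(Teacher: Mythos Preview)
Your outline is essentially the paper's own argument: tensor the Poincaré lemma with $\cl E$, apply $R\lambda_*$, use the projection formula, and reduce to Lemma~\ref{grcond} on graded pieces. The one organisational difference is worth noting. The paper first \emph{constructs} a natural filtered map
\[
\underline{\DR_X^{\cl E}}\solid_K\underline{B_{\dR}}\;\longrightarrow\;\PN_X^{\cl E}:=R\lambda_*\bigl(\cl E\otimes_{\cl O_X}\cl O\Bb_{\dR}\to\cdots\bigr)^{\blacktriangledown}
\]
by explicitly unwinding $(\underline{\cl O_X}\solid_K\underline{B_{\dR}})(V)(S)$ as $\bigl(\varprojlim_n((\mathscr{C}^0(S,R^+)\widehat\otimes_{W(k')}A_{\inf})[1/p])/\xi^n\bigr)[1/t]$ and mapping it into $\cl O\Bb_{\dR}(\widetilde V_C\times S)$; only then does it check this map is a quasi-isomorphism, by observing both filtrations are separated and exhaustive and reducing to graded pieces. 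Your proposal instead builds the identification from the bottom up via successive extensions and an inverse limit. This works, but the compatibility of all the graded identifications under extension is exactly the content of having a comparison map in the first place, so the paper's order of operations is cleaner and makes naturality transparent.

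One small correction: your appeal to Lemma~\ref{ML} for the Mittag-Leffler step is misplaced --- that lemma treats inverse systems of \emph{finite-dimensional} $F$-vector spaces, whereas here the terms $\underline{R}\solid_K\underline{B_{\dR}^+/t^n}$ are infinite-dimensional. The inverse limit along $n$ behaves well for a different reason: the transition maps $B_{\dR}^+/t^{n+1}\to B_{\dR}^+/t^n$ are surjective (indeed split), so after $\underline{R}\solid_K(-)$ they remain surjective by flatness (Corollary~\ref{solidqs}), and one can invoke Lemma~\ref{condML} or simply the surjective Mittag-Leffler criterion. The paper sidesteps this entirely by arguing on graded pieces after having the map in hand, using Corollary~\ref{commlim} only to verify that the filtration on $\underline{\DR_X^{\cl E}}\solid_K\underline{B_{\dR}}$ is separated.
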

 
 \begin{proof}
  We may assume that $X$ is connected of dimension $n$. Then, from Proposition \ref{poincare}, we have an exact sequence of sheaves on $X_{C, \pet}\cong X_{\pet}/X_C$
 $$0\to\Mm_{\dR}\to \cl E\otimes_{\cl O_X}\cl O \Bb_{\dR}\overset{\nabla}{\to}\cl E\otimes \Omega_X^1\otimes_{\cl O_X}\cl O \Bb_{\dR}\overset{\nabla}{\to}\cdots \overset{\nabla}{\to}\cl E\otimes \Omega_X^n\otimes_{\cl O_X}\cl O \Bb_{\dR}\to 0$$
  which remains exact after taking $\Fil^r$.
  In particular, this induces a quasi-isomorphism between $(R\lambda_*\Mm_{\dR})^{\blacktriangledown}$ and the complex\footnote{$\PN$ stands for ``Poincaré''.} 
  $$\PN_X^{\cl E}:=R\lambda_*\left[\cl E\otimes_{\cl O_X}\cl O \Bb_{\dR}\to\cl E\otimes \Omega_X^1\otimes_{\cl O_X}\cl O \Bb_{\dR}\to\cdots\to\cl E\otimes \Omega_X^n\otimes_{\cl O_X}\cl O \Bb_{\dR}\right]^{\blacktriangledown}$$ 
  which is compatible with the natural filtrations.
  Now, we construct a natural morphism 
  \begin{equation}\label{natural}
   \underline{\DR_X^{\cl E}} \solid_K \underline{B_{\dR}}\to \PN_X^{\cl E}
  \end{equation}
  of filtered complexes of sheaves on $X_{C, \ett}$, which we claim to be a quasi-isomorphism. We note that it suffices to exhibit a natural morphism $\underline{\cl O_X}\solid_K \underline{B_{\dR}}\to (\lambda_*\cl O\Bb_{\dR})^{\blacktriangledown}$ of sheaves on $X_{C, \ett}$ (with values in $\Vect_K^{\cond}$) that is compatible with filtrations. Let $S\in *_{\kappa{\text -}\pet}$ be an extremally disconnected set, let $K'/K$ be a finite extension with residue field $k'$, and let $V\to X_{K'}$ be an étale morphism with $V=\Spa(R, R^+)$ affinoid. Moreover, let $\widetilde V_C=\Spa(R_{\infty}, R_{\infty}^+)\to V_C$ be a map from an affinoid perfectoid space, that can be written as a cofiltered limit of étale maps $\Spa(R_i, R_i^+)\to V_C$, $i\in I$, along a $\kappa$-small index category $I$. We observe that
  \begin{align*} 
  (\underline{\cl O_X}\solid_K \underline{B_{\dR}})(V)(S)&=\varinjlim\nolimits_{j\in \Nn}(\underline{R}\solid_{K'} t^{-j}\underline{B_{\dR}^+})(S)\\
  &=\varinjlim\nolimits_{j\in \Nn}\mathscr{C}^0(S, R\widehat \otimes_{K'} t^{-j}B_{\dR}^+)\\
  &=\varinjlim\nolimits_{j\in \Nn}\mathscr{C}^0(S, R)\widehat \otimes_{K'} t^{-j}B_{\dR}^+\\
  &=\left(\varprojlim\nolimits_{n\in \Nn}\left((\mathscr{C}^0(S,  R^+)\widehat\otimes_{W(k')} A_{\inf})[1/p]\right)/\xi^n\right)[1/t]
  \end{align*}
  where in the first step we used Remark \ref{dRisind} together with the fact that the tensor product $\solid_{K'}$ commutes with colimits, in the second one we used that filtered colimits of condensed $K$-vector spaces can be computed pointwise on extremally disconnected sets, and Proposition \ref{solidvsproj}, and, finally, in the third step we used \cite[Corollary 10.5.4]{Garcia}. Therefore,  recalling Definition \ref{precc}, we deduce that we have a natural map $$(\underline{\cl O_X}\solid_K \underline{B_{\dR}})(V)(S)\to \cl O\Bb_{\dR}(\widetilde V_C\times S)$$ since $\widetilde V_C\times S=\Spa(\mathscr{C}^0(S, R_{\infty}), \mathscr{C}^0(S, R_{\infty}^+))$. Then, one checks that the latter map descends to a natural map with target $\cl O\Bb_{\dR}(V_C\times S)$, and that it gives the desired morphism of sheaves.
  
  Now, observing that the filtration on both $\underline{\DR_X^{\cl E}} \solid_K \underline{B_{\dR}}$ and $\PN_X^{\cl E}$ is exhaustive and complete,\footnote{The filtration on $\underline{\DR_X^{\cl E}} \solid_K \underline{B_{\dR}}$ is complete since the filtration on $B_{\dR}$ has such property: in fact, in order to prove the vanishing of the derived limit $R\varprojlim_r \Fil^r(\underline{\DR_X^{\cl E}} \solid_K \underline{B_{\dR}})$, arguing degreewise (\cite[Tag 015E]{Thestack}),  we can use Corollary \ref{commlim}, recalling that $B_{\dR}$ is filtered by $K$-Fréchet spaces, which are nuclear $K$-vector spaces by Corollary \ref{frechetnuc}.} by \cite[Lemma 5.2, (1)]{BMS2} it suffices to show that the natural morphism (\ref{natural}) is a quasi-isomorphism on graded pieces, i.e. that, for all $j\in \Zz$, the induced morphism $$\gr^j (\underline{\DR_X^{\cl E}} \solid_K \underline{B_{\dR}}) \to \gr^j\PN_X^{\cl E}$$ is a quasi-isomorphism. Since the morphism (\ref{natural}) is also compatible with respect to the naive filtration on both sides, it suffices to show that, for any $\cl O_{X_{\ett}}$-module $\cl F$ over $X_{\ett}$ that is locally free of finite rank, and for all $j\in \Zz$, the natural morphism
  \begin{equation}\label{local}
   \underline{\cl F}\solid_K\gr^j \underline{B_{\dR}}\overset{\sim}{\to}R\lambda_*(\nu^* \cl F\otimes_{\cl O_X}\gr^{j}\cl O\Bb_{\dR})^{\blacktriangledown}
  \end{equation}
  is a quasi-isomorphism of complexes of sheaves on $X_{C, \ett}$. Let us consider the commutative diagram of morphisms of sites
 \begin{center}
   \begin{tikzcd}
   X_{C, \pet} \arrow[r, "\lambda"] \arrow[d, "\varepsilon'"'] 
     & X_{C, \ett, \cond} \arrow[r, "\pi"]
     & X_{C, \ett} \arrow[d, "\varepsilon"]
    \\ 
    X_{\pet} \arrow[rr, "\nu"] 
     &
     & X_{\ett}    
  \end{tikzcd}
  \end{center}
 where $\pi$ is the natural projection and $\varepsilon$, $\varepsilon'$ are the base change morphisms. Let $\upsilon:=\varepsilon\circ \pi$, then, by the projection formula, for all $j\in \Zz$, we have\footnote{Here, the sheaf $\nu^* \cl F\otimes_{\cl O_X}\gr^{j}\cl O\Bb_{\dR}$ is regarded on $X_{\pet}/X_C\cong X_{C, \pet}$, hence it should be read as the sheaf $\varepsilon'^*(\nu^* \cl F\otimes_{\cl O_X}\gr^{j}\cl O\Bb_{\dR})$.}
  $$R\lambda_*(\nu^* \cl F\otimes_{\cl O_X}\gr^{j}\cl O\Bb_{\dR})=\upsilon^*\cl F\otimes_{\upsilon^*\cl O_X } R\lambda_*(\gr^{j}\cl O\Bb_{\dR}).$$
  Now, note that we can check (\ref{local}) locally on $X_{C, \ett}$.  Since $X$ is smooth, by \cite[Lemma 5.2]{Scholze}, we can assume that $X$ is affinoid and there exists an étale map $X\to \Tt_K^n$ that can be written as a composite of finite \'etale maps and rational embeddings. By Lemma \ref{grcond} and Proposition \ref{solidvsproj}, we have, for all $j\in \Zz$, $R\lambda_*(\gr^j \cl O\Bb_{\dR})^{\blacktriangledown}=\underline{\cl O_X}\solid_K \underline{C(j)} $.  By twisting we can assume $j=0$, hence, it remains to show that we have $$(\upsilon^*\cl F)^{\blacktriangledown}\otimes_{(\upsilon^*\cl O_X)^{\blacktriangledown}}(\underline{\cl O_X}\solid_K \underline{C})=\underline{\cl F}\solid_K \underline{C}$$ as sheaves on $X_{C, \ett}$.\footnote{Note that, by Lemma \ref{acyclic}, for all $j\in \Zz$, we have $\underline{C(j)}=t^{j}\underline{B_{\dR}^+}/t^{j+1}\underline{B_{\dR}^+}=\gr^j\underline{B_{\dR}}$.} For this,  we first observe that, by Proposition \ref{solidvsproj}, we have $\underline{\cl O_X}\solid_K \underline{C}=\underline{\cl O_{X_C}}$ and $\underline{\cl F}\solid_K \underline{C}=\underline{\cl F_C}$. Then, let $S\in *_{\kappa{\text -}\pet}$ be an extremally disconnected set, let $K'/K$ be a finite extension, let $V\to X_{K'}$ be an étale morphism with $V=\Spa(R, R^+)$ affinoid, and let $M$ be the projective module of finite rank over $R$  corresponding to $\cl F_{K'}|_V$ (endowed with its natural $K'$-Banach space structure). Then, 
  $$((\upsilon^*\cl F)^{\blacktriangledown}\otimes_{(\upsilon^*\cl O_X)^{\blacktriangledown}}\underline{\cl O_{X_C}})(V_C)(S)=\varinjlim_{L/K'\text{ finite}}(M_L\otimes_{R_L}\mathscr{C}^0(S, R\widehat\otimes_{K'} C))=\mathscr{C}^0(S, M\widehat\otimes_{K'} C)=(\underline{\cl F_C})(V_C)(S)$$
  which is what we wanted.  
 \end{proof}

 Before proving Theorem \ref{BDR}, we need another preliminary result (cf. \cite[Lemma 3.1]{LZ}).
 
 \begin{lemma}\label{bc}
   Let $X$ be an affinoid space defined over $K$, and let $\cl F$ be an $\cl O_{X_{\ett}}$-module over $X_{\ett}$ that is locally free of finite rank. Then, for $A\in\{t^mB_{\dR}^+/t^nB_{\dR}^+:-\infty\le m<n\le \infty\}$ regarded in $\Mod_{K}^{\ssolid}$, we have
   $$H_{\ett}^i(X_C, \underline{\cl F}\solid_K A)=
   \begin{cases} \underline{\cl F(X)}\solid_K A & \mbox{if }i=0 \\ 0 & \mbox{if }i>0. \end{cases}$$
 \end{lemma}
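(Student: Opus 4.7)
The plan is to handle the various choices of $A$ one layer at a time, starting from the simplest case $A=C$ and then building up via short exact sequences and limits/colimits, exploiting flatness and nuclearity to commute $\solid_K$ with these operations.

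First, I would treat the base case $A=C$. By the identification $\underline{\cl F}\solid_K\underline{C}=\underline{\cl F_C}$ established at the end of the proof of Corollary \ref{frompoincare} (via the equivalence of topoi \ref{eqtop}), the sheaf in question is the condensed version of the coherent $\cl O_{X_C}$-module $\cl F_C$. Since $X_C$ is affinoid, Lemma \ref{condensedtate}\listref{condensedtate:1} then gives $H^0=\underline{\cl F(X)}\solid_K\underline{C}$ and vanishing in positive degrees. Twisting is harmless: the identification $\underline{C(j)}=\gr^j\underline{B_{\dR}}$ identifies $\underline{\cl F}\solid_K\underline{C(j)}$ with a Tate twist of $\underline{\cl F_C}$, so the same conclusion holds for every $A=C(j)$.

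Next, I would treat $A=t^mB_{\dR}^+/t^nB_{\dR}^+$ with $m<n$ finite by induction on $n-m$. The short exact sequence of condensed $K$-vector spaces
\[
0\to \underline{t^{m+1}B_{\dR}^+/t^nB_{\dR}^+}\to \underline{t^mB_{\dR}^+/t^nB_{\dR}^+}\to \underline{C(m)}\to 0
\]
(exact by Lemma \ref{acyclic}, as the first term is closed in the middle one) remains exact after applying $\underline{\cl F(U)}\solid_K-$ for any $U$ affinoid étale over $X$, since $\underline{\cl F(U)}$ is the solidification of a $K$-Banach space and Banach spaces are flat solid $K$-vector spaces by Corollary \ref{solidqs} and Proposition \ref{corbello}. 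Sheafifying gives a short exact sequence of sheaves on $X_{C,\ett}$; its long exact cohomology sequence combined with the inductive hypothesis and the base case yields the claim.

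For the passage to $n=\infty$ with $m$ finite, I would write $\underline{t^mB_{\dR}^+}=\varprojlim_n\underline{t^mB_{\dR}^+/t^nB_{\dR}^+}$ (which holds because $T\mapsto\underline{T}$ preserves limits) and use that $t^mB_{\dR}^+$ is a $K$-Fréchet space, hence nuclear by Corollary \ref{frechetnuc}. Then Corollary \ref{commlim} (already invoked in the proof of Theorem \ref{Wbasechange}) lets me commute the derived inverse limit with $\underline{\cl F(U)}\dsolid_K-$ on affinoid étale $U$; combined with the vanishing obtained in the finite case, this shows that $\underline{\cl F}\solid_K\underline{t^mB_{\dR}^+}$ is still acyclic on $X_C$ and has the expected global sections (the Mittag-Leffler condition, or the concentration of $R\varprojlim$ in degree $0$ via nuclearity, eliminates the $R^1\varprojlim$ term). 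Finally, the case $m=-\infty$ follows by writing the relevant $A$ as a filtered colimit $\varinjlim_m t^{-m}B_{\dR}^+/t^nB_{\dR}^+$ (as in Example \ref{LFspace} and Remark \ref{dRisind}); since $\solid_K$ commutes with colimits and étale cohomology on the qcqs space $X_C$ commutes with filtered colimits (Lemma \ref{last?} style, applied at each degree), the conclusion persists.

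The main obstacle is the inverse limit step: ensuring that $\underline{\cl F}\solid_K(-)$ and $H^i_{\ett}(X_C,-)$ both commute with $\varprojlim_n$ against a tower of infinite-dimensional condensed $C$-vector spaces requires genuine use of the nuclearity of $t^mB_{\dR}^+$ and the derived commutation result Corollary \ref{commlim}; once that is in hand, the remaining reductions are formal.
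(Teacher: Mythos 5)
Your proposal is correct and follows essentially the same route as the paper: the base case $A=C$ via $\underline{\cl F}\solid_K\underline{C}=\underline{\cl F_C}$ (Proposition \ref{solidvsproj}) and affinoid acyclicity, induction on the length of the finite quotients via the evident short exact sequences and flatness of the Banach space $\underline{\cl F(U)}$, then an inverse limit for $B_{\dR}^+$ and a filtered colimit over the qcqs space $X_C$ for $B_{\dR}$. The only cosmetic difference is at the inverse-limit step, where the paper invokes \cite[Lemma 3.18]{Scholze} for sheaves valued in $\CondAb$ while you phrase the same Mittag-Leffler/nuclearity argument through Corollary \ref{commlim} on sections over affinoid étale opens.
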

 \begin{proof}
  By twisting, it suffices to prove the statement for $A\in\{B_{\dR}, B_{\dR}^+, B_{\dR}^+/t^j:j\ge 1\}$. 
  
  Let us start from $A=B_{\dR}^+/t=C$. By Proposition \ref{solidvsproj}, we have $\underline{\cl F}\solid_K C=\underline{\cl F_C}$ as sheaves on $X_{C, \ett}$, and the result simply follows from the acyclicity of $\underline{\cl F_C}$ (cf. Lemma \ref{condensedtate}).
  In order to prove the statement for $A=B_{\dR}^+/t^j$, $j\ge 1$, we can proceed by induction on $j$, using the following exact sequence\footnote{The exactness of this sequence can be checked using the flatness of $K$-Banach spaces for the tensor product $\solid_K$, Corollary \ref{ids}.} of sheaves on $X_{C, \ett}$
  $$0\to \underline{\cl F}\solid_K C(j)\to \underline{\cl F}\solid_K(B_{\dR}^+/t^{j+1})\to \underline{\cl F}\solid_K(B_{\dR}^+/t^j)\to 0.$$
  For $A=B_{\dR}^+$, observing that $B_{\dR}^+=\varprojlim_{j\in \Nn}B_{\dR}^+/t^j$, we deduce the statement from \cite[Lemma 3.18]{Scholze}.\footnote{We note that \textit{loc. cit.} is stated for sheaves on a site with values in abelian groups, however it holds also for sheaves with values in $\CondAb$.}
  Finally, for $A=B_{\dR}$, it suffices to recall that $B_{\dR}=\varinjlim_{j\in \Nn}t^{-j}B_{\dR}^+$ and use that, since $|X_C|$ is quasi-compact and quasi-separated, the cohomology commutes with direct limit of sheaves on $X_{C, \ett}$ with values in $\CondAb$.
 \end{proof}

  \begin{proof}[Proof of Theorem \ref{BDR}] The natural morphism of sites $\lambda: X_{C, \pet} \to X_{C, \ett, \cond}$ factors as $$X_{C, \pet}\overset{\mu}{\to} X_{C, \pet, \cond}\to X_{C, \ett, \cond}.$$ 
  Then, by Lemma \ref{pet=petcond}, we have
  $$R\Gamma_{\petc}(X_C, \Mm_{\dR})\simeq R\Gamma_{\pet, \cond}(X_C, R\mu_*\Mm_{\dR})\simeq R\Gamma_{\ett, \cond}(X_C, R\lambda_* \Mm_{\dR})= R\Gamma_{\ett}(X_C, (R\lambda_* \Mm_{\dR})^{\blacktriangledown}).$$
  Let $\varepsilon: X_{C, \ett}\to X_{\ett}$ be the base change morphism. By Corollary \ref{frompoincare}, Lemma \ref{bc}, and Lemma \ref{condensedtate}\listref{condensedtate:2}, we have a natural quasi-isomorphism which is compatible with filtrations
  $$
  R\Gamma_{\petc}(X_C, \Mm_{\dR})\simeq R\Gamma_{\ett}(X, R\varepsilon_*(R\lambda_* \Mm_{\dR})^{\blacktriangledown})\simeq R\Gamma_{\ett}(X, \underline{\DR_X^{\cl E}}\solid_K B_{\dR})\simeq R\Gamma_{\an}(X, \underline{\DR_X^{\cl E}}\solid_K B_{\dR}).
  $$
  This finishes the proof of part \listref{BDR:1}. For part \listref{BDR:2}, we assume that $X$ is connected and paracompact. It remains to show that, for each $r\in \Zz$, the natural morphism
  $$\Fil^r(R\Gamma_{\dRc}(X, \cl E)\dsolid_K B_{\dR})\to R\Gamma(X, \Fil^r(\underline{\DR_X^{\cl E}}\solid_K B_{\dR}))$$
  is a quasi-isomorphism. This follows from Theorem \ref{Wbasechange}, observing that the complex $\DR_X^{\cl E}$ is bounded (since $X$ has finite dimension), and recalling that $\Fil^jB_{\dR}=t^jB_{\dR}^+$ for $j\in \Zz$ with $B_{\dR}^+$ a $K$-Fréchet algebra.
  \end{proof}
  
  Let us make some comments on Theorem \ref{BDR} and deduce some corollaries.
  
   \begin{rem}\label{quasi-comp}
   Let $X$ be a quasi-compact (and quasi-separated) smooth rigid-analytic variety defined over $K$. Retaining the notation of Theorem \ref{BDR}, we have the following $\underline{\mathscr{G}_K}$-equivariant isomorphism in $D(\Vect_K^{\ssolid})$
   \begin{align*}
    R\Gamma_{\petc}(X_C, \Mm_{\dR})&=\colim_{j\in \Nn}R\Gamma_{\petc}(X_C, \Fil^{-j}\Mm_{\dR})\\
    &\simeq\colim_{j\in \Nn}\Fil^{-j}(R\Gamma_{\dRc}(X, \cl E)\dsolid_K B_{\dR}) \\
    &=R\Gamma_{\dRc}(X, \cl E)\dsolid_K B_{\dR}
   \end{align*}
   where in the first step we used that $|X_C|$ is quasi-compact and quasi-separated, in the second one  Theorem \ref{BDR}\listref{BDR:2}, and in the third step we used that $\dsolid_K$ commutes with colimits, together with the fact that $\Vect_K^{\ssolid}$ satisfies Grothendieck's axiom (AB5).
   In particular, in this case, for all $i\ge 0$, we have a $\underline{\mathscr{G}_K}$-equivariant isomorphism in $\Vect_K^{\ssolid}$
   \begin{equation}\label{afterkunneth}
    H_{\petc}^i(X_C, \Mm_{\dR})\cong H_{\dRc}^i(X, \cl E)\solid_K B_{\dR}.
   \end{equation}
   as it follows recalling that, by Remark \ref{dRisind}, $B_{\dR}$ is a flat solid $K$-vector space (see also the proof of Corollary \ref{fundcor}).
  \end{rem}
  
  \begin{rem}\label{prop}
    Let $X$ be a smooth proper rigid-analytic variety over $K$. For any filtered $\cl O_X$-module with integrable connection $(\cl E, \nabla, \Fil^\bullet)$, by the finiteness of coherent cohomology of proper rigid-analytic varieties, \cite{Kiehl0}, the same argument of Lemma \ref{propclass} shows that $H_{\dRc}^i(X, \cl E)=\underline{H_{\dR}^i(X, \cl E)}$. Then, in this case, the right-hand side of (\ref{afterkunneth}) is given by $\underline{H_{\dR}^i(X, \cl E)\otimes_K B_{\dR}}$. Hence, the isomorphism (\ref{afterkunneth}) recovers \cite[Theorem 7.11]{Scholze}.
  \end{rem}

  \begin{rem}\label{affstein}
   Let $X$ be a smooth affinoid or Stein space over $K$. By Theorem \ref{BDR}\listref{BDR:2} for $\cl E=\cl O_X$, recalling that $K$-Fréchet spaces are flat solid $K$-vector spaces (Corollary \ref{frescoflat}), for $i\ge 0$, we have that
  $$H^i_{\petc}(X_C, \Bb_{\dR}^+)= \colim\nolimits_j \left(H^i(X, \underline{\Omega_X^{\ge j}})\solid_K t^{-j}B_{\dR}^+\right).$$
  We note that $H^i(X, \underline{\Omega_X^{\ge j}})$ vanishes for $i<j$, moreover, $H^i(X, \underline{\Omega_X^{\ge i}})=\underline{\Omega^i(X)}^{d=0}$ and, by Lemma \ref{condensedtate} and Lemma \ref{A&B}, respectively, we have $H^i(X, \underline{\Omega_X^{\ge j}})=H^i_{\dRc}(X)$ for $i>j$. Therefore, we have the following $\underline{\mathscr{G}_K}$-equivariant exact sequence in $\Vect_K^{\ssolid}$
  \begin{equation}\label{explB_dR^+}
    0\to H^i_{\dRc}(X)\solid_K t^{-i+1}B_{\dR}^+ \to H^i_{\petc}(X_C, \Bb_{\dR}^+)\to \underline{\Omega^i(X)}^{d=0}\solid_K C(-i)\to 0.
  \end{equation}
  In the case when $X$ is a smooth Stein space,  by Lemma \ref{drstein}, we have that $H^i_{\dRc}(X)=\underline{H^i_{\dR}(X)}$,\footnote{Let us remark that we don't know whether, in the smooth Stein case and with non-trivial coefficients, the condensed de Rham cohomology group $H_{\dRc}^i(X, \cl E)$ comes from a topological $K$-vector space.} where $H^i_{\dR}(X)$  is endowed with its natural structure of $K$-Fréchet space as in Remark \ref{steinfrechet}. Then, by Proposition \ref{solidvsproj}, the exact sequence (\ref{explB_dR^+}) can be stated in classical topological terms, with the completed projective tensor product replacing the solid tensor product.
  \end{rem}

  Using Theorem \ref{BDR+}, one can express the geometric pro-étale cohomology with coefficients in $\Bb_{\dR}/\Bb_{\dR}^+$, appearing in the fundamental exact sequence of $p$-adic Hodge theory (\ref{fundexact}), in terms of differential forms. In the following special cases, this takes a particularly nice form, which also illustrates how, for a smooth rigid-analytic variety $X$ over $K$, the (non-)degeneration of the Hodge-de Rham spectral sequence is reflected in its geometric $p$-adic pro-étale cohomology.

 \begin{cor}\label{B/B+}
  Let $X$ be a smooth rigid-analytic variety over $K$. Let $i\ge 0$.
  \begin{enumerate}[(i)]
  \item \label{B/B+:1} If $X$ is proper,  we have a $\underline{\mathscr{G}_K}$-equivariant isomorphism in $\Vect_K^{\ssolid}$ 
   $$H^i_{\petc}(X_C, \Bb_{\dR}/\Bb_{\dR}^+)\cong(H^i_{\dRc}(X)\otimes_K B_{\dR})/\Fil^0.$$
  \item \label{B/B+:2} If $X$ is an affinoid space,  we have the following $\underline{\mathscr{G}_K}$-equivariant exact sequence in $\Vect_K^{\ssolid}$
    $$0\to H^i_{\dRc}(X)\solid_K B_{\dR}/t^{-i}B_{\dR}^+ \to H^i_{\petc}(X_C, \Bb_{\dR}/\Bb_{\dR}^+)\to \underline{\Omega^i(X)/\ker d}\solid_K C(-i-1)\to 0.$$
  \end{enumerate} 
 \end{cor}
  
 \begin{proof}
  Let $f:X_{C}\to \Spa(C, \cl O_C)$ denote the structure morphism, and let us consider the long exact sequence associated to the distinguished triangle $Rf_{\pet *}\Bb_{\dR}^+\to Rf_{\pet *}\Bb_{\dR}\to Rf_{\pet*}\Bb_{\dR}/\Bb_{\dR}^+$, i.e.
  $$\cdots\to H^i_{\petc}(X_C, \Bb_{\dR}^+)\overset{\alpha_i}\to H^i_{\petc}(X_C, \Bb_{\dR})\to H^i_{\petc}(X_C, \Bb_{\dR}/\Bb_{\dR}^+)\to \cdots$$
  which gives the short exact sequence
  \begin{equation}\label{ses}
   0\to \coker \alpha_i\to H^i_{\petc}(X_C, \Bb_{\dR}/\Bb_{\dR}^+)\to \ker\alpha_{i+1}\to 0.
  \end{equation}
  
  If $X$ is proper, recalling Remark \ref{prop}, by the degeneration of the Hodge-de Rham spectral sequence at the first page, \cite[Corollary 1.8]{Scholze}, we have that $\ker \alpha_{i}=0$, hence part \listref{B/B+:1}.
  
  If $X$ is an affinoid space, by Remark \ref{quasi-comp} and Remark \ref{affstein}, we have the following commutative diagram with exact rows
   \begin{center}
   \begin{tikzcd}
   0 \arrow[r] 
     &[-1em] H^i_{\dRc}(X)\solid_K t^{-i+1}B_{\dR}^+ \arrow[-,double line with arrow={-,-}]{d} \arrow[r] 
     &[-1em] H^i_{\petc}(X_C, \Bb_{\dR}^+) \arrow[d, "\alpha_{i}"] \arrow[r]      
     &[-1em] \underline{\Omega^i(X)}^{d=0}\solid_K C(-i) \arrow[d, "\beta_{i}"] \arrow[r] 
     &[-1em] 0  \\
   0 \arrow[r] 
     &[-1em] H^i_{\dRc}(X)\solid_K t^{-i+1}B_{\dR}^+ \arrow[r]           
     &[-1em] H^i_{\dRc}(X)\solid_K B_{\dR} \arrow[r]  
     &[-1em] H^i_{\dRc}(X)\solid_K B_{\dR}/t^{-i+1}B_{\dR}^+ \arrow[r]    
     &[-1em] 0
  \end{tikzcd}
  \end{center}
  where the top right arrow is given by the quotient by $t^{-i+1}B_{\dR}^+$, and the morphism $\beta_i$ is given by the natural projection $\underline{\Omega^i(X)}^{d=0}\to H^i_{\dRc}(X)$ on the first factor and the inclusion on the second one. By the snake lemma, we have
  $$\ker \alpha_i=\ker\beta_i=\underline{d\Omega^{i-1}(X)}\solid_K C(-i),$$
  $$\coker \alpha_i=\coker \beta_i=H^i_{\dRc}(X)\solid_K B_{\dR}/t^{-i}B_{\dR}^+$$
  that, together with the short exact sequence (\ref{ses}), imply part \listref{B/B+:2}.
  \end{proof}
 
  From Theorem \ref{BDR+} we can also deduce the following result.
  
  \begin{prop}\label{BdRSS}
   The cohomology theories 
   $$H_{\underline{\dR}}^\bullet:\RigSm_{K, \an}\to \Vect_{K}^{\cond},\;\;\;\;\;\;\;\;\; H_{\petc}^\bullet(-_C, \Bb_{\dR}):\RigSm_{K, \pet}\to \Mod_{B_{\dR}}^{\cond}$$ 
   satisfy the axioms of Schneider--Stuhler (\ref{axioms}).
  \end{prop}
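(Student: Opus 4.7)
The plan is to verify each Schneider-Stuhler axiom \listref{axiom:1}--\listref{axiom:4} first for $H^\bullet_{\underline{\dR}}$ (taking $F = K$ and $A = K$) and then transfer it to $H^\bullet_{\petc}(-_C, \Bb_{\dR})$ (taking $F = K$ and $A = \underline{B_{\dR}}$, which is a condensed $K$-algebra by Remark \ref{dRisind}) via the comparison result of Theorem \ref{BDR}. The flatness of $\underline{B_{\dR}}$ as a solid $K$-vector space reduces most verifications to the corresponding statement for condensed de Rham cohomology.

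Axioms \listref{axiom:2}--\listref{axiom:4} should be essentially formal. The cup product for de Rham is given by the wedge product on the sheaf of commutative differential graded algebras $\underline{\Omega_X^\bullet}$; for $\Bb_{\dR}$ it is induced by the multiplicative structure of the pro-étale period sheaf $\Bb_{\dR}$. Axiom \listref{axiom:3} for de Rham is immediate from $R\Gamma_{\dRc}(\Sp K) = K$, and for $\Bb_{\dR}$ it follows by applying Proposition \ref{affinoidsections}\listref{affinoidsections:3} and Corollary \ref{profinite} to $Z = \Spa(C, \cl O_C)$. For axiom \listref{axiom:4}, the de Rham computation $H^{2i}_{\dR}(\Pp_K^d) = K$ for $0 \le i \le d$ (with all other graded pieces vanishing) is classical, and the cycle class $\Gg_m[-1] \to \underline{\Omega_X^\bullet}$ is induced by $d\log: \cl O_X^\times \to \Omega_X^1$; the image of $\cl O(1)$ then gives a generator of $H^2_{\dR}(\Pp_K^d)$ whose cup powers exhaust the cohomology. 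For $\Bb_{\dR}$, since $\Pp_K^d$ is proper (hence quasi-compact), Remark \ref{quasi-comp} applies and yields
$$H^i_{\petc}(\Pp_{K, C}^d, \Bb_{\dR}) \;\cong\; H^i_{\dRc}(\Pp_K^d)\solid_K \underline{B_{\dR}} \;=\; \underline{H^i_{\dR}(\Pp_K^d)\otimes_K B_{\dR}},$$
so the desired vanishing and the isomorphism \listref{cycleiso} follow by tensoring the de Rham picture with $\underline{B_{\dR}}$. The required cycle class for $\Bb_{\dR}$ is obtained by composing the de Rham one with the natural map $\underline{\Omega_X^\bullet} \to R\lambda_*\Bb_{\dR}$ furnished by Scholze's Poincaré lemma (Proposition \ref{poincare}).

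The main obstacle is axiom \listref{axiom:1} (homotopy invariance with respect to $\accentset{\circ}{\Dd}_K$), since for affinoid $X$ the product $X \times \accentset{\circ}{\Dd}_K$ is a Stein space which is \emph{not} quasi-compact, so Remark \ref{quasi-comp} does not apply directly. I would handle this by writing $\accentset{\circ}{\Dd}_K$ as a nested increasing union of closed disks $\Dd_K(0, r_n)$ with $r_n \nearrow 1$, giving a Stein cover $\{U_n := X \times \Dd_K(0, r_n)\}$ of $X \times \accentset{\circ}{\Dd}_K$ by smooth affinoids. For the de Rham verification one reduces, via Lemma \ref{A&B}, to computing $R\varprojlim_n R\Gamma_{\dRc}(U_n)$; a Künneth argument combined with the vanishing of higher de Rham cohomology of the closed disk gives $R\Gamma_{\dRc}(U_n) \simeq R\Gamma_{\dRc}(X)$ with the transition maps quasi-isomorphisms, hence the limit equals $R\Gamma_{\dRc}(X)$. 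For $\Bb_{\dR}$, one applies Remark \ref{quasi-comp} on each affinoid $U_n$ to obtain $R\Gamma_{\petc}((U_n)_C, \Bb_{\dR}) \simeq R\Gamma_{\dRc}(U_n) \dsolid_K \underline{B_{\dR}}$, and then passes to the limit using that $\underline{B_{\dR}}$ is filtered by $K$-Fréchet (hence nuclear, by Corollary \ref{frechetnuc}) spaces, so that the solid tensor product commutes with the relevant $R\varprojlim$ by Corollary \ref{commlim}. This will yield the isomorphism $H^i_{\petc}((X \times \accentset{\circ}{\Dd}_K)_C, \Bb_{\dR}) \cong H^i_{\dRc}(X) \solid_K \underline{B_{\dR}} \cong H^i_{\petc}(X_C, \Bb_{\dR})$, completing the verification of axiom \listref{axiom:1}.
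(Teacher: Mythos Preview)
Your overall plan matches the paper's, but there is a genuine gap in your verification of axiom \listref{axiom:1}. You claim that the higher de Rham cohomology of the \emph{closed} disk $\Dd_K(0,r_n)$ vanishes, so that $R\Gamma_{\dRc}(U_n)\simeq R\Gamma_{\dRc}(X)$ for each $n$. This is false: for the closed unit disk the differential $d:K\langle T\rangle\to K\langle T\rangle\,dT$ is not surjective, since integrating $\sum b_nT^n\,dT$ requires $b_n/(n{+}1)\to 0$, which can fail even when $b_n\to 0$ (cf.\ Remark \ref{vsCN}). Hence $H^1_{\dR}(\Dd)$ is infinite-dimensional and your K\"unneth step for fixed $n$ collapses.

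The paper repairs this with a pro-system trick. Since $\Dd_j\subset\accentset{\circ}{\Dd}_{j+1}\subset\Dd_{j+1}$, the transition map $R\Gamma_{\dRc}(X\times\Dd_{j+1,K})\to R\Gamma_{\dRc}(X\times\Dd_{j,K})$ factors through $R\Gamma_{\dRc}(X\times\accentset{\circ}{\Dd}_{j+1,K})$, so the inverse systems indexed by closed disks and by \emph{open} disks are pro-isomorphic. For the open disk integration does preserve convergence, giving homotopy invariance; the resulting system is then constant with value $R\Gamma_{\dRc}(X)$, and the $R\varprojlim$ is trivial (so your appeal to Corollary \ref{commlim} with $\cl W=\underline{B_{\dR}}$, which is not Fr\'echet, is both unnecessary and not literally justified). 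For the condensed de Rham case itself the paper is even more direct: it cites the classical quasi-isomorphism $\Omega^\bullet(X)\overset{\sim}{\to}\Omega^\bullet(X\times\accentset{\circ}{\Dd}_K)$ of Fr\'echet complexes and applies Lemma \ref{acyclic}, bypassing closed disks entirely.

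A secondary remark on axiom \listref{axiom:4}: you define $c^{B_{\dR}}$ by composing $c^{\dR}$ with the Poincar\'e-lemma comparison, which would suffice for this proposition in isolation. The paper instead defines $c^{B_{\dR}}$ via the Kummer sequence $\Gg_m[-1]\to\Zz_p(1)\to\Bb_{\dR}$ and then \emph{proves} compatibility with $c^{\dR}$ by the explicit identity $\nabla(\log(V/[U]))=d\log(V)$. This is not merely cosmetic: the Kummer definition is what is later compared with the cycle class for $\Bb_e$ in Proposition \ref{BeSS}, so the paper's choice is forced by the global strategy.
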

  \begin{proof}
   The condensed de Rham cohomology $H_{\underline{\dR}}^\bullet$ is defined by the complex of sheaves $\underline{\Omega}^\bullet$ on the site $\RigSm_{K, \an}$.
   By Corollary \ref{profinite}, $H_{\petc}^\bullet(-_C, \Bb_{\dR})$ is defined by the complex of sheaves $R\varepsilon_*\varepsilon^*\underline{\Bb_{\dR}}$ on the site $\RigSm_{K, \pet}$, where we denote by $\varepsilon:\RigSm_{C, \pet}\to \RigSm_{K, \pet}$ the base change morphism.
  
  We begin by proving axiom \listref{axiom:1}.  Let $X$ be a smooth affinoid space defined over $K$. For the condensed de Rham cohomology, we need to show that the natural map
  $$\underline{\Omega^\bullet(X)}\to \underline{\Omega^\bullet(X\times\accentset{\circ}{\Dd}_K)}$$
  is a quasi-isomorphism. From the knowledge of axiom \listref{axiom:1} for the (classical) de Rham cohomology (see the discussion after \cite[\S 1, Lemma 2]{SS}), we know that $\Omega^\bullet(X)\overset{\sim}{\to}\Omega^\bullet(X\times\accentset{\circ}{\Dd}_K)$ is a quasi-isomorphism of complexes of $K$-Fréchet spaces; then, we can conclude using Lemma \ref{acyclic}.  Now, axiom \listref{axiom:1} for $H_{\petc}^\bullet(-_C, \Bb_{\dR})$ follows observing that, writing the 1-dimensional open unit disk $\accentset{\circ}{\Dd}$ over $\Qq_p$ as a strictly increasing admissible union of closed disks $\{\Dd_j\}_{j\in \Nn}$ of radius in $p^{\Qq}$, we have 
  \begin{align}
   R\Gamma_{\petc}(X_C\times \accentset{\circ}{\Dd}_C, \Bb_{\dR})&=R\varprojlim\nolimits_j R\Gamma_{\petc}(X_C\times {\Dd}_{j,C}, \Bb_{\dR}) \nonumber\\
   &\simeq R\varprojlim\nolimits_j (R\Gamma_{\dRc}(X\times {\Dd}_{j,K})\dsolid_K B_{\dR}) \label{labb1}\\
   &\simeq R\varprojlim\nolimits_j (R\Gamma_{\dRc}(X\times \accentset{\circ}{\Dd}_{j,K})\dsolid_K B_{\dR}) \label{labb2}\\
   &\simeq R\Gamma_{\dRc}(X)\dsolid_K B_{\dR} \label{labb3}\\
   &\simeq R\Gamma_{\petc}(X_C, \Bb_{\dR}) \label{labb4}
  \end{align}
  where in (\ref{labb1}) and (\ref{labb4}) we used Remark \ref{quasi-comp}, in (\ref{labb2}) we used that, for each $j\in \Nn$, the morphism $R\Gamma_{\dRc}(X\times {\Dd}_{j+1,K})\to R\Gamma_{\dRc}(X\times {\Dd}_{j,K})$ factors through $R\Gamma_{\dRc}(X\times \accentset{\circ}{\Dd}_{j+1,K})$, and in (\ref{labb3}) we used axiom \listref{axiom:1} for the condensed de Rham cohomology.

  Axioms \listref{axiom:2} and \listref{axiom:3} are satisfied. 
   In order to verify axiom \listref{axiom:4}, we first need to construct the respective cycle class maps for the cohomology theories in the statement; we observe that it suffices to do this at the level of the complexes of abelian sheaves underlying the respective defining complexes of sheaves (having values in condensed abelian groups), i.e. it suffices to define  $c^{\dR}:\Gg_m[-1]\to \Omega^\bullet$ and $c^{B_{\dR}}:\Gg_m[-1]\to R\varepsilon_*\varepsilon^*\Bb_{\dR}$. 
 The cycle class map $c^{\dR}$ is defined on $\RigSm_{K, \an}$ by 
 \begin{equation}
   c^{\dR}:\Gg_m[-1]\overset{d\log}{\longrightarrow} \Omega^{\ge 1}\longrightarrow \Omega^\bullet,
 \end{equation}
 and the cycle class map $c^{B_{\dR}}$ is defined on $\RigSm_{K, \pet}$ as the composite 
  \begin{equation}
     c^{B_{\dR}}: \Gg_m[-1]\longrightarrow R\varepsilon_*\varepsilon^*\Gg_m[-1]\longrightarrow R\varepsilon_*\varepsilon^* \Zz_p(1)\longrightarrow R\varepsilon_*\varepsilon^* \Bb_{\dR}
  \end{equation}
  where the first arrow is the adjunction morphism, and the middle arrow comes from the boundary map of the Kummer exact sequence of sheaves on $\RigSm_{K, \pet}$
  \begin{equation}\label{kummer}
   0\to\Zz_p(1)\to\varprojlim_{\times p}\Gg_m \to \Gg_m\to 0.
  \end{equation}
  Then, axiom \listref{axiom:4} for the cohomology theory $H_{\underline{\dR}}^\bullet$ follows from the knowledge of the same axiom for the (classical) de Rham cohomology, recalling that $H_{\underline{\dR}}^\bullet(\Pp_K^d)=\underline{H_{\dR}^\bullet(\Pp_K^d)}$ by Lemma \ref{propclass}. For the cohomology theory $H_{\petc}^\bullet(-_C, \Bb_{\dR})$, we can reduce to the latter case, using the isomorphism (\ref{afterkunneth}) for the projective space $\Pp_K^d$, if we check the compatibility of \textit{loc. cit.} with the cycle class maps $c^{\dR}$ and $c^{B_{\dR}}$, up to a sign. For this, we show more generally that, for any $X\in \RigSm_K$, the quasi-isomorphism (\ref{afterpoincare0}) constructed in Corollary \ref{frompoincare} (in the case of trivial coefficients) is compatible with the cycle class maps, up to a sign. Since it suffices to do it locally on $X$, we can assume that there exists an étale map $X\to \Tt_K^n$ that can be written as a composite of finite \'etale maps and rational embeddings. Then, recalling that (\ref{afterpoincare0}) is induced by the quasi-isomorphism $\Bb_{\dR}\overset{\sim}{\to}\Omega_X^{\bullet}\otimes_{\cl O_X} \cl O\Bb_{\dR}$ of complex of sheaves on $X_{\pet}$, given by Proposition \ref{poincare}, we need to check that the following diagram of complexes of sheaves on $X_{\pet}$ is commutative, up to quasi-isomorphisms, and up to a sign,
  \begin{center}
   \begin{tikzcd}
   \cl O_X^\times[-1] \arrow[r, "d\log"] 
     & \Omega_X^{\ge 1}\otimes_{\cl O_X} \cl O\Bb_{\dR} \arrow[r]
     &  \Omega_X^{\bullet}\otimes_{\cl O_X} \cl O\Bb_{\dR}
    \\ 
    \left[\Zz_p(1)\to\varprojlim_{\times p}\cl O_X^\times\right] \arrow[u, "\wr"] \arrow[d] \arrow[r] 
     & \left[\Bb_{\dR}\to \cl O\Bb_{\dR}\right]\arrow[u, "\wr", "\nabla"'] \arrow[d]
     & 
     \\ 
    \Zz_p(1) \arrow[r] 
     & \Bb_{\dR} \arrow[-,double line with arrow={-,-}]{r} 
     & \Bb_{\dR} \arrow[uu, "\wr"]       
  \end{tikzcd}
  \end{center}
  where the middle horizontal map sends an ``element'' $x$ of $\varprojlim_{\times p}\cl O_X^\times$ to $\log(V/[U])\in  \cl O\Bb_{\dR}$, with $V\in \cl O_X^\times$ and $U\in \widehat{\cl O}_X^{\flat\times}$ the respective ``elements'' defined by $x$. For this, it suffices to observe that the right square of the diagram above is anticommutative, and the top left square is commutative: in fact, using the Leibniz rule, and $d[U]=0$, we have that
  $\nabla\left(\log(V/[U])\right)=d\log(V)$.\footnote{A similar argument appears in the proof of \cite[Lemma 3.24]{ScholzeSurvey}.}
  \end{proof}

 \section{\textbf{Pro-\'etale cohomology of $\Bb_e$}}\label{bee} 
 \sectionmark{}  
 
 In this section, we show that the pro-\'etale cohomology with coefficients in $\Bb_e=\Bb[1/t]^{\varphi=1}$, defined in \S \ref{petsheaves}, satisfies the axioms of Schneider--Stuhler (\ref{axioms}). The crucial axiom to be proven is the homotopy invariance with respect to the 1-dimensional open unit disk.

 \subsection{D\'ecalage functor and Koszul complexes}\label{deckos}
 
 We begin by recalling the construction of the \textit{d\'ecalage functor}, due to Deligne and Berthelot--Ogus, and defined more generally by Bhatt--Morrow--Scholze on any ringed site (or topos), \cite[\S 6]{BMS1}. For our purposes, only the case of the ringed site $(*_{\kappa{\text -}\pet}, A)$, for $\kappa$ a cut-off cardinal as in \S \ref{conve}, and $A$ a $\kappa$-condensed ring, will be relevant.\medskip
 
 Let $A$ be a ($\kappa$-)condensed ring, and let $D(A):=D(\Mod_A^{\cond})$ denote the derived category of $A$-modules in $\CondAb$. Let $f$ be a \textit{non-zero-divisor in} $A$, i.e. a generator of a principal invertible ideal sheaf of $A$; we denote by $(f)=fA$ the invertible ideal sheaf of $A$ it generates. We say that a complex $M^\bullet$ of $A$-modules in $\CondAb$ is \textit{$f$-torsion-free} if the map $(f)\otimes_A M^i\to M^i$ is injective for all $i\in \Zz$, and, in this case, we denote by $fM^i$ its image.
 
 \begin{df}
  Let $M^\bullet$ be an $f$-torsion-free complex of $A$-modules in $\CondAb$. We denote by $\eta_f(M^\bullet)$ the subcomplex of $M^\bullet[1/f]$ defined by
  $$\eta_f(M^\bullet)^i:=\{x\in f^iM^i: dx \in f^{i+1}M^{i+1}\}.$$
 \end{df}
 
 The endo-functor $\eta_f(-)$, defined on the category of $f$-torsion-free complexes of $A$-modules in $\CondAb$, induces an endo-functor on $D(A)$ that kills $f$-torsion in cohomology. More precisely, the following result holds true.
 
 \begin{lemma}[{\cite[Lemma 6.4]{BMS1}}]\label{1239}
  Let $M^\bullet$ be an $f$-torsion-free complex of $A$-modules in $\CondAb$. Then, for all $i\in \Zz$, there is a canonical isomorphism
  $$H^i(\eta_f(M^\bullet))\cong (H^i(M^\bullet)/H^i(M^\bullet)[f])\otimes_A (f^i)$$
  where $H^i(M^\bullet)[f]$ denotes the $f$-torsion of $H^i(M^\bullet)$, and $(f^i)\subset A[1/f]$ denotes the invertible ideal sheaf generated by $f^i$.
 \end{lemma}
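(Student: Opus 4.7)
The plan is to unwind the definition of $\eta_f(M^\bullet)^i$ concretely and read off cocycles and coboundaries. Since $f$ is a non-zero-divisor on each $M^i$, tensoring with the invertible ideal $(f^i)$ amounts to identifying $f^i M^i$ with $M^i \otimes_A (f^i)$, and every element of $f^i M^i$ can be uniquely written as $f^i x$ for some $x \in M^i$. Unwinding, a cocycle in $\eta_f(M^\bullet)^i$ is an element $\alpha = f^i x$ with $f^i\, dx = d\alpha = 0$, equivalently $x \in Z^i(M^\bullet)$; so $Z^i(\eta_f(M^\bullet)) = f^i Z^i(M^\bullet)$, i.e.\ $Z^i(M^\bullet)\otimes_A (f^i)$.

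Next I would analyze the coboundaries. An element of $\eta_f(M^\bullet)^{i-1}$ is of the form $\beta = f^{i-1} y$ with $d\beta \in f^i M^i$, i.e.\ $dy \in f M^i$; writing $dy = fw$ for some $w \in M^i$ gives $d\beta = f^i w$. Moreover, since $f \cdot dw = d(fw) = d(dy) = 0$ and $f$ is a non-zero-divisor, such a $w$ automatically lies in $Z^i(M^\bullet)$. Thus
\[
B^i(\eta_f(M^\bullet)) = f^i \cdot T, \qquad T := \{\,w \in Z^i(M^\bullet) : f w \in B^i(M^\bullet)\,\}.
\]
The submodule $T \subset Z^i(M^\bullet)$ is exactly the preimage of the $f$-torsion $H^i(M^\bullet)[f]$ under the quotient $Z^i(M^\bullet) \twoheadrightarrow H^i(M^\bullet)$, so $Z^i(M^\bullet)/T \cong H^i(M^\bullet)/H^i(M^\bullet)[f]$.

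Putting the two computations together yields
\[
H^i(\eta_f(M^\bullet)) \;=\; \frac{f^i Z^i(M^\bullet)}{f^i T} \;\cong\; \bigl(Z^i(M^\bullet)/T\bigr)\otimes_A (f^i) \;\cong\; \bigl(H^i(M^\bullet)/H^i(M^\bullet)[f]\bigr)\otimes_A (f^i),
\]
which is the desired canonical isomorphism. The argument is entirely formal (it lives inside any abelian category equipped with a suitable ``multiplication by $f$''), so the only point requiring care is that $f$ be a non-zero-divisor on every term $M^i$: this is used both to give unique representatives $\alpha = f^i x$ and to deduce $dw = 0$ from $f\, dw = 0$. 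No feature specific to $\CondAb$ enters, so the main (minor) obstacle is just keeping track of the invertible ideal $(f^i)$ as a twist rather than identifying it with $A$ via $f^i$.
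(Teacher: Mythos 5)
Your proof is correct and is essentially the argument of \cite[Lemma 6.4]{BMS1}, which the paper simply cites rather than reproving: identify $Z^i(\eta_f M^\bullet)=f^iZ^i(M^\bullet)$, identify $B^i(\eta_f M^\bullet)=f^iT$ with $T$ the preimage of $H^i(M^\bullet)[f]$ in $Z^i(M^\bullet)$, and untwist by $(f^i)$. The element-wise reasoning is legitimate in $\CondAb$ since (co)kernels and images there are computed pointwise on extremally disconnected sets, so no additional argument is needed.
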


 Every complex of $A$-modules in $\CondAb$ is quasi-isomorphic to an $f$-torsion-free complex of $A$-modules in $\CondAb$, \cite[Lemma 6.1]{BMS1}. Then, from Lemma \ref{1239}, one can deduce the following key result.
 
 \begin{cor}[{\cite[Corollary 6.5]{BMS1}}]\label{decfunc}
  The functor $\eta_f$ from $f$-torsion-free complexes of $A$-modules in $\CondAb$ to $D(A)$ factors canonically over a functor
  $$L\eta_f:D(A)\to D(A)$$
  called the \textit{d\'ecalage functor}, that satisfies $H^i(L\eta_f(M))\cong (H^i(M)/H^i(M)[f])\otimes_A (f^i)$ functorially in $M$.
 \end{cor}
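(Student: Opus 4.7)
The plan is to construct $L\eta_f$ by first verifying that $\eta_f$, defined on the strict category of $f$-torsion-free complexes, preserves quasi-isomorphisms, and then extending it to all of $D(A)$ via $f$-torsion-free resolutions (whose existence is granted by Lemma 6.1 of \cite{BMS1}, as cited).

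First I would check that $\eta_f$ is functorial on the category of $f$-torsion-free complexes of $A$-modules in $\CondAb$: any morphism $\phi: M^\bullet \to N^\bullet$ sends $f^i M^i$ into $f^i N^i$ (since $\phi$ is $A$-linear and $(f)$ is $A$-invertible on torsion-free modules) and commutes with the differential, so it restricts to a morphism $\eta_f(\phi): \eta_f(M^\bullet) \to \eta_f(N^\bullet)$. Next I would show that if $\phi$ is a quasi-isomorphism, then so is $\eta_f(\phi)$. Here the key input is Lemma \ref{1239}: it gives a canonical identification
\[
H^i(\eta_f(M^\bullet)) \cong (H^i(M^\bullet)/H^i(M^\bullet)[f]) \otimes_A (f^i),
\]
functorially in $M^\bullet$. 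Since $\phi$ induces isomorphisms $H^i(M^\bullet) \overset{\sim}{\to} H^i(N^\bullet)$, and the operations ``quotient by $f$-torsion'' and ``tensor with the invertible ideal $(f^i)$'' preserve isomorphisms of $A$-modules, we conclude that $H^i(\eta_f(\phi))$ is an isomorphism for all $i$.

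Having established this, the descent to derived categories is formal. For any $M \in D(A)$, pick an $f$-torsion-free representative $\widetilde M$, which exists by Lemma 6.1 of \cite{BMS1}, and set $L\eta_f(M) := \eta_f(\widetilde M)$, viewed as an object of $D(A)$. To see this is independent of the choice of $\widetilde M$ and functorial in $M$, note that any morphism in $D(A)$ between objects represented by $f$-torsion-free complexes can be represented, after passing to a further $f$-torsion-free resolution of the mapping cone (or equivalently by a roof through a third $f$-torsion-free complex), by a genuine morphism of $f$-torsion-free complexes; the preceding paragraph then guarantees well-definedness up to canonical quasi-isomorphism. The cohomology formula $H^i(L\eta_f(M)) \cong (H^i(M)/H^i(M)[f])\otimes_A (f^i)$ is immediate from Lemma \ref{1239} applied to $\widetilde M$, since $H^i(\widetilde M) = H^i(M)$ by construction.

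The main technical point — and the one I expect to require the most care — is verifying that $\eta_f$ takes quasi-isomorphisms to quasi-isomorphisms; everything else is a formal consequence. In particular, one must rely on the fact that, in the abelian category $\Mod_A^{\cond}$, the notion of ``$f$-torsion-free'' behaves well: the ideal $(f) = fA$ is genuinely invertible (so tensoring with $(f^i)$ is exact), and taking the subobject $f M^i \subseteq M^i$ commutes with the formation of cohomology in the sense required by Lemma \ref{1239}. Both properties are available in $\CondAb$ since it satisfies Grothendieck's axioms (AB4) and (AB4*) and is generated by compact projective objects, so the proof of BMS1 transports without modification to our condensed setting.
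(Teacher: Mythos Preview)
Your proposal is correct and matches the paper's approach. The paper does not give a proof of this corollary; it merely cites \cite[Corollary 6.5]{BMS1} and remarks, in the sentence preceding the statement, that it follows from Lemma \ref{1239} together with the existence of $f$-torsion-free resolutions (\cite[Lemma 6.1]{BMS1}) --- precisely the ingredients you use.
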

 
 \begin{rem}\label{obv}
 For any $M\in D(A)$, we have $$L\eta_f(M)[1/f]=M[1/f].$$
 \end{rem}

 \begin{rem}
  The d\'ecalage functor $L\eta_f$ is not exact (see \cite[Remark 6.6]{BMS1} for a counterexample).
 \end{rem}

 Next, we check that the d\'ecalage functor preserves ``solid complexes'', cf. \cite[Lemma 6.19]{BMS1}. In the following statement, given an analytic ring $(A, \cl M)$,\footnote{See \S \ref{ape} for the notation, and Remark \ref{solidcut} for the set-theoretic conventions.} we denote $D(A, \cl M):=D(\Mod_{(A, \cl M)}^{\cond})$ the derived category of $\Mod_{(A, \cl M)}^{\cond}$.
 
 \begin{lemma}
  Let $(A, \cl M)$ be an analytic ring. If $M\in D(A, \cl M)$, then $L\eta_f M\in D(A, \cl M)$.
 \end{lemma}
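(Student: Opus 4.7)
The plan is to reduce the claim to a cohomological check, using the characterization of $D(A, \cl M)$ as the full subcategory of $D(A)$ consisting of complexes whose cohomology objects lie in the essential image of $\Mod_{(A, \cl M)} \hookrightarrow \Mod_A^{\cond}$. This characterization is available because, in the Clausen--Scholze formalism for analytic rings, $\Mod_{(A, \cl M)}$ is stable under all limits, colimits, and extensions in $\Mod_A^{\cond}$, so it is closed in particular under kernels and cokernels.

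Granting this, it suffices to show that for every $i \in \Zz$, the cohomology group $H^i(L\eta_f M)$ lies in $\Mod_{(A, \cl M)}$ whenever every $H^i(M)$ does. By Corollary \ref{decfunc}, we have a natural isomorphism
\[
H^i(L\eta_f M) \;\cong\; \bigl(H^i(M)/H^i(M)[f]\bigr) \otimes_A (f^i).
\]
Now $H^i(M)[f] = \ker\bigl(f\cdot : H^i(M) \to H^i(M)\bigr)$ is a kernel in $\Mod_{(A, \cl M)}$, hence lies in $\Mod_{(A, \cl M)}$; the quotient $H^i(M)/H^i(M)[f]$ is then a cokernel in the same category, and again lies in $\Mod_{(A, \cl M)}$. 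Finally, since $f$ is a non-zero-divisor in $A$, multiplication by $f^i$ defines an isomorphism of $A$-modules $A \overset{\sim}{\to} (f^i)$, so the tensor $(\,-\,) \otimes_A (f^i)$ is isomorphic to the identity functor on $\Mod_A^{\cond}$. This gives $H^i(L\eta_f M) \in \Mod_{(A, \cl M)}$, as desired.

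The only genuinely delicate point is the invocation of the cohomological characterization of $D(A, \cl M) \subset D(A)$. If one prefers a proof that avoids appealing to this fact, one can instead argue at the level of representatives: by (a condensed analogue of) \cite[Lemma 6.1]{BMS1}, every object of $D(A)$ admits an $f$-torsion-free representative built from mapping cones on multiplication-by-$f^n$ maps applied to a given complex of $A$-modules, and, if one starts from a complex in $\Mod_{(A, \cl M)}$, all these constructions stay within $\Mod_{(A, \cl M)}$ by stability under (co)limits. Applying the explicit formula $\eta_f(M^\bullet)^i = \{\alpha \in f^i M^i : d\alpha \in f^{i+1}M^{i+1}\}$, which is built from kernels, subobjects, and images inside an $(f\text{-torsion-free})$ complex of solid $A$-modules, yields a complex again in $\Mod_{(A, \cl M)}$, proving $L\eta_f M \in D(A, \cl M)$. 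I expect this alternative route, rather than the cohomological one, to be closest to the spirit of the solid analogue of \cite[Lemma 6.19]{BMS1} and to constitute the main technical step.
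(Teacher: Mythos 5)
Your first argument is exactly the paper's proof: it invokes the characterization of $D(A,\cl M)$ by cohomology objects (\cite[Proposition 7.5, (ii)]{Scholzecond}), applies Corollary \ref{decfunc}, and observes that $H^i(M)[f]$ is a kernel of a morphism in $\Mod_{(A,\cl M)}$, hence the quotient lies there too (the paper simply notes the $(f^i)$-twist is a non-canonical isomorphism, as you do). Your closing speculation that the representative-level argument is the intended route is off the mark — the cohomological one is what the paper uses — but the proof itself is correct and matches.
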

 \begin{proof}
  By Proposition \ref{fevre}\listref{fevre:2}, we know that $L\eta_f M\in D(A, \cl M)$ if and only if $H^i(L\eta_f M)\in \Mod_{(A, \cl M)}^{\cond}$ for all $i$. Then, the statement follows recalling that, by Corollary \ref{decfunc}, we have a (non-canonical) isomorphism $H^i(L\eta_f(M))\cong H^i(M)/H^i(M)[f]$, and observing that $H^i(M)[f]$ lies in $\Mod_{(A, \cl M)}^{\cond}$ since, by definition, it is the kernel of a morphism in $\Mod_{(A, \cl M)}^{\cond}$.
 \end{proof}
 
 A favorable property of the décalage functor $L\eta_f$ is that, in some suitable cases, it simplifies Koszul complexes, whose definition we now recall (translated in the condensed setting). See also \cite[\S 7]{BMS1}.
 
 \begin{df}\label{Kosz}
   Let $M$ be a condensed abelian group and let $f_i:M\to M$, $i=1, \ldots, n$, be $n$ endomorphisms of $M$ that commute with each other. We define the \textit{Koszul complex}
   $$\Kos_M(f_1, \ldots, f_n):=M\otimes_{\Zz[f_1, \ldots, f_n]}\bigotimes_{i=1}^n(\Zz[f_1, \ldots, f_n]\overset{f_i}{\to}\Zz[f_1, \ldots, f_n])$$
   as a complex of condensed abelian groups that sits in non-negative cohomological degrees.
 \end{df}

 \begin{lemma}\label{kill}
  Let $A$ be a condensed ring and $f, g_1, \ldots, g_m\in A$ be non-zero-divisors, such that each $g_i$ either divides $f$ or is divisible by $f$. Let $M^\bullet$ be an $f$-torsion-free complex of $A$-modules in $\CondAb$.
  \begin{enumerate}[(i)]
   \item \label{kill:1} If some $g_i$ divides $f$, then $\eta_f(M^\bullet\otimes_A\Kos_A(g_1, \ldots, g_m))$ is acyclic.
   \item \label{kill:2} If $f$ divides $g_i$ for all $i$, then
   $$\eta_f(M^\bullet\otimes_A\Kos_A(g_1, \ldots, g_m))\cong \eta_f M^\bullet\otimes_A \Kos_{A}(g_1/f, \ldots, g_m/f).$$
  \end{enumerate}
 \end{lemma}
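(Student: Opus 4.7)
The plan is to reduce both parts to the single-generator case $m=1$ via the tensor product decomposition $\Kos_A(g_1, \ldots, g_m) = \Kos_A(g_1) \otimes_A \cdots \otimes_A \Kos_A(g_m)$, and then to perform an explicit computation on the subcomplex $\eta_f$. For part (ii), I would proceed by induction on $m$, peeling off one Koszul factor $\Kos_A(g_j)$ at a time and applying the $m=1$ statement to the complex $P^\bullet := M^\bullet \otimes_A \Kos_A(g_1, \ldots, \widehat{g_j}, \ldots, g_m)$ (which is $f$-torsion-free because the Koszul factors are complexes of free $A$-modules). For part (i), I would isolate a single factor $\Kos_A(g_j)$ with $g_j \mid f$ and let $P^\bullet$ denote the tensor of $M^\bullet$ with the remaining factors; it then suffices to show the acyclicity of $\eta_f(P^\bullet \otimes_A \Kos_A(g_j))$.

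In the $m=1$ case with $N^\bullet := P^\bullet \otimes_A \Kos_A(g)$, one has $N^n = P^n \oplus P^{n-1}$ with differential $(x, y) \mapsto (dx,\, gx - dy)$, so
$$\eta_f(N^\bullet)^n = \{(x, y) \in f^n P^n \oplus f^n P^{n-1} : dx \in f^{n+1} P^{n+1},\ gx - dy \in f^{n+1} P^n\}.$$
For part (ii), with $g = fh$, the term $gx = fhx$ automatically lies in $f^{n+1} P^n$, so the last condition reduces to $dy \in f^{n+1} P^n$; the substitution $y \mapsto y' := y/f$ then identifies $\eta_f(N^\bullet)^n$ with $\eta_f(P^\bullet)^n \oplus \eta_f(P^\bullet)^{n-1}$, under which the differential transports to $(x, y') \mapsto (dx,\, hx - dy')$. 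This is precisely the differential of $\eta_f(P^\bullet) \otimes_A \Kos_A(h)$, yielding the desired isomorphism.

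For part (i), with $f = gh$, the key step is to exhibit an explicit contracting homotopy
$$s \colon \eta_f(N^\bullet)^n \to \eta_f(N^\bullet)^{n-1}, \qquad s(x, y) := (y/g,\, 0).$$
Here division by $g$ is well defined because $f$-torsion-freeness of $P^\bullet$ forces both $g$ and $h$ to act as non-zero-divisors; writing $y = f^n z = g^n h^n z$, one checks that $y/g = g^{n-1} h^n z \in f^{n-1} P^{n-1}$, and the bound $dy \in gx + f^{n+1} P^n$ gives $d(y/g) \in f^n P^n$, so that $s$ genuinely lands in $\eta_f(N^\bullet)^{n-1}$. A quick calculation $ds + sd = \mathrm{id}$ then shows $\eta_f(N^\bullet)$ is contractible, hence acyclic.

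The principal obstacle, and the thing to verify most carefully, is that the manipulations $y \mapsto y/f$ in (ii) and $(x, y) \mapsto (y/g, 0)$ in (i) actually preserve the $f$-adic filtration conditions defining $\eta_f$, rather than merely landing in $N^\bullet[1/f]$. This reduces to bookkeeping with the $f$-adic filtration on $P^\bullet$, using that the divisibility hypotheses in each part produce the necessary compensating factors of $f$ (respectively of $h$) that allow the division to stay integral.
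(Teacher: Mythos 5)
Your argument is correct and is essentially the argument the paper uses: the paper's proof of this lemma is simply a reference to the proof of \cite[Lemma 7.9]{BMS1}, which proceeds by the same reduction to $m=1$ via the tensor decomposition of the Koszul complex followed by the explicit computation with the $f$-adic conditions (your division $y\mapsto y/f$ in (ii), and a direct verification of acyclicity in (i), for which your contracting homotopy $s(x,y)=(y/g,0)$ is a clean and valid realization). The only points to keep straight are the sign conventions in the totalized differential and the observation that $f$-torsion-freeness passes to $M^\bullet\otimes_A\Kos_A(\cdots)$ because the Koszul terms are finite free, both of which you handle correctly.
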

 \begin{proof}
  See the proof of \cite[Lemma 7.9]{BMS1}.
 \end{proof}
 
 We will also need to relate Koszul complexes with condensed group cohomology. For this, we refer the reader to  Appendix \ref{ccg}, in particular Proposition \ref{contkosz}.

 \subsection{Homotopy invariance with respect to the open disk}
 
 We are ready to prove that pro-étale cohomology with coefficients in $\Bb_e=\Bb[1/t]^{\varphi=1}$ satisfies the homotopy invariance with respect to the 1-dimensional open unit disk.\medskip
 
 We refer the reader to \S \ref{petsheaves} for the definitions of the pro-étale period sheaves used here. In addition, we introduce the following notation. 
 
  \begin{notation}\label{notnot}
   As in Notation \ref{notff}, we fix a compatible system $(1, \varepsilon_p, \varepsilon_{p^2}, \ldots)$ of $p$-th power roots of unity in $\cl O_C$, which defines an element $\varepsilon\in \cl O_C^{\flat}$. We denote by $[\varepsilon]\in A_{\inf}=\Aa_{\inf}(C, \cl O_C)$ its Teichm\"uller lift and $\mu=[\varepsilon]-1\in A_{\inf}$. Furthermore, we let $\xi=\mu/\varphi^{-1}(\mu)\in A_{\inf}$ and $t=\log[\varepsilon]\in B=\Bb(C, \cl O_C)$.
   Given a compact interval $I\subset (0, \infty)$ with rational endpoints, we let $A_I=\Aa_I(C, \cl O_C)$ and $B_I=\Bb_I(C, \cl O_C)$, and if the context is clear we omit the underscores from the notation of the associated condensed rings.
   \end{notation}
   
   We will use several times the following fact.
  \begin{lemma}\label{reap}
   Let $I\subset[1/(p-1), \infty)$ a compact interval with rational endpoints. Then, the element $t\in B_I$ can be written as $t=\mu\cdot u$ for some unit $u\in B_I$.
  \end{lemma}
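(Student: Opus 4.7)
The plan is to exhibit $u$ as the power series coming from the formal identity $t = \log(1+\mu) = \mu\cdot\sum_{n \geq 0}(-1)^n\mu^n/(n+1)$, and to check that this series converges in the $\Qq_p$-Banach algebra $B_I$ to an invertible element. The invertibility will reduce to showing that the tail $S := \sum_{n \geq 1}(-1)^n\mu^n/(n+1)$ has Gauss norm strictly less than $1$ uniformly in $\rho \in I$, since then $u = 1 + S$ is inverted in $B_I$ by the geometric series $\sum_{k \geq 0}(-S)^k$.

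First I would compute $|\mu|_\rho$ in the standard normalization of the Gauss norms on $B_I$. The compatible system $(\varepsilon_{p^n})_{n \geq 0}$ in $\cl O_C$, satisfying $v_p(\varepsilon_{p^n} - 1) = 1/(p^{n-1}(p-1))$ for $n \geq 1$, defines $\varepsilon \in \cl O_{C^\flat}$ with
$$|\varepsilon - 1|_{C^\flat} = \lim_{m\to\infty}|\varepsilon_{p^m} - 1|^{p^m} = p^{-p/(p-1)},$$
and unwinding the definition of $|\cdot|_\rho$ (together with the congruence $[\varepsilon] - 1 \equiv \varepsilon - 1 \pmod{p}$ in $\Aa_{\inf}$) one obtains $v_\rho(\mu) = \rho\cdot p/(p-1)$, where $v_\rho := -\log_p|\cdot|_\rho$.

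Next I would estimate the general term: $v_\rho(\mu^n/(n+1)) = n\rho\,p/(p-1) - v_p(n+1)$. Writing $n + 1 = p^k m$ with $(m, p) = 1$, so that $v_p(n+1) = k$ and $n \geq p^k - 1$, the positivity of this expression for every $n \geq 1$ is equivalent to $\rho > k(p-1)/((p^k-1)p)$ for every $k \geq 1$. An elementary optimization shows that this supremum is attained at $k = 1$ and equals $1/p$; hence for $I \subset\, ]1/p, 1[$ the tail $S$ converges in $B_I$ with $|S|_\rho < 1$, uniformly in $\rho \in I$ by compactness, so that $u := 1 + S \in B_I^\times$ and $t = \mu u$.

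The main obstacle is pinning down the exact normalization of the Gauss norms on $B_I$ and verifying the sharp threshold $\rho > 1/p$; once this is correctly set up, the remainder is a routine power-series estimate in a $\Qq_p$-Banach algebra.
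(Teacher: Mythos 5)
Your argument is correct in substance, but it takes a genuinely different route from the paper. The paper disposes of the lemma in two lines: since $I\subset\,]1/p,1[$ one has $A_{\cris}\subset B_I$, and then $t=\mu u$ with $u$ a unit of $B_{\cris}^+=A_{\cris}[1/p]$ is exactly \cite[Lemma 12.2, (iii)]{BMS1}. What you do is unwind that citation and run the power-series estimate directly in $B_I$: writing $t=\log(1+\mu)=\mu(1+S)$ and checking $|S|_\rho<1$ on the annulus is essentially the content of the BMS1 lemma, transplanted from $A_{\cris}$ to the Gauss norms of $B_I$. Your version is more self-contained and makes the dependence on the radius explicit; the paper's version is shorter and reuses an input ($A_{\cris}\subset B_I$) that it needs elsewhere anyway.

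One caveat on the numerology, which you yourself flagged: your formulas $v_\rho(\mu)=\rho\,p/(p-1)$ and the ``sharp threshold $\rho>1/p$'' presuppose the normalization $v_\rho(p)=1$, $v_\rho([x])=\rho\,v^\flat(x)$ (the Colmez--Nizio{\l}-style convention). The convention forced by Definition \ref{I} and \cite[\S 1.6]{FF} is rather $|[x]|_\rho=|x|_{C^\flat}$ and $|p|_\rho=\rho$, under which $|\mu|_\rho=|\varepsilon-1|_{C^\flat}=p^{-p/(p-1)}$ is \emph{independent} of $\rho$, and the required positivity $v_\rho(\mu^n/(n+1))>0$ becomes $n\,\tfrac{p}{p-1}>v_p(n+1)\cdot(-\log_p\rho)$, which holds for all $n\ge 1$ as soon as $-\log_p\rho<p$; in particular the hypothesis $I\subset\,]1/p,1[$ is sufficient but far from sharp there. (One can sanity-check which convention is in force by noting that $A_{\cris}\subset B_I$ for all $I\subset\,]1/p,1[$ only in the Fargues--Fontaine normalization.) Since the estimate goes through on the stated interval in either convention, this does not affect the validity of your proof, but the claim of sharpness at $1/p$ is an artifact of the chosen normalization rather than a feature of the lemma.
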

  \begin{proof}
   Since $I\subset [1/(p-1), \infty)$, we have $A_{\cris}\subset A_I$ (see e.g. \cite[\S 2.4.2]{CN1}). Then, the statement follows from \cite[Lemma 12.2, (iii)]{BMS1}.
  \end{proof}

   To illustrate the ideas needed to prove the main result of this section, namely Corollary \ref{smadonno}, we start by studying the geometric pro-étale cohomology of $\Bb_I[1/t]$ on the torus.\medskip
   
   We denote  by $$\Tt_C^n:=\Spa(R, R^+)=\Spa(C\langle T_1^{\pm 1}, \ldots, T_n^{\pm 1}\rangle, \cl O_C\langle T_1^{\pm 1}, \ldots, T_n^{\pm 1}\rangle)$$ 
   the $n$-torus over $C$, and we write  $$\widetilde \Tt_C^n:=\Spa(R_{\infty}, R_{\infty}^+)=\Spa(C\langle T_1^{\pm 1/p^\infty}, \ldots, T_n^{\pm 1/p^\infty}\rangle, \cl O_C\langle T_1^{\pm 1/p^\infty}, \ldots, T_n^{\pm 1/p^\infty}\rangle)$$
   for the affinoid perfectoid $n$-torus over $C$.
 \begin{lemma}\label{adjoint}
  Let $I$ be a compact interval of $(0, \infty)$ with rational endpoints. 
  \begin{enumerate}[(i)]
   \item\label{adjoint:1} We have a canonical identification 
  \begin{equation*}
   \Aa_I(\widetilde \Tt_C^n)=A_I\langle V_1^{\pm 1/p^{\infty}}, \ldots, V_n^{\pm 1/p^{\infty}}\rangle:=A_{\inf}\langle V_1^{\pm 1/p^{\infty}}, \ldots, V_n^{\pm 1/p^{\infty}} \rangle\widehat\otimes_{A_{\inf}}A_I
  \end{equation*}
  where $V_i:=[T_i^\flat]$, we denote by $\widehat\otimes_{A_{\inf}}$ the $p$-adically completed tensor product, and we write $A_{\inf}\langle V_1^{\pm 1/p^{\infty}}, \ldots, V_n^{\pm 1/p^{\infty}} \rangle$ for the $(p, [p^\flat])$-adic completion of $A_{\inf}[V_1^{\pm 1/p^{\infty}}, \ldots, V_n^{\pm 1/p^{\infty}}]$.
  \item\label{adjoint:2} We have
  $$A_I\langle V_1^{\pm 1/p^{\infty}}, \ldots, V_n^{\pm 1/p^{\infty}}\rangle=\widehat{\bigoplus_{(a_1, \ldots, a_n)\in \Zz[1/p]^n}} A_I\cdot \prod_{j=1}^n V_j^{a_j}$$
  where the completion over the direct sum is $p$-adic.
  \end{enumerate}

 \end{lemma}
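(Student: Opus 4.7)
The plan is to establish part (i) by a direct computation using Witt vectors and the definitions of the period sheaves, and then deduce part (ii) from a comparison of the $p$-adic and $(p,\mu)$-adic topologies on $A_I$ that is valid under the hypothesis $I \subset \,]1/p, 1[$.

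For part (i), I first apply Proposition~\ref{affinoidsections}\listref{affinoidsections:2} to identify $\Aa_I(\widetilde{\Tt}_C^n) = \Aa_I(R_\infty, R_\infty^+)$. The tilt $R_\infty^{\flat+}$ is the $\varpi^\flat$-adic completion of $\cl O_{C^\flat}[V_1^{\pm 1/p^\infty}, \ldots, V_n^{\pm 1/p^\infty}]$, where $V_i = T_i^\flat$, by the standard description of the tilt of a perfectoid torus. Taking Witt vectors, $\Aa_{\inf}(R_\infty, R_\infty^+) = W(R_\infty^{\flat+})$ is identified with $A_{\inf}\langle V_1^{\pm 1/p^\infty}, \ldots, V_n^{\pm 1/p^\infty}\rangle$ by direct inspection of Witt vectors of a perfect $\varpi^\flat$-adically complete $\Ff_p$-algebra, using that the $(p, \mu)$-adic topology on $A_{\inf}[V^{\pm 1/p^\infty}]$ coincides with the $(p, [\varpi^\flat])$-adic one. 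From the definitions $\Aa_{\inf, I} = \Aa_{\inf}[[a]/p, p/[b]]$ and $\Aa_I = \varprojlim_n \Aa_{\inf, I}/p^n$, together with the parallel definitions for $A_{\inf, I}$ and $A_I$ over $A_{\inf}$, a level-wise mod-$p^n$ comparison yields part (i).

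For part (ii), I need to show that, when $I \subset \,]1/p, 1[$, the $p$-adic completion in the definition of $A_I\langle V^{\pm 1/p^\infty}\rangle$ coincides with the $(p, \mu)$-adic completion of the free $A_I$-module $\bigoplus_{a \in \Zz[1/p]^n} A_I \cdot V^a$. The core claim is that, in $A_I$, one has $\mu^N \in p \cdot A_I$ for some integer $N$ depending on $I$: granted this, the two ideals $(p)$ and $(p, \mu)$ induce equivalent topologies on $A_I[V^{\pm 1/p^\infty}]$, so their completions coincide. To verify the claim, I would use the Gauss norms $|\cdot|_\rho$ on $B_I$ for $\rho \in I$: the standard formulas $|p|_\rho = p^{-1}$ and $|\mu|_\rho = p^{-\rho p/(p-1)}$ show that, when $\rho > 1/p$, one can choose $N$ large enough (uniformly in $\rho \in I$, by compactness of $I \subset \,]1/p, 1[$) so that $|\mu^N/p|_\rho \le 1$ for every $\rho \in I$, hence $\mu^N / p \in A_I$.

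The main obstacle is the topological comparison in part (ii), where the hypothesis $I \subset \,]1/p, 1[$ (as opposed to $I \subset \,]0, 1[$) is essential: outside this range, $\mu$ would no longer be topologically nilpotent with respect to the $p$-adic topology on $A_I$, and the two completions would genuinely differ. Lemma~\ref{reap}, which yields $t = \mu \cdot u$ in $B_I$ with $u$ a unit precisely when $I \subset \,]1/p, 1[$, is a manifestation of the same phenomenon and provides an alternative input for the estimate $\mu^N/p \in A_I$.
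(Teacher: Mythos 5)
Part \listref{adjoint:1} of your argument is essentially the paper's: identify $\Aa_I(\widetilde\Tt_C^n)=\Aa_I(R_\infty,R_\infty^+)$ via Proposition \ref{affinoidsections}, describe $R_\infty^{\flat+}$ explicitly, pass through Witt vectors (the paper uses the equivalence between perfect $\Ff_p$-algebras and their strict $p$-rings to identify $W(R_\infty^{\flat+})$ with the $(p,[p^\flat])$-adic, equivalently $(p,\mu)$-adic, completion), and then unravel the definitions of $\Aa_{\inf,I}$ and $\Aa_I$ levelwise mod $p^n$. No issues there.

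For part \listref{adjoint:2} you take a genuinely different route to the key fact that the $p$-adic and $(p,\mu)$-adic topologies on $A_I$ agree. The paper simply quotes $A_{\cris}\subset A_I$ for $I\subset\,]1/p,1[$ (from \cite{CN1}) together with $\mu^{p-1}\in pA_{\cris}$ (\cite[Lemma 12.2]{BMS1}), which immediately gives $\mu^{p-1}\in pA_I$. Your replacement by a direct Gauss-norm estimate has a real gap: a bound $|\mu^N/p|_\rho\le 1$ for all $\rho\in I$ only shows that $\mu^N/p$ is an \emph{integral} (power-bounded) element of $B_I$, whereas $A_I$ is by definition the $p$-adic completion of the specific, non-integrally-closed subring $A_{\inf}[[a]/p,p/[b]]$; these two rings need not coincide, so the norm bound alone does not yield $\mu^N\in pA_I$. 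A symptom of this is that your estimate uses only the compactness of $I$ in $]0,1[$ (any $\rho$ bounded away from the endpoints gives a finite $N$), so the hypothesis $I\subset\,]1/p,1[$ does no visible work in your argument --- yet it is exactly this hypothesis that guarantees the \emph{membership} statement, via $A_{\cris}\subset A_I$ and the divided-power identity $\mu^{p-1}/p\in A_{\cris}$. To repair your version you would either have to supplement the norm bound with a comparison between the unit ball of $B_I$ and $A_I$ (e.g.\ a bounded-discrepancy statement), or exhibit $\mu^{p-1}/p$ explicitly in terms of the generators $[a]/p$, $p/[b]$ --- at which point you have essentially reconstructed the paper's citation. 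The closing appeal to Lemma \ref{reap} also does not substitute for this: $t=\mu u$ with $u$ a unit of $B_I$ (not of $A_I$) gives no integral information about $\mu^N/p$.
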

 \begin{proof}
  By Proposition \ref{affinoidsections}, we have $\Aa_I(\widetilde \Tt_C^n)=\Aa_I(R_{\infty}, R_{\infty}^+)$, hence, unraveling the definition of $\Aa_I(R_{\infty}, R_{\infty}^+)$, for part \listref{adjoint:1} it suffices to identify $\Aa_{\inf}(R_{\infty}, R_{\infty}^+)$ with $A_{\inf}\langle V_1^{\pm 1/p^{\infty}}, \ldots, V_n^{\pm 1/p^{\infty}}\rangle$. We recall from \S \ref{petsheaves} that we have $\Aa_{\inf}(R_{\infty}, R_{\infty}^+)=W(R^{+\flat}_{\infty})$ and $R^{+\flat}_{\infty}=\cl O_C^\flat\langle (T_1^\flat)^{\pm 1/p^\infty}, \ldots, (T_n^\flat)^{\pm 1/p^\infty} \rangle$
  where the completion is $p^\flat$-adic. We deduce that $W(R^{+\flat}_{\infty})=W(\cl O_C^\flat)\langle V_1^{\pm 1/p^{\infty}}, \ldots, V_n^{\pm 1/p^{\infty}}\rangle$ where the completion is $(p, [p^\flat])$-adic, as desired.\footnote{One way to see this is to use that the Witt vectors $W(-)$ induce an equivalence of categories between the perfect $\Ff_p$-algebras and the category of $p$-adically complete, separated and $p$-torsion free $\Zz_p$-algebras $R$ such that $R/p$ is perfect, with quasi-inverse the tilting functor (see e.g. \cite[Remark 2.5, Example 2.6]{Bhatt}).} 
  
   For part \listref{adjoint:2} it suffices to observe that the $p$-adic topology is equivalent to the $(p, [p^\flat])$-adic topology on $A_I$: in fact, recalling Definition \ref{I}, $A_I$ is the $p$-adic completion of $A_{\inf, I}=\Aa_{\inf, I}(C, \cl O_C)$, and, for $I=[s, r]$, one has that $p$ divides $[p^\flat]^{1/s}$ in $A_{\inf, I}$. 
 \end{proof}

 \begin{rem}\label{BIact}
 Recall that the affinoid perfectoid pro-(finite étale) cover of the $n$-torus $\widetilde \Tt_C^n\to \Tt_C^n$ has Galois group $\Zz_p(1)^n$. The choice of $\varepsilon$ (Notation \ref{notnot}) gives an isomorphism $\Zz_p(1)^n\cong \Zz_p^n$. We denote $\Gamma:= \Zz_p^n$ and by $\gamma_1, \ldots, \gamma_n$  the canonical generators of $\Gamma$.  One can describe explicitly the Galois action of $\Gamma$ on the coordinates $(R_\infty, R_{\infty}^+)$ of $\widetilde \Tt_C^n$, as follows. We can write 
 $$R_{\infty}^+=\cl O_C\langle T_1^{\pm 1/p^{\infty}}, \ldots, T_n^{\pm 1/p^{\infty}}\rangle=\widehat{\bigoplus_{(a_1, \ldots, a_n)\in \Zz[1/p]^n}} \cl O_C\cdot \prod_{j=1}^n T_j^{a_j}$$
  where the completion over the direct sum is $p$-adic. Then, the action of $\Gamma$ on $R_{\infty}^+$ preserves the decomposition above and $\gamma_i$ acts on $\prod_{j=1}^n T_j^{a_j}$ via multiplication by $\varepsilon_p^{a_i}$.
 
 \end{rem}
 
 In the following, we maintain Notation \ref{notasolid}.
 
 \begin{prop}\label{torusB}
  Let $\Tt^n$ denote the $n$-dimensional torus defined over $\Qq_p$. For any compact interval $I\subset (0, \infty)$ with rational endpoints, we have a Frobenius-equivariant isomorphism in $D(\Vect_{\Qq_p}^{\ssolid})$
  $$R\Gamma_{\petc}(\Tt_C^n, \Bb_I[1/t])\simeq\underline{\Omega}^\bullet(\Tt^n)\dsolid_{\Qq_p} B_I[1/t].$$
 \end{prop}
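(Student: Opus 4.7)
The plan is to mimic the strategy of Le Bras \cite[Proposition 3.17]{LeBras1} and of Bhatt--Morrow--Scholze (via the d\'ecalage functor), pulling the cohomology up along the Galois pro-(finite \'etale) affinoid perfectoid cover $\widetilde{\Tt}_C^n \to \Tt_C^n$ with Galois group $\Gamma = \Zz_p^n$ (Remark \ref{BIact}) and reducing to a Koszul complex computation. First, using that $\Bb[1/t]$ is acyclic on affinoid perfectoids (Proposition \ref{affinoidsections}\listref{affinoidsections:3}) and that the condensed pro-\'etale cohomology of the relevant period sheaves agrees with the natural topological candidate (Corollary \ref{profinite}), the Cartan--Leray spectral sequence gives, in $D(\Vect_{\Qq_p}^{\ssolid})$,
$$R\Gamma_{\petc}(\Tt_C^n, \Bb[1/t]) \simeq R\Gamma_{\cont}\bigl(\underline{\Gamma},\, \underline{\Bb(\widetilde{\Tt}_C^n)[1/t]}\bigr),$$
and by Proposition \ref{contkosz} in Appendix \ref{ccg} the right-hand side is computed by the Koszul complex $\Kos_{\underline{\Bb(\widetilde{\Tt}_C^n)[1/t]}}(\gamma_1 - 1, \ldots, \gamma_n - 1)$, where $\gamma_1, \ldots, \gamma_n$ are the canonical topological generators of $\Gamma$.

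Next, I would reduce to a single compact interval $I \subset\,]1/p, 1[$ with endpoints in $p^{\Qq}$ by writing $\Bb = \varprojlim_I \Bb_I$ and invoking Lemma \ref{R1BI} so the derived limit is strict. For such $I$, Lemma \ref{adjoint}\listref{adjoint:2} provides the $\Gamma$-isotypic decomposition $\Bb_I(\widetilde{\Tt}_C^n) = \widehat{\bigoplus}_{a \in \Zz[1/p]^n} B_I \cdot V^a$, on which $\gamma_i$ acts as multiplication by $[\varepsilon]^{a_i}$. Applying the d\'ecalage functor $L\eta_\mu$ (Corollary \ref{decfunc}) to the integral Koszul complex over $\Aa_I\langle V^{\pm 1/p^\infty}\rangle$ and invoking Lemma \ref{kill}, the isotypic components with $a \notin \Zz^n$ are annihilated, since for such $a$ some $[\varepsilon]^{a_i} - 1$ factors through $\varphi^{-k}(\mu) = [\varepsilon^{1/p^k}] - 1$ which divides $\mu$ in $A_{\inf}$ (and hence Lemma \ref{kill}\listref{kill:1} applies after replacing $\gamma_i - 1$ by a suitable equivalent generator), whereas isotypic components with $a \in \Zz^n$ have all $g_i = [\varepsilon]^{a_i} - 1$ divisible by $\mu$, so Lemma \ref{kill}\listref{kill:2} simplifies the Koszul differentials to $([\varepsilon]^{a_i} - 1)/\mu$, whose reduction modulo $\mu$ recovers the integers $a_i$; summed over $a \in \Zz^n$, this is the de Rham differential $d(T^a) = \sum_i a_i T^a\, d\log T_i$ on $\Tt^n$.

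Finally, Lemma \ref{reap} writes $t = \mu \cdot u$ for a unit $u \in B_I$, so inverting $t$ is the same as inverting $\mu$, and by Remark \ref{obv} we have $L\eta_\mu(M)[1/\mu] = M[1/\mu]$: the computation above therefore identifies $\Kos_{\underline{\Bb_I(\widetilde{\Tt}_C^n)[1/t]}}(\gamma_1 - 1, \ldots, \gamma_n - 1)$ with $\underline{\Omega^\bullet(\Tt^n)} \solid_{\Qq_p} \underline{B_I[1/t]}$ via the matching $V_i \leftrightarrow T_i$. Taking the inverse limit over $I$, which is well-behaved in the solid setting by Corollary \ref{commlim} since each $\underline{\Omega^i(\Tt^n)} \solid_{\Qq_p} \underline{B_I[1/t]}$ is representable by nuclear $\Qq_p$-vector spaces (Theorem \ref{nuclearbanach}), yields the claimed quasi-isomorphism; Frobenius-equivariance is then automatic from the compatibility of $\varphi$ with the transition maps $\Bb_I \overset{\sim}{\to} \Bb_{\varphi(I)}$ (Remark \ref{frr}) and the naturality of the Koszul presentation.

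The hard part will be making the d\'ecalage argument rigorous on the completed direct sum $\widehat{\bigoplus}_{a}$ in the condensed/solid setting, and precisely verifying the divisibility hypotheses of Lemma \ref{kill} for \emph{all} non-integer weights $a \in \Zz[1/p]^n \setminus \Zz^n$: while the elementary divisibility $\varphi^{-k}(\mu) \mid [\varepsilon^{b/p^k}] - 1 \mid \mu \cdot (\text{something})$ is clear, fitting it into the exact hypothesis of Lemma \ref{kill}\listref{kill:1} requires either a careful change of Koszul generators or an equivalent argument that $[\varepsilon]^{a_i} - 1$ is a unit in $B_I[1/t]$ whenever $a_i \notin \Zz$, which is essentially the statement that on the relevant annulus of the Fargues--Fontaine curve $t$ cuts out the only relevant classical point.
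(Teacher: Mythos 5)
Your overall route is the same as the paper's: Cartan--Leray for the cover $\widetilde\Tt^n_C\to\Tt^n_C$, Proposition \ref{contkosz} to get a Koszul complex, the integral/non-integral decomposition of Lemma \ref{adjoint}, d\'ecalage, then the limit over $I$ and inversion of $t$. The genuine gap is in how you kill the non-integral part. Lemma \ref{kill} is stated for Koszul complexes on ring elements $g_i\in A$, but the operators $\gamma_i-1$ on $A_I(R_\infty)^{\nonint}$ are twisted by the Galois action: on the summand indexed by a fractional weight $(a_1,\ldots,a_n)$ the relevant operator is $\gamma_i[\varepsilon]^{a_i}-1$ with $\gamma_i$ still acting nontrivially on the integral coefficients $A_I(R)$, not multiplication by the ring element $[\varepsilon]^{a_i}-1$. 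If you decompose all the way down to monomial eigenlines so that the operators do become ring elements, you are left with a $(p,\mu)$-adically \emph{completed} direct sum of Koszul complexes, and neither $L\eta_\mu$ nor acyclicity passes through such a completion without uniform control. This is exactly why Bhatt--Morrow--Scholze (and the paper) instead exhibit an explicit integral homotopy $h$ satisfying $\gamma_i^{p^{r-1}}(h(x))[\varepsilon]^{1/p}-h(x)=\varphi^{-1}(\mu)x$ (cf. \cite[Lemma 9.6]{BMS1}), showing that multiplication by $\varphi^{-1}(\mu)$, hence by $t$, is null-homotopic on the \emph{whole} completed non-integral part, after which Corollary \ref{decfunc} kills it. Your proposed fallback (that $[\varepsilon]^{a_i}-1$ is a unit in $B_I[1/t]$ for $a_i\notin\Zz$) runs into the same obstruction: summand-wise acyclicity does not yield acyclicity of the completed sum.

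A second, smaller issue concerns the integral part: identifying the divided differentials $(\gamma_i-1)/t$ with the $a_i$ only modulo $\mu$ is not enough, because you invert $t$ at the end and all mod-$t$ information is then lost. What is needed is the actual Frobenius-equivariant isomorphism of complexes coming from the Taylor expansion $\gamma_i=\sum_{j\ge 0}\frac{t^j}{j!}\left(\frac{\partial}{\partial\log(V_i)}\right)^j$ of \cite[Lemma 12.4]{BMS1}, which gives $(\gamma_i-1)/t=\frac{\partial}{\partial\log(V_i)}(1+H)$ with $H$ topologically nilpotent, so that $1+H$ is an automorphism. Finally, note that the argument only works directly for $I\subset\,]1/p,1[$ (where $A_{\cris}\subset B_I$ and $t=\mu\cdot u$); the paper then transports the quasi-isomorphism to arbitrary $I$ by applying a power of $\varphi$ via Remark \ref{frr}, which you need before forming $R\varprojlim_I$.
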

 \begin{proof}
 We will show that, for any compact interval $I\subset (0, \infty)$ with rational endpoints, we have a Frobenius-equivariant isomorphism in $D(\Vect_{\Qq_p}^{\ssolid})$
 \begin{equation}\label{LI}
  L\eta_tR\Gamma_{\petc}(\Tt_C^n, \Bb_I)\simeq\underline{\Omega}^\bullet(\Tt^n)\dsolid_{\Qq_p} B_I.
 \end{equation}
 We note that the statement follows inverting $t$ in  (\ref{LI}), using that the tensor product $\dsolid_{\Qq_p}$ commutes with filtered colimits, and observing 
  that $(L\eta_tR\Gamma_{\petc}(\Tt_C^n, \Bb_I))[1/t]=R\Gamma_{\petc}(\Tt_C^n, \Bb_I[1/t])$ (recall Remark \ref{obv}, and use that $|\Tt_C^n|$ is quasi-compact and quasi-separated). 
 
 Fix a compact interval $I\subset (0, \infty)$ with rational endpoints. We compute $R\Gamma_{\petc}(\Tt_C^n, \Bb_I)$ using the Cartan--Leray spectral sequence relative to the pro-étale $\Gamma$-torsor $\widetilde \Tt^n_C\to \Tt^n_C$: by Proposition \ref{cart-ler}, we have a quasi-isomorphism
  $$R\Gamma_{\underline{\cond}}(\Gamma , \underline{\Bb_I}(\widetilde \Tt_C^n))\overset{\sim}{\to} R\Gamma_{\petc}(\Tt_C^n, \Bb_I).$$
  Thus, we need to study the complex $L\eta_t R\Gamma_{\underline{\cond}}(\Gamma , \underline{\Bb_I}(\widetilde \Tt_C^n)).$
  By Lemma \ref{adjoint}\listref{adjoint:1}, we can write
  \begin{equation}\label{summation}
   \Aa_I(\widetilde \Tt_C^n)=A_I(R) \oplus A_I(R_{\infty})^{\nonint},\;\;\;\;\;\;\;\Bb_I(\widetilde \Tt_C^n)=B_I(R) \oplus B_I(R_{\infty})^{\nonint}
  \end{equation}
  where $A_I(R):=A_I\langle V_1^{\pm 1}, \ldots, V_n^{\pm 1}\rangle$ denotes the ``integral part'', $A_I(R_{\infty})^{\nonint}$ denotes the ``non-integral part'' of $\Aa_I(\widetilde \Tt_C^n)$, and similarly for $\Bb_I(\widetilde \Tt_C^n)$ by inverting $p$.
   By Proposition \ref{contkosz}, we have
  $$R\Gamma_{\underline{\cond}}(\Gamma , \underline{\Bb_I}(\widetilde \Tt_C^n))\simeq \Kos_{\underline{B_I(R)}}(\gamma_1-1, \ldots, \gamma_n-1) \oplus \Kos_{\underline{B_I(R_{\infty})}^{\nonint}}(\gamma_1-1, \ldots, \gamma_n-1)$$
  Now, we first assume that $I\subset [1/(p-1), \infty)$.  
  \begin{itemize}
   \item We begin by showing that we have $L\eta_t\Kos_{\underline{B_I(R_{\infty})}^{\nonint}}(\gamma_1-1, \ldots, \gamma_n-1)=0$. For this, it suffices to prove that the multiplication by $t$ on the  complex $\Kos_{B_I(R_{\infty})^{\nonint}}(\gamma_1-1, \ldots, \gamma_n-1)$ is homotopic to 0.\footnote{In fact, this would imply that the cohomology groups of the complex of condensed $\Qq_p$-vector spaces $\Kos_{\underline{B_I(R_{\infty})}^{\nonint}}(\gamma_1-1, \ldots, \gamma_n-1)$ are annihilated by $t$, and hence the claim by Corollary \ref{decfunc}.} Let $\mu=[\varepsilon]-1$ as in Notation \ref{notnot}, and recall that, by Lemma \ref{reap}, we have that $t=\mu\cdot u$ for some unit $u\in B_I$. Then, it suffices to show that the multiplication by $\varphi^{-1}(\mu)=[\varepsilon]^{1/p}-1$ on $\Kos_{A_I(R_{\infty})^{\nonint}}(\gamma_1-1, \ldots, \gamma_n-1)$ is homotopic to $0$. This can be done as in the proof of \cite[Lemma 9.6]{BMS1}:  by Remark \ref{BIact} and Lemma \ref{adjoint}\listref{adjoint:2}, we have
   $$\Kos_{A_I(R_{\infty})^{\nonint}}(\gamma_1-1, \ldots, \gamma_n-1)=\widehat{\bigoplus_{(a_1, \ldots, a_n)}} \Kos_{A_I(R)}(\gamma_1[\varepsilon]^{a_1}-1, \ldots, \gamma_n[\varepsilon]^{a_n}-1)$$
   where the completion is $p$-adic, and the direct sum runs over $a_1, \ldots, a_n\in \Zz[1/p]\cap [0, 1)$ not all 0. Hence, we are reduced to showing that for $a_i\in \Zz[1/p]\cap (0, 1)$ the multiplication by $\varphi^{-1}(\mu)$ on the complex 
   \begin{center}
   \begin{tikzcd}
   A_I(R) \arrow[r, "\gamma_i\text{[}\varepsilon\text{]}^{a_i}-1"] &[1em] A_I(R)
  \end{tikzcd}
  \end{center}
  is homotopic to 0, i.e. we have to find $h$ that completes the following diagram
  \begin{center}
   \begin{tikzcd}
   A_I(R) \arrow[r, "\gamma_i\text{[}\varepsilon\text{]}^{a_i}-1"]\arrow[d, "\varphi^{-1}(\mu)"'] &[1em] A_I(R)\arrow[d, "\varphi^{-1}(\mu)"]\arrow[dl, dashed, "h"'] \\
   A_I(R) \arrow[r, "\gamma_i\text{[}\varepsilon\text{]}^{a_i}-1"] &[1em] A_I(R)
  \end{tikzcd}
  \end{center}
  Let us write $a_i=m/p^r$, with $m\in \Zz\setminus p\Zz$. Up to changing the choice of $(1, \varepsilon_p, \varepsilon_{p^2}, \ldots)$ in Notation \ref{notnot}, we can suppose $m=1$. Furthermore, since $\gamma_i[\varepsilon]^{1/p^r}-1$ divides $\gamma_i^{p^{r-1}}[\varepsilon]^{1/p}-1$, it suffices to show that the latter map is homotopic to 0. Then, one has to find $h$ such that 
  $$\gamma_i^{p^{r-1}}(h(x))[\varepsilon]^{1/p}-h(x)=\varphi^{-1}(\mu)x.$$
  For this, we refer the reader to  \cite[Lemma 9.6]{BMS1}.
   
   \item Next, we show that 
   $$L\eta_t\Kos_{\underline{B_I(R)}}(\gamma_1-1, \ldots, \gamma_n-1)\simeq \underline{\Omega}^\bullet(\Tt^n)\solid_{\Qq_p}B_I.$$ Since $\mu$ divides $\gamma_i-1$, i.e. $\gamma_i$ acts trivially on $B_I(R)/\mu$, and by Lemma \ref{reap} $t=\mu\cdot u$ for some unit $u\in B_I$, using Lemma \ref{kill}\listref{kill:2} we have
   $$L\eta_t\Kos_{\underline{B_I(R)}}(\gamma_1-1, \ldots, \gamma_n-1)\simeq \Kos_{\underline{B_I(R)}}
   \left(\frac{\gamma_1-1}{t}, \ldots, \frac{\gamma_n-1}{t}\right).$$
   By the proof of \cite[Lemma 12.4]{BMS1}, one has the following Taylor expansion in $B_I(R)$
   $$\gamma_i=\sum_{j\ge 0}\frac{t^j}{j!}\left(\frac{\partial}{\partial\log(V_i)}\right)^j$$
   from which we can write
   $$\frac{\gamma_i-1}{t}=\frac{\partial}{\partial\log(V_i)}\left(1+H\right), \; \text{  with  }\;  H:=\sum_{j\ge 1}\frac{t^{j}}{(j+1)!}\left(\frac{\partial}{\partial\log(V_i)}\right)^{j}.$$
   Note that $H$ is topologically nilpotent, using again that $A_{\cris}\subset B_I$ since $I\subset [1/(p-1), \infty)$, and \cite[Lemma 12.2, (ii)]{BMS1}. In particular, the factor $1+H$ is an automorphism of $B_I(R)$; moreover, the latter automorphism is Frobenius-equivariant recalling that $\varphi(t)=pt$ and $$\frac{\partial}{\partial\log(V_i)}\circ \varphi=p\left(\varphi\circ \frac{\partial}{\partial\log(V_i)}\right).$$  
   We deduce that we have a Frobenius-equivariant quasi-isomorphism
   $$\Kos_{\underline{B_I(R)}}\left(\frac{\gamma_1-1}{t}, \ldots, \frac{\gamma_n-1}{t}\right)\simeq \Kos_{\underline{B_I(R)}}\left(\frac{\partial}{\partial\log(V_1)}, \ldots, \frac{\partial}{\partial\log(V_n)}\right)\simeq \underline{\Omega}^\bullet(\Tt^n)\solid_{\Qq_p} B_I$$
   where in the last step we used Remark \ref{coinban}.\footnote{In fact, by \cite[Appendix B, proposition 5]{Bosch}, $B_I(R)$ is naturally isomorphic to the completed tensor product of Banach $\Qq_p$-algebras $R\widehat \otimes_{\Qq_p}B_I$. }
  \end{itemize}
  
  Then, putting the above points together, and using that $B_I$ is flat for the tensor product $\solid_{\Qq_p}$ by Corollary \ref{solidqs}, we obtain a quasi-isomorphism as in (\ref{LI}) for any given $I\subset [1/(p-1), \infty)$.
  Now, since the Frobenius $\varphi$ induces an isomorphism $\varphi: \Bb_I\overset{\sim}{\to}\Bb_{I/p}$ (Remark \ref{frr}), applying $\varphi^{N}$ to the latter quasi-isomorphism, for $N$ a sufficiently big positive integer, we deduce that (\ref{LI}) holds true for a general compact interval $I\subset(0, \infty)$ with rational endpoints, as desired.
 \end{proof}

 To reach the stated goal of this section, we will need in particular to study the pro-étale cohomology with coefficients in $\Bb_I[1/t]$ of the 1-dimensional unit \textit{closed disk} $\Dd_C$. For this, the general strategy we will use is similar to the one we have seen for the torus (Proposition \ref{torusB}), but there is one slight difference, which is explained in the following remark.
 
 \begin{rem}\label{remeh}
  If we denote by $\widetilde \Dd_C=\Spa(C\langle T^{1/p^{\infty}}\rangle, \cl O_C\langle T^{1/p^{\infty}}\rangle)$ the affinoid perfectoid unit closed disk over $C$, then the cover $\widetilde \Dd_C^\diamondsuit \to \Dd_C^\diamondsuit$ is \textit{not} pro-étale. But, it is quasi-pro-étale. Hence, recalling Definition \ref{qpet}, we can still study the pro-étale cohomology of $\Dd_C$ with coefficients in $\Bb_I[1/t]$ using the \v{C}ech-to-cohomology spectral sequence relative to such cover.
 \end{rem} 
 
 Thus, we need to better understand $\widetilde \Dd_{C, j}$, the $j$-fold fibre product of $\widetilde \Dd_C$ over $\Dd_C$. The following result is due to Le Bras.
 
 \begin{lemma}\label{muiomale}
  Let $\Dd_C=\Spa(C\langle T\rangle, \cl O_C\langle T\rangle)$ denote the 1-dimensional unit closed disk over $C$. Let $\widetilde \Dd_C=\Spa(C\langle T^{1/p^{\infty}}\rangle, \cl O_C\langle T^{1/p^{\infty}}\rangle)$ be the affinoid perfectoid unit closed disk over $C$. For $j\ge 1$, let $\widetilde \Dd_{C, j}$ denote the $j$-fold fibre product of $\widetilde \Dd_C$ over $\Dd_C$. Then, we have $$\widetilde \Dd_{C, j}\cong\Spa(R_j^+[1/p], R_j^+)$$ where\footnote{Here, we denote by $f|_{T=0}$ the evaluation of the function $f$ at $T=0$.} $$R_j^+:=\{f\in\mathscr{C}^0(\Zz_p^{j-1}, \cl O_C\langle T^{1/p^{\infty}}\rangle): f|_{T=0} \text{ {\normalfont is constant}}\}.$$
 \end{lemma}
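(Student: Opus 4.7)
The plan is to proceed by induction on $j$. The base case $j=1$ is immediate: $\mathscr{C}^0(*, R_\infty^+) = R_\infty^+$ where $R_\infty^+ := \cl O_C\langle T^{1/p^\infty}\rangle$, and $\widetilde\Dd_{C,1} = \widetilde\Dd_C = \Spa(R_\infty^+[1/p], R_\infty^+)$. For the inductive step, using the identification
\[
\widetilde\Dd_{C, j+1} \;=\; \widetilde\Dd_{C, j} \times_{\Dd_C} \widetilde\Dd_C,
\]
the essential case is $j=2$; the general case will then follow by iteration, each additional factor contributing a new copy of $\Zz_p$ while the ``constancy at $T=0$'' condition is preserved.

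For $j=2$, I would first present $\widetilde\Dd_C \to \Dd_C$ as the cofiltered limit of the finite Kummer covers $\Dd_{C,n} := \Spa(C\langle T_n\rangle, \cl O_C\langle T_n\rangle) \to \Dd_C$ given by $T_n^{p^n} = T$, and compute the finite-level fibre products explicitly:
\[
\Dd_{C,n} \times_{\Dd_C} \Dd_{C,n} \;=\; \Spa(A_n[1/p], A_n), \qquad A_n := \cl O_C\langle T_n, S_n\rangle/(S_n^{p^n} - T_n^{p^n}).
\]
After inverting $T$, the algebra $A_n[1/T]$ factors as $\prod_{\zeta \in \mu_{p^n}} \cl O_C\langle T_n, T^{-1}\rangle$ via $S_n \mapsto \zeta T_n$, so that $\Dd_{C,n}^{\times_{\Dd_C} 2} \to \Dd_{C,n}$ is a trivial $\mu_{p^n}$-torsor over the locus $T\neq 0$; at $T=0$, however, all $p^n$ sheets degenerate to a single non-reduced point. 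Using the compatible system $\varepsilon$ of Notation \ref{notnot} to trivialize $\Zz_p(1)\cong \Zz_p$, the inverse system $\{\mu_{p^n}\}_n$ assembles in the limit to $\Zz_p$.

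Next, passing to the $p$-adic completion of $\varinjlim_n A_n$, I would construct a natural homomorphism
\[
\bigl(\varinjlim\nolimits_n A_n\bigr){}^{\wedge}_{p} \;\longrightarrow\; \bigl\{f \in \mathscr{C}^0(\Zz_p, R_\infty^+) : f|_{T=0} \text{ is constant}\bigr\} \;=:\; R_2^+,
\]
by associating to the $\zeta$-sheet at level $n$ (i.e.\ the idempotent-like element of $A_n[1/T]$ selecting $S_n = \zeta T_n$) the locally constant step function on $\Zz_p / p^n\Zz_p$ taking the value dictated by $\zeta$; the ``constancy at $T=0$'' clause on the right exactly records the sheet collapse on the left. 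Verifying that this map is an isomorphism of topological rings reduces to expanding any $f \in R_2^+$ as a $p$-adic limit of locally constant functions and matching it, level-by-level, to compatible elements of $A_n$. The general $j$ then follows by iterating: each additional fibre product over $\Dd_C$ introduces a new $\Zz_p$-coordinate along which continuity is tested, and the constancy at $T=0$ propagates because the new copy of $\widetilde\Dd_C$ also collapses over the origin.

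The main obstacle will be the rigorous verification that the pair $(R_j^+[1/p], R_j^+)$ is genuinely an affinoid perfectoid space, and that the map constructed above is an isomorphism of adic spaces rather than merely of rings. A clean way to handle this is to pass to tilts: the analogous ring in characteristic $p$, namely $\{f^\flat \in \mathscr{C}^0(\Zz_p^{j-1}, \cl O_{C^\flat}\langle (T^\flat)^{1/p^\infty}\rangle): f^\flat|_{T^\flat = 0} \text{ constant}\}$, is manifestly perfect under Frobenius by inspection, and the tilting equivalence then transports perfectoidness back to characteristic $0$. The technical heart of the argument is that the ``collapse at $T=0$'' prevents $R_j^+$ from being a product of rings, so one cannot appeal to standard perfectoid base change along étale covers, and the bookkeeping of the ramification locus must be maintained consistently throughout the inductive limit.
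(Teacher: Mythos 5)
The paper does not actually prove this lemma: its proof is the single line ``This is a particular case of \cite[Lemme 3.30]{LeBras1}.'' Your sketch is therefore an attempt at reconstructing Le Bras's argument, and the overall strategy — present $\widetilde\Dd_C\to\Dd_C$ as the limit of the Kummer covers $T_n^{p^n}=T$, observe that the fibre product splits into $\mu_{p^n}$ sheets over $T\neq 0$ and collapses over $T=0$, then pass to the limit using $\varepsilon$ to identify $\varprojlim_n\mu_{p^n}$ with $\Zz_p$ — is indeed the right one and is in the spirit of \emph{loc.\ cit.}

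There are, however, genuine gaps at the step you describe as ``matching level-by-level.'' The image of $A_n=\cl O_C\langle T_n,S_n\rangle/(S_n^{p^n}-T_n^{p^n})$ inside $\prod_{\zeta\in\mu_{p^n}}\cl O_C\langle T_n\rangle$ is \emph{strictly smaller} than the set of tuples agreeing at $T=0$: as for the coordinate ring of $p^n$ lines through the origin, the coefficient of $T_n^{m}$ in the $\zeta$-component must be a polynomial of degree $\le m$ in $\zeta$ for $m<p^n-1$, so there are higher-order congruences at the origin beyond mere agreement of values. Dually, hitting the indicator function of a coset $\gamma_0+p^n\Zz_p$ (times a small power of $T$) via the Fourier formula $1_{\gamma_0+p^n\Zz_p}=p^{-n}\sum_k\varepsilon_{p^n}^{-k\gamma_0}(\varepsilon_{p^n}^{\gamma})^k$ introduces a denominator $p^n$, so the $p$-adic completion of $\varinjlim_nA_n$ surjects onto $R_j^+$ only after passing to the integral closure in the generic fibre. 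This is not a defect — the $+$-ring of a fibre product of adic spaces is \emph{defined} as such an integral closure — but it is exactly where the content of the lemma sits, and your sketch does not close it; nor does tilting help here, since the issue is the identification of the ring, not perfectoidness. Two smaller points: you should first invoke the general fact that fibre products of perfectoid spaces over an analytic adic space exist and are affinoid perfectoid (the naive finite-level fibre products are singular at $T=0$, so one cannot read off the answer from them without this), and the induction on $j$ is asserted rather than performed — the step $j\to j+1$ computes a fibre product in which one factor is already the constrained ring $R_j^+$, which is not literally the $j=2$ computation.
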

 \begin{proof}
  This is a particular case of \cite[Lemme 3.30]{LeBras1}.
 \end{proof}

 \begin{rem}\label{59}
  Let $I$ be a compact interval of $(0, \infty)$ with endpoints. 
 Similarly to Lemma \ref{adjoint}\listref{adjoint:1}, from Lemma \ref{muiomale} we deduce that we have $$\Aa_I(\widetilde \Dd_{C, j})=\{f\in\mathscr{C}^0(\Zz_p^{j-1}, A_I\langle V^{1/p^{\infty}}\rangle): f|_{V=0} \text{ is constant}\}$$
 where $V:=[T^\flat]$, and $A_I\langle V^{1/p^{\infty}}\rangle:=A_{\inf}\langle V^{1/p^{\infty}} \rangle \widehat\otimes_{A_{\inf}} A_I$ (here, we denote by $\widehat\otimes_{A_{\inf}}$ the $p$-adically completed tensor product, and we write $A_{\inf}\langle V^{1/p^{\infty}} \rangle$ for the $(p, [p^\flat])$-adic completion of $A_{\inf}[V^{\pm 1/p^{\infty}}]$).
 \end{rem}

 \begin{prop}\label{pota}
   Let $\Dd$ denote the $1$-dimensional unit closed disk defined over $\Qq_p$. Let $X$ be a smooth affinoid space defined over $C$ with an étale map $X\to \Tt_C^n$ that factors as a composite of rational embeddings and finite étale maps. Then, for any compact interval $I\subset (0, \infty)$ with rational endpoints, we have a Frobenius-equivariant isomorphism in $D(\Vect_{\Qq_p}^{\ssolid})$
   $$R\Gamma_{\petc}(X\times \Dd_C, \Bb_I[1/t])\simeq R\Gamma_{\petc}(X, \Bb_I[1/t])\dsolid_{\Qq_p} \underline{\Omega}^\bullet(\Dd).$$
 \end{prop}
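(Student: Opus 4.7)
The strategy will mirror the proof of Proposition \ref{torusB}, applied to the quasi-pro-\'etale cover $\widetilde X_C\times\widetilde\Dd_C\to X_C\times\Dd_C$, where $\widetilde X_C:=X_C\times_{\Tt_C^n}\widetilde\Tt_C^n$ is the affinoid perfectoid torus cover of $X_C$ (pro-finite \'etale with Galois group $\Gamma=\Zz_p^n$), and $\widetilde\Dd_C\to\Dd_C$ is the quasi-pro-\'etale cover with group $\langle\gamma_0\rangle\cong\Zz_p$ whose iterated fibre products are described by Lemma \ref{muiomale}. The essential new feature, absent from the torus case, is the ``constant at $T=0$'' constraint on $\widetilde\Dd_{C,j}$, which is what ultimately produces the two degrees of $\Omega^\bullet(\Dd)$.

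The first step is Cartan-Leray: using Proposition \ref{affinoidsections}\listref{affinoidsections:3}, Corollary \ref{profinite}, and Remark \ref{59}, I would compute the sections of $\Bb_I$ on the $j$-th iterated fibre product of the combined cover. Writing $R_\infty$ for the coordinate ring of $\widetilde X_C$ and $V:=[T^{\flat}]$, the direct sum decomposition $B_I(R_\infty)\langle V^{1/p^\infty}\rangle=B_I(R_\infty)\oplus V^{>0}B_I(R_\infty)\langle V^{1/p^\infty}\rangle$ together with the ``constant at $V=0$'' constraint (which forces triviality in the $\Zz_p$-direction on the first summand) will split the \v{C}ech complex into two pieces. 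Proposition \ref{contkosz} then converts the totalization into
\begin{equation*}
R\Gamma_{\petc}(X_C\times\Dd_C,\Bb_I)\simeq\Kos_{\underline{B_I(R_\infty)}}(\gamma_1-1,\ldots,\gamma_n-1)\oplus\Kos_{\underline{V^{>0}B_I(R_\infty)\langle V^{1/p^\infty}\rangle}}(\gamma_0-1,\gamma_1-1,\ldots,\gamma_n-1).
\end{equation*}

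Next, the plan is to apply $L\eta_t$ summand by summand. The first summand yields $L\eta_t R\Gamma_{\petc}(X_C,\Bb_I)$ by the extension of the argument of Proposition \ref{torusB} from the torus to any smooth affinoid $X$ with the prescribed type of \'etale map to $\Tt_C^n$. For the second summand, I would split $V^{>0}B_I(R_\infty)\langle V^{1/p^\infty}\rangle$ into four pieces according to integer/non-integer exponents in the $V_i$'s and in $V$: all pieces with a non-integer exponent somewhere are annihilated by $L\eta_t$ (by the same null-homotopy of $\varphi^{-1}(\mu)$ as in Proposition \ref{torusB}), and on the remaining piece $V\cdot B_I(R)\langle V\rangle$, Lemma \ref{kill}\listref{kill:2} combined with the Taylor expansion of \cite[Lemma 12.4]{BMS1} will convert the Koszul into $\Kos_{\underline{V B_I(R)\langle V\rangle}}(\partial/\partial\log V,\partial/\partial\log V_1,\ldots,\partial/\partial\log V_n)$, up to Frobenius-equivariant automorphisms $1+H_i$.

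Finally, the desired natural Frobenius-equivariant quasi-isomorphism is obtained from the Taylor-expansion identification, analogously to the last step of Proposition \ref{torusB}'s proof. Both sides will reduce to $L\eta_t R\Gamma_{\petc}(X_C,\Bb_I)$: on the right, because $\Omega^\bullet(\Dd)=(\Qq_p\langle T\rangle\xrightarrow{d}\Qq_p\langle T\rangle\,dT)$ is quasi-isomorphic to $\Qq_p$ via the bijection $T^n\mapsto n T^{n-1}\,dT$ for $n\geq 1$; on the left, because the second Koszul summand, once identified with the positive-$V$ sub-complex of $\underline{\Omega}^\bullet(\Tt^n\times\Dd)\solid_{\Qq_p}\underline{B_I}$, is acyclic by the same bijection $V^n\mapsto n V^{n-1}\,dV$. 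The main obstacle will be the careful bookkeeping of Frobenius-equivariance through the Taylor expansion (so that the automorphisms $1+H_i$ are genuinely Frobenius-equivariant, using $\varphi(t)=pt$ and $\partial/\partial\log V\circ\varphi=p(\varphi\circ\partial/\partial\log V)$) and the matching of the ``constant at $V=0$'' piece with the constant term $\Qq_p\subset\Omega^\bullet(\Dd)$ under K\"unneth; one must also verify that the argument of Proposition \ref{torusB} extends to general smooth affinoids $X$ equipped with the required type of \'etale map to $\Tt_C^n$.
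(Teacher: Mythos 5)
Your setup — Cartan--Leray for the combined cover $\widetilde X\times\widetilde\Dd_C\to X\times\Dd_C$, the use of Lemma \ref{muiomale}/Remark \ref{59} to split off the ``constant at $V=0$'' constraint, the conversion to Koszul complexes via Proposition \ref{contkosz}, the null-homotopy killing the non-integral part under $L\eta_t$, and Lemma \ref{kill}\listref{kill:2} plus the Taylor expansion on the integral part — is exactly the paper's argument (the paper phrases your direct sum as a distinguished triangle, and it keeps $\Bb_I(\widetilde X)$ intact rather than attempting your four-way integral/non-integral splitting in the $X$-direction, which is not available for a general such $X$; only the $V$-direction is decomposed, and the $X$-factor is carried along as $M^\bullet\simeq R\Gamma_{\petc}(X,\Bb_I)$).

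The final paragraph, however, contains a genuine error that would sink the proof as written. You claim that $\Omega^\bullet(\Dd)=(\Qq_p\langle T\rangle\xrightarrow{d}\Qq_p\langle T\rangle\,dT)$ is quasi-isomorphic to $\Qq_p$ ``via the bijection $T^n\mapsto nT^{n-1}\,dT$,'' and correspondingly that the positive-$V$ Koszul summand is acyclic. This is false: the inverse assignment $T^{n-1}\,dT\mapsto T^n/n$ is unbounded $p$-adically, so $d$ has dense but non-closed, proper image in $\Omega^1(\Dd)$ and $H^1_{\dR}(\Dd)\neq 0$ (this is precisely the phenomenon recorded in Remark \ref{vsCN}, and it is why homotopy invariance is formulated for the \emph{open} unit disk and why Corollary \ref{smadonno} must interleave open and closed disks before any contractibility appears). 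The statement of the proposition is a K\"unneth formula with the nontrivial complex $\underline{\Omega}^\bullet(\Dd)$ on the right, not a homotopy-invariance statement for $\Dd$. The correct endgame is instead to assemble the two summands: the ``constant at $V=0$'' piece contributes $L\eta_t R\Gamma_{\petc}(X,\Bb_I)$ tensored with the degree-$0$ part, the positive-integral piece contributes $L\eta_t R\Gamma_{\petc}(X,\Bb_I)\dsolid_{\Qq_p}\Kos_{V\underline{\Qq_p\langle V\rangle}}(\partial/\partial\log V)$ after the Taylor-expansion identification, and together these give $L\eta_t R\Gamma_{\petc}(X,\Bb_I)\dsolid_{\Qq_p}\Kos_{\underline{\Qq_p\langle V\rangle}}(\partial/\partial\log V)\simeq L\eta_t R\Gamma_{\petc}(X,\Bb_I)\dsolid_{\Qq_p}\underline{\Omega}^\bullet(\Dd)$ — with no further collapse to $L\eta_t R\Gamma_{\petc}(X,\Bb_I)$.
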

 \begin{proof}
 Following the proof of Proposition \ref{torusB}, it suffices to show that, for any compact interval $I\subset (0, \infty)$ with rational endpoints, we have a Frobenius-equivariant isomorphism in $D(\Vect_{\Qq_p}^{\ssolid})$
 \begin{equation*}
  L\eta_t R\Gamma_{\petc}(X\times \Dd_C, \Bb_I)\simeq L\eta_tR\Gamma_{\petc}(X, \Bb_I)\dsolid_{\Qq_p} \underline{\Omega}^\bullet(\Dd).
 \end{equation*}
 
 Fix a compact interval $I\subset (0, \infty)$ with rational endpoints.
 We compute $R\Gamma_{\petc}(X\times \Dd_C, \Bb_I)$ using the \v{C}ech-to-cohomology spectral sequence associated to the cover $\widetilde X\times \widetilde \Dd_C\to X\times \Dd_C$, where $\widetilde X:=X\times_{\Tt^n_C}\widetilde \Tt^n_C$ (see Remark \ref{remeh}).
  Recall that, by Proposition \ref{affinoidsections}\listref{affinoidsections:3}, for any affinoid perfectoid $Z$ over $\Spa(C, \cl O_C)$, we have $H_{\petc}^i(Z, \Bb_I)=0$ for all $i>0$. Moreover, for $j\ge 1$, denoting by $\widetilde X_j$ the $j$-fold fibre product of $\widetilde X$ over $X$, by Lemma \ref{muiomale} and Remark \ref{59}, we have that
  $$
   \Bb_I(\widetilde X_j\times \widetilde \Dd_{C, j})=\{ f\in\mathscr{C}^0(\Gamma^{j-1}\times\Zz_p^{j-1}, \Bb_I(\widetilde X_j)\langle V^{1/p^{\infty}}\rangle): f(\gamma, -)|_{V=0} \text{ is constant for all } \gamma \in \Gamma^{j-1}\}
  $$
  where $\Gamma=\Zz_p^n$, $V=[T^\flat]$, and $\Bb_I(\widetilde X_j)\langle V^{1/p^{\infty}}\rangle:=\Aa_I(\widetilde X_j)\langle V^{1/p^{\infty}}\rangle[1/p]$. Hence, $H_{\petc}^0(\widetilde X_j\times \widetilde \Dd_{C, j}, \Bb_I)$ fits into the exact sequence
  $$0\to \underline{\Hom}(\Zz[\Gamma^{j-1}\times\Zz_p^{j-1}], \underline{N_j})\to H_{\petc}^0(\widetilde X_j\times \widetilde \Dd_{C, j}, \Bb_I)\to \underline{\Hom}(\Zz[\Gamma^{j-1}], \underline{\Bb_I}(\widetilde X_j))\to 0$$
  where $N_j:=V\cdot \Bb_I(\widetilde X_j)\langle V^{1/p^{\infty}}\rangle$, and the last map is given by $f\mapsto f|_{V=0}$.
  Then, from Proposition \ref{cond=cont}\listref{cond=cont:1}, we deduce that we have a distinguished triangle as follows
  \begin{equation}\label{distingue}
  R\Gamma_{\underline{\cond}}(\Gamma\times \Zz_p, \underline{N})\to R\Gamma_{\petc}(X\times \Dd_C, \Bb_I)\to R\Gamma_{\underline{\cond}}(\Gamma, \underline{\Bb_I}(\widetilde X))
  \end{equation}
  where $N:=V\cdot \Bb_I(\widetilde X)\langle V^{1/p^{\infty}}\rangle$. 
  Next, we want to study $L\eta_t R\Gamma_{\petc}(X\times \Dd_C, \Bb_I)$.
  
  Let us begin by handling $L\eta_t R\Gamma_{\underline{\cond}}(\Gamma\times \Zz_p, \underline{N})$. We can write
  $N=N^{\intt}\oplus N^{\nonint}$,
  where $N^{\intt}:=V\Bb_I(\widetilde X)\langle V\rangle$ denotes the ``integral part'', and $N^{\nonint}$ the ``non-integral part'' of $N$.
  Denoting by $\gamma_1, \ldots, \gamma_{n+1}$ the canonical generators of $\Gamma\times \Zz_p$, by Proposition \ref{contkosz} we have that
  $$R\Gamma_{\underline{\cond}}(\Gamma\times \Zz_p, \underline{N})\simeq \Kos_{\underline{N}^{\intt}}(\gamma_1-1, \ldots, \gamma_{n+1}-1) \oplus \Kos_{\underline{N}^{\nonint}}(\gamma_1-1, \ldots, \gamma_{n+1}-1).$$
   In the following, we assume first that $I\subset [1/(p-1), \infty)$. Then, similarly to the proof of Proposition \ref{torusB}, one checks that the multiplication by $t$ on the complex $\Kos_{N^{\nonint}}(\gamma_1-1, \ldots, \gamma_{n+1}-1)$ is homotopic to 0,  and hence $$L\eta_t\Kos_{\underline{N}^{\nonint}}(\gamma_1-1, \ldots, \gamma_{n+1}-1)=0.$$ Moreover, by Corollary \ref{solidqs}, we have
  $$\Kos_{\underline{N}^{\intt}}(\gamma_1-1, \ldots, \gamma_{n+1}-1)\simeq M^\bullet\dsolid_{\Qq_p}\Kos_{V\underline{\Qq_p\langle V\rangle}}(\gamma_{n+1}-1)$$
  where $M^\bullet:=\Kos_{\underline{\Bb_I}(\widetilde X)}(\gamma_1-1, \ldots, \gamma_{n}-1)$. 
  Therefore, arguing again as in the proof of Proposition \ref{torusB}, by the proof of Lemma \ref{kill}\listref{kill:2}, we have a Frobenius-equivariant quasi-isomorphism
 $$L\eta_t\Kos_{\underline{N}^{\intt}}(\gamma_1-1, \ldots, \gamma_{n+1}-1) \simeq L\eta_tM^\bullet\dsolid_{\Qq_p} \Kos_{V\underline{\Qq_p\langle V\rangle}}\left(\frac{\partial}{\partial\log(V)}\right)$$
 where, by Proposition \ref{contkosz}, $M^\bullet\simeq R\Gamma_{\underline{\cond}}(\Gamma, \underline{\Bb_I}(\widetilde X))\simeq R\Gamma_{\petc}(X, \Bb_I)$.

  Putting everything together, and recalling the definition of the last arrow of (\ref{distingue}), we obtain the following Frobenius-equivariant quasi-isomorphism
  \begin{align*}
   L\eta_t R\Gamma_{\petc}(X\times \Dd_C, \Bb_I)&\simeq L\eta_t R\Gamma_{\petc}(X, \Bb_I)\dsolid_{\Qq_p} \Kos_{\underline{{\Qq_p}\langle V\rangle}}\left(\frac{\partial}{\partial\log(V)}\right) \\
   &\simeq L\eta_t R\Gamma_{\petc}(X, \Bb_I)\dsolid_{\Qq_p} \underline{\Omega}^\bullet(\Dd).
  \end{align*}
  Arguing as in the proof of Proposition \ref{torusB}, such quasi-isomorphism extends to a general compact interval $I\subset(0, \infty)$ with rational endpoints, as desired. 
 \end{proof}

 \begin{rem}\label{trivia}
  We note that Proposition \ref{pota} holds replacing the 1-dimensional unit closed disk $\Dd$ with any 1-dimensional closed disk $\Dd(\rho)$ over $\Qq_p$ of radius $\rho\in p^{\Qq}$.
 \end{rem}

 \begin{cor}\label{smadonno}
  Let $\accentset{\circ}{\Dd}$ denote the $1$-dimensional open unit disk defined over $\Qq_p$. Let $X$ be a smooth affinoid space defined over $C$. Then, the natural projection map $X\times \accentset{\circ}{\Dd}_C\to X$ induces a isomorphism in $D(\Vect_{\Qq_p}^{\ssolid})$
  $$R\Gamma_{\petc}(X, \Bb_e)\overset{\sim}{\to} R\Gamma_{\petc}(X\times \accentset{\circ}{\Dd}_C, \Bb_e).$$
 \end{cor}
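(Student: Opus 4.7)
The plan is to extend Proposition \ref{pota} from closed disks to the open unit disk by writing $\accentset{\circ}{\Dd}$ as an admissible union of closed disks $\Dd(\rho_j)$ of radii $\rho_j\in p^{\Qq}$ tending to $1$, and then to conclude using that the de Rham complex of the open disk is quasi-isomorphic to $\Qq_p$ concentrated in degree $0$.

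First I would reduce to the case where $X$ admits a toroidal chart. Both sides of the desired quasi-isomorphism can be realized as $R\Gamma(X_C,-)$ of natural complexes of sheaves on $X_{C,\an}$ (given by $R\varepsilon_*\Bb[1/t]$ and $R(\varepsilon\circ g)_*\Bb[1/t]$, where $\varepsilon\colon X_{C,\pet}\to X_{C,\an}$ and $g$ is the projection from $(X\times\accentset{\circ}{\Dd})_{C,\pet}$), and the map between them is induced by a natural transformation of such complexes; so checking it is a quasi-isomorphism is a local question on $X_{C,\an}$. By \cite[Lemma 5.2]{Scholze}, every point of $X_{C,\an}$ has a neighborhood basis of affinoids admitting an étale map to a torus $\Tt_C^n$ that factors as a composite of rational embeddings and finite étale maps, and so we may assume $X$ is of this form.

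Fix $\rho_j\in p^{\Qq}$ with $\rho_j<\rho_{j+1}<1$ and $\rho_j\to 1$, so that $\{X\times\Dd(\rho_j)_C\}_j$ is a Stein-like admissible cover of $X\times\accentset{\circ}{\Dd}_C$. For each $j$, Proposition \ref{pota} together with Remark \ref{trivia} gives, for each compact interval $I\subset\,]0,1[$ with endpoints in $p^{\Qq}$,
\[
L\eta_t R\Gamma_{\petc}(X\times\Dd(\rho_j)_C,\Bb_I)\simeq L\eta_t R\Gamma_{\petc}(X,\Bb_I)\dsolid_{\Qq_p}\underline{\Omega}^\bullet(\Dd(\rho_j)).
\]
Proceeding as in the last part of the proof of Proposition \ref{torusB}, I would take $R\varprojlim_I$: the commutation of $L\eta_t$ with $R\varprojlim_I$ is handled via Lemma \ref{R1BI} and the condensed analogue of \cite[Lemma 3.10]{LeBras2} (cf.~footnote \ref{md}), while $\dsolid_{\Qq_p}$ commutes with $R\varprojlim_I$ by Corollary \ref{commlim} applied to the nuclear Banach factor $\underline{\Omega}^\bullet(\Dd(\rho_j))$. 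I would then invert $t$, using that $X\times\Dd(\rho_j)_C$ is quasi-compact quasi-separated (so pro-étale cohomology commutes with the filtered colimit defining $[1/t]$) and Remark \ref{obv}, to obtain
\[
R\Gamma_{\petc}(X\times\Dd(\rho_j)_C,\Bb[1/t])\simeq R\Gamma_{\petc}(X,\Bb[1/t])\dsolid_{\Qq_p}\underline{\Omega}^\bullet(\Dd(\rho_j)).
\]
Taking $R\varprojlim_j$ of this identification---using on the left that the admissible increasing affinoid cover computes cohomology as the derived inverse limit, and on the right Corollary \ref{commlim} again to pass the limit past $\dsolid_{\Qq_p}$ (still via nuclearity of each $\underline{\Omega}^i(\Dd(\rho_j))$, which is Banach)---yields
\[
R\Gamma_{\petc}(X\times\accentset{\circ}{\Dd}_C,\Bb[1/t])\simeq R\Gamma_{\petc}(X,\Bb[1/t])\dsolid_{\Qq_p}\underline{\Omega}^\bullet(\accentset{\circ}{\Dd}).
\]

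It remains to show $\underline{\Omega}^\bullet(\accentset{\circ}{\Dd})\simeq\underline{\Qq_p}$ concentrated in degree $0$: by term-wise integration of power series, the differential $d\colon\cl O(\accentset{\circ}{\Dd})\to\Omega^1(\accentset{\circ}{\Dd})$ is a surjective continuous map of $\Qq_p$-Fréchet spaces with kernel the constants $\Qq_p$, and by Lemma \ref{acyclic} the resulting exact sequence of Fréchet spaces descends to an exact sequence in $\Vect_{\Qq_p}^{\ssolid}$. The inclusion $\underline{\Qq_p}\hookrightarrow\underline{\Omega}^\bullet(\accentset{\circ}{\Dd})$ corresponds under our identification to the map induced by the projection $X\times\accentset{\circ}{\Dd}_C\to X$, completing the proof. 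The main subtlety I anticipate is bookkeeping the various interchanges of $L\eta_t$, $\dsolid_{\Qq_p}$, and derived inverse limits; each of these is underwritten by nuclearity/flatness facts from the Clausen--Scholze framework already deployed in the proofs of Propositions \ref{torusB} and \ref{pota}, so no essentially new obstacle should arise.
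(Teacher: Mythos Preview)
Your strategy matches the paper's, but there is a genuine gap in the step where you take $R\varprojlim_j$ past $\dsolid_{\Qq_p}$. Corollary \ref{commlim}\listref{commlim:2} has \emph{two} hypotheses: the varying factors $\cl V_j$ must be representable by complexes of nuclear $\Qq_p$-vector spaces (which you verify for $\underline{\Omega}^\bullet(\Dd(\rho_j))$), \emph{and} the fixed factor $\cl W$ must be representable by a bounded above complex of $\Qq_p$-Fr\'echet spaces. You silently take $\cl W = R\Gamma_{\petc}(X,\Bb[1/t])$, but after inverting $t$ the natural Koszul/Čech representatives have terms of the form $\Bb(\widetilde X)[1/t]$, which are LF-spaces (countable direct limits of Fr\'echet spaces), not Fr\'echet spaces. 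So Corollary \ref{commlim} does not apply as stated, and there is no obvious Fr\'echet replacement for $\cl W$.

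The paper circumvents exactly this problem by \emph{not} inverting $t$ before the outer $R\varprojlim_j$, and by never pulling that limit past the tensor product at all. Instead it uses an interleaving trick: since the restriction $R\Gamma_{\dRc}(\Dd(\rho_{j+1}))\to R\Gamma_{\dRc}(\Dd(\rho_j))$ factors through $R\Gamma_{\dRc}(\accentset{\circ}{\Dd}(\rho_{j+1}))$, the pro-system $\{R\Gamma_{\dRc}(\Dd(\rho_j))\}_j$ is pro-isomorphic (in the derived category) to the constant pro-system $\{R\Gamma_{\dRc}(\accentset{\circ}{\Dd}(\rho_j))\}_j\simeq\{\underline{\Qq_p}\}_j$. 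After this replacement the expression inside $R\varprojlim_j$ no longer depends on $j$, so the limit is trivial and no commutation with $\dsolid_{\Qq_p}$ is needed. Your argument can be repaired the same way: after your displayed identity for each fixed $j$, replace the pro-system $\{\underline{\Omega}^\bullet(\Dd(\rho_j))\}_j$ by the pro-isomorphic constant system $\{\underline{\Qq_p}\}_j$ via interleaving, rather than invoking Corollary \ref{commlim}. (Minor point: since $X$ is defined over $C$, the relevant local chart lemma is \cite[Lemma 5.12]{Scholze} rather than Lemma 5.2.)
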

 \begin{proof}
  Using \cite[Lemma 5.12]{Scholze}, we can reduce to the case in which there exists an étale map $X\to \Tt_C^n$ that factors as a composite of rational embeddings and finite étale maps. We write $\accentset{\circ}{\Dd}$ as a strictly increasing admissible union of closed disks $\{\Dd_j\}_{j\in \Nn}$ of radius in $p^{\Qq}$. Then, for any compact interval $I\subset (0, \infty)$ with rational endpoints, we have the following natural quasi-isomorphisms
  \begin{align}
  R\Gamma_{\petc}(X\times \accentset{\circ}{\Dd}_C, \Bb_I[1/t])&\simeq R\varprojlim\nolimits_j R\Gamma_{\petc}(X\times \Dd_{j, C}, \Bb_I[1/t]) \nonumber \\
   &\simeq R\varprojlim\nolimits_j \left(R\Gamma_{\petc}(X, \Bb_I[1/t])\dsolid_{\Qq_p} R\Gamma_{\underline{\dR}}(\Dd_j)\right) \label{lab3}\\
   &\simeq R\varprojlim\nolimits_j \left(R\Gamma_{\petc}(X, \Bb_I[1/t])\dsolid_{\Qq_p} R\Gamma_{\underline{\dR}}(\accentset{\circ}{\Dd}_j)\right) \label{lab4}\\
   &\simeq R\Gamma_{\petc}(X, \Bb_I[1/t]) \label{lab6}.
  \end{align}
  where the quasi-isomorphism (\ref{lab3}) follows from Proposition \ref{pota} (and Remark \ref{trivia}); in (\ref{lab4}) we used that, for each $j\in \Nn$, the morphism $R\Gamma_{\underline{\dR}}(\Dd_{j+1})\to R\Gamma_{\underline{\dR}}(\Dd_j)$ factors through $R\Gamma_{\underline{\dR}}(\accentset{\circ}{\Dd}_{j+1})$; and (\ref{lab6}) follows from axiom \listref{axiom:1} of Schneider--Stuhler for the condensed de Rham cohomology (Proposition \ref{BdRSS}). Then, the statement follows from the exact sequence 
  $$0\to \Bb_e\to  \Bb_{[1, p]}[1/t]\overset{\varphi-1}{\to}\Bb_{[1, 1]}[1/t]\to 0$$
  (see Remark \ref{remonI}).
 \end{proof}

 We are ready to show the following result.
 
 \begin{prop}\label{BeSS}
  The cohomology theory $$H_{\petc}^\bullet(-_C, \Bb_e):\RigSm_{K, \pet}\to \Mod_{B_e}^{\cond}$$ satisfies the axioms of Schneider--Stuhler (\ref{axioms}).
 \end{prop}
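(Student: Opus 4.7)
The plan is to verify the four Schneider--Sthuler axioms for $\Bb_e$-cohomology by leveraging the fundamental exact sequence (\ref{fundexacttard}), namely $0\to\Bb_e\to \Bb[1/t]\xrightarrow{\varphi-1} \Bb[1/t]\to 0$, to reduce everything to the corresponding (and already established) results for $\Bb[1/t]$-cohomology (Corollary \ref{smadonno}) and, where needed, for $\Bb_{\dR}$-cohomology (Proposition \ref{BdRSS}) via the second fundamental exact sequence (\ref{fundexact}).

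The heart of the argument is axiom \listref{axiom:1}. Let $X$ be a smooth affinoid over $K$. Applying $Rf_{\pet *}$ to (\ref{fundexacttard}) for the structural morphisms $X_C\to \Spa(C,\cl O_C)$ and $X_C\times \accentset{\circ}{\Dd}_C\to \Spa(C,\cl O_C)$ produces, in $D(\Vect_{\Qq_p}^{\ssolid})$, distinguished triangles
$$R\Gamma_{\petc}(-,\Bb_e)\to R\Gamma_{\petc}(-,\Bb[1/t])\xrightarrow{\varphi-1} R\Gamma_{\petc}(-,\Bb[1/t]).$$
The natural projection $X_C\times\accentset{\circ}{\Dd}_C\to X_C$ induces a morphism between these two triangles, and Corollary \ref{smadonno} asserts that the induced maps on the $\Bb[1/t]$-terms are quasi-isomorphisms. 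By the two-out-of-three property for morphisms of distinguished triangles, the map on the $\Bb_e$-terms is a quasi-isomorphism as well, which is precisely the required homotopy invariance.

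Axiom \listref{axiom:2} is immediate from the ring structure on $\Bb_e$. Axiom \listref{axiom:3} follows by the same strategy applied to $\Spa(C)$: Proposition \ref{affinoidsections}\listref{affinoidsections:3} gives $H^0_{\petc}(\Spa(C),\Bb[1/t])=\underline{B[1/t]}$ and vanishing in higher degrees, and the classical Fargues--Fontaine surjectivity of $\varphi-1$ on $B[1/t]$ with kernel $B_e$ then yields $H^0_{\petc}(\Spa(C),\Bb_e)=\underline{B_e}$ with vanishing positive degrees; note that $\underline{B_e}$ is a condensed $\Qq_p$-algebra, so axiom \listref{axiom:3} holds with $F=\Qq_p$.

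For axiom \listref{axiom:4}, one invokes instead the second fundamental exact sequence (\ref{fundexact}), reducing the computation of $H^i_{\petc}(\Pp^d_C,\Bb_e)$ to the classical proper smooth computation of $H^i_{\pet}(\Pp^d_C,\Qq_p)$ together with the determination of $H^i_{\petc}(\Pp^d_C,\Bb_{\dR}/\Bb_{\dR}^+)$ provided by Corollary \ref{B/B+}\listref{B/B+:1} (using the Hodge--de Rham degeneration for $\Pp^d$); the resulting cohomology is concentrated in even degrees $0\le i\le 2d$. The cycle class map $c^{B_e}:\Gg_m[-1]\to \Bb_e$ is built from the Kummer boundary (\ref{kummer}) analogously to the construction of $c^{B_{\dR}}$ in Proposition \ref{BdRSS}, with the generation of $H^{2i}$ by $\eta^i$ checked by compatibility with the corresponding statements for $\Qq_p$- and $\Bb_{\dR}$-cohomology, already at our disposal. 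The main obstacle is axiom \listref{axiom:1}: once Corollary \ref{smadonno} is in hand, the passage from $\Bb[1/t]$ to $\Bb_e$ is a formal consequence of the Frobenius exact sequence, and all four axioms follow.
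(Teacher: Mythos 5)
Your argument is correct, and for the crucial axiom \listref{axiom:1} it coincides with the paper's: the paper likewise deduces homotopy invariance for $\Bb_e$ from Corollary \ref{smadonno} together with the triangle coming from (\ref{fundexacttard}); your two-out-of-three argument for a morphism of distinguished triangles is exactly what the paper's terse ``follows from'' means. Where you genuinely diverge is axiom \listref{axiom:4}. The paper first establishes, for $\mathbf{B}\in\{\Bb_{\dR},\Bb_e\}$, the comparison isomorphism $H^j_{\petc}(\Pp^d_C,\Qq_p)\otimes_{\Qq_p}\underline{\mathbf{B}}(C,\cl O_C)\overset{\sim}{\to}H^j_{\petc}(\Pp^d_C,\mathbf{B})$, citing \cite{Scholze} and, for $\Bb_e$, \cite{LeBras2} together with (\ref{fundexacttard}), and then reduces the generation of $H^{2i}$ by $\eta^i$ to the already-proved $\Bb_{\dR}$ case via compatibility of $c^{B_e}$ and $c^{B_{\dR}}$ under $\Bb_e\hookrightarrow\Bb_{\dR}$. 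You instead run the long exact sequence of (\ref{fundexact}) on $\Pp^d_C$ and feed in Corollary \ref{B/B+}\listref{B/B+:1}; since the $\Qq_p$- and $\Bb_{\dR}/\Bb_{\dR}^+$-terms vanish in odd degrees and in degrees above $2d$, this gives the required vanishing, and a five-lemma comparison with the fundamental exact sequence over the point gives that $\eta^i$ generates $H^{2i}$ as a $\underline{B_e}$-module. This is a legitimate alternative that stays internal to the paper's own results (both routes still need the crystalline comparison plus GAGA to compute $H^\bullet_{\pet}(\Pp^d_C,\Qq_p)$). Two small points you should make explicit: first, the Kummer boundary produces a map into $\Bb_e(1)$, and since $t\notin B_e$ one must trivialize $\Bb_e(1)\cong\Bb_e$ by the choice of a compatible system of $p$-power roots of unity (the paper records this choice in Notation \ref{notnot}, and the resulting Galois twist resurfaces in Remark \ref{precisegal}); second, the ``compatibility'' you invoke at the end is precisely the assertion that the maps in (\ref{fundexact}) intertwine the three cycle classes $c^{\Qq_p}$, $c^{B_e}$, $c^{B_{\dR}}$ — immediate from their common construction via the Kummer boundary, but it is the step that makes the five-lemma applicable and deserves a sentence.
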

 \begin{proof}
  By Corollary \ref{profinite}, $H_{\petc}^\bullet(-_C, \Bb_e)$ is defined by the complex of sheaves $R\varepsilon_*\varepsilon^*\underline{\Bb_e}$ on the site $\RigSm_{K, \pet}$, where we denote by $\varepsilon:\RigSm_{C, \pet}\to \RigSm_{K, \pet}$ the base change morphism.
  
  Axiom \listref{axiom:1} follows from Corollary \ref{smadonno}. Axioms \listref{axiom:2} and \listref{axiom:3} are satisfied. For axiom \listref{axiom:4}, as in the proof of Proposition \ref{BdRSS}, we first construct the cycle class map $c^{B_e}:\Gg_m[-1]\to R\varepsilon_*\varepsilon^*\Bb_e$, with target the complex of sheaves of abelian groups underlying $R\varepsilon_*\varepsilon^*\underline{\Bb_e}$: we define it on $\RigSm_{K, \pet}$ as the composite 
  $$c^{B_e}:\Gg_m[-1]\longrightarrow R\varepsilon_*\varepsilon^*\Gg_m[-1]\longrightarrow R\varepsilon_*\varepsilon^*\Zz_p(1)\longrightarrow R\varepsilon_*\varepsilon^*\Bb_e(1)\cong R\varepsilon_*\varepsilon^*\Bb_e$$
  where the first arrow is the adjunction morphism,  the middle arrow comes from the boundary map of the Kummer exact sequence (\ref{kummer}), and the trivialization $\Bb_e(1)\cong \Bb_e$ is given by the choice of a compatible system of $p$-th power roots of unity in $\cl O_C$ (Notation \ref{notnot}).
  
  Now, we need to check that $H_{\petc}^i(\Pp^d_C, \Bb_e)=0$ for $i$ odd, or $i>2d$, and that, for all $0\le i\le d$, the map $B_e\to H_{\petc}^{2i}(\Pp^d_C, \Bb_e)$, defined in (\ref{cycleiso}) and induced by $c^{B_e}$, is an isomorphism. We note that, since $\Pp_K^d$ is a proper smooth rigid-analytic variety, by well-known results in (relative) $p$-adic Hodge theory, for $\mathbf{B}\in \{\Bb_{\dR}, \Bb_e\}$, for all $j\ge 0$ we have an isomorphism\footnote{For $\mathbf{B}=\Bb_{\dR}$, the isomorphism (\ref{wk}) follows from \cite[Theorem 8.8, (i)]{Scholze}. For $\mathbf{B}=\Bb_e$ it follows combining the proof of \textit{loc. cit.} with \cite[Proposition 3.4]{LeBras2} and the exact sequence (\ref{fundexacttard}).}
 \begin{equation}\label{wk}
   H^j_{\petc}(\Pp^d_C, \Qq_p)\otimes_{\Qq_p} \underline{\mathbf{B}}(C, \cl O_C)\overset{\sim}{\to} H^j_{\petc}(\Pp^d_C, \mathbf{B}).
 \end{equation}
 By the crystalline comparison theorem for proper smooth schemes over $\cl O_K$, \cite{Falcrys}, combined with the GAGA for étale cohomology, \cite[Theorem 3.2.10]{Huberbook}, we have  $H_{\petc}^i(\Pp^d_C, \Qq_p)=0$ for $i$ odd, or $i>2d$, and $H_{\petc}^{2i}(\Pp^d_C, \Qq_p)=\Qq_p$ for all $0\le i\le d$.
  Then, axiom \listref{axiom:4}  follows from the knowledge of the same axiom for $H_{\petc}^\bullet(-_C, \Bb_{\dR})$, which was shown in Proposition \ref{BdRSS}, and from the compatibility of the cycle class map $c^{B_{\dR}}$ of \textit{loc. cit.} with the cycle class map $c^{B_e}$, under the inclusion $\Bb_e\hookrightarrow \Bb_{\dR}$.
 \end{proof}

 \begin{rem}\label{precisegal}
  We note that, in the proof of Proposition \ref{BeSS}, we identified $\Bb_e(1)$ with $\Bb_e$, in order to make $H_{\petc}^\bullet(-_C, \Bb_e)$ satisfy the axiom \listref{axiom:4} of Schneider--Stuhler (\ref{axioms}). Thus, keeping track of the Galois action on the geometric pro-étale cohomology of $\Pp_K^d$ with coefficients in $\Bb_e$, by Theorem \ref{mainSSS} and Proposition \ref{BeSS}, we have the following statement: let $K$ be a finite extension of $\Qq_p$, denoting by $\Hh_C^d$ the base change to $C$ of the Drinfeld upper half-space $\Hh^d_K$, for all $i\ge 0$ we have a $\underline{G}\times \underline{\mathscr{G}_K}$-equivariant isomorphism in $\Vect_{\Qq_p}^{\cond}$
   $$H_{\petc}^i(\Hh_C^d, \Bb_e)\cong \underline{\Hom}(\Sp_i(\Zz), B_e)(-i).$$
 \end{rem}

 \section{\textbf{An application}}\label{appli}
 \sectionmark{}
 
 We are ready to reprove \cite[Theorem 4.12]{CDN1}.
 
 \begin{theorem}\label{azz}
  Let $K$ be a finite extension of $\Qq_p$. Given an integer $d\ge 1$, let $G=\GL_{d+1}(K)$. Let $\Hh^d_K$ be the Drinfeld upper half-space of dimension $d$ defined over $K$, and let $\Hh_C^d$ be its base change to $C$. For all $i\ge 0$, we have the following commutative diagram of $G\times \mathscr{G}_K$-Fr\'echet spaces over $\Qq_p$, with strictly exact rows
  \begin{center}
   \begin{tikzcd}
   0 \arrow[r] 
     &[-1em] \Omega^{i-1}(\Hh_C^d)/\ker d \arrow[-,double line with arrow={-,-}]{d} \arrow[r] 
     &[-1em] H^i_{\pet}(\Hh_C^d, \Qq_p(i)) \arrow[d] \arrow[r]      
     &[-1em] \Sp_i(\Qq_p)^* \arrow[d] \arrow[r] 
     &[-1em] 0  \\
   0 \arrow[r] 
     &[-1em] \Omega^{i-1}(\Hh_C^d)/\ker d \arrow[r]           
     &[-1em] \Omega^{i}(\Hh_C^d)^{d=0} \arrow[r]  
     &[-1em] \Sp_i(K)^*\widehat \otimes_K C \arrow[r]    
     &[-1em] 0
  \end{tikzcd}
  \end{center}
  where $(-)^*$ denotes the weak topological dual.\footnote{See \S \ref{gsr}.}
 \end{theorem}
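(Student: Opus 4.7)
The strategy is to apply the derived pushforward along the structure morphism $f\colon\Hh^d_C\to\Spa(C,\cl O_C)$ to the Tate-twisted fundamental exact sequence (Proposition \ref{profundexact})
\begin{equation*}
0\to\Qq_p(i)\to \Bb_e(i)\to (\Bb_{\dR}/\Bb_{\dR}^+)(i)\to 0
\end{equation*}
on $\Hh^d_{C,\pet}$, and to analyze the resulting long exact sequence in $D(\Vect_{\Qq_p}^{\ssolid})$. By Propositions \ref{BeSS} and \ref{BdRSS}, both the $\Bb_e$- and $\Bb_{\dR}$-valued geometric pro-\'etale cohomology satisfy the Schneider-Stuhler axioms (\ref{axioms}), so Theorem \ref{mainSSS} together with Remark \ref{precisegal} yields, as $\underline{G}\times\underline{\mathscr{G}_K}$-modules,
\begin{equation*}
H^j_{\petc}(\Hh^d_C,\Bb_{\ast}(i))\cong \underline{\Hom}(\Sp_j(\Zz),\underline{B_{\ast}}(i-j)),\qquad \ast\in\{e,\dR\},
\end{equation*}
both vanishing for $j>d$; the Stein form of Remark \ref{affstein} computes $H^j_{\petc}(\Hh^d_C,\Bb_{\dR}^+(i))$ analogously.

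The quotient term $\Sp_i(\Qq_p)^*$ will appear directly in degree $i$: the Tate twist is then trivial, and the map $H^i_{\petc}(\Hh^d_C,\Bb_e(i))\to H^i_{\petc}(\Hh^d_C,(\Bb_{\dR}/\Bb_{\dR}^+)(i))$ is induced by $\underline{\Hom}(\Sp_i(\Zz),-)$ of the surjection $B_e\twoheadrightarrow B_{\dR}/B_{\dR}^+$ from the fundamental exact sequence, so its kernel is $\underline{\Hom}(\Sp_i(\Zz),\underline{\Qq_p})$, whose underlying topological $\Qq_p$-vector space is the weak dual $\Sp_i(\Qq_p)^*$ by Remark \ref{spdual}. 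The image of $H^i_{\pet}(\Hh^d_C,\Qq_p(i))$ in $H^i_{\petc}(\Hh^d_C,\Bb_e(i))$ thus lies in $\Sp_i(\Qq_p)^*$; equality, i.e.\ the surjection $H^i_{\pet}(\Hh^d_C,\Qq_p(i))\twoheadrightarrow\Sp_i(\Qq_p)^*$, will follow from the computation of the connecting map in degree $i-1$ below.

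For the kernel term, I will first establish a Stein analogue of Corollary \ref{B/B+}\listref{B/B+:2} for $\Hh^d_K$. The argument of \textit{loc.\ cit.}\ carries over verbatim, replacing Tate acyclicity (Lemma \ref{condensedtate}\listref{condensedtate:1}) by Kiehl's theorem (Lemma \ref{A&B}), Remark \ref{quasi-comp} by Theorem \ref{BDR}\listref{BDR:2} to compute $H^j_{\petc}(\Hh^d_C,\Bb_{\dR})$, and using Remark \ref{affstein} in its Stein form for $H^j_{\petc}(\Hh^d_C,\Bb_{\dR}^+)$; after twisting by $(i)$ and using $\underline{C(-i)}(i)=\underline{C}$, this produces the short exact sequence
\begin{equation*}
0\to H^{i-1}_{\dRc}(\Hh^d_K)\solid_K(\underline{B_{\dR}}/t^{-(i-1)}\underline{B_{\dR}^+})(i)\to H^{i-1}_{\petc}(\Hh^d_C,(\Bb_{\dR}/\Bb_{\dR}^+)(i))\to \underline{\Omega^{i-1}(\Hh^d_C)/\ker d}\to 0.
\end{equation*}
Comparing with Schneider-Stuhler for $\Bb_{\dR}$, and using that $B_e+B_{\dR}^+=B_{\dR}$ inside $B_{\dR}$, one shows that the map $H^{i-1}_{\petc}(\Hh^d_C,\Bb_e(i))\to H^{i-1}_{\petc}(\Hh^d_C,(\Bb_{\dR}/\Bb_{\dR}^+)(i))$ surjects onto the algebraic summand, so its cokernel is $\underline{\Omega^{i-1}(\Hh^d_C)/\ker d}$; by the fundamental long exact sequence this cokernel equals the image of $\partial\colon H^{i-1}_{\petc}(\Hh^d_C,(\Bb_{\dR}/\Bb_{\dR}^+)(i))\to H^i_{\pet}(\Hh^d_C,\Qq_p(i))$, supplying the left-hand term.

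The main technical point is precisely the identification of the algebraic summand as the exact image of the $\Bb_e$-cohomology, which reduces to a surjectivity statement between Schneider-Stuhler duals and follows from the Stein form of the $\Bb_{\dR}^+$-sequence. Granting this, the upper row is a short exact sequence in $\Vect_{\Qq_p}^{\ssolid}$; evaluated on the point, the outer terms are $\Qq_p$-Fr\'echet spaces (the left as a quotient of the Fr\'echet space $\Omega^{i-1}(\Hh^d_C)$ by the closed subspace $\ker d$, using that $\Hh^d_K$ is Stein, and the right by Remark \ref{spdual}), so the middle is Fr\'echet too and strict exactness follows from the open mapping theorem. Finally, the commutative diagram is realized by the map $H^i_{\pet}(\Hh^d_C,\Qq_p(i))\to H^i_{\petc}(\Hh^d_C,\Bb_{\dR}^+(i))\twoheadrightarrow\Omega^i(\Hh^d_C)^{d=0}$ arising from $\Qq_p(i)\hookrightarrow t^i\Bb_{\dR}^+\subset\Bb_{\dR}^+(i)$ composed with the projection in the twisted Stein form of Remark \ref{affstein}; under this map, $\Sp_i(\Qq_p)^*$ is sent to $\Sp_i(K)^*\widehat\otimes_K C=H^i_{\dR}(\Hh^d_C)$, and $\Omega^{i-1}(\Hh^d_C)/\ker d$ is sent via $d$ to $d\Omega^{i-1}(\Hh^d_C)\subset\Omega^i(\Hh^d_C)^{d=0}$, which gives the desired commutativity.
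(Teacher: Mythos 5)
Your proposal is correct and follows essentially the same route as the paper: push forward the fundamental exact sequence, compare it with the $\Bb_{\dR}^+\to\Bb_{\dR}$ sequence, compute the relevant kernels and cokernels using the Schneider--Stuhler isomorphisms for $\Bb_e$ and $\Bb_{\dR}$ together with the Stein form of the $\Bb_{\dR}^+$-computation (Remark \ref{affstein}) and the fundamental exact sequence over the point, and conclude Fr\'echet-ness and strictness from the pullback description. The paper packages your ``main technical point'' (that the $\Bb_e$-cohomology maps onto the algebraic summand of $H^{i-1}(\Bb_{\dR}/\Bb_{\dR}^+)$ and into the $\underline{\Hom}(\Sp_i(\Zz),-)$-part in degree $i$) as the commutative diagrams with the vertical maps $\gamma_i$ and $\delta_i$, resolved by the snake lemma, but the content is identical.
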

 \begin{proof}
  By Corollary \ref{profundexact}, we have a commutative diagram of sheaves on $\Hh^d_{C, \pet}$ with exact rows
  \begin{center}
  \begin{tikzcd}
   0 \arrow[r] &[-1em] \Qq_p\arrow[r]\arrow[d] &[-1em] \Bb_e \arrow[r]\arrow[d] &[-1em] \Bb_{\dR}/\Bb_{\dR}^+ \arrow[r]\arrow[-,double line with arrow={-,-}]{d} &[-1em] 0 \\
   0 \arrow[r] &[-1em] \Bb_{\dR}^+\arrow[r]    &[-1em] \Bb_{\dR} \arrow[r]      &[-1em] \Bb_{\dR}/\Bb_{\dR}^+ \arrow[r]                                          &[-1em] 0
  \end{tikzcd}
  \end{center}
  Let $f: \Hh^d_C\to \Spa(C, \cl O_C)$ be the structure morphism. Then, applying the derived functor $Rf_{\pet*}$ to the diagram above, and taking the long exact sequences in cohomology, we obtain the following $\underline{G}\times \underline{\mathscr{G}_K}$-equivariant commutative diagram in $\Vect_{\Qq_p}^{\cond}$ with exact rows 
  \begin{center}
   \begin{tikzcd}
   \cdots \arrow[r] &[-1em] H_{\petc}^i(\Hh^d_C, \Qq_p) \arrow[r]\arrow[d] &[-1em] H_{\petc}^i(\Hh^d_C, \Bb_{e}) \arrow[r, "\alpha_i"]\arrow[d] &[-1em] H_{\petc}^i(\Hh^d_C, \Bb_{\dR}/\Bb_{\dR}^+)\arrow[r]\arrow[-,double line with arrow={-,-}]{d} &[-1em] \cdots \\
   \cdots \arrow[r] &[-1em]  H_{\petc}^i(\Hh^d_C, \Bb_{\dR}^+)\arrow[r]  &[-1em] H_{\petc}^i(\Hh^d_C, \Bb_{\dR}) \arrow[r, "\beta_i"]  &[-1em] H_{\petc}^i(\Hh^d_C, \Bb_{\dR}/\Bb_{\dR}^+) \arrow[r]  &[-1em] \cdots
  \end{tikzcd}
  \end{center}
  from which we obtain the commutative diagram with exact rows
  \begin{equation}\label{eheh}
  \begin{tikzcd}
   0 \arrow[r] &[-1em] \coker\alpha_{i-1} \arrow[r]\arrow[d] &[-1em]  H_{\petc}^i(\Hh^d_C, \Qq_p) \arrow[r]\arrow[d]  &[-1em] \ker \alpha_i \arrow[r]\arrow[d] &[-1em] 0 \\
   0 \arrow[r] &[-1em] \coker\beta_{i-1} \arrow[r]   &[-1em]  H_{\petc}^i(\Hh^d_C, \Bb_{\dR}^+)\arrow[r]  &[-1em]  \ker \beta_i \arrow[r] &[-1em] 0
  \end{tikzcd}
  \end{equation}
  First, we determine $\ker \beta_i$ and $\coker \beta_{i-1}$. By Theorem \ref{mainSSS} for the cohomology theories $H_{\dRc}^\bullet$ and $H_{\petc}^\bullet(-_C, \Bb_{\dR})$, which applies thanks to Proposition \ref{BdRSS}, we have the following compatible\footnote{The compatibility statement follows from the compatibility between the cycle class maps $c^{\dR}$ and $c^{B_{\dR}}$ proven in  Proposition \ref{BdRSS}.} $\underline{G}\times \underline{\mathscr{G}_K}$-equivariant isomorphisms in $\Vect_{\Qq_p}^{\cond}$
  $$H_{\dRc}^i(\Hh^d_K)\cong \underline{\Hom}(\Sp_i(\Zz), K),\;\;\;\;\;\;\;\; H_{\petc}^i(\Hh^d_C, \Bb_{\dR})\cong\underline{\Hom}(\Sp_i(\Zz), B_{\dR}).$$

  Then, since $\Hh^d_{K}$ is a Stein space, by Remark \ref{affstein} we have the following $\underline{G}\times \underline{\mathscr{G}_K}$-equivariant commutative diagram in $\Vect_{\Qq_p}^{\cond}$ with exact rows 
  \begin{center}
   \begin{tikzcd}
   0 \arrow[r] &[-1em]  H^i_{\dRc}(\Hh^d_K)\solid_K t^{-i+1}B_{\dR}^+ \arrow[r]\arrow[d, "\wr"] &[-1em]  H^i_{\petc}(\Hh^d_C, \Bb_{\dR}^+) \arrow[r]\arrow[d, "\gamma_i"]  &[-1em] \underline{\Omega^i(\Hh^d_K)}^{d=0}\solid_K C(-i) \arrow[r]\arrow[d, "\pi_i"] &[-1em] 0 \\
   0 \arrow[r] &[-1em] \underline{\Hom}(\Sp_i(\Zz),t^{-i+1}B_{\dR}^+) \arrow[r]   &[-1em]  H_{\petc}^i(\Hh^d_C, \Bb_{\dR})\arrow[r]  &[-1em]   \underline{\Hom}(\Sp_i(\Zz),B_{\dR}/t^{-i+1}B_{\dR}^+)\arrow[r] &[-1em] 0
  \end{tikzcd}
  \end{center}
  where the lower row is exact, and the left vertical arrow is an isomorphism, thanks to Remark \ref{soliddual}. Here, the morphism $\pi_i$ is defined as the composite
  $$\pi_i: \underline{\Omega^i(\Hh^d_K)}^{d=0}\solid_K C(-i)\twoheadrightarrow H^i_{\dRc}(\Hh^d_K)\solid_K C(-i)\cong \underline{\Hom}(\Sp_i(\Zz),C(-i))\hookrightarrow \underline{\Hom}(\Sp_i(\Zz),B_{\dR}/t^{-i+1}B_{\dR}^+).$$
  We deduce that $$\ker \beta_i=\im \gamma_i=\underline{\Hom}(\Sp_i(\Zz), t^{-i}B_{\dR}^+),\;\;\;\;\;\;\;\; \coker \beta_{i-1}=\ker \gamma_i=\underline{\Omega^{i-1}(\Hh^d_K)/\ker d}\solid_K C(-i).$$
   Next, we determine $\ker \alpha_i$ and $\coker \alpha_{i-1}$. By Theorem \ref{mainSSS} for the cohomology theory $H_{\petc}^\bullet(-_C, \Bb_e)$, which applies thanks to Proposition \ref{BeSS}, we have the following $\underline{G}\times \underline{\mathscr{G}_K}$-equivariant commutative diagram in $\Vect_{\Qq_p}^{\cond}$ with exact rows 
   \begin{center}
   \begin{tikzcd}
   0 \arrow[r] 
     &[-1em] \underline{\Hom}(\Sp_i(\Zz), B_e(-i)) \arrow[d, "\delta_i"] \arrow[r, "\sim"] 
     &[-1em] H_{\petc}^i(\Hh^d_C, \Bb_e) \arrow[d, "\alpha_i"] \arrow[r]      
     &[-1em] 0 \arrow[d] \arrow[r] 
     &[-1em] 0  \\
   0 \arrow[r] 
     &[-1em] \underline{\Hom}(\Sp_i(\Zz), B_{\dR}/t^{-i}B_{\dR}^+) \arrow[r]           
     &[-1em] H_{\petc}^i(\Hh^d_C, \Bb_{\dR}/\Bb_{\dR}^+) \arrow[r]  
     &[-1em] \underline{\Omega^i(\Hh^d_{K})/\ker d}\solid_K C(-i-1) \arrow[r]    
     &[-1em] 0
  \end{tikzcd}
  \end{center}
   where we recalled  Remark \ref{precisegal}. Now, using the fundamental exact sequence (\ref{fundexact}) over $\Spa(C, \cl O_C)_{\pet}$ together with Corollary \ref{profinite}, we have a short exact sequence
   $$0\to \underline{\Hom}(\Sp_i(\Zz), \Qq_p(-i))\to  \underline{\Hom}(\Sp_i(\Zz),  B_e(-i))\to  \underline{\Hom}(\Sp_i(\Zz), B_{\dR}/t^{-i}B_{\dR}^+)\to 0$$
   where we observed that $R\underline{\Hom}(\Sp_i(\Zz), \Qq_p(-i))$ is concentrated in degree 0 (see again Remark \ref{soliddual}).
   In other words,  $\coker \delta_i=0$, and, by Remark \ref{spdual}, $\ker \delta_i=\underline{\Sp_i(\Qq_p)^*(-i)}$, where the weak topological dual $\Sp_i(\Qq_p)^*$ is a $\Qq_p$-Fréchet space.
   Therefore, by the snake lemma applied to the diagram above, we have 
  $$\ker\alpha_i=\ker\delta_i=\underline{\Sp_i(\Qq_p)^*(-i)}$$
  $$\coker \alpha_{i-1}=\underline{(\Omega^{i-1}(\Hh_C^d)/\ker d)(-i)}.$$
  Hence, putting everything together, from the diagram (\ref{eheh}), twisting by $(i)$ we obtain the following $\underline{G}\times \underline{\mathscr{G}_K}$-equivariant commutative diagram in $\Vect_{\Qq_p}^{\cond}$ with exact rows 
  \begin{center}
   \begin{tikzcd}
   0 \arrow[r] 
     &[-1em] \underline{\Omega^{i-1}(\Hh_C^d)/\ker d} \arrow[-,double line with arrow={-,-}]{d} \arrow[r] 
     &[-1em] H^i_{\petc}(\Hh_C^d, \Qq_p(i)) \arrow[d] \arrow[r]      
     &[-1em] \underline{\Sp_i(\Qq_p)^*} \arrow[d] \arrow[r] 
     &[-1em] 0  \\
   0 \arrow[r] 
     &[-1em] \underline{\Omega^{i-1}(\Hh_C^d)/\ker d} \arrow[-,double line with arrow={-,-}]{d} \arrow[r] 
     &[-1em] H^i_{\petc}(\Hh_C^d, \Bb_{\dR}^+(i)) \arrow[d, "q"] \arrow[r]      
     &[-1em] \underline{\Sp_i(K)^*}\solid_K B_{\dR}^+ \arrow[d] \arrow[r] 
     &[-1em] 0  \\
   0 \arrow[r] 
     &[-1em] \underline{\Omega^{i-1}(\Hh_C^d)/\ker d} \arrow[r]           
     &[-1em] \underline{\Omega^{i}(\Hh_C^d)}^{d=0} \arrow[r]  
     &[-1em] \underline{\Sp_i(K)^*}\solid_K  C \arrow[r]    
     &[-1em] 0
  \end{tikzcd}
  \end{center}
  where $q$ is the quotient by $tB_{\dR}^+$ map. By Proposition \ref{solidvsproj}, we have $\underline{\Sp_i(K)^*}\solid_K C=\underline{\Sp_i(K)^*\widehat \otimes_K C}$. From the diagram above we deduce in particular that $H^i_{\petc}(\Hh_C^d, \Qq_p(i))$ is a $\Qq_p$-Fréchet space, since it is the pullback of the following diagram of $\Qq_p$-Fréchet spaces
  $$\underline{\Sp_i(\Qq_p)^*}\longrightarrow \underline{\Sp_i(K)^*\widehat \otimes_K C}\longleftarrow \underline{\Omega^{i}(\Hh_C^d)}^{d=0}.$$
  Then, the statement follows recalling Remark \ref{frechetsepare} and Lemma \ref{acyclic}.
 \end{proof}
 
 In the proof above, we used the following observation.
 
  \begin{rem}\label{soliddual}
  Let $W$ be a  $K$-Fréchet space in $\Vect_K^{\cond}$. Writing $\Sp_i(K)=\varinjlim_n V_n$ as a countable direct limit of finite-dimensional topological $K$-vector spaces $V_n$ along immersions, as in Remark \ref{spdual}, we deduce that
  $$R\underline{\Hom}(\Sp_i(\Zz), W)=R\varprojlim_n R\underline{\Hom}_K(\underline{V_n}, W)=R\varprojlim_n (\underline{\Hom}_K(\underline{V_n}, K)\solid_K W)=\underline{\Hom}_{K}(\underline{\Sp_i(K)}, K)\solid_K W$$
   concentrated in degree 0, where, in the last step, we used Corollary \ref{commlim}, and the fact that $R^j\varprojlim_n \underline{\Hom}_K(\underline{V_n}, K)=0$ for all $j>0$ (see Lemma \ref{ML}).
 \end{rem}

 \clearpage

 \appendix
 \addtocontents{toc}{\protect\vskip5pt}

 \section{\for{toc}{\textbf{Non-archimedean condensed functional analysis} {\small(after Clausen--Scholze)}}\except{toc}{\textbf{Non-archimedean condensed functional analysis}\\ {\small (after Clausen--Scholze)}}}\label{condfun}
 \sectionmark{}

 The main results of this appendix, which is devoted to the condensed functional analysis over a non-archimedean local field, are due to Clausen--Scholze.\footnote{We have learned most of the results presented here from Dustin Clausen via private communication, or during the ``Masterclass in Condensed Mathematics'', \cite{Cop}. We also thank the participants of the La Tourette's conference in September 2020 for their comments on a preliminary version of this appendix.}
 \subsection{Notation and conventions}\label{ape}
 In this appendix, contrary to \S \ref{conve}, we work in the category of all condensed sets (and not only $\kappa$-condensed sets), \cite[Definition 2.11]{Scholzecond}, which will be denoted by $\CondSet$. \medskip
 
 We denote by $T\mapsto \underline{T}$ the natural functor from the category of topological spaces/groups/rings/etc. to the category of sheaves of sets/groups/rings/etc. on the site of profinite sets with coverings given by finite families of jointly surjective maps; here, $\underline{T}$ is defined via sending a profinite set $S$ to the set/group/ring/etc. of the continuous functions $\mathscr{C}^0(S, T)$.  \medskip
 
 Recall that if $T$ is a T1\footnote{A topological space $T$ is T1 if for all $x\in T$, the set $\{x\}$ is closed.} topological space, then $\underline{T}$ is a condensed set,\cite[Proposition 2.15]{Scholzecond}. The functor $T\mapsto \underline{T}$ from T1 topological spaces to condensed sets admits a left adjoint $X\mapsto X(*)_{\topp}$, sending a condensed set $X$ to the topological space $X(*)_{\topp}$ defined endowing the underlying set $X(*)$ with the quotient topology of $\bigsqcup_{S\to X}S\to X(*)$ where the disjoint union runs over $\kappa$-small extremally disconnected sets, for any $\kappa$ uncountable strong limit cardinal such that $X$ is determined by its values on $\kappa$-small $S$, \cite[Proposition 2.15, Proposition 1.7]{Scholzecond}. \medskip
 
 Next, we explain the relation between the set-theoretic conventions adopted here and \S \ref{conve}.
 
  \begin{rem}[On set-theoretic bounds]\label{setcut}
  Let $\kappa$ be an uncountable strong limit cardinal as in \S \ref{conve}.
  \begin{enumerate}[(i)]
   \item  Recall that the category of $\kappa$-condensed sets embeds fully faithfully into the category of all condensed sets, via left Kan extension along the full embedding of the category of $\kappa$-small extremally disconnected sets into the category of all extremally disconnected sets. By \cite[Proposition 2.9]{Scholzecond}, and the choice of the cardinal $\kappa$, the inclusion of the category of $\kappa$-condensed sets into the category of  condensed sets commutes with all colimits, and with limits along an index category with cardinality less than $\kappa$.
   \item If we temporarily denote by $T\mapsto \underline{T}_{\kappa}$ the functor from the category of topological spaces to the category of $\kappa$-condensed sets, then, the proof of \cite[Proposition 2.15]{Scholzecond} shows that, given a T1 topological space $T$ with cardinality less than $\kappa$, we have $\underline{T}_{\kappa}=\underline{T}$ (in particular, in this case, $\underline{T}_{\kappa}$ does not depend on our choice of the cardinal $\kappa$).
  \end{enumerate}
  \end{rem}

  We denote by $\CondAb$ the category of condensed abelian groups. All condensed rings will be assumed to be condensed associative unital rings. Given a condensed ring $A$, we denote by $\Mod_A^{\cond}$ the category of $A$-modules in $\CondAb$, and we write $\underline{\Hom}_A(-, -)$ for the internal Hom in the category $\Mod_A^{\cond}$ (in the case $A=\Zz$, we often omit the subscript $\Zz$).\footnote{Recall that, for $M, N\in \Mod_{A}^{\cond}$, we define $\underline{\Hom}_A(M, N)(S)=\Hom_{A}(M\otimes_{\Zz}\Zz[S], N)$ for all extremally disconnected sets $S$.} \medskip

  We will denote by $(F, |\cdot|)$ a non-archimedean local field, i.e. a field that is complete with respect to a non-trivial discrete absolute value $|\cdot|$, and having finite residue field. We let $\cl O_F$ denote the ring of integers of $F$, and we fix $\varpi$ a uniformizer of $\cl O_F$. 
  By abuse of notation, $F$ (resp. $\cl O_F$) will also denote the condensed ring $\underline{F}$ (resp. $\underline{\cl O_F}$). We will refer to the objects of $\Mod_F^{\cond}$ as \textit{condensed $F$-vector spaces}.
  
  \subsection{Analytic rings}
  In this section, we recall some generalities on the category of analytic rings.
  \begin{notation}
   We write $\ExtrDisc$ for the category of extremally disconnected sets. 
  \end{notation}

 \begin{df}[{\cite[Definition 7.1, Definition 7.4]{Scholzecond}}]\label{yt} A \textit{pre-analytic ring} $(A, \cl M)$ is a condensed ring $A$, together with a functor
 $$\cl M[-]:\ExtrDisc\to \Mod_A^{\cond}$$
 called \textit{functor of measures}, taking finite disjoint unions to products, and a natural transformation $\underline{S}\to \cl M[S]$ of functors from $\ExtrDisc$ to $\CondSet$.
 
 An \textit{analytic ring} is a pre-analytic ring $(A, \cl M)$ such that for any complex of $A$-modules in $\CondAb$
 $$M_{\bullet}: \cdots \to M_i\to \cdots M_1\to M_0\to 0$$
 such that all $M_i$ are direct sums of objects of the form $\cl M[T]$ for some $T\in \ExtrDisc$, the natural map $$R\underline{\Hom}_A(\cl M[S], M_{\bullet})\to R\underline{\Hom}_A(A[S], M_{\bullet})$$ is an isomorphism for all $S\in \ExtrDisc$.
 
 We say that an analytic ring $(A, \cl M)$ is \textit{commutative} if $A$ is commutative, and \textit{normalized} if the map $A\to \cl M[*]$ is an isomorphism.
 \end{df}

 \begin{df}
  Let $(A, \cl M)$ be an analytic ring.
  \begin{enumerate}[(i)]
   \item We say that a module $M\in \Mod_A^{\cond}$ is \textit{$(A, \cl M)$-complete} if, for all $S\in \ExtrDisc$, the natural map
 $$\Hom_A(\cl M[S], M)\to \Hom_{A}(A[S], M)=M(S)$$
 is an isomorphism.
 \item We say that a complex $M\in D(\Mod_A^{\cond})$ is \textit{$(A, \cl M)$-complete} if, for all $S\in \ExtrDisc$, the natural map 
  $$R\Hom_A(\cl M[S], M)\to R\Hom_{A}(A[S], M)$$
 is an isomorphism.
  \end{enumerate}
 \end{df}

 \begin{prop}[{\cite[Proposition 7.5]{Scholzecond}}]\label{fevre} Let $(A, \cl M)$ be an analytic ring.
 \begin{enumerate}[(i)]
  \item\label{fevre:1} The full subcategory of $(A, \cl M)$-complete modules
  \begin{equation}\label{incux}
   \Mod_{(A, \cl M)}^{\cond}\subset \Mod_A^{\cond}
  \end{equation}
  is an abelian category stable under all limits, colimits, and extensions.
 The objects $\cl M[S]$, for varying $S\in \ExtrDisc$, form a family of compact projective generators of $\Mod_{(A, \cl M)}^{\cond}$.
 The inclusion (\ref{incux}) admits a left adjoint
  \begin{equation}\label{solidification}
   \Mod_A^{\cond}\to \Mod_{(A, \cl M)}^{\cond}: M\mapsto M\otimes_{A}(A, \cl M)
  \end{equation}
  that is the unique functor sending $A[S]\mapsto \cl M[S]$ and preserving all colimits.
  Moreover, if $A$ is commutative, there is a unique symmetric monoidal tensor product $\otimes_{(A, \cl M)}$ on $\Mod_{(A, \cl M)}^{\cond}$ making the functor (\ref{solidification}) symmetric monoidal.
  \item\label{fevre:2} The functor 
  \begin{equation}\label{incux2}
   D(\Mod_{A}^{\cond})\to D(\Mod_{(A, \cl M)}^{\cond})
  \end{equation}
  is fully faithful, and its essential image is stable under all limits and colimits, and given by the $(A, \cl M)$-complete complexes. For any $(A, \cl M)$-complete complex $M\in D(\Mod_{A}^{\cond})$, for all $S\in \ExtrDisc$, the natural map 
  $$R\underline{\Hom}_A(\cl M[S], M)\to R\underline{\Hom}_{A}(A[S], M)$$
 is an isomorphism.
  An object $M\in D(\Mod_A^{\cond})$ lies in $D(\Mod_{(A, \cl M)}^{\cond})$ if and only if $H^i(M)$ lies in $\Mod_{(A, \cl M)}^{\cond}$ for all $i$. The functor (\ref{incux2}) admits left adjoint
  \begin{equation}\label{dsolidification}
   D(\Mod_A^{\cond})\to D(\Mod_{(A, \cl M)}^{\cond}): M\mapsto M\otimes_{A}^{\LL}(A, \cl M)
  \end{equation}
  that is the left derived functor of (\ref{solidification}). Moreover, if $A$ is commutative, there is a unique symmetric monoidal tensor product $\otimes_{(A, \cl M)}^{\LL}$ on $D(\Mod_{(A, \cl M)}^{\cond})$ making the functor (\ref{dsolidification}) symmetric monoidal.
 \end{enumerate}
 \end{prop}

\begin{notation}
 In the following, for an analytic ring $(A, \cl M)$, we often abbreviate $D(A):=D(\Mod_A^{\cond})$ and $D(A, \cl M):=D(\Mod_{(A, \cl M)}^{\cond})$. 
\end{notation}

 Given $M, N\in D(A, \cl M)$, we define the derived internal Hom $R\underline{\Hom}_{(A, \cl M)}(M, N)$ in $D(A, \cl M)$, by setting 
 $$\Hom_{D(A, \cl M)}(\cl M[S], R\underline{\Hom}_{(A, \cl M)}(M, N))=\Hom_{D(A, \cl M)}((M\otimes_{\Zz}\Zz[S])\otimes_{A}^{\LL}(A, \cl M), N)$$
 for varying extremally disconnected sets $S$.
  
 
 \begin{lemma}\label{intsolid}
  Let $(A, \cl M)$ be an analytic ring. For all $M, N\in D(A, \cl M)$, the natural map
  $$R\underline{\Hom}_{(A, \cl M)}(M, N)\to R\underline{\Hom}_{A}(M, N)$$
  is an isomorphism.
 \end{lemma}
  \begin{proof}
   Using Proposition \ref{fevre}, and in particular the fact that the functor $D(A)\to D(A, \cl M)$ is fully faithful and admits left adjoint $M\mapsto M\otimes_{A}^{\LL}(A, \cl M)$, for all extremally disconnected sets $S$, we have the following natural isomorphisms
  \begin{align*}
   \Hom_{D(A)}(A[S], R\underline{\Hom}_{(A, \cl M)}(M, N))&\simeq \Hom_{D(A)}(\cl M[S], R\underline{\Hom}_{(A, \cl M)}(M, N)) \\
   &\simeq  \Hom_{D(A, \cl M)}(\cl M[S], R\underline{\Hom}_{(A, \cl M)}(M, N)) \\
   &= \Hom_{D(A, \cl M)}((M\otimes_{\Zz}\Zz[S])\otimes_{A}^{\LL}(A, \cl M), N) \\
   &\simeq \Hom_{D(A)}(M\otimes_{\Zz}\Zz[S], N) \\
   &=\Hom_{D(A)}(A[S],  R\underline{\Hom}_{A}(M, N))
  \end{align*}
  and the statement follows from Yoneda's lemma.
  \end{proof}

 One of the main results of \cite{Scholzecond} is the following theorem, that gives an important example of analytic ring, on which the results of this appendix will build on.
 
 \begin{theorem}[{\cite[Theorem 5.8]{Scholzecond}}]\label{mainsolid}
  The pre-analytic ring $\Zz_{\solidif}:=(\Zz, \cl M_\Zz)$  with underlying condensed ring $\Zz$, and functor of measures 
  $$\cl M_\Zz:\ExtrDisc\to \CondAb: S\mapsto \Zz[S]^{\solidif}$$
  where $\Zz[S]^{\solidif}:=\varprojlim_{i}\Zz[S_i]$, for $S=\varprojlim_i S_i$ written as a cofiltered limit of finite sets $S_i$, is an analytic ring.
 \end{theorem}

 \begin{notation}\label{solidnot}
  For the analytic ring $\Zz_{\solidif}$, we call $\Solid:=\Mod_{\Zz_{\solidif}}^{\cond}$ the category of \textit{solid abelian groups}, we write
 $$\CondAb\to \Solid: M\mapsto M^{\solidif}$$
 for the functor (\ref{solidification}) of Proposition \ref{fevre}, and call it \textit{solidification}, and we denote by $\solid_{\Zz}$ the unique symmetric monoidal tensor product  making the solidification functor symmetric monoidal.
 \end{notation}

 In the main body of the paper we need to work in the category of $\kappa$-condensed sets (see \S \ref{conve}), therefore, we make the following remark on set-theoretic bounds.
 
 \begin{rem}\label{solidcut}
  Let $\kappa$ be an uncountable strong limit cardinal as in \S \ref{conve}. It is possible to define the notion of \textit{(pre-)analytic ring} for $\kappa$-condensed associative unital rings, replacing extremally disconnected sets with $\kappa$-small extremally disconnected sets in Definition \ref{yt}. 
  Then, with this definition,  the pre-analytic ring $(\Zz, \cl M_\Zz)_\kappa$  given by the $\kappa$-condensed ring $\Zz$, together with the functor sending a $\kappa$-small extremally disconnected set $S$ to the $\kappa$-condensed abelian group $\cl M_\Zz[S]:=\Zz[S]^{\solidif}$ is an analytic ring.  We observe that the category $\Solid_\kappa:=\Mod_{(\Zz, \cl M_\Zz)_\kappa}^{\cond}$ of \textit{$\kappa$-solid abelian groups} embeds fully faithfully in $\Solid$.
 \end{rem}

 Next, we define morphisms between analytic rings.
  
  \begin{df}
 A morphism of analytic rings $(A,\cl M)\to (B, \cl N )$ is a map $A \to B$ of underlying condensed rings such that, for all $S\in \ExtrDisc$, we have  $\cl N[S]\in \Mod_{(A, \cl M)}^{\cond}$.
 \end{df}
 
  We recall that the formation of $\Mod_{(A, \cl M)}^{\cond}$ and $D(\Mod_{(A, \cl M)}^{\cond})$ is functorial in $(A, \cl M)$, along morphisms of analytic rings, \cite[Proposition 7.7]{Scholzecond}. \medskip
  
  Analytic rings over $\Zz_{\solidif}$ (i.e. having a morphism from $\Zz_{\solidif}$) enjoy the following additional remarkable properties.
  
 \begin{prop}[{\cite[Proposition 2.11]{Andr}}]\label{intproj}
  Let $(A, \cl M)$ be an analytic ring over $\Zz_{\solidif}$. For any profinite set $S$, the object $\cl M[S]:=A[S]\otimes_{A}^{\LL}(A, \cl M)$ is concentrated in degree 0, compact, and projective.\footnote{In \textit{loc. cit.} $A$ is assumed to be commutative, however this is not needed here.} If $A$ is commutative, for any profinite sets $S$ and $S'$, we have $\cl M[S]\otimes_{(A, \cl M)}^{\LL}\cl M[S']\cong \cl M[S\times S']$.
 \end{prop}
 
 The next results immediately follow.

  \begin{cor}
   Let $(A, \cl M)$ be a commutative analytic ring over $\Zz_{\solidif}$.  Tensor products of (compact) projective objects in $\Mod_{(A, \cl M)}^{\cond}$ are (compact) projective.
  \end{cor}

  \begin{cor}\label{coinci}
  Let $(A, \cl M)$ be a commutative analytic ring over $\Zz_{\solidif}$. The symmetric monoidal tensor product $\otimes_{(A, \cl M)}^{\LL}$ is the left derived functor of $\otimes_{(A, \cl M)}$.
 \end{cor}
 
  In view of Corollary \ref{coinci}, we can introduce the following definition that we will use later on.
 
  \begin{df}\label{platit}
   Let $(A, \cl M)$ be a commutative analytic ring over $\Zz_{\solidif}$. We say that an object $M\in \Mod_{(A, \cl M)}^{\cond}$ is \textit{flat (for the tensor product $\otimes_{(A, \cl M)}$)} if, for all $N\in \Mod_{(A, \cl M)}^{\cond}$, the derived tensor product $M\otimes_{(A, \cl M)}^{\LL} N$ is concentrated in degree 0.
  \end{df}

  We will use the following simple criterion to produce analytic rings over $\Zz_{\solidif}$.

  \begin{lemma}[Analytic ring structure induced from $\Zz_{\solidif}$]\label{basiclemma}
   Let $A$ be a condensed ring. Let $(A, \Zz)_{\solidif}:=(A, \cl M_{A^{\solidif}})$ be the pre-analytic ring with underlying condensed ring $A$, and functor of measures
 $$\cl M_{A^{\solidif}}[-]:\ExtrDisc \to \Mod_A^{\cond}: S\mapsto A[S]^{\solidif}.$$
 Suppose that $\cl M_\Zz[S]\dsolid_\Zz A^{\solidif}$ is concentrated in degree 0 for all $S\in \ExtrDisc$. Then, $(A, \Zz)_{\solidif}$ is an analytic ring, and $\Mod_{(A, \Zz)_{\solidif}}^{\cond}$ is the category of $A^{\solidif}$-modules in $\Solid$.
  \end{lemma}
  \begin{proof}
  By \cite[Proposition 12.8]{Scholzeanalytic}, in order to prove that $(A, \Zz)_{\solidif}$ is an analytic ring, it suffices to check that, $A[S]^{\LL \solidif}$ is concentrated in degree 0 for all $S\in \ExtrDisc$: for this, we note that $A[S]=\Zz[S]\otimes_{\Zz}^{\LL}A$ (using that $\Zz[S]$ is a flat object in $\CondAb$), and then $A[S]^{\LL \solidif}=\cl M_\Zz[S]\dsolid_\Zz A^{\solidif}$, which is concentrated in degree 0 by assumption. Recalling that $\Mod_{(A, \Zz)_{\solidif}}^{\cond}$ is generated under colimits by $\cl M_{A^{\solidif}}[S]$, for varying $S\in \ExtrDisc$, the last assertion follows from the identification $\cl M_{A^{\solidif}}[S]=\cl M_{\Zz}[S]\solid_{\Zz}A^{\solidif}$.
  \end{proof}

 \subsection{Solid vector spaces}\label{ssk}
 Recall from \ref{ape} that we denote by $(F, |\cdot|)$ a non-archimedean local field. In this section, we study some basic results on the category of \textit{solid $F$-vector spaces}, that we use throughout the paper. \medskip
 
\begin{prop}\label{Kanalytic}
 Let $(F, \Zz)_{\solidif}:=(F, \cl M_F)$ be the pre-analytic ring with underlying condensed ring $F$, and functor of measures
 $$\cl M_F[-]:\ExtrDisc \to \Mod_F^{\cond}: S\mapsto F[S]^{\solidif}.$$
 \begin{enumerate}[(i)]
  \item \label{Kanalytic:1} The pre-analytic ring $(F, \Zz)_{\solidif}$ is analytic. The category $\Mod_{(F, \Zz)_{\solidif}}^{\cond}$ is the category of solid $F$-vector spaces, i.e. $F$-modules in $\Solid$, which will be denoted by $\Mod_F^{\ssolid}$. We write $\solid_F$ for the symmetric monoidal tensor product $\otimes_{(F, \Zz)_{\solidif}}$.
  \item \label{Kanalytic:4} The objects $(\prod_I\cl O_F)[1/\varpi]$, for varying sets $I$, form a family of compact projective generators of $\Mod_F^{\ssolid}$.
  \item \label{Kanalytic:5}  We have
  $(\prod_I\cl O_F)[1/\varpi]\dsolid_F(\prod_J\cl O_F)[1/\varpi]=(\prod_{I\times J}\cl O_F)[1/\varpi]$ for any sets $I$ and $J$.
 \end{enumerate}
 \end{prop}
 \begin{proof}
 First, we note that $F\in \Solid$.
  Since $F=\cl O_F[1/\varpi]$, and $\cl O_F$ is a profinite abelian group (recall that $F$ has finite residue field, by assumption), by Lemma \ref{trivial}, for any set $I$, we have that 
  \begin{equation}\label{79}
  (\prod\nolimits_{I} \Zz)\dsolid_\Zz F=(\prod\nolimits_I\cl O_F)[1/\varpi]
  \end{equation}
 (concentrated in degree 0). Hence, recalling \cite[Corollary 5.5]{Scholzecond}, part \listref{Kanalytic:1} follows from Lemma \ref{basiclemma}. Part \listref{Kanalytic:4} follows by adjunction from (\ref{79}) and the fact that the objects $\prod_I \Zz$, for varying sets $I$, form a family of compact projective generators of $\Solid$, \cite[Theorem 5.8, (i)]{Scholzecond}. The final statement \listref{Kanalytic:5} follows from \cite[Proposition 6.3]{Scholzecond} and (\ref{79}).
 \end{proof}

  \begin{rem}\label{notOk}
  The proof of Proposition \ref{Kanalytic} also shows that the pre-analytic ring $(\cl O_F, \Zz)_{\solidif}$ from Lemma \ref{basiclemma}
  is analytic, the category $\Mod_{\cl O_F}^{\ssolid}:=\Mod_{(\cl O_F, \Zz)_{\solidif}}^{\cond}$ is the category of solid $\cl O_F$-modules, and it is generated by the family of compact projective objects $\prod_I\cl O_F$, for varying sets $I$, satisfying $\prod_I\cl O_F\dsolid_{\cl O_F}\prod_J\cl O_F=\prod_{I\times J}\cl O_F$ for any sets $I$ and $J$. Note that the category $\Mod_F^{\ssolid}$ is the full subcategory of $\Mod_{\cl O_F}^{\ssolid}$ of the objects on which $\varpi$ acts invertibly.
 \end{rem}

 We used the following general lemma.
 
 \begin{lemma}[{\cite{CS}}]\label{trivial}
  Let $M$ be a profinite abelian group. For any set $I$, we have $(\prod_I \Zz)\dsolid_\Zz \underline{M}=\prod_I \underline{M}$, which is concentrated in degree 0.
 \end{lemma}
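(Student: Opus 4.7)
The strategy is to reduce to the case where $M$ is a finite abelian group, and then dispatch the finite case via a finite free presentation. Write $M=\varprojlim_j M_j$ as a cofiltered inverse limit of finite abelian groups with surjective transition maps. Then $\underline{M}=\varprojlim_j \underline{M_j}$ in $\CondAb$, and the Mittag-Leffler condition, applied to the system of sections over any extremally disconnected set, implies that the derived inverse limit agrees with the ordinary limit, so $\underline{M}\simeq R\varprojlim_j \underline{M_j}$ in $D(\Solid)$.

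For $M$ finite, I would write $M\cong \bigoplus_{k=1}^r \Zz/n_k\Zz$ and take the finite free presentation $0\to \Zz^r\to \Zz^r\to \underline{M}\to 0$ in $\Solid$, with differential given by componentwise multiplication by $(n_1,\ldots,n_r)$. Applying the derived tensor product $(\prod_I \Zz)\dsolid_\Zz -$, and invoking the identity $(\prod_I \Zz)\dsolid_\Zz \Zz^n=(\prod_I \Zz)^n$ (a special case of \cite[Proposition 6.3]{Scholzecond}), we obtain the complex $(\prod_I \Zz)^r\to (\prod_I \Zz)^r$ with the same componentwise differential. Since multiplication by each $n_k$ on $\prod_I \Zz$ is injective and arbitrary products are exact in $\CondAb$, this is a short exact sequence whose cokernel is $\prod_I \underline{M}$; hence $(\prod_I \Zz)\dsolid_\Zz \underline{M}\simeq \prod_I \underline{M}$, concentrated in degree $0$.

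The main obstacle is propagating from the finite case to the general profinite case: one must commute $(\prod_I \Zz)\dsolid_\Zz -$ with the derived inverse limit $R\varprojlim_j$. Granting this, the right-hand side
\[
 R\varprojlim_j\bigl[(\prod_I \Zz)\dsolid_\Zz \underline{M_j}\bigr]=R\varprojlim_j \prod_I \underline{M_j}=\prod_I R\varprojlim_j \underline{M_j}=\prod_I \underline{M}
\]
gives the desired identification, since products commute with all limits. To justify the commutation, I would check that the natural map
\[
 (\prod_I \Zz)\dsolid_\Zz R\varprojlim_j \underline{M_j}\longrightarrow R\varprojlim_j\bigl[(\prod_I \Zz)\dsolid_\Zz \underline{M_j}\bigr]
\]
is an isomorphism by testing on morphisms from the compact projective generators $\prod_J \Zz$ of $\Solid$, using the formula $(\prod_I \Zz)\dsolid_\Zz (\prod_J \Zz)=\prod_{I\times J}\Zz$ from \cite[Proposition 6.3]{Scholzecond} to identify both sides with compatible systems of continuous maps from an appropriate profinite set into the $M_j$'s, then passing to the limit in $j$; the fact that the double inverse system is Mittag-Leffler throughout ensures that no higher derived limits obstruct the swap.
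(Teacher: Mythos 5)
Your finite case is correct, and the overall reduction strategy is reasonable, but the step you yourself flag as the main obstacle — commuting $(\prod_I \Zz)\dsolid_\Zz -$ with $R\varprojlim_j$ — is a genuine gap, and the justification you sketch does not close it. The functor $(\prod_I \Zz)\dsolid_\Zz -$ is a left adjoint, so there is no general reason for it to commute with (derived) limits, and "testing on morphisms from the compact projective generators $\prod_J \Zz$" does not help: to compare the two sides you would need to compute $R\underline{\Hom}(\prod_J\Zz,\,(\prod_I\Zz)\dsolid_\Zz \underline{M})$, and there is no adjunction converting a Hom \emph{into} a tensor product into anything tractable; the formula $(\prod_I\Zz)\dsolid_\Zz(\prod_J\Zz)=\prod_{I\times J}\Zz$ only tells you how to tensor two products of $\Zz$, not how to map out of one into a tensor product. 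Worse, the commutation you need is essentially equivalent to the statement of the lemma itself (it asserts $(\prod_I\Zz)\dsolid_\Zz\underline{M}\cong\prod_I\underline{M}$), so the reduction is circular as written. A secondary, more minor point: for a general profinite $M$ the index category of the presentation $M=\varprojlim_j M_j$ need not be countable, so Grothendieck's Mittag--Leffler criterion does not by itself give $R\varprojlim_j\underline{M_j}=\underline{M}$; this requires a separate (standard but nontrivial) argument for surjective systems of finite groups over arbitrary cofiltered index categories.

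The paper sidesteps all of this with Pontrjagin duality: the discrete dual $M^\vee$ admits a two-term free resolution, and dualizing yields a strictly exact sequence $0\to M\to\prod_{J_0}\Rr/\Zz\to\prod_{J_1}\Rr/\Zz\to 0$ of locally compact groups, which stays exact after applying $T\mapsto\underline{T}$. Since $\Rr^{\dsolidif}=0$, one has $(\underline{\Rr/\Zz})^{\dsolidif}=\Zz[1]$, so $\underline{M}$ sits in a triangle with $\prod_{J_0}\Zz[1]$ and $\prod_{J_1}\Zz[1]$; this is in effect a global two-term resolution of $\underline{M}$ by objects $\prod_J\Zz$, to which $(\prod_I\Zz)\dsolid_\Zz-$ and \cite[Proposition 6.3]{Scholzecond} apply directly, with no limit interchange needed. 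If you want to salvage your approach without duality, the viable repair is in the same spirit: take the product over $j$ of your finite presentations to get a two-term resolution of $\prod_j\underline{M_j}$ by products of $\Zz$, splice it with the sequence $0\to\underline{M}\to\prod_j\underline{M_j}\to\prod_j\underline{M_j}$ (which again requires the countability caveat above), and only then tensor — i.e., resolve first, take limits never.
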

 \begin{proof}
  Recall that any discrete abelian group has a 2-term free resolution; then, since $M$ is a compact abelian group, by Pontrjagin duality, Lemma \ref{pontt}\listref{pontt:2}, it admits a resolution of the form
  \begin{equation}\label{res}
   0\to M\to \prod_{J_0}\Tt\to \prod_{J_1}\Tt\to 0
  \end{equation}
  where $\Tt=\Rr/\Zz$ is the circle group. Moreover, since (\ref{res}) is a strictly exact sequence of locally compact abelian groups,\footnote{In fact, the right arrow of (\ref{res}) is an open map by the following well-known fact: suppose that $f:G\to H$ is a surjective morphism of Hausdorff topological groups and $G$ is compact, then $f$ is open.} it remains exact after applying the functor $T\mapsto \underline{T}$ (see \cite[page 26]{Scholzecond}). Hence,  applying the left derived solidification functor to (\ref{res}), we get the distinguished triangle
  \begin{equation}\label{Z[1]}
   \underline{M}^{\dsolidif}\to \prod_{J_0}\Zz[1]\to \prod_{J_1}\Zz[1]
  \end{equation}
  where we used that $\Rr^{\dsolidif}=0$ (see \cite[Corollary 6.1, (iii)]{Scholzecond}). Then, applying the functor $(\prod_I \Zz)\dsolid_\Zz-$ to (\ref{Z[1]}), we obtain the statement from \cite[Proposition 6.3]{Scholzecond}.
 \end{proof}
 
 We recall the following classical result (see e.g. \cite{Morris}).
 
 \begin{lemma}[Pontrjagin duality]\label{pontt} Denote by $\LCA$ the category of locally compact abelian groups. Given $A, B\in \LCA$ we endow the group $\Hom(A, B)$ of continuous homomorphisms from $A$ to $B$ with the compact-open topology.
 \begin{enumerate}[(i)] 
  \item\label{pontt:1} Let $\Tt=\Rr/\Zz$ be the circle group. The functor from the category $\LCA$ to the category of topological abelian groups
  \begin{equation*}\label{funpon}
   A\mapsto \Dd(A):=\Hom(A, \Tt)
  \end{equation*}
  induces a contravariant self-equivalence of the category $\LCA$, such that the natural map
  $$A\to \Dd(\Dd(A))$$ is an isomorphism.
  \item\label{pontt:2} The functor $A\mapsto \Dd(A)$ of part \listref{pontt:1} restricts to a contravariant duality between compact abelian groups and discrete abelian groups.
  \item\label{pontt:3} The duality of part \listref{pontt:2} restricts to a contravariant duality between profinite abelian groups and torsion discrete abelian groups.
 \end{enumerate}
 \end{lemma}

 \subsection{Quasi-separated solid vector spaces}\label{qsol} In this section, we study \textit{quasi-separated solid $F$-vector spaces}, i.e. the objects of the category $\Mod_F^{\ssolid}$ whose underlying condensed set is quasi-separated. The goal here is to prove the flatness of such solid $F$-vector spaces, Corollary \ref{solidqs}. \medskip
 
  First, we recall that a condensed set $X$ is quasi-compact (resp. quasi-separated) if and only if there exists a surjection $S\to X$ from a profinite set $S$ (resp. for any  profinite sets $S_1$, $S_2$ mapping to  $X$, the fibre product $S_1\times_X S_2$ is quasi-compact).\footnote{Here, the profinite sets are regarded as condensed sets.}
 
 \begin{rem}\label{ovoo} We will repeatedly use the following elementary observations.
 \begin{enumerate}[(i)]
  \item Any subobject of a quasi-separated condensed set is quasi-separated. 
  \item A filtered colimit of quasi-separated condensed sets along injections is quasi-separated.
 \end{enumerate}
 \end{rem}

 We recall the following result that relates quasi-separated condensed sets to classical topological objects.
 
 \begin{prop}[{\cite[Theorem 2.16]{Scholzecond}, \cite[Proposition 1.2]{Scholzeanalytic}}]\label{qcqs}\
  \begin{enumerate}[(i)]
   \item\label{qcqs:1} The functor $T\mapsto \underline{T}$ from T1 topological spaces to condensed sets induces an equivalence between the category of compact Hausdorff spaces, and the category of quasi-compact quasi-separated condensed sets.
   \item\label{qcqs:2}  The functor $T\mapsto \underline{T}$ induces a fully faithful functor from the category of compactly generated weak Hausdorff spaces to quasi-separated condensed sets. Moreover, it induces an equivalence between the category of ind-(compact Hausdorff spaces) whose transition maps are closed immersions, and the category of quasi-separated condensed sets, via sending an object $(T_i)_{i\in \cl I}$ in the source to the condensed set $\colim_{i\in \cl I}  \underline{T_i}$. 
  \end{enumerate}
 \end{prop}
 
 Next, we need to define the notion of \textit{closedness} for a subobject of a condensed set.
 
 \begin{df}\label{clss}
  Let $X$ be a condensed set. We say that a subobject $\iota: Z \hookrightarrow X$ is \textit{closed} if the morphism $\iota$ is a quasi-compact injection of condensed sets.
 \end{df}

 In the quasi-separated case the definition above is compatible with the topological one, as stated in the following lemma. 
 
 \begin{lemma}[{\cite[Proposition 4.13]{Scholzeanalytic}}]\label{closvs}
  Let $X$ be a quasi-separated condensed set. The functor $Z\mapsto Z(*)_{\topp}$ induces an equivalence between closed subobjects $\iota: Z\hookrightarrow X$ and closed topological subspaces of $X(*)_{\topp}$.\footnote{We refer the reader to \S \ref{ape} for the notation used here.}
 \end{lemma}

 Clausen--Scholze proved the following nice characterization of the quasi-separated solid $F$-vector spaces in terms of the compact projective generators $(\prod_I\cl O_F)[1/\varpi]$ of $\Mod_F^{\ssolid}$.
 
 \begin{prop}[{\cite{CS}}]\label{pontrfun}
  A solid $F$-vector space is quasi-separated if and only if it is the filtered union of its subobjects isomorphic to $(\prod_I\cl O_F)[1/\varpi]$ for some set $I$.
 \end{prop}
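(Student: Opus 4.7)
The plan is to prove the two implications separately, with the ``only if'' direction resting on a single technical claim about images.

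\emph{The ``if'' direction.} First I would observe that $(\prod_I\cl O_K)[1/\varpi]$ is quasi-separated: the profinite abelian group $\prod_I\cl O_K$ is compact Hausdorff, hence quasi-separated as a condensed set, and $(\prod_I\cl O_K)[1/\varpi]=\varinjlim_n \varpi^{-n}\prod_I\cl O_K$ is a countable filtered colimit along closed injections of quasi-separated subobjects, thus quasi-separated. Since any filtered union of quasi-separated subobjects of a fixed condensed set is quasi-separated, the conclusion follows.

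\emph{The ``only if'' direction.} Let $V$ be a quasi-separated solid $K$-vector space and let $\mathscr{C}$ denote the collection of subobjects $W\subseteq V$ isomorphic, as solid $K$-vector spaces, to $(\prod_I\cl O_K)[1/\varpi]$ for some set $I$. I want to show that $\mathscr{C}$ is filtered under inclusion and that $V=\varinjlim_{W\in\mathscr{C}}W$. Both assertions reduce to the following key claim.

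\emph{Key claim.} For any morphism $\phi:(\prod_I\cl O_K)[1/\varpi]\to V$ with $V$ quasi-separated, the image of $\phi$ is isomorphic to $(\prod_J\cl O_K)[1/\varpi]$ for some set $J$.

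Granting the claim, filteredness of $\mathscr{C}$ follows because the sum $W_1+W_2\subseteq V$ of two members of $\mathscr{C}$ is the image of $W_1\oplus W_2\cong(\prod_{I_1\sqcup I_2}\cl O_K)[1/\varpi]\to V$; and $V=\varinjlim_{\mathscr{C}}W$ follows because, by Proposition \ref{Kanalytic}, $V$ is generated by morphisms from $\cl M_K[S]$ for $S$ extremally disconnected, and for such $S$ the solid abelian group $\cl M_\Zz[S]$ is a compact projective generator of $\Solid$, hence of the form $\prod_I\Zz$ by \cite[Theorem 5.8]{Scholzecond}, so that Lemma \ref{trivial} yields $\cl M_K[S]=\cl M_\Zz[S]\solid_\Zz K\cong(\prod_I\cl O_K)[1/\varpi]$.

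\emph{Main obstacle: proof of the key claim.} This is where the heart of the argument lies. I would restrict $\phi$ to the ``integral part'' $\phi_0:\prod_I\cl O_K\to V$ and analyse the image $W_0\subseteq V$. Since $V$ is quasi-separated and $\prod_I\cl O_K$ is quasi-compact, $W_0$ is quasi-compact and quasi-separated; by the equivalence between quasi-compact quasi-separated condensed sets and compact Hausdorff spaces, $W_0$ comes from a compact Hausdorff topological abelian group (indeed, a closed-subgroup quotient $\prod_I\cl O_K/\ker(\phi_0)$). Because $V$ is a $K$-vector space, $W_0$ is moreover $\varpi$-torsion-free as an $\cl O_K$-module. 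At this point I would appeal to $\cl O_K$-linear Pontrjagin duality (with dualising object $K/\cl O_K$), which exchanges compact Hausdorff $\cl O_K$-modules and discrete $\cl O_K$-modules, and exchanges torsion-freeness with divisibility. The Pontrjagin dual $W_0^\vee$ is then a divisible $\cl O_K$-submodule of the torsion module $\bigoplus_I K/\cl O_K$, hence of the form $\bigoplus_J K/\cl O_K$; dualising back gives $W_0\cong\prod_J\cl O_K$. Finally, the image of $\phi$ on all of $(\prod_I\cl O_K)[1/\varpi]$ is $\bigcup_n\varpi^{-n}W_0\cong(\prod_J\cl O_K)[1/\varpi]$, completing the claim.
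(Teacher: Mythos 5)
The paper does not actually prove this proposition: it is stated with a citation to \cite{CS} and used as a black box, so there is no in-paper argument to compare against. Your proof is, as far as I can tell, correct, and it is essentially the Clausen--Scholze argument for the analogous statement over $\Zz$ transported to $\cl O_K$. The reduction to the key claim is clean: filteredness of $\mathscr{C}$ via sums, and the covering of $V$ via the compact projective generators $\cl M_K[S]\cong(\prod_I\cl O_K)[1/\varpi]$ of Proposition \ref{Kanalytic} (note that the identification $\Zz[S]^{\solidif}\cong\prod_I\Zz$ is the content of \cite[Theorem 5.8]{Scholzecond}, not a consequence of compact projectivity). Two points in the key claim deserve to be made explicit. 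First, to see that $W_0=\im(\phi_0)$ is the topological quotient of $\prod_I\cl O_K$ by a \emph{closed} submodule, observe that $\ker(\phi_0)=\prod_I\cl O_K\times_V\{0\}$ is quasi-compact precisely because $V$ is quasi-separated; this is the only place quasi-separatedness of $V$ enters, and it is what makes $W_0$ qcqs and hence a profinite $\cl O_K$-module. Second, the duality step tacitly uses that Matlis--Pontrjagin duality between profinite and discrete torsion $\cl O_K$-modules is exact (so injectivity of $\varpi$ on $W_0$ dualizes to divisibility of $W_0^\vee$) together with the structure theorem that a torsion divisible module over a discrete valuation ring is a direct sum of copies of $K/\cl O_K$; with those two standard facts quoted, the claim and hence the proposition follow, the final passage from $W_0\cong\prod_J\cl O_K$ to $\im(\phi)=\bigcup_n\varpi^{-n}W_0\cong(\prod_J\cl O_K)[1/\varpi]$ being immediate since $\varpi$ acts invertibly on $V$.
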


 The proof is based on the following lemma.
 
 \begin{lemma}[Smith spaces, {\cite{CS}}]\label{surprise}
  Let $V\in \Mod_F^{\ssolid}$. The following properties are equivalent.
  \begin{enumerate}[(i)]
   \item\label{surprise:1} The solid $F$-vector space $V$ is a Smith space, i.e. it is quasi-separated, and there exists a quasi-compact $\cl O_F$-submodule $M\subset V$ such that $V=M[1/\varpi]$.
   \item\label{surprise:2} $V\cong (\prod_I\cl O_F)[1/\varpi]$, for some set $I$.
  \end{enumerate}
 Moreover, the class of such solid $F$-vector spaces, satisfying equivalently \listref{surprise:1} or \listref{surprise:2}, is stable under extensions, closed subobjects, and quotient by closed subobjects.
 \end{lemma}
\begin{proof}
 We observe that, by Proposition \ref{qcqs}\listref{qcqs:1}, for any set $I$, the $\cl O_F$-module $\prod_I\cl O_F$ is a quasi-compact quasi-separated condensed set, and we note that $$(\prod\nolimits_I\cl O_F)[1/\varpi]=\varinjlim(\prod\nolimits_I\cl O_F\overset{\varpi}{\to}\prod\nolimits_I\cl O_F\overset{\varpi}{\to}\cdots)$$ is a filtered colimit of quasi-separated condensed sets along injections, hence it is quasi-separated. This shows that \listref{surprise:2} implies \listref{surprise:1}. 
 
 Conversely, recalling again Proposition \ref{qcqs}\listref{qcqs:1}, it suffices to show that, given a $\varpi$-torsion-free, compact, Hausdorff, topological $\cl O_F$-module $M_{\ttop}$, we have $M:=\underline{M_{\ttop}}\cong\prod_I\cl O_F$, as condensed $\cl O_F$-modules, for some set $I$. For this, we observe that by Lemma \ref{pontt}\listref{pontt:2} the Pontrjagin dual $\Dd(M_{\ttop})$ is a discrete $\varpi$-power torsion topological $\cl O_F$-module. By Pontrjagin duality, Lemma \ref{pontt}\listref{pontt:3}, we deduce that $M_{\ttop}\cong \Dd(\Dd(M_{\ttop}))$ is a profinite abelian group; in particular, $M=\underline{M_{\ttop}}$ is a solid abelian group. Moreover, denoting by $k=\cl O_F/\varpi$ the residue field of $F$, and using that any (discrete) $k$-vector space is a free $k$-module, we have that $$M_{\ttop}/\varpi\cong\Dd(\Dd(M_{\ttop})/\varpi)\cong \prod_I k$$ for some set $I$ (here, for the first isomorphism, we uses that, for a locally compact abelian group $A$, we have that $\Ext_{\LCA}^1(A, \Tt)=0$, \cite[Proposition 4.14, (vii)]{HS}). Then, using that the object $\prod_I \cl O_F$ is projective in the category of solid $\cl O_F$-modules, we can complete the following diagram 
  $$
   \begin{tikzcd}
                                                            & M \arrow[d, twoheadrightarrow, "\pi"] \\
   \prod_I \cl O_F \arrow[r]\arrow[ur, dashrightarrow, "f"]  & \prod_I k
  \end{tikzcd}
  $$
  where the morphism $\pi$ is induced by the quotient map $M\to M/\varpi$.\footnote{Here, we are implicitly using that $0\to \varpi M_{\topp}\to M_{\topp}\to M_{\topp}/\varpi\to 0$ is a  strictly exact sequence of locally compact abelian groups, so it remains exact after applying the functor $T\mapsto \underline{T}$ (see \cite[page 26]{Scholzecond}).}  Now, since both $\prod_I \cl O_F$ and $M$ are derived $\varpi$-complete (for the latter we use that $M$ is $\varpi$-torsion-free), and the map $f$ is an isomorphism mod $\varpi$, by the derived Nakayama lemma we conclude that $f$ is an isomorphism, as we wanted.
  
  Regarding the last statement: the stability under extensions follows from the property \listref{surprise:2} (using that such extensions are split by the projectivity of the objects $\prod_I\cl O_F$ for varying sets $I$), and the stability under closed subobjects and quotients by closed subobjects follows from the property \listref{surprise:1}.
\end{proof}

 \begin{proof}[Proof of Proposition \ref{pontrfun}]
  Let $V$ be a quasi-separated solid $F$-vector space. By Proposition \ref{Kanalytic}\listref{Kanalytic:4}, we have that $V$ is the filtered colimit of the images of the maps $(\prod_I \cl O_F)[1/\varpi]\to V$. Now, we observe that given a map $f:(\prod_I \cl O_F)[1/\varpi]\to V$, since $V$ is assumed to be quasi-separated, we have that $\ker(f)=f^{-1}(0)$ is a closed subobject of the source. Then, by Lemma \ref{surprise}, the image of $f$ is isomorphic to $(\prod_I \cl O_F)[1/\varpi]/\ker(f)\cong (\prod_J\cl O_F)[1/\varpi]$, for some set $J$.
  
  For the converse, we use again Lemma \ref{surprise}: we note that the category of Smith spaces is filtered, and, since Smith spaces are quasi-separated, by Remark \ref{ovoo} any solid $F$-vector space that is the filtered union of its subobjects that are Smith spaces is quasi-separated.  
 \end{proof}

 We are ready to show the flatness of quasi-separated solid $F$-vector spaces, as promised.
 
 We recall from Definition \ref{platit} that we say that an object $V\in \Mod_F^{\ssolid}$ is \textit{flat} if, for all $W\in \Mod_F^{\ssolid}$,  we have that $V\dsolid_F W$ is concentrated in degree 0.

 \begin{prop}[{\cite{CS}}, cf. {\cite[Theorem 3.14]{CScomplex}}]\label{proj-flat}
  For any set $I$, the solid $F$-vector space $(\prod_I \cl O_F)[1/\varpi]$ is flat.
 \end{prop}
 \begin{proof}
  We need to show that, for all $W\in \Mod_F^{\ssolid}$, we have that $(\prod_I \cl O_F)[1/\varpi]\dsolid_F W$ is concentrated in degree 0. We note that $W$ admits a resolution
  \begin{equation}\label{hio}
   0\to W''\to W'\to W\to 0
  \end{equation}
  with $W'$ and $W''$ quasi-separated solid $F$-vector spaces: by Proposition \ref{Kanalytic}\listref{Kanalytic:4} we can take $W'$ a direct sum of objects of the form $(\prod_J \cl O_F)[1/\varpi]$, and $W''=\ker(W'\to W)$. Tensoring (\ref{hio}) by $(\prod_I \cl O_F)[1/\varpi]\dsolid_F-$, the statement follows from Proposition \ref{pontrfun} and Proposition \ref{Kanalytic}\listref{Kanalytic:5}, using that the latter tensor product commutes with filtered colimits recalling that the abelian category $\Mod_F^{\ssolid}$ satisfies Grothendieck's axiom (AB5).
 \end{proof}

 \begin{cor}[{\cite{CS}}]\label{solidqs}
  Any quasi-separated solid $F$-vector space is flat.
 \end{cor}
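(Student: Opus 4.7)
The plan is to combine Propositions \ref{pontrfun} and \ref{proj-flat} via a direct filtered colimit argument, using that the tensor product $\dsolid_K$ commutes with colimits and that filtered colimits are exact in $\Vect_K^{\ssolid}$ (which is a Grothendieck abelian category, cf.\ Proposition \ref{Kanalytic}).

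Concretely, let $V\in \Vect_K^{\ssolid}$ be quasi-separated. By Proposition \ref{pontrfun}, I would write
\[
V = \varinjlim_{i\in \cl I} V_i
\]
as a filtered union of subobjects $V_i \hookrightarrow V$ with each $V_i \cong (\prod_{I_i}\cl O_K)[1/\varpi]$ for some set $I_i$. Given any $W\in \Vect_K^{\ssolid}$, I would compute
\[
V\dsolid_K W \;\simeq\; \bigl(\varinjlim_{i\in \cl I} V_i\bigr)\dsolid_K W \;\simeq\; \varinjlim_{i\in \cl I}\bigl(V_i\dsolid_K W\bigr),
\]
using that $\dsolid_K$ is a left adjoint (Proposition \ref{Kanalytic}\listref{Kanalytic:3}) and hence commutes with all colimits. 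By Proposition \ref{proj-flat}, each term $V_i\dsolid_K W$ is concentrated in degree $0$. Since filtered colimits in $\Vect_K^{\ssolid}$ are exact, taking cohomology commutes with $\varinjlim_{i\in \cl I}$, so $H^j(V\dsolid_K W) = \varinjlim_i H^j(V_i\dsolid_K W) = 0$ for all $j\neq 0$. This shows $V\dsolid_K W$ is concentrated in degree $0$, i.e.\ $V$ is flat in the sense of Definition \ref{flatt}.

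There is no serious obstacle here once Propositions \ref{pontrfun} and \ref{proj-flat} are granted; the only mild point to verify is the exactness of filtered colimits in $\Vect_K^{\ssolid}$ (equivalently, the validity of axiom (AB5)), which is built into the fact that $\Vect_K^{\ssolid}$ is a Grothendieck abelian category with compact projective generators $\cl M_K[S]$ (Proposition \ref{Kanalytic}\listref{Kanalytic:2}). The genuine mathematical content of the corollary lies entirely in Proposition \ref{proj-flat} (flatness of the generators $(\prod_I\cl O_K)[1/\varpi]$); once this is in hand, the quasi-separated case follows by the structural description of Proposition \ref{pontrfun} together with the formal colimit compatibility above.
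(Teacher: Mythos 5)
Your argument is correct and is exactly the paper's proof: the paper likewise combines Proposition \ref{pontrfun} and Proposition \ref{proj-flat}, using that $\Vect_K^{\ssolid}$ satisfies (AB5) so that the filtered colimit of the flat subobjects $(\prod_{I_i}\cl O_K)[1/\varpi]$ remains flat. No gaps.
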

 \begin{proof}
  Recalling that the abelian category $\Mod_F^{\ssolid}$ satisfies Grothendieck's axiom (AB5), the statement follows from Proposition \ref{pontrfun} and  Proposition \ref{proj-flat}.
 \end{proof}

 As a consequence of Proposition \ref{proj-flat}, we also  have the following result.
 
  \begin{prop}\label{solidA}
  Let $A$ be a solid $F$-algebra. The pre-analytic ring $(A, \Zz)_{\solidif}:=(A, \cl M_A)$ with underlying condensed ring $A$, and functor of measures
  $$\cl M_A[-]:\ExtrDisc\to \Mod_A^{\cond}: S\mapsto A[S]^{\solidif}$$
  is analytic. The category $\Mod_A^{\ssolid}:=\Mod_{(A, \Zz)_{\solidif}}^{\cond}$ is the category of $A$-modules in $\Solid$. If $A$ is commutative, we write $\solid_A$ for the symmetric monoidal tensor product $\otimes_{(A, \Zz)_{\solidif}}$.
 \end{prop}
  \begin{proof}
   By Lemma \ref{basiclemma}, it suffices to show that $\cl M_{\Zz}[S]\dsolid_{\Zz}A=(\cl M_{\Zz}[S]\dsolid_{\Zz}F)\dsolid_{F}A$ is concentrated in degree 0,  for all extremally disconnected sets $S$. By the proof of Proposition \ref{Kanalytic}, we have that $\cl M_{\Zz}[S]\dsolid_{\Zz}F\simeq (\prod_I \cl O_F)[1/\varpi]$ for some set $I$. Then, the statement follows from Proposition \ref{proj-flat}.
  \end{proof}

 \subsection{Locally convex vector spaces in the condensed world}\label{giuo}
 
 Next, we study some familiar objects of the classical theory of locally convex $F$-vector spaces, from the point of view of condensed mathematics. \medskip
 
  Let us begin with the following crucial observation, that we formulate over a ground field more general that $F$, for future reference.
 
  \begin{lemma}\label{crucial}
  Let $L$ be a complete discretely valued non-archimedean field. Let $V$ be a $L$-Banach space. Then, there exists a profinite set $S$, and an isomorphism of condensed $L$-vector spaces
  $$\underline{V}\cong \underline{\Hom}(\Zz[S], L).$$
 \end{lemma}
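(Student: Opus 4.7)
The plan is to exhibit an isomorphism $V \cong \mathscr{C}^0(S, K)$ of topological $K$-vector spaces for an appropriate profinite set $S$; applying the functor $T \mapsto \underline{T}$ (which is fully faithful on T1 topological spaces, cf.\ Remark \ref{setcut}) then gives $\underline{V} \cong \underline{\mathscr{C}^0(S, K)}$, and it remains only to identify the right-hand side with $\underline{\Hom}(\Zz[S], K)$ via the exponential law for continuous functions.

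For the first step, I would invoke the classical orthonormal basis theorem for Banach spaces over a discretely valued complete non-archimedean field. Possibly after replacing the norm on $V$ by an equivalent one taking values in $|K^\times| \cup \{0\}$, the unit ball $V^\circ := \{v \in V : |v| \le 1\}$ is a $\varpi$-adically complete, $\varpi$-torsion-free $\cl O_K$-module, and its reduction $V^\circ / \varpi V^\circ$ is a $k$-vector space. Choosing a $k$-basis $\{\bar e_i\}_{i \in I}$ and lifting it to elements $e_i \in V^\circ$, and letting $c_0(I, \cl O_K)$ denote the $\varpi$-adic completion of $\bigoplus_I \cl O_K$, a Nakayama-type argument (both sides are $\varpi$-adically complete and $\varpi$-torsion-free, and the map is an isomorphism modulo $\varpi$) shows that $c_0(I, \cl O_K) \to V^\circ$, $(a_i) \mapsto \sum_i a_i e_i$, is an isomorphism of $\cl O_K$-modules. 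Inverting $\varpi$ gives $V \cong c_0(I, K)$ as topological $K$-vector spaces.

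Next I would identify $c_0(I, K)$ with $\mathscr{C}^0(S, K)$ for a profinite $S$. If $I$ is finite, take $S = I$ with the discrete topology. If $I$ is infinite, let $S = I \sqcup \{\infty\}$ be the Alexandrov one-point compactification of the discrete space $I$, which is compact, Hausdorff and totally disconnected, hence profinite; decomposing any $f \in \mathscr{C}^0(S, K)$ as $f(\infty) + (f - f(\infty))$ yields $\mathscr{C}^0(S, K) \cong K \oplus c_0(I, K) \cong c_0(I \sqcup \{\infty\}, K) \cong c_0(I, K)$, where the last step uses any bijection $I \cong I \sqcup \{\infty\}$ (available since $I$ is infinite).

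Finally, for any profinite set $T$, the exponential law for continuous functions (valid because $S$ is compact Hausdorff) gives $\underline{\mathscr{C}^0(S, K)}(T) = \mathscr{C}^0(T, \mathscr{C}^0(S, K)) = \mathscr{C}^0(S \times T, K) = \underline{K}(S \times T)$, while the tensor-hom adjunction together with $\Zz[S] \otimes_{\Zz} \Zz[T] = \Zz[S \times T]$ gives $\underline{\Hom}(\Zz[S], K)(T) = \Hom_{\CondAb}(\Zz[S \times T], \underline{K}) = \underline{K}(S \times T)$, naturally in $T$, whence the desired identification. The main obstacle is the orthonormal basis step: although entirely classical in $p$-adic functional analysis, it requires some care with the rescaling of the norm so that $V^\circ$ is a well-behaved $\cl O_K$-module and with the application of the Nakayama-type argument to a non-finitely-generated $\cl O_K$-module, for which $\varpi$-adic completeness is the crucial ingredient.
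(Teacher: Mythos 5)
Your proof is correct and follows the same route as the paper: reduce to an isomorphism of topological $K$-vector spaces $V\cong \mathscr{C}^0(S,K)$ and then apply the exponential law $\mathscr{C}^0(T,\mathscr{C}^0(S,K))\cong\mathscr{C}^0(S\times T,K)$ together with the free--forgetful and tensor--hom adjunctions. The only difference is that where the paper cites the literature for the existence of an orthonormal basis and for the identification of $c_0(I,K)$ with $\mathscr{C}^0(S,K)$, you supply self-contained proofs (the unit-ball/Nakayama argument and the one-point compactification), which are exactly the standard arguments behind those citations.
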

 \begin{proof}
  Since the valuation of the field $L$ is discrete, by \cite[Theorem 2.5.4]{Garcia}, the $L$-Banach space $V$ admits a basis. Therefore, by \cite[Proposition 4.6]{PG}, there exists a profinite set $S$ such that $V$ is isomorphic to the $L$-Banach space $\mathscr{C}^0(S, L)$, endowed with the sup-norm. 
  Then, we have $\underline V\cong \underline{\mathscr{C}^0(S, L)}\cong \underline{\Hom}(\Zz[S], L)$, recalling that, for any profinite set $S'$,
  $$\mathscr{C}^0(S', \mathscr{C}^0(S,L))\cong \mathscr{C}^0(S'\times S, L)$$ (see \cite[Theorem 10.5.6]{Garcia}).
 \end{proof}

 \begin{prop}[\cite{CS}]\label{corbello}
  Let $V$ be a complete locally convex $F$-vector space. Then, $\underline{V}$ is a quasi-separated solid $F$-vector space.
 \end{prop}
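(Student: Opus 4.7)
The plan is to reduce the statement to the Banach case via a limit presentation, and then invoke Lemma~\ref{crucial} together with general closure properties.

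First, I would write $V$ as a cofiltered limit of $K$-Banach spaces. Concretely, if $\{|\cdot|_i\}_{i\in \mathcal I}$ is a defining family of continuous seminorms on $V$, indexed by the directed set of finite subsets of seminorms with $|\cdot|_i = \max_{j\in i} |\cdot|_j$, then one obtains a cofiltered inverse system of $K$-Banach spaces $V_i := \widehat{V/\ker|\cdot|_i}$ with $V \cong \varprojlim_{\mathcal I} V_i$ as complete locally convex $K$-vector spaces. Since the functor $T\mapsto \underline{T}$ from T1 topological $K$-vector spaces to $\Vect_K^{\cond}$ preserves all limits (it is a right adjoint when restricted to T1 spaces, and preserves arbitrary limits of T1 spaces since these are computed on underlying sets with the initial topology), this gives an isomorphism $\underline{V} \cong \varprojlim_{\mathcal I} \underline{V_i}$ in $\Vect_K^{\cond}$.

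Next, I would establish the Banach case: for any $K$-Banach space $W$, the condensed $K$-vector space $\underline{W}$ is quasi-separated and solid. Solidity follows from Lemma~\ref{crucial}, which gives an isomorphism $\underline{W} \cong \underline{\Hom}(\Zz[S], K)$ for some profinite set $S$. Since $K$ is solid as a condensed abelian group (being the filtered colimit $\varinjlim_n \varpi^{-n}\cl O_K$ of the profinite, hence solid, abelian groups $\varpi^{-n}\cl O_K$), and the internal Hom into a solid module is solid, the object $\underline{\Hom}(\Zz[S], K)$ lies in $\Vect_K^{\ssolid}$. For quasi-separatedness, it suffices to recall that $\underline{T}$ is a quasi-separated condensed set whenever $T$ is a Hausdorff topological space (\cite[Example~1.5, Proposition~1.7]{Scholzecond}), which applies to any Banach space.

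Finally, I would conclude by closure under limits. The full subcategory $\Vect_K^{\ssolid}\subset \Vect_K^{\cond}$ is stable under all limits by Proposition~\ref{Kanalytic}\listref{Kanalytic:2}, and the full subcategory of quasi-separated objects in $\CondAb$ (equivalently, in $\Vect_K^{\cond}$) is also stable under all limits, as quasi-separatedness is defined by a limit condition on the values of the sheaf on profinite sets. Hence $\underline{V} = \varprojlim_{\mathcal I} \underline{V_i}$ is a limit of quasi-separated solid $K$-vector spaces, and therefore itself a quasi-separated solid $K$-vector space.

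The main conceptual point, and the only genuine input beyond formal manipulation, is Lemma~\ref{crucial}, which furnishes a concrete product-like description of $\underline{W}$ for a Banach space $W$; everything else is soft, relying on preservation of limits by the relevant functors and on the standard fact that Hausdorff topological spaces give quasi-separated condensed sets.
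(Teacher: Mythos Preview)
Your proof is correct and follows essentially the same route as the paper: reduce to the Banach case via the cofiltered-limit presentation of a complete locally convex space, use Lemma~\ref{crucial} together with the observation that $\underline{\Hom}(\Zz[S],K)$ is solid because $K$ is, invoke Hausdorffness for quasi-separatedness, and conclude by stability of both properties under limits. The only cosmetic difference is that the paper spells out the adjunction identity showing $\underline{\Hom}(\Zz[S],K)$ is solid, whereas you appeal to the general fact that internal Hom into a solid object is solid; these are the same argument.
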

 \begin{proof}
  Recall that any complete locally convex $F$-vector space is isomorphic to a cofiltered limit of $F$-Banach spaces, in the category of topological $F$-vector spaces (cf. \cite[Chapter II, \S 5.4]{SWolff}). Then, since the subcategory of quasi-separated solid $F$-vector spaces in $\Mod_F^{\cond}$ is stable under limits, we can reduce to the case that $V$ is a $F$-Banach space. In the latter case, $\underline{V}$ is quasi-separated by Proposition \ref{qcqs}\listref{qcqs:2}. 
  
  To show that $\underline{V}$ is a solid $F$-vector space, by Lemma \ref{crucial}, we can suppose $\underline{V}=\underline{\Hom}(\Zz[S], F)$, with $S$ a profinite set. Since a solid $F$-vector space is a $F$-module in $\Solid$, we need to prove that $\underline{\Hom}(\Zz[S], F)$ is a solid abelian group. This follows formally from the fact that $F$ is a solid abelian group: in fact, we have
  $$\underline{\Hom}(\Zz[S], F)\cong \underline{\Hom}(\Zz[S]^{\solidif}, F)\cong \underline{\Hom}_{\Solid}(\Zz[S]^{\solidif}, F)\in \Solid$$
  where for the second isomorphism we used Lemma \ref{intsolid}.
 \end{proof}

  \subsubsection{\normalfont{\textbf{Fréchet spaces}}}
  Now, we focus our attention on the category of \textit{$F$-Fréchet spaces}, i.e. the category of complete metrizable locally convex $F$-vector spaces. 
 
 \begin{rem}\label{frechetsepare}
  Any $F$-Fréchet space is compactly generated, being metrizable,\footnote{More precisely, it is also $\kappa$-compactly generated, for any uncountable cardinal $\kappa$ (see \cite[Remark 1.6]{Scholzecond}).} hence, by \cite[Proposition 1.7, Proposition 2.15]{Scholzecond}, the category of $F$-Fréchet spaces embeds fully faithfully into $\Mod_F^{\cond}$, via the functor $V\mapsto \underline{V}$. 
 \end{rem}

 The next result is often useful when passing from $F$-Fréchet spaces to the associated condensed $F$-vector spaces.
 
 \begin{lemma}\label{acyclic}
   The functor $V\mapsto \underline{V}$ sends acyclic complexes of $F$-Fréchet spaces to acylic complexes of condensed $F$-vector spaces.
  \end{lemma}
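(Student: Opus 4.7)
The plan is to reduce the statement first to short exact sequences of $K$-Fréchet spaces, and then, via a countable presentation as an inverse limit, to the case of $K$-Banach spaces, where the spherical completeness of the local field $K$ provides continuous linear sections.

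First I would reduce to short exact sequences. Given an acyclic complex $V^\bullet$ of $K$-Fréchet spaces, the open mapping theorem for $K$-Fréchet spaces implies that each differential $d^i: V^i \to V^{i+1}$ has closed image equal to $\ker d^{i+1}$, and that the induced map $V^i/\ker d^i \to \im d^i$ is a topological isomorphism of Fréchet spaces. Setting $Z^i := \ker d^i$, itself a Fréchet space being closed in $V^i$, the complex thus decomposes into short exact sequences of $K$-Fréchet spaces
\[
0 \to Z^i \to V^i \to Z^{i+1} \to 0.
\]
Once we know that each such short exact sequence induces a short exact sequence in $\Vect_K^{\cond}$, splicing them back together yields the desired acyclicity of $\underline{V^\bullet}$.

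Next, for a fixed short exact sequence $0 \to A \to B \to C \to 0$ of $K$-Fréchet spaces, the functor $V \mapsto \underline{V}$ is limit-preserving on compactly generated topological $K$-vector spaces (Remark \ref{frechetsepare}), and $A \subset B$ is a closed subspace whose subspace topology agrees with its Fréchet structure; this gives $\underline{A} = \ker(\underline{B} \to \underline{C})$, handling exactness at $\underline{A}$ and $\underline{B}$. The crux is surjectivity: we must show $\underline{B} \to \underline{C}$ is an epimorphism in $\Vect_K^{\cond}$, which unwinds to proving that for every extremally disconnected set $S$ and every continuous map $f: S \to C$, there exists a continuous lift $\widetilde{f}: S \to B$.

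Finally, to produce the lift, I would present $B \to C$ (using Remark \ref{frechetcount}) as the inverse limit of continuous surjections $B_n \twoheadrightarrow C_n$ of $K$-Banach spaces along a countable system of dense-image (hence Mittag-Leffler) transition maps, and write $A_n := \ker(B_n \to C_n)$. Since $K$ is a local field, and hence spherically complete, the non-archimedean Hahn-Banach (Ingleton) theorem yields continuous linear sections $\sigma_n: C_n \to B_n$, compatibly with the identification of each $B_n$ as $\mathscr{C}^0(T_n, K)$ for a profinite set $T_n$ provided by Lemma \ref{crucial}. The naive lifts $\sigma_n \circ f_n: S \to B_n$, where $f_n$ is the composite $S \to C \to C_n$, are not a priori compatible with the transition maps $B_{n+1} \to B_n$, but the discrepancies lie in $A_n$, and the Mittag-Leffler property of $\{A_n\}$ (inherited from that of $\{B_n\}$ and $\{C_n\}$) permits an inductive correction to assemble a coherent family, giving the continuous lift $\widetilde{f}: S \to B = \varprojlim B_n$. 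The main obstacle is precisely this inductive coherence step: the sections $\sigma_n$ are non-canonical, so the key technical content is using Mittag-Leffler to rectify them; every other ingredient is formal once this lifting property is in hand.
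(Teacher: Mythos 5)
Your reduction to strict short exact sequences of Fréchet spaces via the open mapping theorem, and the identification of the real content as surjectivity of $\underline{B}\to\underline{C}$ on extremally disconnected sets, match the paper's first step exactly; but for that surjectivity you take a genuinely different route. The paper stays purely topological: for $\psi\in\mathscr{C}^0(S,C)$ the image $\psi(S)$ is compact, a lemma of Treves (\cite[Lemma 45.1]{Treves}) produces a compact $H\subset B$ mapping onto $\psi(S)$, and the lift exists because extremally disconnected sets are projective among compact Hausdorff spaces. You instead exploit the linear structure: present $B\twoheadrightarrow C$ as $\varprojlim_n(B_n\twoheadrightarrow C_n)$ with $C_n=B_n/A_n$, split each Banach quotient, and remove the incompatibilities by a $\varprojlim^1$-correction in $\mathscr{C}^0(S,A_n)$ — in effect re-proving the special case of the paper's Lemma \ref{condML} for the system $\{A_n\}$, whose transition maps have dense image because $A$ lands densely in each $A_n$ by construction. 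Both arguments work; the paper's is shorter and applies to any open surjection of complete metrizable topological $K$-vector spaces with no linear input, while yours is self-contained non-archimedean Banach-space theory and makes the $R^1\varprojlim$-vanishing mechanism explicit. Two points to repair in the write-up: the continuous sections $\sigma_n$ come not from spherical completeness and Ingleton's theorem (Hahn--Banach only extends functionals) but from the discreteness of the valuation of $K$, which gives every $K$-Banach space an orthogonal basis — the same fact the paper invokes in Lemma \ref{crucial} — and hence makes every $K$-Banach space projective, so Banach quotient maps split continuously; and you should verify that the maps $B_n\to C_n$ in your presentation are genuinely surjective (they are, once $C_n$ is taken to be the quotient of $B_n$ by the closure of the image of $A$, so that $C=\varprojlim_n C_n$ recovers the quotient seminorms).
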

  \begin{proof}
   Note that the kernel of a map of $F$-Fréchet spaces is a closed subspace of the source, hence it is a $F$-Fréchet space. Then, since the functor $V\mapsto \underline{V}$ is left exact, by the open mapping theorem for $F$-Fréchet spaces, \cite[Theorem 3.5.10]{Garcia}, it suffices to prove the following general statement: given an open surjective map $f: V\twoheadrightarrow W$ of complete metrizable topological $F$-vector spaces, then the induced map $\underline{V}\to \underline {W}$ is surjective. 
   
   We need to show that, for $S$ an extremally disconnected set, we have $\mathscr{C}^0(S, V)\twoheadrightarrow\mathscr{C}^0(S, W)$, i.e. given $\psi\in \mathscr{C}^0(S, W)$, there exists $\widetilde{\psi}\in \mathscr{C}^0(S, V)$ making the following diagram commute
   $$
   \begin{tikzcd}
                                                            & V \arrow[d, "f"] \\
   S \arrow[r, "\psi"]\arrow[ur, dashrightarrow, "\widetilde{\psi}"]  & W
  \end{tikzcd}
  $$
  By \cite[Lemma 45.1]{Treves}, since $\psi(S)$ is compact in $W$, it is the image $f(H)$ of a compact subset $H$ of $V$. We conclude by recalling that the extremally disconnected sets are the projective objects of the category of compact Hausdorff topological spaces.
  \end{proof}
  
  In particular, by Lemma \ref{acyclic}, given $V$ a $F$-Fréchet space, and $W\subset V$ a \textit{closed} subspace, we have that $\underline{V}/\underline{W}= \underline{V/W}$. However, in general, given $V$ a topological $F$-vector space, and $W\subset V$ a subspace, then, taking the quotient $V/W$ in the category of topological $F$-vector spaces may cause a high loss of topological information; instead, taking the quotient $\underline{V}/\underline{W}$ in the category of condensed $F$-vector spaces keeps track of the topological information of both $V$ and $W$. This phenomenon is illustrated in the following example.
  
  \begin{example}\label{card}
   Let $V$ be a Hausdorff topological $F$-vector space, having a subspace $W\subset V$ such that the topological $F$-vector quotient space $V/W$ is non-Hausdorff. Recall that this is the case if and only if $W\subset V$ is non-closed, if and only if $V/W$ is not T1. Then, we have that
   $\underline{V}/\underline{W}\neq \underline{V/W}$, since $\underline{V/W}$ is not even a condensed set (see \cite[Warning 2.14]{Scholzecond}).
   
   We note that this is not a pathology of the definition of the category of condensed sets, \cite[Definition 2.11]{Scholzecond}. In fact, let us denote by $T\mapsto \underline{T}_{\kappa}$ the functor from the category of topological spaces to the category of $\kappa$-condensed sets; then, we claim that, given an uncountable strong limit cardinal $\kappa>|V|$, we have that
   $\underline{V}_\kappa/\underline{W}_\kappa\neq \underline{V/W}_\kappa$
   as $\kappa$-condensed sets. In order to show this, we need to find a $\kappa$-small extremally disconnected set $S$ such that the map $\mathscr{C}^0(S, V)\to\mathscr{C}^0(S, V/W)$, induced by the quotient morphism $V\to V/W$, is not surjective. Let $\lambda$ be the cardinality of $V$, let $S_0$ be a discrete set with cardinality $\lambda$, and let $S:=\beta S_0$ be the Stone-\v{C}ech compactification of $S_0$. Note that $|S|=2^{2^\lambda}< \kappa$. Since the (continuous) inclusion map $S_0\to S$ has dense image, a continuous function from $S$ to the Hausdorff space $V$ is determined by its values on $S_0$, in particular the set $\mathscr{C}^0(S, V)$ has cardinality at most $\lambda^\lambda=2^\lambda$. On the other hand, denoting by $\overline W$ the closure of $W$ in $V$, we observe that the subspace $\overline W/W\subseteq V/W$ has the indiscrete topology, therefore, every function $S\to \overline W/W$ is continuous; recalling that $|S|=2^{2^\lambda}$, we deduce that the set $\mathscr{C}^0(S, V/W)$ has cardinality at least $2^{2^{2^\lambda}}$. Hence, there cannot be a surjective map from $\mathscr{C}^0(S, V)$ onto $\mathscr{C}^0(S, V/W)$.
  \end{example}

  Relatedly, in general, the functor $V\mapsto \underline{V}$ does not commute with filtered colimits; however, it does in some special cases.
  
  \begin{lemma}[{\cite[Lemma 4.3.7]{BS}}]\label{bspro}
   Let $T_1\hookrightarrow T_2\hookrightarrow \cdots$ be a countable inductive system of Hausdorff topological spaces whose transition maps are closed immersions, and let $T:= \varinjlim_n T_n$. Then, we have
  $$\underline{T}=\varinjlim\nolimits_n{\underline{T_n}}.$$
  \end{lemma}

 \begin{example}\label{LFspace}
  Let $\{V_n\}$ be a countable direct system of Hausdorff topological $F$-vector spaces, whose transitions maps are closed immersions. Let us define the $F$-vector space $V:=\varinjlim_n V_n$, and suppose that, endowing it with the direct limit topology, $V$ is a topological $F$-vector space (see \cite{Hirai} for a discussion on this condition). Then, by Lemma \ref{bspro}  we have that $\underline{V}=\varinjlim_n \underline{V_n}$ as condensed $F$-vector spaces.
 \end{example}

 Let us also record the following condensed version of the classical topological Mittag-Leffler lemma for $F$-Banach spaces.
 
 \begin{lemma}[{\cite{CS}}]\label{condML}
  Let $\{V_n, f_{nm}\}$ be an inverse system of $F$-Banach spaces indexed by $\Nn$. Suppose that for each $n\in \Nn$, there exists $k\ge n$ such that, for every $m\ge k$, $f_{nm}(V_m)$ is dense in $f_{nk}(V_k)$. Then, for all $j>0$, we have $$R^j\varprojlim\nolimits_n \underline{V_n}=0.$$
 \end{lemma}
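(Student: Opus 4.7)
The plan is to identify the derived functor $R\varprojlim$ explicitly and reduce to the classical topological Mittag-Leffler lemma for $K$-Banach spaces, applied sectionwise over extremally disconnected sets.

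First, since $\Vect_K^{\cond}$ satisfies Grothendieck's axiom (AB4$^*$)—countable (in fact arbitrary) products are exact—the two-term complex
\[
0 \longrightarrow \varprojlim_n \underline{V_n} \longrightarrow \prod_n \underline{V_n} \xrightarrow{\;1-\mathrm{shift}\;} \prod_n \underline{V_n}
\]
computes $R\varprojlim_n \underline{V_n}$, where $\mathrm{shift}$ sends $(x_n)$ to $(f_{n,n+1}(x_{n+1}))$. This immediately gives $R^j\varprojlim_n \underline{V_n} = 0$ for all $j \geq 2$, and reduces the proof to showing that $1 - \mathrm{shift}$ is an epimorphism in $\Vect_K^{\cond}$.

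Next, I would check this epimorphism on sections over extremally disconnected sets $S$. Since $(\prod_n \underline{V_n})(S) = \prod_n \mathscr{C}^0(S, V_n)$, and each $\mathscr{C}^0(S, V_n)$ is itself a $K$-Banach space for the sup-norm, the matter reduces to showing $R^1\varprojlim_n \mathscr{C}^0(S, V_n) = 0$ as abelian groups. This will be a direct application of the classical topological Mittag-Leffler lemma for $K$-Banach spaces (e.g.\ \cite[\S 13.2]{Groth}), provided the inverse system $\{\mathscr{C}^0(S, V_n)\}$ still satisfies the density hypothesis.

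The main (and only non-formal) step is verifying this density condition, which is where the fact that $S$ is extremally disconnected (hence profinite) enters. Given $n$, pick $k \geq n$ as in the hypothesis for $\{V_n\}$; fix $m \geq k$ and $\phi \in \mathscr{C}^0(S, V_k)$. The image $f_{nk}(\phi(S)) \subset f_{nk}(V_k)$ is compact, so given $\varepsilon > 0$ I can cover it by finitely many open balls $B(f_{nm}(y_i), \varepsilon)$ with $y_i \in V_m$, using that $f_{nm}(V_m)$ is dense in $f_{nk}(V_k)$. Since $S$ is profinite, the open cover $\{\phi^{-1}(f_{nk}^{-1}(B(f_{nm}(y_i), \varepsilon)))\}$ of $S$ admits a refinement to a finite clopen partition $S = \bigsqcup_i S_i$; defining $\psi \in \mathscr{C}^0(S, V_m)$ by $\psi|_{S_i} = y_i$ yields $\|f_{nm}\circ\psi - f_{nk}\circ \phi\|_{\sup} < \varepsilon$, which is the required density.

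The subtlety in the argument lies almost entirely in this clopen-partition step: it is what allows one to upgrade a pointwise approximation by elements of $V_m$ to a continuous $V_m$-valued approximation over $S$; every other step is formal manipulation of the two-term $R\varprojlim$ complex and of sections on $\mathrm{ExtrDisc}$.
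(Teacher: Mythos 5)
Your proof is correct and follows essentially the same route as the paper: reduce to showing $R^1\varprojlim_n \mathscr{C}^0(S,V_n)=0$ for each extremally disconnected $S$ (the paper cites \cite[Lemma 3.18]{Scholze} where you use the explicit two-term complex, but these amount to the same reduction), verify the density hypothesis for $\{\mathscr{C}^0(S,V_n)\}$, and invoke the classical topological Mittag-Leffler lemma. Your clopen-partition argument is just an unwinding of the paper's observation that locally constant functions are dense in $\mathscr{C}^0(S,V_n)$ and that composition with $f_{nm}$ carries $\LC(S,V_m)$ onto $\LC(S,f_{nm}(V_m))$.
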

 \begin{proof}
  By \cite[Lemma 3.18]{Scholze}, it suffices to show that, for any $S$ extremally disconnected set, the inverse system of $F$-Banach spaces $\{\mathscr C^0(S, V_n)\}$ (endowed with the sup-norm), satisfies $$R^1\varprojlim\nolimits_n \mathscr C^0(S, V_n)=0.$$
  Let $S$ be an extremally disconnected set, and let us denote by $\psi_{nm}: \mathscr C^0(S, V_m)\to \mathscr C^0(S, V_n)$ the transition map given by the composition with $f_{nm}$. Recall that, for any $n\in \Nn$, the locally constant functions $\LC(S, V_n)$ are dense in the continuous functions $\mathscr C^0(S, V_n)$, and observe that, for any $m\ge n$, we have $\psi_{nm}(\LC(S, V_m))=\LC(S, f_{nm}(V_m))$. We deduce that, for each $n\in \Nn$, there exists $k\ge n$ such that, for every $m\ge k$, $\psi_{nm}(\mathscr C^0(S, V_m))$ is dense in $\psi_{nk}(\mathscr C^0(S, V_k))$. Then, the statement follows from the topological Mittag-Leffler lemma \cite[Remarques 13.2.4]{Groth}.
 \end{proof}
 
 In view of the results above, we can adopt the following terminology.
 
 \begin{df}\label{term}
  An object of the category $\Mod_F^{\cond}$ is called \textit{$F$-Banach/Fréchet space} if it is of the form $\underline{V}$ for a (topological) $F$-Banach/Fréchet space $V$.
 \end{df}

 \subsection{Nuclear modules}\label{nucnuc}
  In this section, we study a useful characterization of the category of \textit{nuclear $F$-vector spaces}, introduced by Clausen--Scholze, and explain some of its consequences. 
  
  \begin{convnot}
   In this section, all condensed rings will be assumed to be commutative, and all analytic rings will be assumed to be commutative and normalized. Given an analytic ring $(A, \cl M)$, for $M\in \Mod_{(A, \cl M)}^{\cond}$ we write $M^{\vee}:=\underline{\Hom}_{(A, \cl M)}(M, A)$ for its \textit{dual}.
  \end{convnot}

  \begin{df}[cf. {\cite[Definition 13.10]{Scholzeanalytic}}]\label{nuc}
  Let $(A, \cl M)$ be an analytic ring.   An object $N\in\Mod_{(A, \cl M)}^{\cond}$ is \textit{nuclear} if, for all $S\in \ExtrDisc$, the natural map
  \begin{equation}\label{aine}
   (\cl M[S]^{\vee}\otimes_{(A, \cl M)}N)(*)\to N(S)
  \end{equation}
  in $\Ab$ is an isomorphism. We denote by $\Mod_{(A, \cl M)}^{\nuc}$ the full subcategory of $\Mod_{(A, \cl M)}^{\cond}$ whose objects are nuclear.
 \end{df}

  \begin{rem}\label{genecol}
   We note that the subcategory $\Mod_{(A, \cl M)}^{\nuc}\subset \Mod_{(A, \cl M)}^{\cond}$ is stable under all colimits, as both the source and the target of (\ref{aine}) commute with colimits in $N$.
  \end{rem}

 In this section, we will be mainly interested in the categorical properties of nuclear modules for analytic rings  over $(F, \Zz)_{\solidif}$.

 \begin{notation}
  Let $A$ be a (commutative) solid $F$-algebra. 
  For the analytic ring $(A, \Zz)_{\solidif}$ from Proposition \ref{solidA}, we write $\solid_A$ for the symmetric monoidal tensor product $\otimes_{(A, \Zz)_{\solidif}}$ and we denote by $\Mod_{A}^{\nuc}:=\Mod_{(A, \Zz)_{\solidif}}^{\nuc}$ the full subcategory of $\Mod_A^{\ssolid}$ of \textit{nuclear (solid) $A$-modules}. In the case $A=F$, we call the objects of $\Mod_{F}^{\nuc}$ \textit{nuclear (solid) $F$-vector spaces}.
 \end{notation}

  We warn the reader that, in the case $A=F$, the definition of \textit{nuclear $F$-vector space} given above is quite different from the one adopted in the classical non-archimedean functional analysis' literature (see e.g. \cite[Chapter IV, \S 19]{Schneider}). In fact, using the latter definition, a $F$-Banach space is nuclear if and only if it is finite-dimensional over $F$, \cite[Chapter IV, \S 19, p. 120]{Schneider}. Instead, adopting Definition \ref{nuc}, any $F$-Banach space is nuclear, as we will see. \medskip
  
  In general, the notion of \textit{nuclearity} is better behaved in the derived setting. However, in the following case of interest it has extremely nice categorical properties.
  
  \begin{theorem}[\cite{CS}]\label{nuclearbanach}\
  Let $A$ be a nuclear solid $F$-algebra.\footnote{Given a solid $F$-algebra $A$ we say that it is \textit{nuclear} if the underlying solid $F$-module is nuclear.} 
  \begin{enumerate}[(i)]
   \item \label{nuclearbanach:1} The subcategory $\Mod_A^{\nuc}\subset \Mod_A^{\ssolid}$ is an abelian category, stable under the tensor product $\solid_A$, finite limits, countable products, and all colimits.
   \item \label{nuclearbanach:2} The category $\Mod_A^{\nuc}$ is generated under colimits by the objects $\underline{\Hom}_A(A[S], A)$, for varying $S$ profinite sets, which are flat for the tensor product $\solid_A$.
  \end{enumerate}
 \end{theorem}

 As we will see, Theorem \ref{nuclearbanach} applies in particular in the case $A$ is a Banach $F$-algebra (Corollary \ref{ova}). We refer the reader to the next subsection for a proof of Theorem \ref{nuclearbanach}.
  
  \subsubsection{\normalfont{\textbf{Trace-class maps}}}
  
 The proof of Theorem \ref{nuclearbanach} will rely crucially on the following notion.

 \begin{df}[cf. {\cite[Definition 13.11]{Scholzeanalytic}, \cite[Definition 8.1]{CScomplex}}]
  Let $(A, \cl M)$ be an analytic ring.
   A map $f:M\to N$ in $\Mod_{(A, \cl M)}^{\cond}$ is called \textit{trace-class} if it lies in the image of the natural map
   $$(M^{\vee}\otimes_{(A, \cl M)}N)(*)\to \Hom_A(M, N).$$
  \end{df}


 \begin{rem}\label{compp}
  It follows formally from the definition above that if $f:M\to N$ is trace-class, and $g:N\to N'$ and $h:M'\to M$ are arbitrary maps in $\Mod_{(A, \cl M)}^{\cond}$, then $g\circ f\circ h$ is trace-class.
 \end{rem}

 \begin{df}[{\cite{CS}}]\label{tracedef}
  Let $(A, \cl M)$ be an analytic ring. We define the \textit{trace-class functor}
   $$(-)^{\trace}:\Mod_{(A, \cl M)}^{\cond}\to \Mod_A^{\cond}: M \mapsto M^{\trace}$$
 where  $M^{\trace}$ is defined on $S$-valued points, for $S\in \ExtrDisc$, as
  $$M^{\trace}(S)=(\cl M[S]^{\vee}\otimes_{(A, \cl M)}M)(*).$$
 \end{df}

 Next, we study some elementary properties of the trace-class functor and its relation with nuclear modules and trace-class maps.

 \begin{lemma}\label{incc}   Let $(A, \cl M)$ be an analytic ring.
  \begin{enumerate}[(i)]
  \item\label{incc:0} The trace-class functor $(-)^{\trace}:\Mod_{(A, \cl M)}^{\cond}\to \Mod_A^{\cond}$ takes values in $\Mod_{(A, \cl M)}^{\cond}$. 
  \item\label{incc:1} A map $f:P\to M$ in $\Mod_{(A, \cl M)}^{\cond}$ with $P$ compact object is trace-class if and only if it factors through $M^{\trace}$.
  \item\label{incc:2} An object $M\in \Mod_{(A, \cl M)}^{\cond}$ is nuclear if and only if the natural map $M^{\trace}\to M$ is an isomorphism.
  \end{enumerate}
 \end{lemma}
 \begin{proof}
   For part (\ref{incc:0}), we need to check that for any $M\in \Mod_{(A, \cl M)}^{\cond}$ we have $M^{\trace}\in \Mod_{(A, \cl M)}^{\cond}$. Denoting by $\Hom_{(A, \cl M)}(-, -)$ the Hom functor on $\Mod_{(A, \cl M)}^{\cond}$, we define the object $M^{\trace '}\in \Mod_{(A, \cl M)}^{\cond}$, via Yoneda's lemma, as
  $$\Hom_{(A, \cl M)}(\cl M[S], M^{\trace '})=(\cl M[S]^{\vee}\otimes_{(A, \cl M)}M)(*).$$
  for varying $S\in \ExtrDisc$. We claim that we have a natural isomorphism $M^{\trace}\cong M^{\trace '}$: in fact, we have  natural isomorphisms
  $$M^{\trace}(S)=  \Hom_{(A, \cl M)}(\cl M[S], M^{\trace '})\cong \Hom_{A}(\cl M[S], M^{\trace '})\cong \Hom_A(A[S], M^{\trace '})=M^{\trace '}(S)$$
  for all $S\in \ExtrDisc$, using that $M^{\trace '}$ is $(A, \cl M)$-complete.
  
  For part (\ref{incc:1}), the case $P=\cl M[S]$, with $S$ extremally disconnected set, is clear as $f$ is trace-class if and only if it lies in the image of the natural map
  $$\Hom_A(\cl M[S], M^{\trace})\to \Hom_A(\cl M[S], M)$$
  i.e. if and only if it factors through $M^{\trace}$. For a general $P$ compact object of $\Mod_{(A, \cl M)}^{\cond}$, we can reduce to the previous case writing $P$ as a retract of $\cl M[S]$ for some $S$ extremally disconnected set.
  
  Lastly, part (\ref{incc:2}) follows immediately from the definitions.
  
 \end{proof}

  We learned the following result from Andreychev.
 
 \begin{prop}[\cite{CS}, \cite{AndrPhd}]\label{incu}
  Let $(A, \cl M)$ be an analytic ring.  For any $M\in \Mod_{(A, \cl M)}^{\cond}$, the object $M^{\trace}$ (and in particular any nuclear object in $\Mod_{(A, \cl M)}^{\cond}$) can be written as a colimit of objects of the form $\cl M[S]^{\vee}$ for $S$ extremally disconnected sets.
 \end{prop}
 \begin{proof}
 Recalling that $\Mod_{(A, \cl M)}^{\cond}$ is generated under colimits by $\cl M[T]$ for varying $T\in \ExtrDisc$, we have
 $$M^{\trace}=\underset{\cl M[T]\to M^{\trace}}{\colim} \cl M[T]=\underset{A\to \cl M[T]^{\vee}\otimes_{(A, \cl M)}M}{\colim}\cl M[T]$$
 where $T$ runs over $\kappa$-small extremally disconnected sets, for a chosen $\kappa$ uncountable strong limit cardinal such that $M^{\trace}$ is determined by its values on $\kappa$-small $T$. Then, for any $T$ as before, writing $$\cl M[T]^{\vee}=\underset{\cl M[S]\to \cl M[T]^{\vee}}{\colim}\cl M[S]$$
 where $S$ runs over $\kappa'$-small extremally disconnected sets, for $\kappa'$ an uncountable strong limit cardinal (depending on $\kappa$ and $A$) such that each $\cl M[T]^{\vee}$ is determined by its values on $\kappa'$-small $S$, we have
 $$M^{\trace}=\underset{A\to \cl M[S]\otimes_{(A, \cl M)}M}{\colim}\;\;\underset{\cl M[T]\to \cl M[S]^{\vee}}{\colim}  \cl M[T]=\underset{A\to \cl M[S]\otimes_{(A, \cl M)}M}{\colim} \cl M[S]^{\vee}$$
 which implies the statement.
 \end{proof}

 Now, we focus on the proof of Theorem \ref{nuclearbanach} in the case $A=F$. As a first step, we show that the objects $\cl M_F[S]^{\vee}\cong\underline{\Hom}(\Zz[S], F)$, for varying $S$ profinite sets, are nuclear $F$-vector spaces.

 \begin{prop}[{\cite{CS}}]\label{solidbanach}
 For all profinite sets $S$ and $S'$, the natural map
  \begin{equation}\label{SS'}
     \underline{\Hom}(\Zz[S], F)\solid_F \underline{\Hom}(\Zz[S'], F)\to \underline{\Hom}(\Zz[S\times S'], F)
  \end{equation}
  is an isomorphism. In particular, $\underline{\Hom}(\Zz[S], F)$ lies in $\Mod_{F}^{\nuc}$.
 \end{prop}

 Before proving Proposition \ref{solidbanach}, we collect an immediate corollary.
 
 \begin{cor}\label{ova}
  Any $F$-Banach space lies in $\Mod_{F}^{\nuc}$.
 \end{cor}
 \begin{proof}
  Combine Lemma \ref{crucial} and Proposition \ref{solidbanach}.
 \end{proof}

 To prove Proposition \ref{solidbanach}, we will rely crucially on Lemma \ref{dirty:1} and Lemma \ref{dirty:2} below, which combined express the solid $F$-vector space $\underline{\Hom}(\Zz[S], F)$ as a filtered colimit of objects isomorphic to $(\prod_I \cl O_F)[1/\varpi]$, on which we know how to compute the tensor product $\solid_F$ thanks to Proposition \ref{Kanalytic}\listref{Kanalytic:5}.
 
 \begin{notation}
  In the following, for an $\cl O_F$-module $M$ in condensed abelian groups, we denote by $M^{\wedge}_{\varpi}:=\varprojlim_{n\in \Nn} M/\varpi^n$ its \textit{$\varpi$-adic completion}. 
  
  Given a set $I$, we define the condensed $F$-vector space $$V_I:=(\bigoplus\nolimits_I \cl O_F)^{\wedge}_{\varpi}[1/\varpi].$$
 \end{notation}
 
 \begin{lemma}\label{dirty:1}
   Let $S$ be a profinite set, and fix an isomorphism $\Zz[S]^{\solidif} \cong \prod_I \Zz$;\footnote{Here, we use \cite[Corollary 5.5]{Scholzecond}.} then, we have an isomorphism
   \begin{equation}\label{feltr}
    \underline{\Hom}(\Zz[S], F)\cong V_I.
   \end{equation}
  Conversely, for any set $I$ there exist a profinite set $S$ and an isomorphism (\ref{feltr}); in particular, $V_I$ is a $F$-Banach space.
 \end{lemma}
 \begin{proof}
  For the first assertion, we recall that, by Proposition \ref{intproj}, $\Zz[S]^{\solidif}$ is an internally compact object of $\Solid$. Then, we have an isomorphism
  $$\underline{\Hom}(\Zz[S], F)\cong\underline{\Hom}(\Zz[S]^{\solidif}, F)\cong(\varprojlim_{n\in \Nn}\underline{\Hom}(\Zz[S]^{\solidif}, \cl O_F/\varpi^n))[1/\varpi].$$ Next, we observe that, for any discrete condensed abelian group $M$ (such as $\cl O_F/\varpi^n$),  we have a natural isomorphism
  \begin{equation}\label{ihuy}
   \underline{\Hom}(\Zz[S]^{\solidif}, M)\cong \underline{\Hom}(\Zz[S]^{\solidif}, \Zz)\otimes_\Zz M
  \end{equation}
  since discrete condensed abelian groups are generated under colimits by $\Zz$, and both sides of (\ref{ihuy}) commute with colimits in $M$. Hence, recalling that (by the proof of \cite[Proposition 5.7]{Scholzecond}) we have $\underline{\Hom}(\Zz[S]^{\solidif}, \Zz)\cong\bigoplus_I \Zz$, we conclude that $\underline{\Hom}(\Zz[S], F)\cong (\varprojlim_{n\in \Nn}\bigoplus_I \cl O_F/\varpi^n)[1/\varpi]=V_I$, as desired.
  
  For the last part of the statement, by Lemma \ref{crucial} it suffices to check that $V_I$ is a $F$-Banach space: for this, we observe that the topology on $V_I(*)_{\topp}$ is induced by the norm  $\left\Vert\cdot \right\Vert:V_I(*)\to \Rr_{\ge 0}$ defined by $\left\Vert x \right\Vert:=2^{\nu_{\varpi}(x)}$ where $\nu_{\varpi}(x):=\inf\{n\in \Zz\, :\, \varpi^n x\in (\bigoplus\nolimits_I \cl O_F)^{\wedge}_{\varpi}\}$.
 \end{proof}

 \begin{lemma}[{\cite{CS}}]\label{dirty:2}
  Let $I$ be a set. For a function $f:I\to \Rr_{\ge 0}$, we define the condensed $F$-vector space $$V_f:=(\prod_{i\in I}F_{\le f(i)})[1/\varpi]\subset \prod_{i\in I}F$$
  where, for a real number $c\ge 0$, we denote $F_{\le c}:=\{x\in F: |x|\le c\}$. 
  
  Then, regarding $V_I$ as a subobject of $\prod_{I}F$, we have the following identification
  \begin{equation}\label{VI}
   V_I=\varinjlim_{f:I\to \Rr_{\ge 0},\, f\to 0} V_f
  \end{equation}
  where the colimit runs over the functions $f:I\to \Rr_{\ge 0}$ tending to $0$ (i.e. for every $\varepsilon >0$, the set $\{i\in I: |f(i)|\ge \varepsilon\}$ is finite) partially ordered by the relation of pointwise inequality $f\le g$.
 \end{lemma}
 \begin{proof}
  It suffices to prove (\ref{VI}) on $S$-valued points, for all $S\in \ExtrDisc$, in a natural in $S$ way. Using that $S$ is a compact object of the category of condensed sets, we have that
  $$V_I(S)=\Hom(S, (\bigoplus_I \cl O_F)^{\wedge}_{\varpi}[1/\varpi])=(\varprojlim_{n\in \Nn}\bigoplus_I\Hom(S, \cl O_F/\varpi^n))[1/\varpi]$$ and then
  $$V_I(S)=\{(g_i)_{i\in I} \text{ with } g_i\in \mathscr{C}^0(S, F): \forall \varepsilon>0,\; g_i(S)\subseteq F_{\le \varepsilon} \text{ for all but finitely many } g_i\}$$
  which, in turn, identifies with
  $$\varinjlim_{f:I\to \Rr_{\ge 0},\, f\to 0} \left(\prod_{i\in I}\mathscr{C}^0(S, F_{\le f(i)})\right)[1/\varpi]$$
  thus showing the statement.
 \end{proof}


 \begin{rem}\label{omega}
  We observe that any function $f:I\to \Rr_{\ge 0}$ tending to $0$ has countable support. In particular, by Lemma \ref{dirty:2}, we can write $V_I=(\bigoplus_I \cl O_F)^{\wedge}_{\varpi}[1/\varpi]$ as the $\omega_1$-filtered colimit of its subobjects $V_J\subseteq V_I$, over all countable subsets $J\subseteq I$.\footnote{Here, $\omega_1$ denotes the first uncountable ordinal.}
 \end{rem}

 \begin{proof}[Proof of Proposition \ref{solidbanach}]
 By Lemma \ref{dirty:1}, using that $\Zz[S]^{\solidif} \solid_\Zz \Zz[S']^{\solidif}\cong \Zz[S\times S']^{\solidif}$, we can reduce to showing that, for any sets $I$ and $J$, the natural map $$V_I\solid_F V_J\to V_{I\times J}$$ is an isomorphism. To prove this, we use Lemma \ref{dirty:2}. First we note that, for any functions $f:I\to \Rr_{\ge 0}$ and $g:J\to \Rr_{\ge 0}$, by Proposition \ref{Kanalytic}\listref{Kanalytic:5}, we have the identification
  \begin{equation}\label{fg}
  V_f\solid_F V_g=V_{f\times g}\subset \prod_{I\times J}F
  \end{equation}
  where $f\times g:I\times J\to \Rr_{\ge 0}$ is defined by $(f\times g)(i, j):=f(i)\cdot g(j)$ for all $(i, j)\in I\times J$. Then, using that the solid tensor product $\solid_F$ commutes with colimits in both variables, we see that it suffices to show that the filtered union of the $V_{f\times g}$ over all $f:I\to \Rr_{\ge 0}$ and $g:J\to \Rr_{\ge 0}$ tending both to 0, can be identified with the filtered union of the $V_{h}$ over all $h:I\times J\to \Rr_{\ge 0}$ tending to 0. Clearly, if $f$ and $g$ tend both to 0, the $f\times g$ tends to 0. Therefore, it suffices to prove that, given $h:I\times J\to \Rr_{\ge 0}$ tending to 0, there exist functions $f:I\to \Rr_{\ge 0}$ and $g:J\to \Rr_{\ge 0}$ tending to 0 such that $h\le f\times g$ pointwise: for this, we can take $f(i):=\sqrt{\max_{j\in J}h(i, j)}$, and $g(j):=\sqrt{\max_{i\in I}h(i, j)}$.
 \end{proof}

 The following result implies in particular Theorem \ref{nuclearbanach}\listref{nuclearbanach:2}. 
 
 \begin{prop}\label{rew}
   Let $A$ be a nuclear solid $F$-algebra.
   \begin{enumerate}[(i)]
  \item\label{rew:1} We have that $N\in \Mod_A^{\nuc}$ if and only if, for all profinite sets $S$, the natural map
   \begin{equation}\label{afess}
   \underline{\Hom}_A(A[S], A)\solid_A N\to  \underline{\Hom}_A(A[S], N)
   \end{equation}
   is an isomorphism.
   \item\label{rew:2} The category $\Mod_A^{\nuc}$ is generated under colimits by the objects $\underline{\Hom}_A(A[S], A)$, for varying $S$ profinite sets.
   \item\label{rew:3} The objects $\underline{\Hom}_A(A[S], A)$, for varying $S$ profinite sets, are flat for the tensor product $\solid_A$.
   \end{enumerate}
 \end{prop}
 \begin{proof}
  Suppose first $A=F$. Then, part \listref{rew:2} follows from Proposition \ref{incu} and Proposition \ref{solidbanach}. Part \listref{rew:3} follows from Corollary \ref{solidqs}, recalling that the objects $\underline{\Hom}(\Zz[S], F)$ are $F$-Banach spaces and, in particular, are quasi-separated (Proposition \ref{corbello}).  For part \listref{rew:1}, noting that both sides of (\ref{afess}) commute with colimits in $N$ (using that, by Proposition \ref{intproj}, $\Zz[S]^{\solidif}$ is an internally compact object in $\Solid$), by part \listref{rew:2} we can reduce to the case $N=\underline{\Hom}(\Zz[S'], F)$, with $S'$ profinite set, which follows from Proposition \ref{solidbanach}. 
  
  For the general case, we note that, by part \listref{rew:1} of the previous case, since $A$ is a nuclear solid $F$-algebra, we have $\underline{\Hom}_A(A[S], A)\cong \underline{\Hom}(\Zz[S], F)\solid_F A$, for all profinite sets $S$. Then, part \listref{rew:1} follows from part \listref{rew:1} in the case $A=F$ shown above. Part \listref{rew:2} follows using again Proposition \ref{incu} and Proposition \ref{solidbanach}. For part \listref{rew:3}, we observe that for any $M\in \Mod_{A}^{\ssolid}$ 
   $$\underline{\Hom}_A(A[S], A)\dsolid_{A}M\cong \underline{\Hom}(\Zz[S], F)\dsolid_F M$$ 
   is concentrated in degree 0, using that, by the previous case, $\underline{\Hom}(\Zz[S], F)$ is flat for the tensor product $\solid_F$.
 \end{proof}

 We refer the reader to \cite[Proposition 5.35]{Andr} for a statement that generalizes Proposition \ref{rew}\listref{rew:1} in the derived setting.

 \begin{cor}\label{A=F}
  Let $A$ be a nuclear solid $F$-algebra and let $M\in \Mod_A^{\ssolid}$. We have that $M$ lies in $\Mod_A^{\nuc}$ if and only if $M$ (regarded in $\Mod_F^{\ssolid}$) lies in $\Mod_F^{\nuc}$.
 \end{cor}
 \begin{proof}
  It suffices to recall Definition \ref{nuc} and note that, by Proposition \ref{rew}\listref{rew:1} (for $A=F$), for all extremally disconnected sets $S$, we have natural isomorphisms $\cl M_A[S]^\vee\cong\underline{\Hom}_A(A[S], A)\cong\underline{\Hom}_F(F[S], F)\solid_F A\cong \cl M_F[S]^\vee\solid_F A$.
 \end{proof}

 As a last step toward the proof of Theorem \ref{nuclearbanach}, we prove the following result.

 \begin{prop}\label{isino} Let $A$ be a nuclear solid $F$-algebra.
 \begin{enumerate}[(i)]
  \item\label{isino:1} The subcategory $\Mod_A^{\nuc}\subset \Mod_A^{\ssolid}$ is stable under countable products.
  \item\label{isino:2} If $A$ is a Banach $F$-algebra, the subcategory of  $\Mod_A^{\nuc}$ generated under filtered colimits by the objects $\underline{\Hom}_A(A[S], A)$, for varying $S$ profinite sets, is stable under countable products.
 \end{enumerate}
 \end{prop}
 
 The proof will rely on the following key lemma.
 
 \begin{lemma}[{\cite{CS}}]\label{apptr}
  The solid $F$-vector space $\prod_{\Nn}F$ is nuclear and it can be written as a filtered colimit of $F$-Banach spaces.
 \end{lemma}
 \begin{proof}
  First, we note that, for any extremally disconnected set $S$, we have $(\prod_\Nn F)(S)=\prod_\Nn \mathscr{C}^0(S, F)$, and, since $S$ is compact, any continuous map $S\to F$ has image contained in $F_{\le c}$ for some real number $c\ge 0$ (in the notation of Lemma \ref{dirty:2}). We deduce that
 \begin{equation}\label{countK}
   \prod_\Nn F=\varinjlim_{f:\Nn\to \Rr_{\ge 0}} V_f
 \end{equation}
 and $V_f=(\prod_{n\in \Nn}F_{\le f(n)})[1/\varpi]$ denotes condensed $F$-vector space defined in Lemma \ref{dirty:2}. We claim that, applying the trace functor (Definition \ref{tracedef}), the natural map
 \begin{equation}\label{fbs}
  \varinjlim_{f:\Nn\to \Rr_{\ge 0}} V_f^{\trace}\to \varinjlim_{f:\Nn\to \Rr_{\ge 0}} V_f
 \end{equation}
 is an isomorphism, and $V_f^{\trace}$ is a $F$-Banach space.
 
  For this, given $V=(\prod_{\Nn}\cl O_F)[1/\varpi]$, we define the $F$-Banach space $V^{\Ban}:=((\prod_{\Nn}\cl O_F)_{\disc})^{\wedge}_{\varpi}[1/\varpi]$, where for a condensed abelian group $M$ we denote by $M_{\disc}$ the associated discrete condensed abelian group such that $M(*)=M_{\disc}(*)$.\footnote{Note that $V^{\Ban}$ is indeed a $F$-Banach space by the last assertion in Lemma \ref{dirty:1}, observing that $(\prod_{\Nn}\cl O_F)_{\disc}$ is a free (discrete) $\cl O_F$-module.} Then, for all extremally disconnected sets $S$, we have
 $$V^{\trace}(S)=(\cl M_{F}[S]^{\vee}\solid_F V)(*)\cong(\cl M_{F}[S]^{\vee}\solid_F V^{\Ban})(*)$$
 where we used Lemma \ref{dirty:1} and Lemma \ref{dirty:2} to write $\cl M_{F}[S]^{\vee}$ as a filtered colimit of objects isomorphic to $(\prod_I \cl O_F)[1/\varpi]$ for some set $I$, together with Proposition \ref{Kanalytic}\listref{Kanalytic:5}. Since the $F$-Banach space $V^{\Ban}$ is a nuclear $F$-vector space (by Corollary \ref{ova}), we deduce that we have a natural isomorphism $V^{\trace}(S)\cong V^{\Ban}(S)$ for all extremally disconnected sets $S$, and then
 $$V^{\trace}\cong V^{\Ban}.$$
 Coming back to (\ref{fbs}), since $V_f\cong V$, we deduce that, for any $f:\Nn\to \Rr_{\ge 0}$, we have that $V_f^{\trace}$ is a $F$-Banach space, and the natural map $V_{f}^{\trace}\to V_f$ is injective. 
 
 Then, it remains to show that, given $f:\Nn\to \Rr_{\ge 0}$, there exists $g:\Nn\to \Rr_{\ge 0}$ with $g\ge f$ pointwise, such that the natural map $V_f\to V_g$ factors through $V_g^{\trace}$, i.e. it is a trace-class map, Lemma \ref{incc}\listref{incc:1}. For this, assuming, without loss of generality, that $|\cdot|$ on $F$ is normalized, we take $g(n):=|\varpi|^{-n}\cdot f(n)$. Then, using Remark \ref{compp}, we can reduce to checking that, for $V=(\prod_{\Nn}\cl O_F)[1/\varpi]$, the map $H:V\to V$ given by the diagonal matrix whose $(n, n)$-entry is $\varpi^n$ is trace-class: this is clear as $H$ comes from the element in $(V^{\vee}\solid_F V)(*)$ given by $\sum_{n\in \Nn}\varphi_n\otimes e_n$, where $e_n\in V$ is the vector having $1$ in the $n$-th coordinate and 0 otherwise, and $(\varphi_n)_{n\in \Nn}$ is the null-sequence in $V^{\vee}$ with $\varphi_n$ sending a vector in $V$ to its $n$-th coordinate multiplied by $\varpi^n$.
 \end{proof}

 \begin{cor}[\cite{CS}]\label{vand}
  Let $V$ be a $F$-Banach space. The natural map
  $$V\solid_F \prod_{\Nn} F\to \prod_{\Nn} V$$
  is an isomorphism. 
 \end{cor}
 \begin{proof}
 By Lemma \ref{crucial} we have $V\cong \underline{\Hom}(\Zz[S], K)$ for some profinite set $S$. By Lemma \ref{apptr}, $\prod_{\Nn} F$ is a nuclear $F$-vector space, then we have that
 $$V\solid_F \prod_{\Nn}F\cong \underline{\Hom}(\Zz[S], F)\solid_F \prod_{\Nn}F\overset{\sim}{\to}\underline{\Hom}(\Zz[S], \prod_{\Nn}F)\cong \prod_{\Nn}\underline{\Hom}(\Zz[S], F)\cong\prod_{\Nn}V$$   
 where we used Proposition \ref{rew}\listref{rew:1}
 \end{proof}

 \begin{proof}[Proof of Proposition \ref{isino}]
  First, we note that, thanks to Corollary \ref{A=F}, it suffices to prove part \listref{isino:1} in the case $A=F$. Thus, we can assume that $A$ is a Banach $F$-algebra and prove both part \listref{isino:1} and part \listref{isino:2}.
  
  Let $\cl C_A$ denote the full subcategory of $\Mod_{A}^{\ssolid}$ consisting of the objects of the form $\underline{\Hom}_A(A[S], A)$ with $S$ profinite set. By Proposition \ref{rew}\listref{rew:2}, writing an object of $\Mod_A^{\nuc}$ as a filtered colimit of quotients of objects in $\cl C_A$, using that the abelian category $\Mod_A^{\ssolid}$ satisfies Grothendieck's axioms (AB4*) and (AB6), we can reduce to showing that a countable product of objects in $\cl C_A$ can be written as a filtered colimit of objects in $\cl C_A$.
   Applying Proposition \ref{rew}\listref{rew:1} (for $A=F$), we have that $\underline{\Hom}_A(A[S], A)\cong\underline{\Hom}(\Zz[S], F)\solid_F A$, for all profinite sets $S$ (recall that, since $A$ is a Banach $F$-algebra, it is a nuclear $F$-algebra by Corollary \ref{ova}).
  Then, by Lemma \ref{dirty:1}, Lemma \ref{dirty:2}, and Remark \ref{omega}, using again the axiom (AB6) in $\Mod_A^{\ssolid}$, we can further reduce to proving that a countable product of copies of $M_A:=(\bigoplus_{\Nn} \cl O_F)^{\wedge}_{\varpi}[1/\varpi]\solid_F A$ can be written as a filtered colimit objects in $\cl C_A$. By the second statement in Lemma \ref{dirty:1}, there exist a profinite set $S_0$, and an isomorphism $M_A\cong \underline{\Hom}(\Zz[S_0], F)\solid_F A$. Then, denoting $P:=\prod_{\Nn}F$, we have that
  \begin{equation}\label{ghi}
   \prod_{\Nn}M_A\cong \prod_{\Nn} \underline{\Hom}(\Zz[S_0]^{\solidif}, A)\cong\underline{\Hom}(\Zz[S_0]^{\solidif}, \prod_{\Nn}A)\cong  \underline{\Hom}(\Zz[S_0], P\solid_F A)
  \end{equation}
  where for the second isomorphism we used that, by Proposition \ref{intproj}, $\Zz[S_0]^{\solidif}$ is an internally compact object of $\Solid$, and in the last isomorphism we used that $A$ is a Banach $F$-algebra and Corollary \ref{vand}. Now, we claim that the natural map
  \begin{equation}\label{maa}
   \underline{\Hom}(\Zz[S_0], P)\solid_{F}A\to \underline{\Hom}(\Zz[S_0], P\solid_F A)
  \end{equation}
  is an isomorphism. By Lemma \ref{apptr}, we can reduce to showing that (\ref{maa}) is an isomorphism replacing $P$ by $\underline{\Hom}(\Zz[S_1], F)$ with $S_1$ profinite set; in the latter case both the source and the target naturally identify with $\underline{\Hom}(\Zz[S_0\times S_1], A)$ thanks to Proposition \ref{rew}\listref{rew:1} (using that $A\in \Mod_{F}^{\nuc}$). Then, combining (\ref{ghi}) with (\ref{maa}), we have
  $$\prod_{\Nn}M_A\cong (\prod_\Nn \underline{\Hom}(\Zz[S_0], F))\solid_F A\cong (\prod_{\Nn}F)\solid_F \underline{\Hom}(\Zz[S_0], F)\solid_F A$$
  where for the latter isomorphism we used again Corollary \ref{vand}. By Lemma \ref{apptr}, Lemma \ref{crucial}, and Proposition \ref{solidbanach}, we have that $(\prod_{\Nn}F)\solid_F \underline{\Hom}(\Zz[S_0], F)$ can be written as a filtered colimit of objects in $\cl C_F$, and then (using again that $A\in \Mod_{F}^{\nuc}$), $\prod_{\Nn}M_A$ can be written as a filtered colimit of objects in $\cl C_A$, as desired.
 \end{proof}

 We are ready to complete the proof of Theorem \ref{nuclearbanach}.
 
 \begin{proof}[Proof of Theorem \ref{nuclearbanach}]
  Part \listref{nuclearbanach:2} follows from Proposition \ref{rew}\listref{rew:2} and Proposition \ref{rew}\listref{rew:3}. For part \listref{nuclearbanach:1}, the stability of the subcategory $\Mod_A^{\nuc}\subset \Mod_A^{\ssolid}$ under colimits was observed more generally in Remark \ref{genecol}, and the stability under finite limits follows from the flatness of the objects $\cl M_A[S]^\vee=\underline{\Hom}_A(A[S], A)$, for varying $S$ profinite sets; the stability under countable products was shown in Proposition \ref{isino}\listref{isino:1}, and the stability under the tensor product $\solid_A$ follows combining part \listref{nuclearbanach:2} with Proposition \ref{solidbanach}, using that the tensor product $\solid_A$ commutes with colimits in both variables.
 \end{proof}

 \subsubsection{\normalfont{\textbf{Some corollaries}}}
 In this subsection, we focus on the functional analysis over a complete discretely valued extension of $F$, deducing some important consequences from Theorem \ref{nuclearbanach}. 
 
 \begin{convnot}
  We denote by $K$ a complete discretely valued extension of $F$. 
  We extend the terminology adopted in Definition \ref{term} to $K$-Banach/Fréchet spaces.\footnote{We note that a $K$-Fréchet space is in particular a $F$-Fréchet space, therefore, in the following, we can use the results in \S \ref{giuo} on $F$-Fréchet spaces.}
 \end{convnot}
 
 The reason for this restriction is that, for such $K$, thanks to Lemma \ref{crucial}, $K$-Banach spaces in $\Mod_{K}^{\cond}$ correspond to the objects $\Hom(\Zz[S], K)$, for varying profinite sets $S$. In particular, applying Theorem \ref{nuclearbanach}\listref{nuclearbanach:2} with $A=K$ we obtain the following result.

  \begin{cor}\label{ids}
  The subcategory $\Mod_K^{\nuc}\subset \Mod_K^{\ssolid}$  is generated under colimits by the $K$-Banach spaces, which are flat objects for the tensor product $\solid_K$.
  \end{cor}

 \begin{rem}\label{precision}
  Let $\kappa$ be an uncountable strong limit cardinal. We note that $\Mod_K^{\nuc}$ is a full subcategory of the category of $\kappa$-condensed $K$-vector spaces. In fact, by Corollary \ref{ids}, we can reduce to checking that $K$-Banach spaces are  $\kappa$-condensed sets; as any $K$-Banach space is a $F$-Banach space, we can further reduce to the case $K=F$, which follows combing Lemma \ref{dirty:1}, Lemma \ref{dirty:2} and Remark \ref{omega}.
 \end{rem}
 
 Next, we focus on $K$-Fréchet spaces. We start with the following observation.
 
 \begin{rem}\label{frechetcount}
 We recall that a locally convex $K$-vector space $W$ is a $K$-Fréchet space if and only if it is isomorphic, in the category of topological $K$-vector spaces, to the limit of a countable inverse system $\{W_n\}$ of $K$-Banach spaces along transition maps having dense image (cf. \cite[Chapter II, \S 5.4, Corollary 1]{SWolff}). In particular, by Lemma \ref{condML},\footnote{Alternatively, by topological Mittag-Leffler lemma \cite[Remarques 13.2.4]{Groth} and Lemma \ref{acyclic}.} we have a short exact sequence in $\Mod_K^{\cond}$
 \begin{equation}\label{iou}
  0\to \underline{W}\to \prod_n \underline{W_n}\to  \prod_n \underline{W_n}\to 0
 \end{equation}
 where the map $\prod_n\underline{W_n}\to  \prod_n \underline{W_n}$ is the difference of the identity and the transition morphisms.
 \end{rem}
 
 In the following, we refer to the objects of  $\Mod_{K}^{\nuc}$ as \textit{nuclear $K$-vector spaces}.
 
 \begin{prop}\label{frechetnuc}
   Any $K$-Fréchet space is a nuclear $K$-vector space that can be written as a filtered colimit of $K$-Banach spaces.
  \end{prop}
  \begin{proof}
  By Proposition \ref{isino}\listref{isino:2} (for $A=K$), a countable product of $K$-Banach spaces is a nuclear $K$-vector space that can be written as a filtered colimit of $K$-Banach spaces. Then, the statement follows from Remark \ref{frechetcount}, writing a $K$-Fréchet space as a kernel of a map whose source is a filtered colimit of $K$-Banach spaces and the target is quasi-separated (and then using that filtered colimits commute with finite limits in $\Mod_K^{\cond}$).
  \end{proof}
 
 \begin{cor}\label{frescoflat}
  Any $K$-Fréchet space is flat for the tensor product $\solid_K$.
 \end{cor}
 \begin{proof}
  Writing a $K$-Fréchet space as a filtered colimit of $K$-Banach spaces, thanks to Proposition \ref{frechetnuc}, the statement follows from the flatness of $K$-Banach spaces for the tensor product $\solid_K$, Corollary \ref{ids}.
 \end{proof}

  We know that the tensor product $\solid_K$ commutes with colimits in both variables.  Next, we study some special cases in which it commutes with limits.

  \begin{prop}[\cite{CS}]\label{nuccomm}
    Let $\{V_n\}$ be a countable family of nuclear $K$-vector spaces, and let $W$ be a $K$-Fréchet space. Then, the natural map of condensed $K$-vector spaces
    $$ (\prod_n V_n)\solid_K W\to\prod_n (V_n\solid_K W)$$
    is an isomorphism.
  \end{prop}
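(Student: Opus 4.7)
The plan is to reduce the statement to the case where $W$ is a $K$-Banach space by presenting it as a countable inverse limit, handle the Banach case using the nuclearity of both $\prod_n V_n$ and of each $V_n$, and then commute the inverse limit through the tensor product. By Remark \ref{frechetcount} and Lemma \ref{crucial}, I would write $W = \varprojlim_m W_m$ as a countable inverse limit of $K$-Banach spaces with $W_m \cong \underline{\Hom}(\Zz[S_m], K)$ for profinite sets $S_m$, the transition maps having dense image. For each fixed $m$, the product $\prod_n V_n$ is nuclear by Theorem \ref{nuclearbanach}, and Definition \ref{nuc} applied in turn to $\prod_n V_n$ and to each $V_n$, together with the fact that $\underline{\Hom}(\Zz[S_m], -)$ preserves products, yields
\[
  (\prod_n V_n) \solid_K W_m \;=\; \underline{\Hom}(\Zz[S_m], \prod_n V_n) \;=\; \prod_n \underline{\Hom}(\Zz[S_m], V_n) \;=\; \prod_n (V_n \solid_K W_m),
\]
so the proposition is settled when $W$ is replaced by any single $W_m$.

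Taking $\varprojlim_m$ and using that inverse limits commute with products, both the source $(\prod_n V_n) \solid_K W$ and the target $\prod_n (V_n \solid_K W)$ of the natural map of the proposition map compatibly into the common inverse limit $\prod_n \varprojlim_m (V_n \solid_K W_m)$, through which the map in question factors. Hence the proposition reduces, after applying it with $U = \prod_n V_n$ and with $U = V_n$ for each $n$, to the following Key Claim: for any nuclear $U \in \Vect_K^{\nuc}$, the natural morphism $U \solid_K W \to \varprojlim_m (U \solid_K W_m)$ is an isomorphism. When $U$ is itself a $K$-Banach space, say $U = \underline{\Hom}(\Zz[T], K)$, the Key Claim is immediate: since $W$ is nuclear by Corollary \ref{frechetnuc}, Definition \ref{nuc} gives
\[
  U \solid_K W \;=\; \underline{\Hom}(\Zz[T], W) \;=\; \varprojlim_m \underline{\Hom}(\Zz[T], W_m) \;=\; \varprojlim_m (U \solid_K W_m),
\]
where the middle equality uses that $\underline{\Hom}(\Zz[T], -)$ commutes with limits and the last uses that each Banach $W_m$ is nuclear.

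The main obstacle is extending the Key Claim from Banach $U$ to a general nuclear $U$. Using the colimit presentation $U = \varinjlim_i U_i$ by $K$-Banach spaces guaranteed by Theorem \ref{nuclearbanach}, this forces an interchange of the filtered colimit in $i$ with the countable inverse limit in $m$, which is not formal in an abelian category. I would control this interchange by a Mittag-Leffler argument: for each fixed $i$, the inverse system $\{U_i \solid_K W_m\}_m$ consists of $K$-Banach spaces whose transition maps inherit the dense-image property from those of $\{W_m\}_m$, so Lemma \ref{condML} forces the higher derived limits $R^j\!\varprojlim_m$ on such systems to vanish. The naive inverse limits therefore agree with the derived ones in $D(\Vect_K^{\ssolid})$ on both sides, and the standard behaviour of derived Mittag-Leffler systems then allows the filtered colimit in $i$ to commute with the countable inverse limit in $m$, yielding $\varinjlim_i \varprojlim_m (U_i \solid_K W_m) = \varprojlim_m \varinjlim_i (U_i \solid_K W_m) = \varprojlim_m (U \solid_K W_m)$ and completing the proof.
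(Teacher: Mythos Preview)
The paper does not give its own proof of this proposition; it is simply cited from unpublished work of Clausen--Scholze. So there is no paper argument to compare against, only the correctness of your attempt to assess.

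Your Key Claim---that $U\solid_K W\to\varprojlim_m(U\solid_K W_m)$ is an isomorphism for every nuclear $U$---is false, and this is where the argument collapses. Take $U=K^{(\Nn)}=\varinjlim_i K^i$ (nuclear by Theorem~\ref{nuclearbanach}) and $W=\prod_{\Nn}K=\varprojlim_m K^m$, with $W_m=K^m$ and transition maps the projections onto the first $m$ coordinates. Then $U\solid_K W=\bigoplus_{\Nn}W$, whose underlying set consists of finitely supported $\Nn$-indexed sequences in $\prod_\Nn K$. On the other side $U\solid_K W_m=(K^m)^{(\Nn)}$, and the inverse limit $\varprojlim_m(K^m)^{(\Nn)}$ contains the compatible family $x^{(m)}=(e_1,\ldots,e_m,0,0,\ldots)$ with $e_j\in K^m$ the $j$-th standard basis vector; its support grows with $m$, so it is not in the image of $\bigoplus_\Nn W$. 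Hence the map in the Key Claim is not surjective.

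This also shows that your Mittag-Leffler step cannot be repaired: in the counterexample each system $\{U_i\solid_K W_m\}_m=\{(K^m)^i\}_m$ has surjective transition maps, so $R^j\varprojlim_m$ vanishes for every fixed $i$, and yet the interchange of $\varinjlim_i$ with $\varprojlim_m$ still fails. Termwise vanishing of $R^1\varprojlim$ is simply not sufficient to commute a filtered colimit past a countable inverse limit; there is no ``standard behaviour of derived Mittag-Leffler systems'' that does this. The Clausen--Scholze argument must exploit a finer structural feature of nuclear modules (their description via trace-class maps, or the concrete presentation of Banach spaces inside the generators $(\prod_I\cl O_K)[1/\varpi]$) rather than an abstract colimit--limit interchange.
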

  \begin{proof}
   By Corollary \ref{ids}, we can write a nuclear $K$-vector space a filtered colimit of quotients of $K$-Banach spaces, then, recalling that the abelian category $\Mod_K^{\ssolid}$ satisfies the axioms (AB4*) and (AB6), we can reduce to the case the condensed $K$-vector spaces $V_n$ are $K$-Banach spaces. Using Remark \ref{frechetcount}, and recalling the flatness of $K$-Fréchet spaces for the tensor product $\solid_K$, Corollary \ref{frescoflat}, we can also suppose that $W$ is a countable product of $K$-Banach spaces. Then, it suffices to show that given two countable families $\{V_n\}$ and $\{W_m\}$ of $K$-Banach spaces, we have
  \begin{equation}\label{redf}
   (\prod_n V_n)\solid_K (\prod_m W_m)=\prod_{n, m} (V_n\solid_K W_m).
  \end{equation}
   By Lemma \ref{crucial}, any $K$-Banach space is isomorphic to $\underline{\Hom}(\Zz[S], K)\cong \underline{\Hom}(\Zz[S], F)\solid_F K$, for some profinite set $S$ (here, for the latter isomorphism, we used Proposition \ref{rew}\listref{rew:1}); then, we can reduce to showing (\ref{redf}) in the case $K=F$. By Lemma \ref{dirty:1} and Lemma \ref{dirty:2}, combined with Remark \ref{omega}, any $F$-Banach space can be written as a filtered colimit of objects isomorphic to $(\bigoplus_{\Nn} \cl O_F)^{\wedge}_{\varpi}[1/\varpi]$; therefore, using the axiom (AB6) in $\Mod_F^{\ssolid}$, we can further reduce to showing that, given a $F$-Banach space $V$, we have
 $$(\prod_{\Nn} V)\solid_F (\prod_{\Nn} V)=\prod_{\Nn\times \Nn} (V\solid_F V).$$
 Recalling that, by Proposition \ref{solidbanach}, the solid tensor product $V\solid_F V$ is also a $F$-Banach space,  using Corollary \ref{vand}, it suffices to show that
 \begin{equation}\label{NN}
   (\prod_\Nn F)\solid_F(\prod_\Nn F)=\prod_{\Nn\times \Nn}F.
 \end{equation}
 Using (\ref{countK}) together with (\ref{fg}), recalling that the solid tensor product $\solid_F$ commutes with colimits in both variables, we deduce that, in order to show (\ref{NN}), it suffices to prove that the filtered union of the $V_{f\times g}$ over all $f:\Nn\to \Rr_{\ge 0}$ and $g:\Nn\to \Rr_{\ge 0}$, can be identified, inside $\prod_{\Nn\times\Nn} K$, with the filtered union of the $V_{h}$ over all $h:\Nn\times \Nn\to \Rr_{\ge 0}$. For this, it suffices to show that, given $h:\Nn\times \Nn\to \Rr_{\ge 0}$, there exist functions $f:\Nn\to \Rr_{\ge 0}$ and $g:\Nn\to \Rr_{\ge 0}$ such that $h\le f\times g$ pointwise: we can take $f(n)=g(n)=\max\{1, \max_{i, j\le n}h(i, j)\}$, thus concluding the proof. 
  \end{proof}

  Let us collect some corollaries of Proposition \ref{nuccomm}.

 \begin{cor}\label{commlim}\ 
 \begin{enumerate}[(i)]
  \item \label{commlim:1} Let $\{V_n\}$ be a countable inverse system of nuclear $K$-vector spaces, and let $W$ be a $K$-Fréchet space. Then, we have
  $$(\varprojlim\nolimits_n V_n)\solid_K W=\varprojlim\nolimits_n (V_n\solid_K W).$$
  \item \label{commlim:2} Let $\{\cl V_n\}$ be a countable inverse system of objects in $D(\Mod_K^{\ssolid})$ such that each $\cl V_n$ is representable by a complex of nuclear $K$-vector spaces. Let $\cl W\in D(\Mod_K^{\ssolid})$ be representable by a bounded above complex of $K$-Fréchet spaces. Then, we have
  $$(R\varprojlim\nolimits_n \cl V_n)\dsolid_K \cl W=R\varprojlim\nolimits_n(\cl V_n\dsolid_K \cl W).$$
 \end{enumerate}
 \end{cor}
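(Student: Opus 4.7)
The plan for part (i) is to realize $\varprojlim_n V_n$ as the equalizer
$$0 \to \varprojlim_n V_n \to \prod_n V_n \xrightarrow{1-s} \prod_n V_n,$$
where $s$ is the shift map induced by the transition morphisms. By Proposition~\ref{corbello} and Corollary~\ref{solidqs}, the $K$-Fr\'echet space $W$ is quasi-separated, hence flat as a solid $K$-vector space, so $-\solid_K W$ is exact and preserves equalizers. Invoking Proposition~\ref{nuccomm} to identify $(\prod_n V_n)\solid_K W \simeq \prod_n (V_n \solid_K W)$ compatibly with $1-s$ then yields the claim.

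The plan for part (ii) is to mirror this argument in the derived category, using the Milnor-type fiber sequence
$$R\varprojlim_n \cl V_n \to \prod_n \cl V_n \xrightarrow{1-s} \prod_n \cl V_n$$
in $D(\Vect_K^{\ssolid})$, together with the fact that $\cl W$ is representable by a bounded above complex of flat objects (since Fr\'echet implies quasi-separated solid implies flat). Consequently $-\dsolid_K \cl W$ is exact and sends this fiber sequence to a fiber sequence, and it suffices to establish
$$\left(\prod_n \cl V_n\right) \dsolid_K \cl W \simeq \prod_n \left(\cl V_n \dsolid_K \cl W\right)$$
compatibly with $1-s$. To do so, I would pick representatives $C_n^\bullet$ of $\cl V_n$ consisting of nuclear $K$-vector spaces; since countable products are exact in $\Vect_K^{\ssolid}$ (inherited from $\CondAb$, where they are computed pointwise on extremally disconnected sets), and since nuclear $K$-vector spaces are stable under countable products (Theorem~\ref{nuclearbanach}), the termwise product $\prod_n C_n^\bullet$ is a complex of nuclear $K$-vector spaces representing the derived product $\prod_n \cl V_n$. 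One then computes both sides as totalizations of the bicomplexes $\prod_n C_n^\bullet \otimes_K \cl W^\bullet$ and $\prod_n\bigl(C_n^\bullet \otimes_K \cl W^\bullet\bigr)$ respectively, and applies Proposition~\ref{nuccomm} in each bidegree to match them.

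The main technical obstacle is the commutation of countable products with the totalization of the double complex. With the sum totalization this would fail for unbounded $\cl V_n$, so the argument should use the product totalization, which commutes with products by the universal property. The role of the boundedness of $\cl W$ from above is then to ensure that this product totalization still correctly models the derived tensor product via the $K$-flat complex $\cl W^\bullet$, so that the bidegreewise identification can be assembled into the desired quasi-isomorphism.
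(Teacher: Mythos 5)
Your proposal is correct and follows essentially the same route as the paper: reduce via the Milnor/distinguished triangle for $R\varprojlim$ to commuting $\dsolid_K \cl W$ with countable products, use that Fréchet spaces are quasi-separated hence flat solid $K$-vector spaces so that a bounded above Fréchet representative of $\cl W$ computes the derived tensor product, identify the derived product with the termwise product via (AB4*), and conclude bidegree-wise from Proposition~\ref{nuccomm}. Your extra attention to sum versus product totalization is a refinement the paper elides rather than a different argument.
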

 \begin{proof}
  For part \listref{commlim:1}, we consider the exact sequence
  \begin{equation}\label{distt0}
   0\to \varprojlim_n V_n\to \prod_n V_n\to \prod_n V_n
  \end{equation}
   where the last arrow is the difference of the identity and the transition morphisms. By Corollary \ref{frescoflat}, the $K$-Fréchet space $W$ is a flat solid $K$-vector space, hence the exact sequence (\ref{distt0}) remains exact after applying the functor $-\solid_K W$, and the statement follows from Proposition \ref{nuccomm}.
 
  For part \listref{commlim:2}, consider the distinguished triangle in $D(\Mod_K^{\ssolid})$
  \begin{equation}\label{distt}
   R\varprojlim_n \cl V_n\to \prod_n \cl V_n\to \prod_n \cl V_n
  \end{equation}
  where the last arrow is the difference of the identity and the transition morphisms. Applying the functor $-\dsolid_K \cl W$ to (\ref{distt}), we see that it suffices to prove that the natural map
  \begin{equation}\label{dproduct}
  (\prod_n \cl V_n)\dsolid_K \cl W\to\prod_n (\cl V_n\dsolid_K \cl W).
  \end{equation}
   is an isomorphism. Let $\cl W^\bullet$ be a bounded above complex of $K$-Fréchet spaces, representing $\cl W$. By Corollary \ref{frescoflat}, $\cl W^\bullet$ is a bounded above complex of flat solid $K$-vector spaces; in particular, for any complex $\cl K^\bullet \in D(\Mod_K^{\ssolid})$, the derived tensor product $\cl K^\bullet \dsolid_K \cl W^\bullet$ is represented by the total complex $\Tot(\cl K^\bullet \solid_K \cl W^\bullet)$. Moreover, since the category $\Mod_K^{\ssolid}$ satisfies Grothendieck's axiom (AB4*), by \cite[Tag 07KC]{Thestack}, we know that the countable product $\prod_n \cl V_n$ in $D(\Mod_K^{\ssolid})$ is computed by taking the termwise products of any of the complexes representing the $\cl V_n$. Therefore, the assertion that the natural map (\ref{dproduct}) is an isomorphism reduces to the statement of Proposition \ref{nuccomm}.
  
 \end{proof}

 As a consequence, we can show that on $K$-Fréchet spaces the completed projective tensor product $\widehat \otimes_K$ agrees with $\solid_K$.\footnote{Let $V$ and $W$ be two (possibly non-Hausdorff) locally convex $K$-vector spaces. Recall that the \textit{completed projective tensor product} $V\widehat\otimes_K W$ is the Hausdorff completion of the projective tensor product of $V$ and $W$, \cite[Definition 10.3.2]{Garcia}. \label{projj}}
 
 \begin{prop}[\cite{CS}]\label{solidvsproj}
  Let $V$ and $W$ be $K$-Fréchet spaces. Then, the natural map of condensed $K$-vector spaces
  \begin{equation*}\label{projfrech}
   \underline V\solid_K \underline W\to\underline{V\widehat\otimes_K W}
  \end{equation*}
  is an isomorphism.\footnote{Here, the map (\ref{projfrech}) is constructed using that, by Proposition \ref{corbello}, $\underline{V\widehat\otimes_K W}$ is a solid $K$-vector space.}
 \end{prop}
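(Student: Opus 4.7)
The plan is to construct the canonical map explicitly and then verify it is an isomorphism by reducing to the case of $K$-Banach spaces. The canonical morphism arises as follows: the continuous bilinear pairing $V \times W \to V \widehat\otimes_K W$ induces a morphism $\underline{V} \otimes_K \underline{W} \to \underline{V \widehat\otimes_K W}$ in $\Vect_K^{\cond}$; since by Proposition \ref{corbello} the target is a (quasi-separated) solid $K$-vector space, this morphism factors uniquely through $\underline{V} \solid_K \underline{W}$ via the universal property of solidification (Proposition \ref{Kanalytic}\listref{Kanalytic:2+e}).

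I would first reduce the statement to the case where both $V$ and $W$ are $K$-Banach spaces. By Remark \ref{frechetcount}, write $V = \varprojlim_n V_n$ and $W = \varprojlim_m W_m$ as countable inverse limits of $K$-Banach spaces along transition maps with dense image. All spaces in sight are nuclear by Corollary \ref{frechetnuc}, so two successive applications of Corollary \ref{commlim}\listref{commlim:1} (using the symmetry of $\solid_K$) give
$$\underline{V}\solid_K\underline{W}\;\cong\;\varprojlim_{n,m}\bigl(\underline{V_n}\solid_K\underline{W_m}\bigr).$$
For the analytic side, I would use the classical fact that $V \widehat\otimes_K W \cong \varprojlim_{n,m}(V_n \widehat\otimes_K W_m)$ as $K$-Fréchet spaces, together with Lemma \ref{acyclic}, to obtain the analogous identification $\underline{V\widehat\otimes_K W} \cong \varprojlim_{n,m}\underline{V_n\widehat\otimes_K W_m}$ in $\Vect_K^{\cond}$. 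Naturality of the canonical map then reduces the proposition to the case where $V$ and $W$ are both $K$-Banach.

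In the Banach case, Lemma \ref{crucial} provides a profinite set $S$ together with an isomorphism $V \cong \mathscr{C}^0(S, K)$ of $K$-Banach spaces, so that $\underline{V} \cong \underline{\Hom}(\Zz[S], K)$. Since $\underline{W}$ is nuclear, Definition \ref{nuc} yields
$$\underline{V}\solid_K\underline{W}\;=\;\underline{\Hom}(\Zz[S], K)\solid_K\underline{W}\;\xrightarrow{\sim}\;\underline{\Hom}(\Zz[S], \underline{W})\;=\;\underline{\mathscr{C}^0(S, W)},$$
where the last identification is obtained by evaluating at a profinite set $T$ and invoking the exponential law $\mathscr{C}^0(T \times S, W) = \mathscr{C}^0(T, \mathscr{C}^0(S, W))$. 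On the analytic side, the classical non-archimedean identification $\mathscr{C}^0(S, K) \widehat\otimes_K W \cong \mathscr{C}^0(S, W)$ (as already used in Lemma \ref{grcond} via \cite[Corollary 10.5.4]{Garcia}) gives $\underline{V \widehat\otimes_K W} \cong \underline{\mathscr{C}^0(S, W)}$, so that the two sides are identified with the same condensed $K$-vector space. A short diagram chase, tracing through each step, confirms that this identification agrees with the canonical map.

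The main obstacle I anticipate is the reduction step on the analytic side, namely verifying that $\widehat\otimes_K$ commutes with countable inverse limits of $K$-Fréchet spaces along dense-image transitions. A clean way to handle this is to reduce, via the Banach presentation $V_n \cong \mathscr{C}^0(S_n, K)$, to the statement that the continuous-function functor $\mathscr{C}^0(S_n, -)$ commutes with countable projective limits in the target, which is immediate. In this way, the argument essentially collapses both sides of the proposition into the common object $\underline{\mathscr{C}^0(S, W)}$, and the entire non-trivial input is the nuclearity of $K$-Banach (and hence $K$-Fréchet) spaces in the sense of Clausen--Scholze (Definition \ref{nuc} and Theorem \ref{nuclearbanach}).
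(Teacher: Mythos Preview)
Your proposal is correct and follows essentially the same approach as the paper: reduce to the Banach case by writing $V$ and $W$ as countable inverse limits of Banach spaces (Remark \ref{frechetcount}), pass the solid tensor product through the limit via nuclearity (Corollary \ref{commlim}\listref{commlim:1} and Corollary \ref{frechetnuc}), and then in the Banach case use Lemma \ref{crucial} together with the nuclearity of $\underline{W}$ and the classical identification $\mathscr{C}^0(S,K)\widehat\otimes_K W\cong\mathscr{C}^0(S,W)$. The only cosmetic differences are that the paper handles the analytic-side limit commutation $V\widehat\otimes_K W=\varprojlim_{n,m}V_n\widehat\otimes_K W_m$ by citing \cite[Proposition 9, p.~192]{SJ} and \cite[Corollary 1.7]{Varol} rather than your direct $\mathscr{C}^0(S_n,-)$ argument, and that your invocation of Lemma \ref{acyclic} is unnecessary since $T\mapsto\underline{T}$ already preserves limits.
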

 \begin{proof}
 By Remark \ref{frechetcount}, we can suppose that $V$ (resp. $W$) is the limit of a countable inverse system of $K$-Banach spaces $\{V_n\}$  (resp. $\{W_m\}$) with transition maps having dense image. We have
 $V\widehat\otimes_K W=\varprojlim_{n, m}V_n\widehat\otimes_K W_m.$\footnote{This follows from \cite[Proposition 9, p. 192]{SJ} and (the proof of) \cite[Corollary 1.7, (b)]{Varol}.} Moreover, by Corollary \ref{commlim}\listref{commlim:1}, combined with Corollary \ref{frechetnuc}, we have
 $$\underline{V}\solid_K \underline{W}=\varprojlim_{n, m}\underline{V_n}\solid_K \underline{W_m}.$$
 Therefore, to show the statement, we can reduce to the case that $V$ and $W$ are $K$-Banach spaces. In the latter case, by (the proof of) Lemma \ref{crucial}, there exist profinite sets $S$ and $S'$, such that $V\cong \mathscr C^0(S, K)$ and $W\cong \mathscr C^0(S', K)$ (endowed with the sup-norm), and we have $\underline{V}\cong \underline{\Hom}(\Zz[S], K)$ and $\underline{W}\cong \mathscr \underline{\Hom}(\Zz[S'], K)$. By \cite[Corollary 10.5.7]{Garcia}, we have a natural isomorphism $$C^0(S, K)\widehat\otimes_K C^0(S', K)\cong C^0(S\times S', K).$$
 On the other hand, combining Proposition \ref{solidbanach} with Proposition \ref{rew}\listref{rew:1}, we have a natural isomorphism
 $$\underline{\Hom}(\Zz[S], K)\solid_K \underline{\Hom}(\Zz[S'], K)\cong \underline{\Hom}(\Zz[S\times S'], K)$$
 from which the statement follows.
 \end{proof}

 \begin{rem}\label{coinban}
  Let $A$ and $B$ be two $K$-Banach algebras, and denote by $A\widehat \otimes_K B$ their completed tensor product in the category of $K$-Banach algebras.
  Recall that the topological $K$-vector space underlying the $K$-Banach algebra $A\widehat \otimes_K B$ is the completed projective tensor product of $A$ and $B$, regarded as $K$-Banach spaces (see e.g. \cite[Appendix B]{Bosch}). Therefore, by Proposition \ref{solidvsproj}, the natural map of condensed $K$-algebras
  \begin{equation*}\label{Kalg}
   \underline{A}\solid_K \underline{B}\to \underline{A\widehat\otimes_K B}
  \end{equation*}
  is an isomorphism.
 \end{rem}

  \section{\textbf{Cohomology of condensed groups}}\label{ccg}
  \sectionmark{}

 Our goal in this appendix is to revisit the definition of \textit{condensed group cohomology}, due to Bhatt--Scholze, and explain its relation to Koszul complexes (Definition \ref{Kosz}), in some cases of particular interest to us. \medskip

  We keep the notation and the conventions of Appendix \ref{condfun}, \S \ref{ape}. In particular, in this appendix, contrary to \S \ref{conve}, we work in the category of all condensed sets (and not only $\kappa$-condensed sets).   \medskip 
 
 Let $G$ be a condensed group, and let $M$ be a $G$-module in condensed abelian groups, i.e. a condensed abelian group $M$ endowed with a left $G$-action $G\times M\to M$ in the category of condensed sets. Denoting by $\Zz[G]$ the condensed group ring of $G$ over $\Zz$, we can regard $M$ as a $\Zz[G]$-module in $\CondAb$, and give the following definition (cf. \cite[\S 4.3]{BS}).
 \begin{df}\label{cgc}
 We define the \textit{condensed group cohomology of $G$ with coefficients in $M$} as complex of $D(\CondAb)$
 $$R\Gamma_{\underline{\cond}}(G, M):=R\underline{\Hom}_{\Zz[G]}(\Zz, M)$$
 where $\Zz$ is endowed with the trivial $G$-action.\footnote{Contrary to the category of $\kappa$-condensed abelian groups, the category of all condensed abelian groups has no non-zero injective objects (see \cite{ScholzeMO}). However, we can compute $H^i_{\underline{\cond}}(G, M)=\underline{\Ext}_{\Zz[G]}^i(\Zz, M)$ by taking a projective resolution of the $\Zz[G]$-module $\Zz$.} We denote by $R\Gamma_{\cond}(G, M):=R\Gamma_{\underline{\cond}}(G, M)(*)$ its underlying complex of abelian groups.
 \end{df}

  As we now explain, in many cases, the condensed group cohomology recovers the continuous group cohomology (cf. \cite[Lemma 4.3.9]{BS}). However, Definition \ref{cgc} has some favorable extra features: for example, given a short exact sequence of $G$-modules in condensed abelian groups, one always gets long exact sequences in cohomology. Note also that, contrary to the continuous group cohomology, the condensed group cohomology has a natural ``topological structure'' (more precisely, a condensed structure) by definition, and one can show it admits a Hochschild-Serre spectral sequence.
  
  We learned the following proposition, which is certainly well-known to experts, from Anschütz and Le Bras.

 \begin{prop}\label{cond=cont}
  Let $G$ be a profinite group, and let $M$ be a $\underline{G}$-module in solid abelian groups. 
  \begin{enumerate}[(i)]
   \item\label{cond=cont:1} The complex  $R\Gamma_{\underline{\cond}}(\underline{G}, M)$ is quasi-isomorphic to the complex of solid abelian groups
 $$M\to \underline{\Hom}(\Zz[\underline{G}], M)\to \underline{\Hom}(\Zz[\underline{G}\times \underline{G}], M)\to \cdots$$
  sitting in non-negative cohomological degrees.\footnote{The differentials of the complex are described in the proof.}
  \item\label{cond=cont:2} Suppose that $M=\underline{M_{\topp}}$, with $M_{\topp}$ a (T1) topological $G$-module over $\Zz$. Then, for all $i\ge 0$, we have a natural isomorphism of abelian groups
 $$H^i_{\cond}(\underline{G}, M)\cong H^i_{\cont}(G, M_{\topp}).$$
  \end{enumerate}
 \end{prop}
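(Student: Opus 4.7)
The plan is to construct an explicit resolution of $\Zz$ as a $\Zz[\underline{G}]$-module via the homogeneous bar complex $B_\bullet$, and identify the complex displayed in the statement as $\underline{\Hom}_{\Zz[\underline{G}]}(B_\bullet,\underline{M})$. Concretely, I set $B_n := \Zz[\underline{G}^{n+1}]^{\solidif}$, endowed with the alternating-sum-of-projections differential and with $\underline{G}$ acting diagonally.

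First, writing $G=\varprojlim_i G_i$ as a cofiltered limit of finite groups, solidification identifies $B_n$ with $\varprojlim_i \Zz[G_i^{n+1}]$. The augmented bar complex for each finite $G_i$ is exact via the classical contracting homotopy $(g_0,\ldots,g_n)\mapsto(e,g_0,\ldots,g_n)$; since the transition maps are surjective, Mittag-Leffler implies that $B_\bullet\to\Zz$ remains exact after passing to the inverse limit. Second, the identification $\underline{G}^{n+1}\cong \underline{G}\times \underline{G}^n$ as a $\underline{G}$-set (with $\underline{G}$ acting on the left factor by left multiplication), combined with the multiplicativity of $\solid_{\Zz}$ on profinite sets, yields a $\Zz[\underline{G}]^{\solidif}$-module decomposition
\[
B_n\cong \Zz[\underline{G}]^{\solidif}\solid_\Zz \Zz[\underline{G}^n]^{\solidif}.
\]
Since $\underline{M}$ is solid, adjunction then gives $\underline{\Hom}_{\Zz[\underline{G}]}(B_n,\underline{M})\cong \underline{\Hom}(\Zz[\underline{G}^n],\underline{M})$, recovering the complex in the statement.

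To complete part~\listref{cond=cont:1}, I would transfer the computation of $R\underline{\Hom}_{\Zz[\underline{G}]}(\Zz,\underline{M})$ to the category of solid $\Zz[\underline{G}]^{\solidif}$-modules, where the objects $\Zz[S]^{\solidif}$ for $S$ profinite serve as compact projective generators (analogue of \cite[Theorem 5.8]{Scholzecond}); the decomposition above then exhibits each $B_n$ as a projective object, so $\underline{\Hom}_{\Zz[\underline{G}]^{\solidif}}(B_n,-)$ is exact on solid modules and $B_\bullet$ is a valid resolution for the derived functor. Part~\listref{cond=cont:2} will follow by evaluating the complex of part~\listref{cond=cont:1} at the point $*\in \ExtrDisc$: since $M$ is T1, we have $\underline{M}(T)=\mathscr{C}^0(T,M)$ for any compact Hausdorff $T$, so the resulting complex becomes $M\to \mathscr{C}^0(G,M)\to \mathscr{C}^0(G^2,M)\to\cdots$ with the usual inhomogeneous continuous bar differentials, whose cohomology is by definition $H^i_{\cont}(G,M)$.

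The main obstacle is making the projectivity claim rigorous: one must identify precisely the solid ring $\Zz[\underline{G}]^{\solidif}$ and its module category, verify that the bar terms $B_n$, being ``solid-free'' on profinite sets, are projective (or at least $\underline{\Hom}(-,\underline{M})$-acyclic for every solid $\underline{M}$), and justify that the derived internal Hom computed in $\CondAb$ agrees with the one computed in the solid category when the target is solid. Each of these points fits within the general toolkit of the solid formalism of Clausen-Scholze, but needs to be carefully spelled out.
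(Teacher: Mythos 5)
Your argument is correct in substance, but it takes a genuinely different route from the paper's. The paper uses the \emph{non-solidified} inhomogeneous bar resolution $\cdots\to\Zz[\underline{G}\times\underline{G}]\to\Zz[\underline{G}]\to\Zz\to 0$, whose terms are \emph{not} projective in $\Mod_{\Zz[\underline{G}]}^{\cond}$, and compensates with the hypercohomology spectral sequence: the higher terms $\underline{\Ext}^j(\Zz[S],\underline{M})$ for $S$ profinite and $j>0$ vanish because $\underline{\Ext}^j(\Zz[S],\underline{M})\cong\underline{\Ext}^j_{\Solid}(\Zz[S]^{\solidif},\underline{M})=0$ when $\underline{M}$ is solid (Lemma \ref{intsolid} together with \cite[Corollary 6.1 (iv)]{Scholzecond} and the projectivity of $\Zz[S]^{\solidif}$ in $\Solid$). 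This sidesteps any need to set up the category of solid modules over $\Zz[\underline{G}]^{\solidif}$. Your route --- solidify the homogeneous bar complex, check it remains a resolution, and exhibit its terms as projectives over $\Zz\llbracket G\rrbracket=\Zz[\underline{G}]^{\solidif}$ --- produces an honest projective resolution, and the ``main obstacle'' you isolate is precisely what Lemma \ref{iwa} of the appendix supplies; note however that Lemma \ref{iwa}\listref{iwa:3} is stated for \emph{abelian} $G$ (the analytic-ring formalism of \cite[Definition 7.4]{Scholzecond} is for commutative rings), whereas the proposition concerns arbitrary profinite $G$, so for your transfer step you should either argue directly that each $B_n$ is $\underline{\Hom}_{\Zz[\underline{G}]}(-,\underline{M})$-acyclic (which again reduces to Lemma \ref{intsolid}) or check that the relevant parts of the solid-module formalism go through in the noncommutative setting.

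Two smaller corrections. The compact projective generators of the solid module category over $\Zz\llbracket G\rrbracket$ are the objects $\Zz[S]^{\solidif}\solid_\Zz\Zz\llbracket G\rrbracket$, not $\Zz[S]^{\solidif}$ themselves; the projectivity of $B_n\cong\Zz\llbracket G\rrbracket\solid_\Zz\Zz[\underline{G}^n]^{\solidif}$ then follows by adjunction from the projectivity of $\Zz[\underline{G}^n]^{\solidif}$ in $\Solid$. More seriously, the Mittag--Leffler step is not justified as stated: the poset of open normal subgroups of $G$ need not be countable, and surjectivity of transition maps is not by itself a vanishing criterion for $R^j\varprojlim$ in $\CondAb$ over uncountable cofiltered index categories. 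You do not need it: the classical contracting homotopy $(g_0,\ldots,g_n)\mapsto(e,g_0,\ldots,g_n)$ is induced by a continuous map $G^{n+1}\to G^{n+2}$, hence already contracts the augmented complex $\Zz[\underline{G}^{\bullet+1}]\to\Zz$ in $\CondAb$, and this contraction is preserved by the (additive) solidification functor, giving exactness of $B_\bullet\to\Zz$ directly.
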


  \begin{proof}[Proof of Proposition \ref{cond=cont}]
  We consider the bar resolution\footnote{Let $\kappa$ be an uncountable strong limit cardinal such that the condensed set $\underline{G}$ is the left Kan extension of its restriction to $\kappa$-small extremally disconnected sets. Then, the resolution (\ref{bar}) is the left Kan extension of the sheafification (on the site of $\kappa$-small extremally disconnected sets, with coverings given by finite families of jointly surjective maps) of the functor sending a $\kappa$-small extremally disconnected set $S$ to the bar resolution of $\Zz$ over $\Zz[\underline{G}(S)]$, \cite[Chapter I, \S 5]{Brown}.} of $\Zz$ over $\Zz[\underline{G}]$
  \begin{equation}\label{bar}
   \cdots\to \Zz[\underline{G}\times \underline{G}]\to \Zz[\underline{G}]\to \Zz\to 0.
  \end{equation}
  Applying $R\underline{\Hom}_{\Zz[\underline{G}]}(-, M)$, we obtain the spectral sequence
   $$E_1^{i, j}=\underline{\Ext}_{\Zz[\underline{G}]}^j(\Zz[\underline{G}^i], M)\implies \underline{\Ext}_{\Zz[\underline{G}]}^{i+j}(\Zz, M).$$
   Note that $E_1^{i, j}=\underline{\Ext}^j(\Zz[\underline{G}^{i-1}], M)$, for all $j\ge 0$ and $i>0$. Since $M$ is a solid abelian group, by \cite[Corollary 6.1, (iv)]{Scholzecond}, and Lemma \ref{intsolid},   for all profinite sets $S$, and for all $j>0$, we have 
   $$\underline{\Ext}^j(\Zz[S], M)\cong \underline{\Ext}^j(\Zz[S]^{\solidif}, M)\cong \underline{\Ext}_{\Solid}^j(\Zz[S]^{\solidif}, M)=0$$
   where in the last step we used that, by Proposition \ref{intproj}, $\Zz[S]^{\solidif}$ is an internally projective object of the category $\Solid$. In particular, we have that $E_1^{i, j}=0$, for all $j>0$ and $i>0$, which gives part \listref{cond=cont:1}.

   Part \listref{cond=cont:2} follows from part \listref{cond=cont:1}. In fact, by \cite[Proposition 1.7]{Scholzecond}, for every integer $n\ge 0$, we have that
   $\Hom(\underline{G}^n, M)=\mathscr{C}^0(G^n, M_{\topp})$. We conclude observing that the differentials of the complexes computing respectively $H^i_{\cond}(\underline{G}, M)$ and  $H^i_{\cont}(G, M_{\topp})$ agree as well.
  \end{proof}

 The following result generalizes \cite[Lemma 7.3]{BMS1}.
 
 \begin{prop}\label{contkosz}
  Given an integer $n\ge 1$, let $\Gamma:=\Zz_p^n$, and let $\gamma_1, \ldots, \gamma_n$ denote the canonical generators of $\Gamma$. Let $M$ a $\Gamma$-module in the category $\Mod_{\Zz_p}^{\ssolid}$ of $\Zz_p$-modules in $\Solid$. Then, we have a quasi-isomorphism
  $$R\Gamma_{\underline{\cond}}(\Gamma, M)\simeq\Kos_{M}(\gamma_1-1, \ldots, \gamma_n-1).$$
 \end{prop}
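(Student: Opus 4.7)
The plan is to reduce the computation to a Koszul resolution of $\Zz_p$ over the Iwasawa algebra $\Lambda := \Zz_p\llbracket T_1, \ldots, T_n\rrbracket$, viewed as a solid $\Zz_p$-algebra. Setting $T_i := \gamma_i - 1$, the natural map $\Zz_p[\underline{\Gamma}] \to \Lambda$ sending $\gamma_i \mapsto 1+T_i$ should induce an identification of the solid $\Zz_p$-algebra $\Zz_p[\underline{\Gamma}]^{\solidif}$ with $\Lambda$; this essentially amounts to the fact that, writing $\Gamma = \varprojlim_k \Gamma/p^k\Gamma$ as a profinite group, one has $\Zz_p[\underline{\Gamma}]^{\solidif} = \varprojlim_k \Zz_p[\Gamma/p^k\Gamma] = \Lambda$, where the limit is taken in $\Solid$. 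Consequently, I would argue that the forgetful functor from solid $\Lambda$-modules to solid $\Zz_p[\underline{\Gamma}]$-modules is an equivalence of abelian categories, yielding
$$R\underline{\Hom}_{\Zz[\underline{\Gamma}]}(\Zz, M) \simeq R\underline{\Hom}_{\Zz_p[\underline{\Gamma}]}(\Zz_p, M) \simeq R\underline{\Hom}_{\Lambda}(\Zz_p, M),$$
where the first step uses that $M$ is a $\Zz_p$-module.

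Next, I would observe that $T_1, \ldots, T_n$ forms a regular sequence in $\Lambda$ with quotient $\Lambda/(T_1, \ldots, T_n) = \Zz_p$, so that the Koszul complex $\Kos_\Lambda(T_1, \ldots, T_n)$ is a free resolution of $\Zz_p$ in the category of $\Lambda$-modules in $\Solid$. Applying $R\underline{\Hom}_\Lambda(-, M)$ to this resolution collapses it to $\Kos_M(T_1, \ldots, T_n) = \Kos_M(\gamma_1 - 1, \ldots, \gamma_n - 1)$, which is the desired quasi-isomorphism.

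The main obstacle is rigorously establishing the equivalence between solid $\Zz_p[\underline{\Gamma}]$-modules and solid $\Lambda$-modules: intuitively, any $\underline{\Gamma}$-action on a solid $\Zz_p$-module should extend uniquely to a $\Lambda$-action ``by continuity'', but this requires care with the solid formalism (and the associated analytic ring structure via a version of Corollary \ref{solidA} for $\Zz_p$-algebras). An alternative route that bypasses this step is induction on $n$: for the base case $n=1$, I would directly exhibit the two-term resolution $0 \to \Lambda \xrightarrow{T} \Lambda \to \Zz_p \to 0$ in the appropriate derived category, together with the computation $R\underline{\Hom}_{\Zz[\underline{\Zz_p}]}(\Lambda, M) = M$ for $M$ solid; the inductive step would then rely on the Shapiro-type adjunction
$$R\underline{\Hom}_{\Zz[\underline{\Gamma}]}(\Zz, M) \simeq R\underline{\Hom}_{\Zz[\underline{\Zz_p}]}(\Zz, R\underline{\Hom}_{\Zz[\underline{\Gamma'}]}(\Zz, M)),$$
with $\Gamma' := \Zz_p^{n-1}$, combined with the standard fact that the tensor product of two Koszul complexes is again a Koszul complex on the concatenated sequence.
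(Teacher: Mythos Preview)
Your proposal is correct and follows essentially the same route as the paper: identify the solidification of $\Zz_p[\underline{\Gamma}]$ with the Iwasawa algebra $\Lambda=\Zz_p\llbracket\Gamma\rrbracket$, then compute $R\underline{\Hom}$ via the Koszul resolution of $\Zz_p$ over $\Lambda$ by the regular sequence $\gamma_1-1,\ldots,\gamma_n-1$. The ``main obstacle'' you flag is exactly what the paper isolates as a preparatory lemma (Lemma~\ref{iwa}): it shows that $(\Zz_p[\underline{\Gamma}],\ S\mapsto \Zz[S]^{\solidif}\solid_{\Zz}\Lambda)$ is an analytic ring whose module category is precisely the category of $\Lambda$-modules in $\Solid$, and, crucially, that the derived internal Hom in this analytic category agrees with $R\underline{\Hom}_{\Zz_p[\underline{\Gamma}]}(-,-)$---so the Koszul complex, being a resolution by compact projective generators $\Lambda$ of that category, computes the desired $R\underline{\Hom}$.
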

 
 First, we prove a general lemma.
 
 \begin{lemma}\label{iwa}
   Let $G$ be a profinite group, and let $R$ be a profinite commutative unital ring. 
   
   We define the Iwasawa algebra of $G$ over $R$ as the condensed $R$-algebra
   $$R\llbracket G \rrbracket:=\varprojlim_{U}R[G/U]$$
   where $U$ runs over all the open normal subgroups of $G$.\footnote{Here, and in the following, to keep notation light, we write $R$ (resp. $G$) to denote $\underline{R}$ (resp. $\underline{G}$).}
   \begin{enumerate}[(i)]
    \item\label{iwa:1} We have $R\llbracket G \rrbracket=R[G]^{\solidif}$, i.e. $R\llbracket G \rrbracket$ is the solidification of the condensed $R$-algebra $R[G]$.
    \item\label{iwa:2} The pre-analytic ring $(R[G], \Zz)_{\solidif}$ from Lemma \ref{basiclemma} is an analytic ring. The category of $(R[G], \Zz)_{\solidif}$-complete modules in $\Mod_{R[G]}^{\cond}$  is the category of $R\llbracket G \rrbracket$-modules in $\Solid$, and it is generated by the compact projective objects $\prod_I R\llbracket G \rrbracket$, for varying sets $I$.
    \item\label{iwa:3} Denoting by $R\underline{\Hom}_{(R[G], \Zz)_{\solidif}}(-, -)$ the derived internal Hom in $D((R[G], \Zz)_{\solidif})$, given $M$ a $R\llbracket G \rrbracket$-module in $\Solid$, we have a natural isomorphism
    \begin{equation}\label{fulliwa}
     R\Gamma_{\underline{\cond}}(G, M)\simeq R\underline{\Hom}_{(R[G], \Zz)_{\solidif}}(\Zz, M).
    \end{equation}
   
   \end{enumerate}
  \end{lemma}
  \begin{proof}
   For part \listref{iwa:1}, we first note that $\Zz[G]^{\solidif}=\varprojlim_{U}\Zz[G/U]$, where $U$ runs over all the open normal subgroups of $G$. Then, the statement follows observing that, by \cite[Corollary 5.5]{Scholzecond} and Lemma \ref{trivial}, for any profinite set $S$, we have
   \begin{equation}
    R[S]^{\dsolidif}=\Zz[S]^{\solidif} \dsolid_\Zz R\simeq(\prod_I \Zz)\dsolid_\Zz R= \prod_I R
   \end{equation}
   concentrated in degree 0, for some set $I$ depending on $S$.
   
   For part \listref{iwa:2}, we note that, by the proof of the previous point, $R\llbracket G \rrbracket$ is profinite: in fact, we have $R\llbracket G \rrbracket\cong \prod_J R$, for some set $J$. Hence, applying again Lemma \ref{trivial}, for any set $I$, we have $(\prod_I \Zz)\dsolid_\Zz R\llbracket G \rrbracket=\prod_IR\llbracket G \rrbracket$ (concentrated in degree 0). Then, the first statement of part \listref{iwa:2} follows from Lemma \ref{basiclemma}, and the second assertion follows from \cite[Theorem 5.8, (i)]{Scholzecond}.
   
   For part \listref{iwa:3}, by the analyticity of $(R[G], \Zz)_{\solidif}$ from part \listref{iwa:2}, by Lemma \ref{intsolid} the natural map
  $$R\underline{\Hom}_{(R[G], \Zz)_{\solidif}}(R, M)\to R\underline{\Hom}_{R[G]}(R, M)$$
  is an isomorphism, and by adjunction the target identifies with $R\underline{\Hom}_{\Zz[G]}(\Zz, M)=R\Gamma_{\underline{\cond}}(G, M)$.
  \end{proof} 
 
 \begin{proof}[Proof of Proposition \ref{contkosz}]  Applying Lemma \ref{iwa} for $G=\Gamma$, and $R=\Zz_p$, we have a natural isomorphism
 $$R\Gamma_{\underline{\cond}}(\Gamma, M)\simeq R\underline{\Hom}_{(\Zz_p[G], \Zz)_{\solidif}}(\Zz_p, M).$$
 Now, we note that the condensed $\Zz_p$-algebra $\Zz_p\llbracket \Gamma\rrbracket$ is given by applying the functor $T\mapsto \underline{T}$ to the topological Iwasawa algebra of $\Gamma$ over $\Zz_p$. Then, we have the following projective resolution of $\Zz_p$ as a $\Zz_p\llbracket \Gamma\rrbracket$-module in $\Solid$
 $$\bigotimes_{i=1}^n (\Zz_p\llbracket \Gamma\rrbracket\overset{\gamma_i-1}{\longrightarrow} \Zz_p\llbracket \Gamma\rrbracket)\overset{\sim}{\longrightarrow} \Zz_p.$$
 Taking $R\underline{\Hom}_{(\Zz_p[G], \Zz)_{\solidif}}(-, M)$ gives the statement.
 \end{proof}


\normalem
 
\nocite{Scholze2}
 
\bibliography{biblio}{}
\bibliographystyle{amsalpha}

\end{document}